\documentclass[]{article}

\usepackage{hyperref}

\renewcommand{\v}[1]{\ensuremath{\mathbf{#1}}}

\newcommand{\mc}{\mathcal}

\def\st{{\rm s.t.}}

\usepackage{algorithmic}
\usepackage{algorithm}
\usepackage{booktabs}
\usepackage{caption,subcaption}
\usepackage[usenames,dvipsnames]{color}
\usepackage{fullpage,graphicx,amssymb,amsmath}
\usepackage{enumerate}
\usepackage{empheq}
\usepackage[authoryear]{natbib}
\usepackage{lscape}
\usepackage[normalem]{ulem}
\usepackage{url}
\usepackage{verbatim} 
\usepackage{siunitx} 

\usepackage{morefloats} 

\usepackage[colorinlistoftodos,prependcaption,textsize=tiny]{todonotes}

\newcommand{\be}{\begin{enumerate}}
\newcommand{\ee}{\end{enumerate}}
\newtheorem{theorem}{Theorem}

\newtheorem{definition}{Definition}

\newcommand{\qed}{\hfill $\Box$}


\renewcommand{\Re}{\mathbb{R}} 
\newcommand{\Z}{\mathbb{Z}} 
\newcommand{\diag}{\mathop{\mathrm{diag}}}

\newcommand{\vbeta}{\boldsymbol{\beta}}

\newcommand{\sortedList}{\texttt{sortedList}} %
\newcommand{\newPoints}{\mc{I}^{\textrm{new}}} %
\newcommand{\sampled}{\mc{I}^{\textrm{sampled}}} %
\newcommand{\nontabuPoints}{\mc{I}^{\textrm{nontabu}}} %
\newcommand{\tabuPointsCurrent}{\mc{I}^{\textrm{tabu,cur}}} %
\newcommand{\tabuPointsNew}{\mc{I}^{\textrm{tabu,new}}} %
\newcommand{\aspirationPoints}{\mc{I}^{\textrm{aspire}}} %
\newcommand{\ftrue}{f^{\textrm{true}}} %
\newcommand{\fbar}{\bar{f}} %
\newcommand{\fhat}{\hat{f}} %
\newcommand{\vftrue}{\v{f}^{\textrm{true}}} %
\newcommand{\vfhat}{\hat{\v{f}}} %

\newcommand{\tabuTenureShort}{\tau^{\textrm{short}}} %
\newcommand{\neighborhoodShort}{\mc{N}^{\textrm{short}}} %
\newcommand{\neighborhoodLong}{\mc{N}^{\textrm{long}}} %
\newcommand{\tabuGridDistanceThresholdShort}{G^{\textrm{short}}} %
\newcommand{\tabuGridDistanceThresholdLong}{G^{\textrm{long}}} %
\newcommand{\iterFound}{C^{\textrm{itrFound}}} %

\newcommand\filledcirc{{\color{red}\bullet}\mathllap{\circ}} 

\newcommand{\newcodecolor}[1]{\textcolor{teal}{#1}}

\title{LineWalker: Line Search for Black Box Derivative-Free Optimization and Surrogate Model Construction}
\author{Dimitri J. Papageorgiou$^1$, Jan Kronqvist$^2$, Krishnan Kumaran$^1$  \\
{\small $^1$ExxonMobil Technology and Engineering Company}\\
{\small 1545 Route 22 East, Annandale, NJ 08801 USA}\\
{\small \{dimitri.j.papageorgiou,krishnan.kumaran\}@exxonmobil.com} \\
\\
{\small $^2$Department of Mathematics, KTH Royal Institute of Technology}\\
{\small Lindstedtsvagen 25, 114 28 Stockholm, Sweden}\\
{\small jankr@kth.se} \\
}

\begin{document}

\maketitle

\begin{abstract}
This paper describes a simple, but effective sampling method for optimizing and learning a discrete approximation (or surrogate) of a multi-dimensional function along a one-dimensional line segment of interest.
The method does not rely on derivative information and the function to be learned can be a computationally-expensive ``black box'' function that must be queried via simulation or other means. 
It is assumed that the underlying function is noise-free and smooth, although the algorithm can still be effective when the underlying function is piecewise smooth.
The method constructs a smooth surrogate on a set of equally-spaced grid points by evaluating the true function at a sparse set of judiciously chosen grid points.  At each iteration, the surrogate's non-tabu local minima and maxima are identified as candidates for sampling. Tabu search constructs are also used to promote diversification. If no non-tabu extrema are identified, a simple exploration step is taken by sampling the midpoint of the largest unexplored interval. The algorithm continues until a user-defined function evaluation limit is reached. Numerous examples are shown to illustrate the algorithm's efficacy and superiority relative to state-of-the-art methods, including Bayesian optimization and NOMAD, on primarily nonconvex test functions.

\vspace{2mm}
\noindent \textbf{keywords:} active learning, black-box optimization, derivative-free optimization, Gaussian process regression, surrogate model, tabu search.
\end{abstract}

\begin{quote}
\centering
\textit{``I keep a close watch on this heart of mine.\\
I keep my eyes wide open all the time.\\
I keep the ends out for the tie that binds\\
Because you're mine, I walk the line.''\\
\indent~~ --Johnny Cash ``I walk the line'' (1956)}
\end{quote}

\newpage
\section*{Nomenclature}

All sets are denoted in calligraphic font, e.g., $\mc{S}$ as opposed to $S$.
All vectors and matrices are written in bold font, e.g., $\v{s} = (s_1,\dots,s_N)$.
$\Re_+$ denotes the set of non-negative reals.
$\Z_+$ denotes the set of non-negative integers.
$D$ denotes the number of dimensions of the underlying function $\ftrue$, while $N$ denotes the number of grid points for the discretized approximation function $\fhat$, i.e., the dimension of the vector $\vfhat$. 

\begin{tabular}{ll}
\toprule
 & Definition \\
\hline
\textbf{Sets} & \\
$i,j \in \mc{I}$ & set of equally-spaced grid point indices; $\mc{I} = \{1,\dots,N\}$ \\
$i \in \sampled$ & set of already sampled grid points where the true function $\ftrue$ has been evaluated \\
$i \in \newPoints$ & set of newly identified grid points to be sampled \\
$\neighborhoodShort_i \subset \mc{I}$ & set of short-term neighboring indices to $i \in \mc{I}$; $\neighborhoodShort_i = \left\{j \in \mc{I} \cap \{ i - \tabuGridDistanceThresholdShort_i, \dots, i + \tabuGridDistanceThresholdShort_i \} \right\} $ \\
$\neighborhoodLong_i \subset \mc{I}$ & set of long-term neighboring indices to $i \in \mc{I}$; $\neighborhoodLong_i = \left\{j \in \mc{I} \cap \{ i - \tabuGridDistanceThresholdLong_i, \dots, i + \tabuGridDistanceThresholdLong_i \} \right\} $ \\
$\mc{S}^{\max}/\mc{S}^{\min}$ & set of maximizers/minimizers (grid point indices) of the approximation function $\hat{\v{f}}$ \\
\\
\multicolumn{2}{l}{\textbf{User-Defined Parameters}} \\
$\v{A} \in \Re^{N \times N}$ & smoothing matrix \\
$\iterFound_i \in \Z_+$ & iteration in which index $i \in \sampled$ was sampled/evaluated \\ 
$e^{\min}$ & user-defined optimality tolerance; maximum error between current and previous fits \\
$E^{\max,\textrm{total}} \in \Z_+$ & maximum number of total function evaluations allowed \\
$E^{\max,\textrm{itr}} \in \Z_+$ & maximum number of function evaluations allowed per major iteration \\
$\tabuGridDistanceThresholdShort_i \in \Z_+$ & short-term tabu grid distance threshold for grid index $i \in \mc{I}$ \\
$\tabuGridDistanceThresholdLong_i \in \Z_+$ & long-term tabu grid distance threshold for grid index $i \in \mc{I}$ \\
$N \in \Z_+$ & number of equally-spaced grid points \\
$N^{\max,\textrm{nbrs}} \in \Z_+$ & maximum number of sampled neighbors \\
$\alpha \in \Re_+$ & first-derivative smoothing parameter \\
$\mu \in \Re_+$ & second-derivative smoothing parameter \\
$\delta^{\min} \in \Re_+$ & objective function tolerance for local minima \\
$\delta^{\max} \in \Re_+$ & objective function tolerance for local maxima \\
$\nu^{\min} \in \Re_+$ & Minimum grid point separation multiplier  \\
$\nu^{\max} \in \Re_+$ & Maximum grid point separation multiplier  \\
$\tabuTenureShort \in \Z_+$ & short-term tabu tenure \\
\\
\multicolumn{2}{l}{\textbf{General Parameters}} \\
$s_i \in \{0,1\}$ & takes value 1 if true function $\ftrue$ has been evaluated at index $i$; 0 otherwise \\
$\diag(\v{s})$ & $\diag(\v{s}) \in \{0,1\}^{N \times N}$ such that $\diag(\v{s})_{ii} = 1$ if $s_i = 1$; $\diag(\v{s})_{ij} = 0$ otherwise \\
$\v{x}_i \in \Re^D$ & sample at grid index $i$ \\
\\
\textbf{Functions} & \\
$\ftrue:\Re^D \mapsto \Re$ & true $D$-dimensional function that we are trying to learn/optimize along a single dimension \\
$\ftrue_i$ & true discretized function value evaluated at grid point $i$ (i.e., $\ftrue_i = \ftrue(\v{x}_i)$) \\
$\fhat_i$ & approximate value of $\ftrue_i$ \\
$\vfhat \in \Re^N$ & ``approximation function'' (i.e., vector) of $\ftrue$ along a line segment; $\vfhat=(\fhat_1,\dots,\fhat_N)$ \\
\bottomrule
\end{tabular}

\section{Introduction}

Across engineering and scientific disciplines, one is often faced with the tasks of learning and optimizing a function whose analytical form is not known beforehand.
Learning a high-dimensional function is, in general, extremely challenging.  However, learning low-dimensional subspaces of this function may serve as a practical compromise that still reveals useful information.  With particular focus on computationally expensive black box functions, this paper describes an approach for learning a one-dimensional deterministic smooth function on a bounded interval, although the ideas can be extended to 2-, 3-, and other low-dimensional subspaces. While the method is not guaranteed to learn the one-dimensional function over the domain of interest, we show that pursuing the extrema of this function often produces a high-resolution approximation. 

In addition to learning, this algorithm can be used for optimizing a function along a line segment, a step commonly referred to as ``line search'' in continuous optimization. 
Unlike traditional gradient/Hessian-based methods, which seek to find the nearest local optima and then stop, this method seeks to approximate the function along the entirety of a given line segment using a small number of function evaluations.  Since local information for a truly nonconvex function tells you nothing about the function's behavior far from the current point, the hope is that the line search will efficiently uncover more information about the function than a traditional method. 
The algorithms presented in this paper do not describe how to find a search direction; they assume that one is given. 

From a mathematical vantage point, our motivations and goals can be described as follows:
Assume we are given a one-dimensional deterministic continuous (ideally, smooth) function $f:\Re \mapsto \Re$ on the domain $[x^L,x^U] \subset \Re$. Let $\epsilon > 0$ be a parameter used to define local optimality.  Then, our goals are to
\begin{eqnarray}
\text{find} && x^* \in \arg\min \Big\{ f(x) : x \in [x^L,x^U] \subset \Re \Big\} \label{eq:goal1_find_argmin}, \\
\text{find all} && x \in [x^L,x^U] : f(x) \leq f(y)~\text{or}~f(x) \geq f(y)~\forall y : |x-y|<\epsilon \label{eq:goal2_find_all_extrema}, \\
\text{find} && \fhat(\cdot) \in \arg\min_{\fbar(\cdot)} \max_{x \in [x^L,x^U]} || \fbar(x)-f(x) || \label{eq:goal3_find_best_surrogate}.
\end{eqnarray}
Goal~\eqref{eq:goal1_find_argmin} encapsulates the standard derivative-free optimization (DFO) goal of finding a global minimum of a black box function.
Goal~\eqref{eq:goal2_find_all_extrema} captures our less common and more challenging goal of finding all local extrema of this same black box function.
Goal~\eqref{eq:goal3_find_best_surrogate} is to find the ``best'' surrogate function $\fhat$ that minimizes the maximum error between it and the underlying function $f$, typically subject to a limit on the number of function evaluations that may be used to construct the surrogate.

Given the vast literature on DFO and surrogate modeling, we approached this research with a high degree of skepticism that improvements were possible.  Indeed, many state-of-the-art methods purport to find global optimal solutions to challenging DFO instances in 10, 20, and even 50 dimensions.  Certainly the one-dimensional setting has been solved, we thought. This hypothesis turns out to be false. We find that our proposed method is competitive with and sometimes superior to leading methods for solving one-dimensional deterministic DFO problems and/or producing a high-quality (low error) surrogate.

\subsection{Literature review}

Since our motivation is to learn and optimize a one-dimensional function, we briefly discuss relevant literature in the areas of line search, derivative-free optimization, and surrogate modeling.

\subsubsection{What is a ``line search'' searching for?}

The importance of line search in classical deterministic continuous optimization is unequivocal as captured by the assertion ``One-dimensional search is the backbone of many algorithms for solving a nonlinear programming problem'' \cite[Chapter 8, p.344]{bazaraa2006nonlinear}. Moreover, interest in line search has experienced a resurgence since ``Choosing appropriate step sizes is critical for reducing the computational cost of training large-scale neural network models'' \citet{chae2019empirical}. 
But what exactly is a line search method searching for?  In both the deterministic and stochastic optimization communities, the term ``line search'' is essentially synonymous with the task of finding an optimal, near-optimal, or sufficiently good step size (also known as the ``learning rate'' in the machine learning community \citep{ruder2016overview}) in which to move, after a search direction has been selected.
Indeed, as a cornerstone of numerous direct search algorithms for continuous optimization, line search methods attempt to answer the basic question: ``How far should I move from my current point along a direction of interest to a new point to improve my objective function value?''  
More formally, the prototypical line search algorithm (see, for example, \citet[Chapter 3]{nocedal2006numerical}) for minimizing a function $\phi: \Re^n \mapsto \Re$ assumes that a point $\v{x}_k \in \Re^n$ and direction $\v{d}_k \in \Re^n$ are available at iteration $k$ and that one then seeks to solve the following univariate minimization problem for the optimal step length $\gamma_k \in \Re$:
\begin{equation} \label{model:prototypical_line_search}
\min_{\gamma_k \in \Re} \phi(\v{x}_k + \gamma_k \v{d}_k).
\end{equation}
The method used to solve the minimization problem \eqref{model:prototypical_line_search} is referred to as a line search method.

Classic exact and inexact line search procedures are discussed in \cite[Chapter 8]{bazaraa2006nonlinear} and \citet[Chapter 3]{nocedal2006numerical}. 
Exact methods seek a global optimum of \eqref{model:prototypical_line_search}, while inexact methods attempt to find a ``good enough'' step size to guarantee descent at lower computational expense and are thus more commonly used in practice. 
Relatively few recent works have investigated line search methods.
For deterministic problems, \citet{neumaier2019line} present a line search method for optimizing continuously differentiable functions with Lipschitz continuous gradient.
\citet{bergou2018line} propose an adaptive regularized framework using cubics, which behaves like a line search procedure along the quasi-Newton direction with a special backtracking strategy for smooth nonconvex optimization. Meanwhile, for stochastic problems,
\citet{mahsereci2015probabilistic} pursue a probabilistic line search by constructing a Gaussian process surrogate of the univariate optimization objective, and using a probabilistic belief over the Wolfe conditions to monitor the descent.
\citet{bergou2022subsampling} assume a twice-continuously differentiable objective function and investigate a stochastic algorithm with subsampling to solve it. 
\citet{paquette2020stochastic} adapt a classical backtracking Armijo line search to the stochastic optimization setting.

But is the search for a scalar the only item that one could search for?  Certainly not. 
The quintessential line search algorithm used to solve \eqref{model:prototypical_line_search} is driven by the goal for \textit{iterative descent} whereby an algorithm is designed to successively improve the solution until convergence to a local optimum is achieved \citep{bertsekas1999nonlinear}.
In some ways, this classic approach can be viewed as an exploitation step since one is most often searching along a descent direction and therefore attempting to exploit this knowledge in hopes of making guaranteed improvement, however small or large that improvement may be. This classic approach also reveals that the primary goal of line search is optimality, not learning, an important theme addressed below.

\subsubsection{Derivative-free optimization}

Given the immense volume of DFO literature (also known as black box optimization), we highlight only the most relevant themes here, while pointing the interested reader to the surveys by \cite{conn2009introduction}, \cite{larson2019derivative}, \cite{rios2013derivative}, and the references therein.
\cite{larson2019derivative} categorize DFO methods along three main dimensions: 1) direct-search vs. model-based; 2) local vs. global; and 3) deterministic vs. randomized.
Direct-search methods progress by comparing function values to directly determine candidate points and include popular methods like the Nelder-Mead simplex method \citep{nelder1965simplex} and mesh adaptive search algorithms (NOMAD) \citep{Le2011a}. In contrast, model-based methods (discussed below) rely on an approximate model, also known as a surrogate or response surface, whose predictions guide the selection of candidate points. 
The ``local vs. global'' categorization distinguishes DFO methods that seek convergence to local optima from ``global'' ones that involve some degree of exploration. Unlike in deterministic global optimization, the qualifier ``global'' here does not typically mean that such a method is able to provably optimize a black box function.
Finally, the ``deterministic vs. randomized'' categorization differentiates methods that do not possess any probabilistic components with those that do.
Not surprisingly, there are also many hybrid methods attempting to combine the best attributes of the aforementioned methods.

Since we incorporate some tabu search concepts into our enhanced LineWalker algorithm, we note that \cite[p.6]{conn2009introduction} caution that simulated annealing, evolutionary algorithms, artificial neural networks, tabu search, and population-based methods should only be used for DFO in ``extreme cases'' and as a ``last resort.''  This is due to empirical evidence that such general-purpose heuristics typically require many function evaluations and provide no convergence guarantees.  There is also a body of work on surrogate-assisted heuristics \citep{ong2005surrogate}.  However, we would not describe our approach as a ``surrogate-assisted tabu search'' since tabu search components play a subservient role in LineWalker.

\subsubsection{Surrogate Modeling}

As described in \cite{bhosekar2018advances}, surrogate models play a critical role in three common problem classes: (1) prediction and modeling; (2) derivative-free optimization; and (3) feasibility analysis, where one must also satisfy design constraints. They also point out that key differences emerge when using surrogates for each of these three problem classes. In this work, we are primarily focused on the first two.

Popular surrogate models include Gaussian process regression in Bayesian optimization \citep{brochu2010tutorial,shahriari2015taking}, radial basis functions \citep{gutmann2001radial,muller2016miso,costa2018rbfopt}, and a mixture of basis functions \cite{cozad2014learning}.  Basis function-guided approaches share a common thread: They presuppose a set of basis functions, which transform the input data (the $\v{x}$ values) by operating on the raw feature space, and then create a surrogate by determining the weights to assign to each basis function.  Rather than try to map input data into a potentially higher-dimensional feature space, our approach emphasizes the objective function values and attempts to constrain how much these values are allowed to vary. 

Surrogate-based methods for DFO generally follow the same steps. First, an initial set of function evaluations (samples) are made. A surrogate model is then constructed and an ``acquisition'' function is used to select the next sample. After the new function value has been obtained, the surrogate model is updated and a new sample is chosen. This process repeats until some termination criteria are met, e.g., a maximum number of function evaluations has been reached. The acquisition function governs the tradeoff between exploration and exploitation.

\subsection{Contributions}

The contributions of this paper are:
\begin{enumerate}
\item With a particular focus on moderate to complicated noise-free smooth functions, we introduce a simple, but effective sampling method for optimizing and learning a discrete surrogate of a multi-dimensional function along a one-dimensional line segment of interest. 
\item We provide theoretical underpinnings that connect our approach to constrained nonlinear fitting and Gaussian Process Regression. 
\item Numerous examples are shown to illustrate the algorithm's efficacy and superiority relative to state-of-the-art methods, including Bayesian optimization and NOMAD, in terms of the number of function evaluations needed for optimality and overall surrogate quality.
\end{enumerate}

It is worth mentioning what is not considered in this paper. 
First, we deliberately avoid discussion of how to find a direction in which to search as it is a research topic in and of itself.
Second, we do not consider noisy (i.e., stochastic) function evaluations.
Third, we assume that estimating partial derivatives by finite differences or automatic differentiation is impractical or impossible, consistent with our assumption that a computationally expensive simulator is the main bottleneck.

The remainder of this paper is organized as follows:
Section~\ref{sec:LineWalker_algorithms} first describes a na\"{i}ve, but surprisingly effective extrema hunting algorithm, which relies solely on exploitation.  This algorithm lays the foundation for our main LineWalker algorithms, which incorporate various exploration steps and tabu search constructs to improve overall performance.  
Section~\ref{sec:theory} outlines the theoretical underpinnings of our approach as well as connections with Bayesian optimization.  
Section~\ref{sec:Numerical_experiments} describes our numerical experiments and showcases the performance of our LineWalker algorithms against state-of-the-art methods.
Conclusions and future research directions are offered in Section~\ref{sec:conclusions}.
The Appendix provides a detailed visual comparison of our \texttt{LineWalker-full} algorithm with its closest competitor - Bayesian optimization.

\section{Main results: LineWalker algorithms} \label{sec:LineWalker_algorithms}

\subsection{A line search algorithm for learning extrema of a function}
We now describe a sampling algorithm to learn the extrema of (and consequently optimize) a multi-dimensional continuous (ideally, smooth) function $\ftrue : \Re^D \mapsto \Re$ along a single dimension.
Note that this dimension does need to align with the axes of the original function.  In other words, given any two points $\v{x}_1$ and $\v{x}_2$ in $\Re^D$, the algorithm attempts to approximate the true function on the line segment that connects them.
First, we construct a grid of $N$ equally-spaced ``grid points'' along the single dimension of interest.
This grid of indices is denoted by the set $\mc{I}$.
Suppose we have function evaluations $\ftrue_i = \ftrue(\v{x}_i)$ at a subset of grid points $i \in \sampled$ and let $s_i$ be a binary parameter taking value 1 if $i \in \sampled$; 0 otherwise.     
Second, using all samples (function evaluations) obtained thus far, we construct a function approximation $\hat{\v{f}} = (\fhat_1,\dots,\fhat_N)$ by solving the following unconstrained least-squares optimization problem
\begin{equation}\label{model:unconstrained_opt_for_approximate_f}
\min_{\hat{\v{f}}}~~ \sum_{i=1}^N s_i (\fhat_i - \ftrue_i)^2 + \alpha \sum_{i=1}^{N-1} (\fhat_{i+1} - \fhat_i)^2 + \mu \sum_{i=2}^{N-1} (\fhat_{i+1} + \fhat_{i-1} - 2 \fhat_i)^2~.
\end{equation}
The first summation denotes the error in the function approximation $\fhat_i$ and the true function $\ftrue_i$ at the grid points $i \in \sampled$ where function evaluations have been made.  The second and third summations denote the squared first and second derivatives, respectively, of the function approximation $\hat{\v{f}}$. Thus, $\alpha$ and $\mu$ can be viewed as weights, smoothing parameters, or regularizers to encourage the minimization to choose function approximations that do not vary widely.
Third, given the function approximation $\hat{\v{f}}$, we identify new grid points to sample by detecting the extrema (i.e., the local maxima and minima, although saddle points could also be considered) of $\hat{\v{f}}$.

Using a standard calculus derivation for least-squares minimization, one can show that an optimal approximation $\vfhat^* \in \Re^N$ occurs by solving the linear system 
\begin{equation} \label{eq:least_squares_linear_system}
(\v{A}+\diag(\v{s}))\vfhat^*=\diag(\v{s})\vftrue
\end{equation}
where $\diag(\v{s}) \in \{0,1\}^{N \times N}$ is a sparse binary diagonal matrix whose positive diagonal entries correspond to the grid points $i \in \sampled$ where the true function has been evaluated; $\vftrue = (\ftrue_1,\dots,\ftrue_N)$; and $\v{A} \in \Re^{N \times N}$ is a sparse, symmetric, pentadiagonal, positive semidefinite matrix (see Theorem~\ref{thm:surrogate_matrix_is_positive_semidefinite}) given by
\begin{equation} \label{eq:smoothing_matrix_A}
\v{A} = 
\begin{pmatrix*}[c]
\mu	&	-\alpha-2\mu	&	\mu	&	0	&	\dots	&		&	 &	 &	0	\\
-\alpha-2\mu	&	2\alpha+5\mu	&	-\alpha-4\mu	&	\mu	&	0 	&	\dots	&	 &	&		\\
\mu	&	-\alpha-4\mu	&	2\alpha+6\mu	&	-\alpha-4\mu	&	\mu	&	0	&	\dots &	&		\\
0	&	\mu	&	-\alpha-4\mu	&	2\alpha+6\mu	&	-\alpha-4\mu	&	\mu	&	0&	&		\\
\vdots	&	\ddots	&	\ddots	&	\ddots	&	\ddots	&	\ddots	&	\ddots&	\ddots&	\vdots	\\
	&	&  0 & \mu	&	-\alpha-4\mu	&	2\alpha+6\mu	&	-\alpha-4\mu	&	\mu	&	0	\\
	&   &	&  0 & \mu	&	-\alpha-4\mu	&	2\alpha+6\mu	&	-\alpha-4\mu	&	\mu		\\
	&		&		&	& 0 &	\mu	&	-\alpha-2\mu	&	2\alpha+5\mu	&	-\alpha-4\mu		\\
0	&		&		&		&	\dots	&	0		&	\mu	&	-\alpha-2\mu	&	\mu
\end{pmatrix*}
\end{equation}

The algorithm is outlined in pseudocode in Algorithm~\ref{algo:segment_search_1D}.
In Step~\ref{step:initialize_samples}, an initial set of samples (indices) is selected where the function should be evaluated. 
As stated above and conveyed in the \textbf{while} loop beginning in Step~\ref{step:main_while_loop}, the algorithm continues to sample strict extrema of the approximate function $\vfhat$ until the approximation does not change.  In Step~\ref{step:solve_linear_system}, a linear system of equations, i.e., Equation~\eqref{eq:least_squares_linear_system}, is solved to obtain a least-squares fit relative to the samples obtained thus far. In Steps~\ref{step:maxima_hunt} and \ref{step:minima_hunt}, strict extrema of the approximate function $\vfhat$ are identified.  In Step~\ref{step:error_between_successive_fits}, the error between successive fits is computed to determine if the fit has materially changed. The algorithm terminates once the error between successive fits has fallen below the user-defined tolerance $e^{\min}$ or no new unsampled extrema are found (Step~\ref{step:no_new_extrema_found}).
\begin{algorithm} 
\caption{\texttt{extremaHunter()}: One-dimensional approximate global line segment search}
\label{algo:segment_search_1D}
\begin{algorithmic}[1]
\REQUIRE Optimality tolerance $e^{\min} > 0$; Number of grid points $N$; Smoothing matrix $\v{A}$; End points $\v{x}_1$ and $\v{x}_N$ 
\STATE Define the set $\{\v{x}_i\}_{i \in \mc{I}}$ of points on the line segment connecting $\v{x}_1$ and $\v{x}_N$
\STATE Set $s_i = 0~\forall i \in \mc{I}$; $\vftrue = \v{0}$; $\hat{\v{f}}^{\textrm{prev}} = \v{0}$; $e = e^{\min} + 1$
\STATE Evaluate the true function at an initial set $\sampled$ of grid points; Set $s_i = 1~\forall i \in \sampled$ \label{step:initialize_samples}
\WHILE{$(e > e^{\min})$} \label{step:main_while_loop}
	\STATE $\vfhat = (\v{A}+\diag(\v{s}))\backslash \diag(\v{s})\vftrue$, i.e., $\vfhat$ solves the linear system $(\v{A}+\diag(\v{s}))\vfhat=\diag(\v{s})\vftrue$ \label{step:solve_linear_system}
	\STATE $\mc{S}^{\max} = \{ i \in \{2,\dots,N-1\} : \fhat_i > \max\{\fhat_{i-1},\fhat_{i+1}\} + \delta^{\max}\}$ \label{step:maxima_hunt} 
	\STATE $\mc{S}^{\min} =\{ i \in \{2,\dots,N-1\} : \fhat_i < \min\{\fhat_{i-1},\fhat_{i+1}\} - \delta^{\min}\}$ \label{step:minima_hunt}
	\STATE $\newPoints= (\mc{S}^{\max} \cup \mc{S}^{\min}) \backslash \sampled$; $s_i = 1~\forall i \in \newPoints$
	\STATE \textbf{if} $|\newPoints|=\emptyset$ \textbf{then return} $\hat{\v{f}}$ \textbf{end if} \label{step:no_new_extrema_found}
	\FOR{$i \in \newPoints$}
		\STATE $\ftrue_i = \ftrue(\v{x}_i)$ \label{step:evaluate_function}
	\ENDFOR
	\STATE $\sampled = \sampled \cup \newPoints$
	\STATE $e = N^{-1} \sum_{i=1}^N |\hat{f}^{\textrm{prev}}_i - \fhat_i|$ \label{step:error_between_successive_fits}
	\STATE $\hat{\v{f}}^{\textrm{prev}} = \vfhat$
\ENDWHILE
\RETURN $\hat{\v{f}}$
\end{algorithmic}
\end{algorithm}

It is important to note that there are only two potentially time-consuming steps in the entire algorithm.
First, Step~\ref{step:solve_linear_system} requires the solution of a linear system of equations, whose computational complexity $O(N^3)$ depends on the number of grid points $N$ used in the discretization. 
Second, Step~\ref{step:evaluate_function} requires the true function $\ftrue$ to be evaluated, which may require a call to a computationally-expensive simulation or oracle.  

The algorithm is easily explained by way of example as shown in the following subsection.   
An \textit{iteration} refers to a single pass through all steps in the \textbf{while} loop beginning in Step~\ref{step:main_while_loop}.

\subsection{Illustrative example of \texttt{extremaHunter()}}

The $D$-dimension Rastrigin function \url{https://en.wikipedia.org/wiki/Rastrigin_function} is 
\begin{equation*}
f(\v{x} )=10D + \sum_{i=1}^{D}\left[x_{i}^{2}-10\cos(2\pi x_{i})\right].
\end{equation*}
It is typically defined on the domain $x_i \in [-5.12,5.12]$ for $i=1,\dots,D$, and has a global minimizer at $\v{x}^*=\v{0}$ with a function value of $f(\v{0})=0$. 
Figure~\ref{fig:rastrigin_demo_iter1} shows the ground truth of the 1-dimensional Rastrigin function on the interval $[-3,3]$ as well as the resulting function approximation obtained from sampling 11 initial uniformly-spaced points, including the endpoints, on a grid of size $N=1000$. Smoothing parameters are set to $\alpha=0$ and $\mu=0.01$.
Although the algorithm was ``lucky'' to compute an initial sample point near the true global minimum of zero, we are not privy to this fact.  Moreover, the ``Current Fit'' is a rather poor approximation of the ``Ground Truth'' function as it misses several local minima and maxima.

\begin{figure}[h!] 
\centering
\includegraphics[width=9cm]{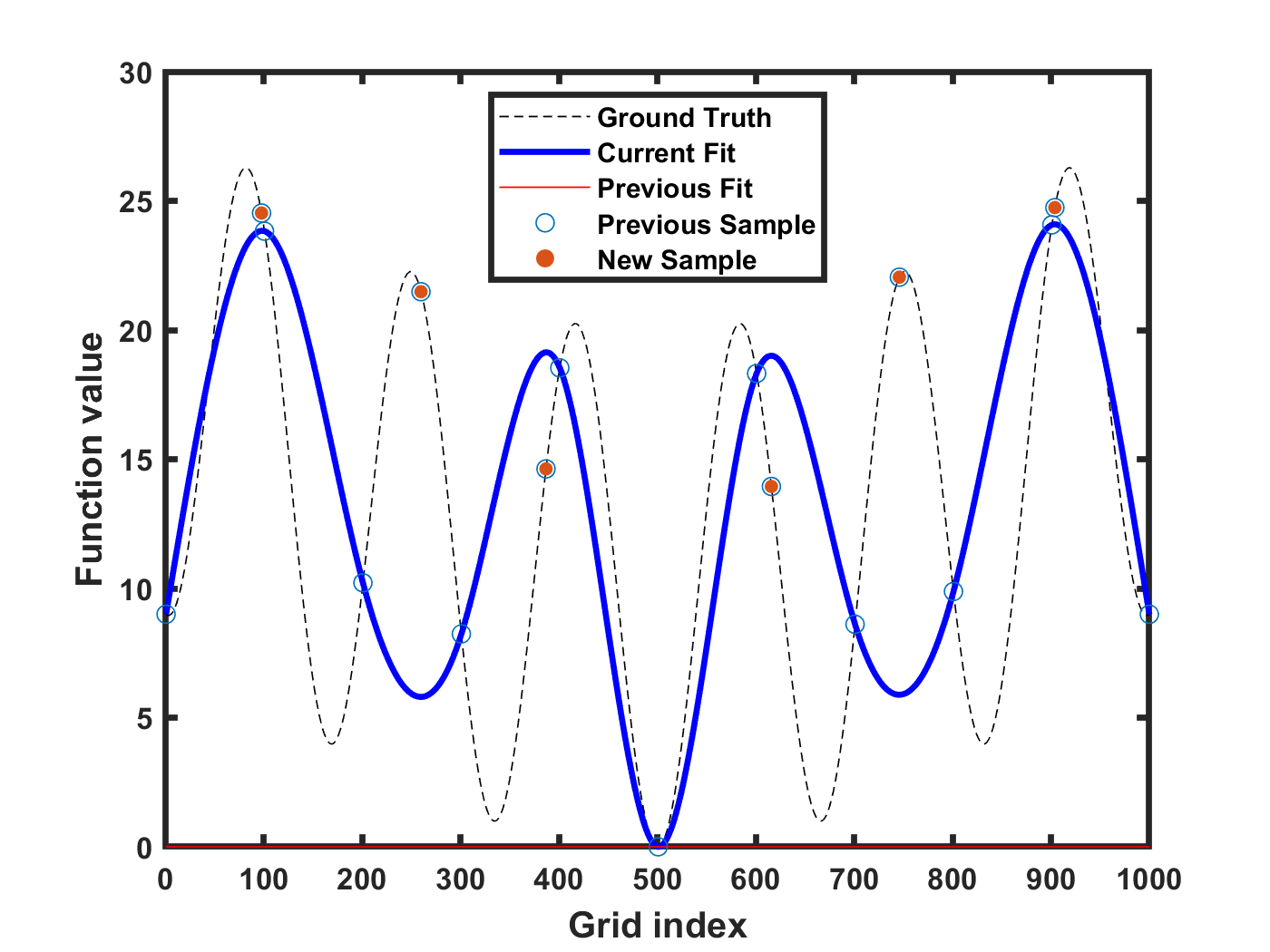}
\caption{Iteration 1 results for the Rastrigin function.  The ``Ground Truth'' function that we are trying to learn/optimize is shown with a dashed line.  An initial function approximation ``Current Fit'' is made using only 11 initial samples (function evaluations) labeled $\circ$.  The ``Previous Fit'' is initialized to the zero vector. 
The algorithm recommends ``New Sample'' function evaluations at critical points, shown with a $\filledcirc$, of the ``Current Fit.'' Parameter settings: $N=1000$, $e^{\min} = 0.001$, $\alpha=0$, $\mu=0.01$.}
\label{fig:rastrigin_demo_iter1}
\end{figure} 

The function approximations from iterations 2, 4, and 6 of Algorithm~\ref{algo:segment_search_1D} are shown in Figure~\ref{fig:rastrigin_iterations}. In six iterations of the main while loop in Step~\ref{step:main_while_loop}, the algorithm made a total of 52 function evaluations, i.e., only 5.2\% of the $N=1000$ grid points were sampled.  The algorithm terminated with a minimizer at $x^{\min} = 0.003$ with a function value of $f^{\min}=0.0018$, slightly off from the true minimum function value of 0. Moreover, the algorithm identified all local extrema (with small error) as it is designed to do. 
\begin{figure} [h!]	
  \begin{subfigure}[b]{0.333\textwidth}
    \includegraphics[width=\textwidth]{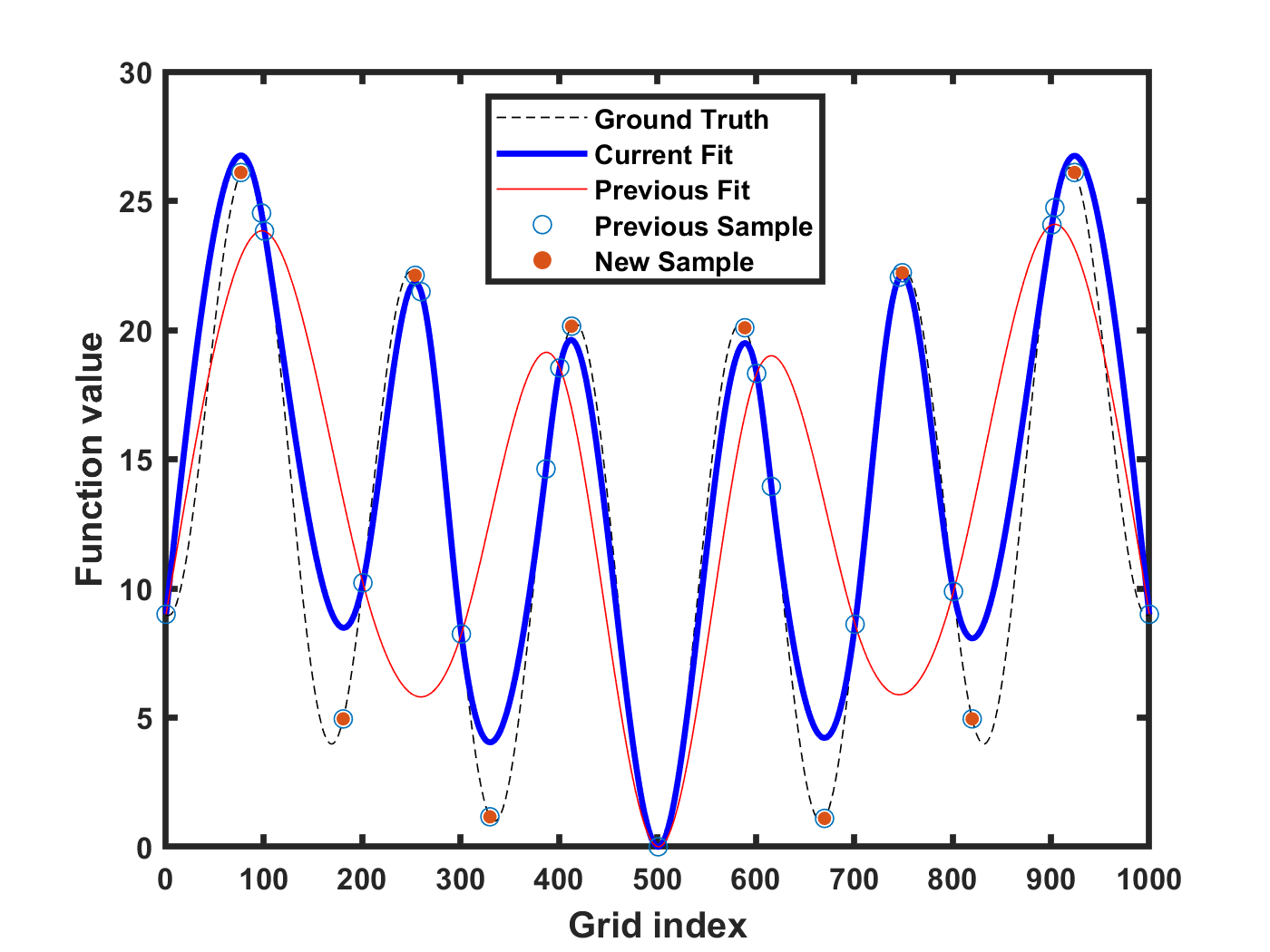}
    \caption{Iteration 2}
    \label{fig:rastrigin_demo_iter2}
  \end{subfigure}
    \begin{subfigure}[b]{0.333\textwidth}
    \includegraphics[width=\textwidth]{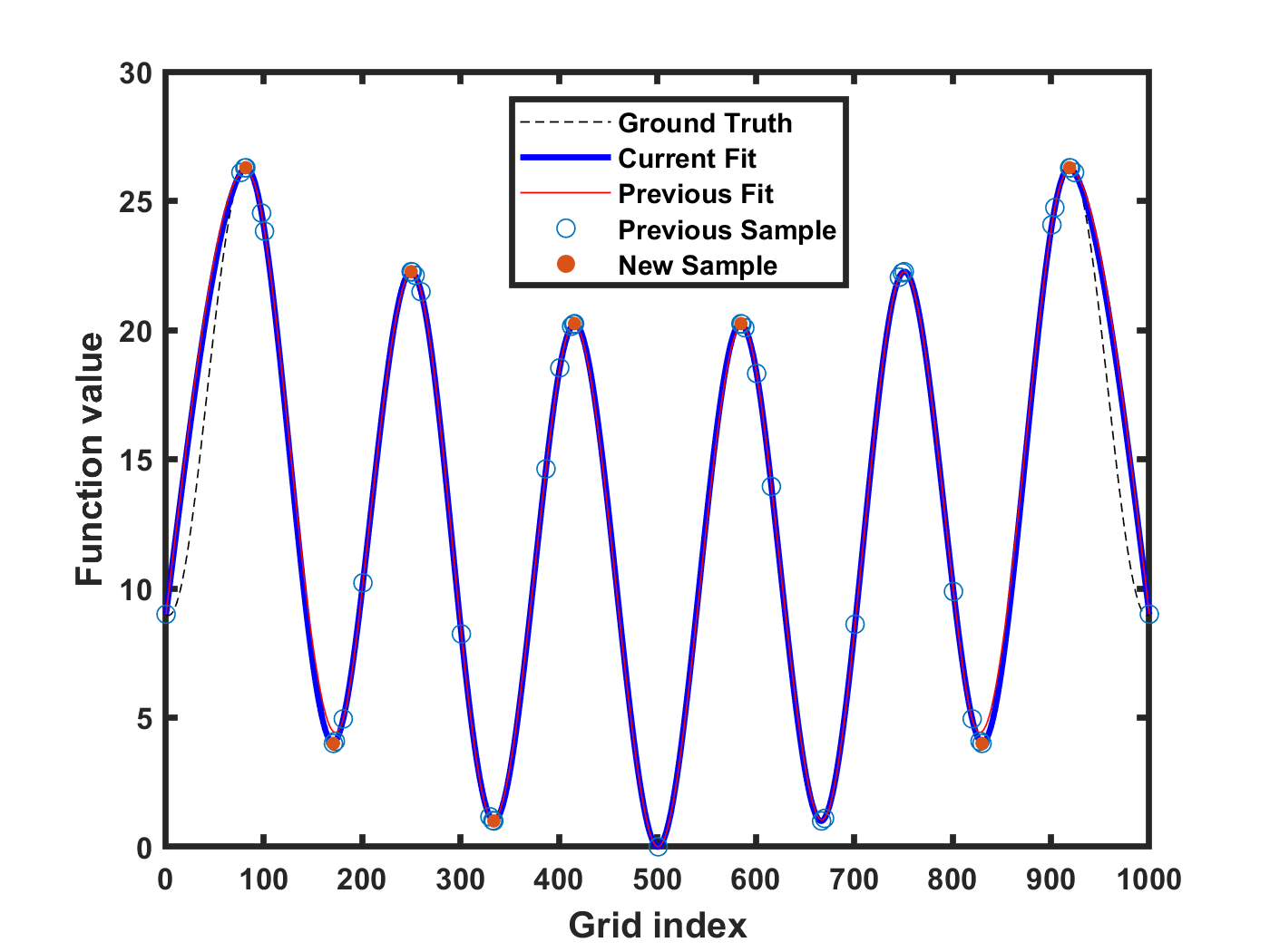}
    \caption{Iteration 4}
    \label{fig:rastrigin_demo_iter4}
  \end{subfigure}
      \begin{subfigure}[b]{0.333\textwidth}
      \includegraphics[width=\textwidth]{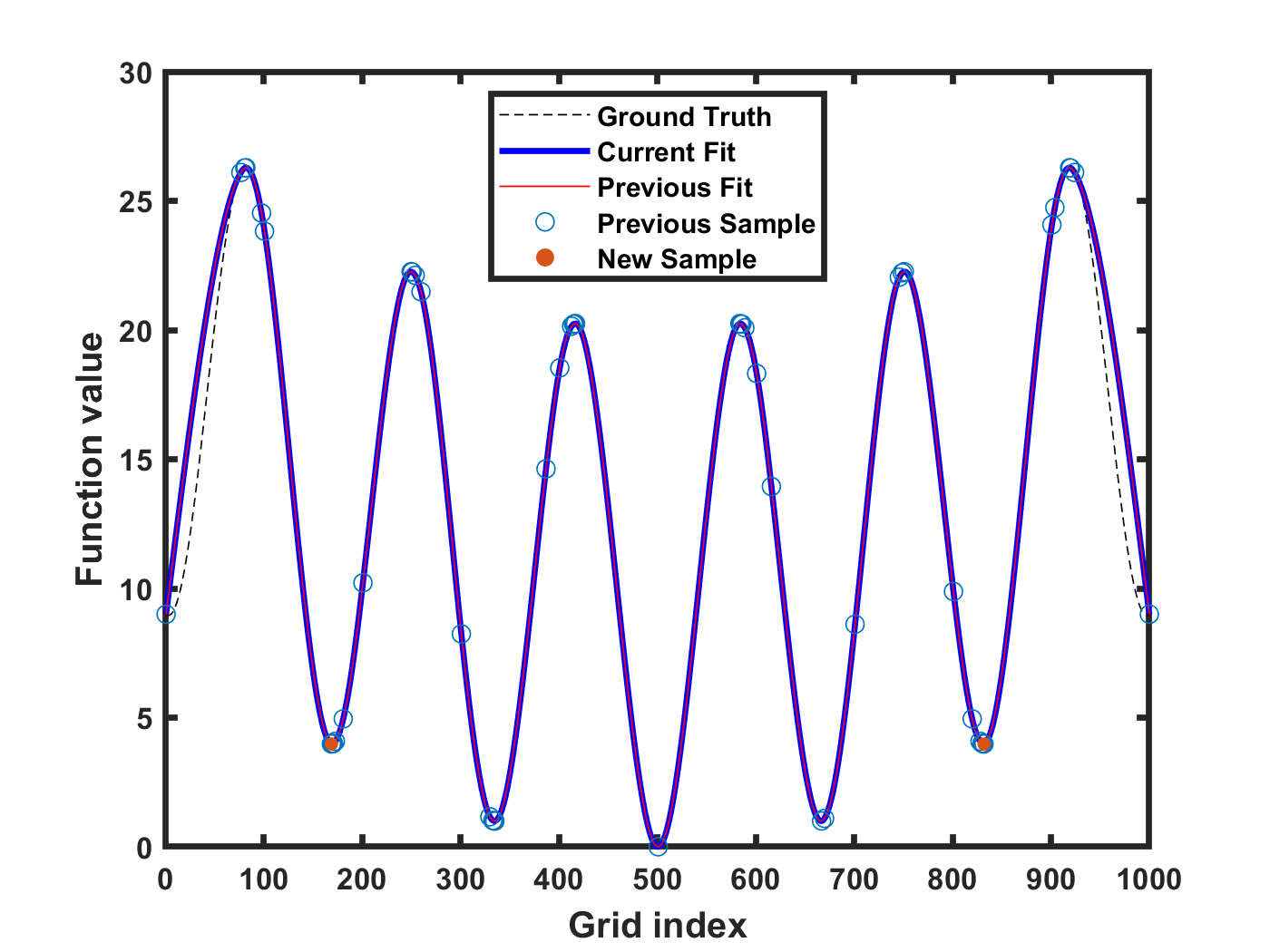}
    \caption{Iteration 6}
      \label{fig:rastrigin_demo_iter6}
    \end{subfigure}
  \caption{By iteration 4, the sampling algorithm has achieved a relatively close approximation of the 1-dimensional Rastrigin function.  By iteration 6,  the algorithm identifies the point $x^{\min} = 0.003$ with a function value of $f^{\min}=0.0018$ as a global minimizer.}
  \label{fig:rastrigin_iterations}
\end{figure}

This example shows that the \texttt{extremaHunter()} algorithm is capable of finding a near global minimum along with all other extrema, while producing a high-quality surrogate.  At the same time, several potential shortcomings emerge.  First, as far as the ``exploration vs. exploitation'' tradeoff is concerned, Algorithm~\ref{algo:segment_search_1D} only exploits.  Specifically, it exploits all extrema of the current fit and otherwise takes no exploratory samples, which could hinder the approximation quality from improving. Second, it may indiscriminately re-sample very close to an existing sample. While at times this may be a wise strategy (e.g., when an existing sample is near a true global minimum), it may also lead to inefficient sampling. Third, it samples all newly-identified local extrema in each iteration. Many state-of-the-art algorithms take one sample per iteration because function evaluations are computationally expensive.

\subsection{Enhanced LineWalker algorithms} \label{sec:enhanced_linewalker_algorithms}

Algorithm~\ref{algo:segment_search_1D} was conceived to terminate as soon as the fit ceases to materially change or no new unsampled extrema are identified.
It is also possible, and perhaps more common, for a user to prefer a termination criterion based on a limited budget of function evaluations. Algorithm~\ref{algo:segment_search_1D_budgetLimited} sketches such a variant and highlights all new steps in a different font color. It also attempts
to overcome the shortcomings of the basic approach described above, while preserving Algorithm~\ref{algo:segment_search_1D}'s extrema hunting nature. Our enhancements come in three flavors: (1) Tabu search structures forbidding neighborhoods of sampled points from being visited too frequently. (2) A simple exploration (a.k.a. diversification) strategy to sample in sparsely-sampled regions when no non-tabu peaks and valleys are available. (3) Sampling near, but not directly at, an extremum of the current approximation.
Each of these three components is described in the subsections below.
Component (2) serves as a mechanism for global exploration, while components (1) and (3) inject some local exploration to balance Algorithm~\ref{algo:segment_search_1D}'s purely exploitative nature.

\begin{algorithm} [h]
\caption{ \texttt{LineWalker-full()}: Budget-limited one-dimensional approximate global line segment search}
\label{algo:segment_search_1D_budgetLimited}
\begin{algorithmic}[1]
\REQUIRE \newcodecolor{Maximum number of function evaluations $E^{\max,\textrm{total}} > 0$; 
Maximum number of function evaluations per major iteration $E^{\max,\textrm{itr}} > 0$; }
Number of grid points $N$; Smoothing matrix $\v{A}$; End points $\v{x}_1$ and $\v{x}_N$ 
\STATE Define the set $\{\v{x}_i\}_{i \in \mc{I}}$ of points on the line segment connecting $\v{x}_1$ and $\v{x}_N$
\STATE Set $s_i = 0~\forall i \in \mc{I}$; $\vftrue = \v{0}$; 
\STATE Evaluate the true function at an initial set $\sampled$ of grid points; Set $s_i = 1~\forall i \in \sampled$; \label{step:initialize_samples_budgetLimited} 
\STATE \newcodecolor{Set $\texttt{itr}=0$; $\iterFound_i = 0~\forall i \in \sampled$} \label{step:initialize_iter_found}
\WHILE{\newcodecolor{$(|\sampled| < E^{\max,\textrm{total}})$}} \label{step:main_while_loop_budgetLimited}
	\STATE \newcodecolor{$\texttt{itr}=\texttt{itr}+1$ } \label{step:increment_iteration_counter}
	\STATE $\vfhat = (\v{A}+\diag(\v{s}))\backslash \diag(\v{s})\vftrue$, i.e., $\vfhat$ solves the linear system $(\v{A}+\diag(\v{s}))\vfhat=\diag(\v{s})\vftrue$ \label{step:solve_linear_system_budgetLimited}
	\STATE $\mc{S}^{\max} = \{ i \in \{2,\dots,N-1\} : \fhat_i > \max\{\fhat_{i-1},\fhat_{i+1}\} + \delta^{\max}\}$ \label{step:maxima_hunt_budgetLimited} 
	\STATE $\mc{S}^{\min} =\{ i \in \{2,\dots,N-1\} : \fhat_i < \min\{\fhat_{i-1},\fhat_{i+1}\} - \delta^{\min}\}$ \label{step:minima_hunt_budgetLimited}
	\STATE $\newPoints= (\mc{S}^{\max} \cup \mc{S}^{\min}) \backslash \sampled$; $s_i = 1~\forall i \in \newPoints$
	\STATE \newcodecolor{\texttt{manageTabuStruct($\sampled$)} \#See Algorithm~\ref{algo:manage_tabu_struct} } \label{step:manage_tabu_struct}
	\STATE \newcodecolor{$\nontabuPoints =$ \texttt{findNonTabuPoints($\newPoints$)} \#See Algorithm~\ref{algo:find_nontabu_points} } \label{step:findNonTabuPoints}
	\STATE \newcodecolor{$\newPoints = \nontabuPoints$ } \label{step:set_newPoints_equal_to_nontabuPoints}
	\STATE \newcodecolor{\textbf{if}($\newPoints=\emptyset$) \textbf{then} $\newPoints=$\texttt{findLargestUnexploredInterval($\sampled$)} \textbf{end if} \#See Alg~\ref{algo:find_largest_unexplored_interval} } \label{step:findLargestUnexploredInterval}
	\STATE \newcodecolor{$\sortedList = \texttt{sort}(\newPoints)$ \quad \#Sort in ascending order according to $\fhat$ } \label{step:sort_points_in_ascending_order}
	\FOR{\newcodecolor{ $j=1:\min\{\texttt{length}(\sortedList),E^{\max,\textrm{itr}},E^{\max,\textrm{total}}-|\sampled| \}$}}
		\STATE \newcodecolor{$i=\sortedList[j]$ }
		\STATE \newcodecolor{\textbf{if}($i \in \nontabuPoints$) \textbf{then} $i=$\texttt{sampleAroundTheBend($i$)} \textbf{end if} \#See Algorithm~\ref{algo:sample_around_the_bend} } \label{step:sample_around_the_bend}
		\STATE \newcodecolor{$\iterFound_i = \texttt{itr}$ } \label{step:mark_iter_found}
		\STATE $\ftrue_i = \ftrue(\v{x}_i)$ \label{step:evaluate_function_budgetLimited}
		\STATE $\sampled = \sampled \cup \{i\}$ 
	\ENDFOR
\ENDWHILE
\RETURN $\hat{\v{f}}$
\end{algorithmic}
\end{algorithm}

Unlike Algorithm~\ref{algo:segment_search_1D} 
,
Algorithm~\ref{algo:segment_search_1D_budgetLimited} chooses at most $E^{\max,\textrm{itr}}$ per iteration and is ``budget-limited'' in that it collects $E^{\max,\textrm{total}}$ total samples (Step~\ref{step:main_while_loop_budgetLimited}). 
Specifically, Algorithm~\ref{algo:segment_search_1D_budgetLimited} judiciously selects new samples in Step~\ref{step:findNonTabuPoints} by finding all non-tabu points amongst the newly-identified local extrema. If no such points are available, it explores in Step~\ref{step:findLargestUnexploredInterval} by finding the largest unexplored interval.
Rather than immediately accept all candidate samples, in Step~\ref{step:sort_points_in_ascending_order}, it sorts the newly-identified extrema in ascending order according to their approximate value $\fhat_i$. After which, samples are taken in sorted order until the maximum number $E^{\max,\textrm{itr}}$ of function evaluations per iteration is reached or some other criterion is met. Note that one could employ a more sophisticated \texttt{sort} function that uses other available information. In Step~\ref{step:sample_around_the_bend}, we sample near, but not directly at, a non-tabu candidate extremum of the current approximation.

For completeness and ease of reference, we present in Algorithm~\ref{algo:LineWalker_pure} a one-line pseudocode of what we refer to as \texttt{LineWalker-pure()}. It is essentially a budget-limited version of the \texttt{extremaHunter()} method in Algorithm~\ref{algo:segment_search_1D} and excludes all tabu search-related components and the sample ``around the bend'' strategy. It includes the same exploration strategy as Algorithm~\ref{algo:segment_search_1D_budgetLimited} in the event that no new extrema are identified.  
This simplified algorithm serves as a useful reference point in our computational experiments.

\begin{algorithm} 
\caption{ \texttt{LineWalker-pure()}: \texttt{LineWalker-full()} excluding all tabu-related enhancements and the sample ``around the bend'' strategy}
\label{algo:LineWalker_pure}
\begin{algorithmic}[1]
\STATE Identical to Algorithm~\ref{algo:segment_search_1D_budgetLimited}, except it excludes Steps 
\ref{step:initialize_iter_found},
\ref{step:increment_iteration_counter},
\ref{step:manage_tabu_struct},
\ref{step:findNonTabuPoints},
\ref{step:set_newPoints_equal_to_nontabuPoints},
 \ref{step:sample_around_the_bend}, 
 \ref{step:mark_iter_found} 
\end{algorithmic}
\end{algorithm}

\subsubsection{Tabu search structures}

We incorporate a simple tabu search heuristic in which neighborhoods of sampled points (grid indices) are forbidden from being sampled for a certain number of major iterations (one pass of the main \textbf{while} loop in Algorithm~\ref{algo:segment_search_1D_budgetLimited}). Tabu search has been one of the most successful heuristics for finding high-quality solutions to a variety of nonconvex and combinatorial optimization problems over the past few decades and is masterfully presented in \cite{glover1998tabu}.  As described by the algorithm's inventor, ``tabu search is based on the premise that problem solving, in order to qualify as intelligent, must incorporate adaptive memory and responsive exploration'' \cite[p.4]{glover1998tabu}. Below we describe the essential tabu search ingredients that we borrow to ``incorporate adaptive memory and responsive exploration'' into our algorithm.  

\textbf{Tabu lists and neighborhoods}. We maintain two tabu lists: a short-term and long-term list of sampled grid indices. Whereas the short-term tabu list strives to prevent revisiting an interval around a recently sampled index $i \in \sampled$ 
(regardless of that sample's approximate value $\fhat_i$ or any other information),
the long-term tabu list aims to deter re-sampling ``too close'' to any existing sample, where ``closeness'' is defined dynamically and depends on several factors. These high-level concepts are described rigorously below.
  
We explicitly store the long-term list via the set $\sampled$. That is, all sampled indices are deemed ``long-term tabu'' since we have no reason to re-sample these points when the black box function is deterministic, which is our assumption.  In contrast, the short-term list is maintained implicitly.  Following common practice, 
we store the iteration number $\iterFound_i$ in which grid index $i$ was sampled.
$\iterFound_i = 0$ for each sample $i$ in the initial sample (see Step~\ref{step:initialize_iter_found}).
An index remains on the (implicit) short-term tabu list as long as $(\texttt{itr}-\iterFound_i) \leq \tabuTenureShort$ iterations, where \texttt{itr} is the current major iteration (see Step~\ref{step:increment_iteration_counter}) and $\tabuTenureShort$ is a nonnegative integer defining the dynamic short-term tabu tenure parameter.

Associated with each tabu index $i$ is a short- and long-term neighborhood ($\neighborhoodShort_i$ and $\neighborhoodLong_i$, respectively) of forbidden neighboring indices. 
Moreover, each neighborhood of tabu index $i$ is governed by a grid distance threshold, a nonnegative integer parameter denoted by $\tabuGridDistanceThresholdShort_i$ and $\tabuGridDistanceThresholdLong_i$, respectively.
The short-term neighborhood of $i$ is defined as $\neighborhoodShort_i = \left\{j \in \mc{I} \cap \{ i - \tabuGridDistanceThresholdShort_i, \dots, i + \tabuGridDistanceThresholdShort_i \} \right\}$. $\neighborhoodLong_i$ is defined similarly.
The two neighborhoods clearly intersect but can be related in several ways: $\neighborhoodShort_i \subset \neighborhoodLong_i$, $\neighborhoodLong_i \subset \neighborhoodShort_i$, or $\neighborhoodShort_i = \neighborhoodLong_i$ for a given $i \in \sampled$.

The short-term grid distance threshold $\tabuGridDistanceThresholdShort_i$ is held constant throughout the algorithm with  $\tabuGridDistanceThresholdShort_i = N/(2 E^{\max,\textrm{total}})$.  The logic behind the value $N/(2 E^{\max,\textrm{total}})$ is simple: If there are $N$ grid points and we allow a maximum of $E^{\max,\textrm{total}}$ function evaluations, then, assuming equidistant samples, samples would occur every $N/E^{\max,\textrm{total}}$ grid indices when the algorithm terminates.  For example, if $N=5000$ and $E^{\max,\textrm{total}}=50$, then equidistant samples would occur every $N/E^{\max,\textrm{total}}=100$ grid indices.  Dividing by 2, we obtain $N/(2 E^{\max,\textrm{total}})$ (or $5000/100 = 50$, in our example) grid indices, which we use as our short-term tabu grid distance threshold $\tabuGridDistanceThresholdShort_i$.  

Unlike the static management of $\tabuGridDistanceThresholdShort_i$, the long-term tabu grid distance threshold parameter is defined as $\tabuGridDistanceThresholdLong_i = \nu_i N/|\sampled|$ for all $i \in \sampled$ and changes dynamically as a function of (1) the distance between $\fhat_i$ and $\hat{F}^{\textrm{min}}=\min\{\fhat_j : j \in \mc{I} \}$ and $\hat{F}^{\textrm{max}}=\max\{\fhat_j : j \in \mc{I}\}$ (captured via the multiplier $\nu_i \in [\nu^{\min},\nu^{\max}] \subseteq [0,1]$) and (2) the number $|\sampled|$ of current samples. 
Being more complex than $\tabuGridDistanceThresholdShort_i$, the long-term threshold $\tabuGridDistanceThresholdLong_i$ definition deserves explanation. 
When there are relatively few samples, i.e., $|\sampled|$ is small and hence $N/|\sampled|$ is large, a larger long-term tabu neighborhood is desired to encourage more exploration of new extrema or unexplored intervals. When $|\sampled|$ is large, i.e., close to $E^{\max,\textrm{total}}$, then a relatively smaller long-term tabu neighborhood is desired. While the term $(N/|\sampled|)$ diminishes as more samples are collected, the multiplier $\nu_i$ aims to scale $(N/|\sampled|)$ even further using the following logic:    
If sample $i$'s approximate value $\fhat_i$ is near the current approximation $\vfhat$'s minimum $\hat{F}^{\textrm{min}}$ or maximum $\hat{F}^{\textrm{max}}$ objective function value, then we would like the multiplier $\nu_i$ to be small (i.e., close to $\nu^{\min}$) to permit re-sampling near, but not too close to $i$.  If $\fhat_i$ is near the midpoint of all $\fhat$ values, then we would like $\nu_i$ to be large (i.e., close to $\nu^{\max}$) to create a larger tabu neighborhood and avoid re-sampling near $i$. That is, we wish to avoid sampling too frequently at local extrema whose $\fhat$ value is close to the ``middle'' of the fit and instead prioritize sampling points near a global minimum or maximum of $\vfhat$. 

To implement this logic for computing $\tabuGridDistanceThresholdLong_i$, we set $\nu_i = \nu^{\min} + \kappa_i(\nu^{\max}-\nu^{\min})$, where
$\kappa_i = \min\{ \hat{F}^{\textrm{max}}-\fhat_i,\fhat_i-\hat{F}^{\textrm{min}} \}/\hat{F}^{\textrm{mid}}$, $\hat{F}^{\textrm{range}}=(\hat{F}^{\textrm{max}}-\hat{F}^{\textrm{min}})$, and 
$\hat{F}^{\textrm{mid}} = \tfrac{1}{2}\hat{F}^{\textrm{range}}$.
Note that if $\fhat_i=\hat{F}^{\textrm{min}}+\hat{F}^{\textrm{mid}}$, then $\kappa_i = 1$. 
If $\fhat_i=\hat{F}^{\textrm{min}}$ or $\fhat_i=\hat{F}^{\textrm{max}}$, then $\kappa_i = 0$. 
Since $N/|\sampled| \geq N/E^{\max,\textrm{total}} > N/(2 E^{\max,\textrm{total}}) = \tabuGridDistanceThresholdShort_i ~\forall i \in \sampled$, it should be clear that if $\nu_i =1$, then $\tabuGridDistanceThresholdLong_i = \nu_i N/|\sampled| > \tabuGridDistanceThresholdShort_i$ for any $i \in \sampled$, rendering $\tabuGridDistanceThresholdShort_i$ redundant. Instead, we set $\nu^{\min} = 0.10$ and $\nu^{\max} = 0.25$ so that once we have collected $\tfrac{1}{2}E^{\max,\textrm{total}}$ samples (i.e., half of our total sample budget), $\tabuGridDistanceThresholdLong_i \leq \tabuGridDistanceThresholdShort_i$ for all $i \in \sampled$. 

In traditional tabu search fashion, each index $i$ and its neighbors should not be revisited unless some other criteria are satisfied: the tabu tenure has been reached or aspiration criteria are met.  Both are discussed below.

\textbf{Tabu tenure}.  While the short-term tabu grid distance threshold $\tabuGridDistanceThresholdShort_i$ (and hence $\neighborhoodShort_i$) for each $i \in \sampled$ is held constant throughout the algorithm, the short-term tabu tenure $\tabuTenureShort$ is dynamic and is governed by the number of non-boundary local extrema in the current fit.  The rationale behind using this metric is that if the number of local extrema is larger than the current short-term tabu tenure, the algorithm may be inclined to revisit an existing sample's neighborhood before it has explored another extremum.  

\begin{algorithm} 
\caption{ \texttt{manageTabuStruct($\sampled$)} } 
\label{algo:manage_tabu_struct}
\begin{algorithmic}[1]
\REQUIRE Current samples $\sampled$
\STATE $\eta=$ number of non-boundary extrema in the current fit 
\IF{$\eta > \tabuTenureShort$}
	\STATE $\tabuTenureShort=\tabuTenureShort+1$ 
\ELSIF{$\eta < \tabuTenureShort -1$}
	\STATE $\tabuTenureShort=\tabuTenureShort-1$
\ENDIF 
\STATE $\hat{F}^{\textrm{range}}=(\hat{F}^{\textrm{max}}-\hat{F}^{\textrm{min}})$; $\hat{F}^{\textrm{mid}} = \tfrac{1}{2}\hat{F}^{\textrm{range}}$ 
\STATE $\kappa_i = \min\{ \hat{F}^{\textrm{max}}-\fhat_i,\fhat_i-\hat{F}^{\textrm{min}} \}/\hat{F}^{\textrm{mid}}$ 
\STATE $\nu_i = \nu^{\min} + \kappa_i(\nu^{\max}-\nu^{\min})$
\STATE $\tabuGridDistanceThresholdLong_i = \nu_i (N/|\sampled|)$
\end{algorithmic}
\end{algorithm}

Algorithm~\ref{algo:manage_tabu_struct} describes how we dynamically update our tabu structures.
Let $\eta$ be the number of non-boundary (non-endpoint) local extrema in the current fit.  Intuitively, if there are few local extrema ($\eta$ is small), then we may wish to revisit previously sampled peaks and valleys more frequently. In contrast, if there are many local extrema, then we may wish to retain a longer tabu tenure so that each extremum is explored. Using this rationale, Algorithm~\ref{algo:manage_tabu_struct} does the following: If $\eta$ is greater than or equal to the current short-term tabu tenure $\tabuTenureShort$, then we increment $\tabuTenureShort$ by one. If $\eta$ is less than $\tabuTenureShort-1$ and $\tabuTenureShort>1$, then we decrement $\tabuTenureShort$ by one.  Else, we leave $\tabuTenureShort$ as is.
Meanwhile, all sample points $i \in \sampled$ are deemed long-term tabu. Hence, we do not explicitly keep a long-term tabu tenure parameter because it is always infinite. At the same time, we dynamically adjust the long-term tabu neighborhood size.

\textbf{Aspiration criteria}. ``Aspiration criteria are introduced in tabu search to determine when tabu activation rules can be overridden, thus removing a tabu classification otherwise applied to a move'' \cite[p.26]{glover1998tabu}.  We introduce two aspiration criteria.  Aspiration criterion 1 is a standard approach and overrides both short- and long-term tabu tenure values. It simply requires a candidate solution to be a potential minimizer and have at most $N^{\max,\textrm{nbrs}}$ sampled neighbors around it. When 30 or fewer samples have been collected, we deem a candidate a potential minimizer if its objective function value is within 1\% of the best known objective function value and $N^{\max,\textrm{nbrs}}=1$. When more than 30 samples have been collected, we deem a candidate a potential minimizer if its objective function value is within 10\% of the best known objective function value and $N^{\max,\textrm{nbrs}}=2$. Thus, with fewer than 30 samples, we are much more selective about our samples and only sample at most two points in a very small interval around an approximate minimum (compared to \texttt{bayesopt}, which may sample many points in the neighborhood of a minimum).  With greater than 30 samples, we allow for at most three samples near a minimum.  The reason for increasing the potential minimizer criterion from 1\% to 10\% is because certain functions (e.g., dejong, SawtoothD, and Easom-Schaffer2A) have multiple valleys with similar objective function values that may need to be revisited.  This helps our LineWalker Algorithm~\ref{algo:segment_search_1D_budgetLimited} balance exploitation and exploration.

Aspiration criterion 2 only overrides a short-term tabu tenure and only does so if the objective value improved by a minimum amount in the previous iteration. Specifically, if a candidate solution is ``near, but not too close to,'' a newly-found minimum (i.e., one discovered in the previous iteration), and, in the previous iteration $\texttt{itr}-1$, the minimum objective function value decreased by at least 1\% of $\hat{F}^{\textrm{range}}$ (the range of the entire fit in the \textit{current} iteration $\texttt{itr}$), then we may wish to visit this candidate solution as it may suggest that a new valley has been discovered and is worthy of immediate investigation.  Here ``near, but not too close to'' means that, given the newly-found minimum $\hat{\v{x}}_i$ and the candidate solution $\tilde{\v{x}}_i$, the following conditions are met: (``near'') $\hat{\v{x}}_i$ is the nearest evaluated sample to the right or left of $\tilde{\v{x}}_i$, and (``but not too close to'') $\tilde{\v{x}}_i \notin \neighborhoodLong(\hat{\v{x}}_i)$.  Without this aspiration criterion, the algorithm may find a new minimizer in a promising valley, but then not revisit its neighborhood for many iterations because of a high short-term tabu tenure. This aspiration proved to be helpful on a number of benchmark instances with very steep drops (e.g., our dis-continuous Grimacy \& Lee variant, de Jong, Michal, and Easom-Schaffer2A).

\begin{algorithm} 
\caption{ \texttt{findNonTabuPoints($\newPoints$)} }
\label{algo:find_nontabu_points}
\begin{algorithmic}[1]
\REQUIRE Current samples $\sampled$; Candidate new samples $\newPoints$
\STATE $\tabuPointsCurrent = \{i \in \sampled : \iterFound_i \leq \tabuTenureShort \}$ \label{step:get_current_short_term_tabu_points}
\STATE $\tabuPointsNew = \{ i \in \newPoints : (\exists j \in \tabuPointsCurrent : i \in \neighborhoodShort_j) \cup (\exists j \in \sampled : i \in \neighborhoodLong_j) \}$ \label{step:get_new_tabu_points}
\STATE $\aspirationPoints = \{ i \in \tabuPointsNew : \textrm{aspiration criteria are satisfied} \}$ \#See main text \label{step:get_aspiration_points} 
\STATE $\nontabuPoints = (\newPoints \setminus \tabuPointsNew) \cup \aspirationPoints$
\RETURN $\nontabuPoints$
\end{algorithmic}
\end{algorithm}

Given the set $\newPoints$ of newly-identified local extrema and the set $\sampled$ of current samples, Algorithm~\ref{algo:find_nontabu_points} outlines how non-tabu points are identified.
After identifying the set $\tabuPointsCurrent$ of all current short-term tabu samples in Step~\ref{step:get_current_short_term_tabu_points}, we determine which of the new points are deemed tabu (according to the updated neighborhood definitions) and place them in the set $\tabuPointsNew$ in Step~\ref{step:get_new_tabu_points}. We then check which of these latter points satisfy the aforementioned aspiration criteria in Step~\ref{step:get_aspiration_points} before returning the set $\nontabuPoints$.

\subsubsection{Exploration/Diversification} 

Algorithm~\ref{algo:segment_search_1D} has no exploration components; it only pursues exploitation of newly-identified extrema. 
There are many potential options for exploration.  We adopt a very simple diversification mechanism in which, if there are no non-tabu candidate points to sample, we find the largest interval, i.e., the one with the largest number of unexplored grid points, and select the grid index that bisects it. This point becomes the next sampled point.  If there are multiple intervals with the same number of unexplored grid points, then we break ties by finding the one possessing the smallest objective function value according to our current approximation $\vfhat$. This approach does not use any information (e.g., slope, curvature, proximity to other extrema) of the current approximation, only the number of unexplored grid points and function values at existing sample points. 
Hence, our exploration step is different from what is done in Bayesian optimization where the acquisition function attempts to use prior knowledge of the blackbox function to estimate the uncertainty at any given unexplored point.  We do not assume that such prior information is available.
Intuitive and straightforward, the subroutine is presented in Algorithm~\ref{algo:find_largest_unexplored_interval} for completeness. Note that the \texttt{First} function in Step~\ref{step:tie_breaker_first} selects the first element in a set and is used as the final tie-breaker if there is more than one unexplored interval with the same minimum approximate objective function value.     


\begin{algorithm} 
\caption{ \texttt{findLargestUnexploredInterval($\sampled$)} } 
\label{algo:find_largest_unexplored_interval}
\begin{algorithmic}[1]
\REQUIRE Current samples $\sampled$
\STATE $R_i = \min\{ j \in \sampled: j > i \} \quad \forall i \in \sampled \setminus \{N\}$ 
\STATE $\mc{U} = \arg\max \left\{ R_i - i : i \in \sampled \setminus \{N\} \right\}$ 
\STATE$\hat{F}_i^{\min} = \min \left\{ \fhat_j : j \in \{i,\dots,R_i\} \right\} \quad \forall i \in \mc{U}$
\STATE $k = \texttt{First} \big( \arg\min \{ \hat{F}_i^{\min} : i \in \mc{U} \} \big)$ \label{step:tie_breaker_first}
\RETURN $k+ \Big\lfloor \tfrac{R_k - k}{2} \Big\rfloor $
\end{algorithmic}
\end{algorithm}

\subsubsection{Sampling ``around the bend''} Early in the search, when there are very few samples and the function approximation is relatively inaccurate, sampling extrema as done in Algorithm~\ref{algo:segment_search_1D} can lead to closely-spaced samples, which in turn leaves many large intervals unexplored.  To account for these potential early misfits and to mitigate over-sampling in a narrow interval, we select samples near, but not necessarily on, the grid point deemed to be an extremum. Consequently, this subroutine can be viewed as a means to induce partial exploration near a non-tabu candidate extremum.

Our \texttt{sampleAroundTheBend()} procedure is described in Algorithm~\ref{algo:sample_around_the_bend}. For each non-tabu candidate index $j \in \nontabuPoints$, we determine a ``left-middle-right'' index triple $(L,M,R)$ where $L \in \sampled$ and $R \in \sampled$ correspond to the nearest sampled index to the left and right of $j$, respectively, and $M \in \mc{I}$ is the index at the midpoint between $L$ and $R$.  
We then determine which interval is larger (i.e., has fewer samples): $\{j,\dots,R\}$ (to the right of $j$) or $\{L,\dots,j\}$ (to the left of $j$). If it is the former (Step~\ref{step:sample_to_the_right}), we attempt to find a sample to right of $j$, in the interval $\{j,\dots,M\}$, whose approximate objective function value $\fhat_i$ is close to $\fhat_j$, but perhaps slightly less optimal. Else (Step \ref{step:sample_to_the_left}), we attempt to sample to the left of $j$ using symmetric logic.   
The parameter $\theta$, set to 0.01 or 1\% in our experiments, governs the degree of local optimality that can be sacrificed when sampling ``around the bend.'' As for the role of the index $M$, it should be clear that sampling at the midpoint $M$ itself would be tantamount to bisection search and result in a more uniformly-spaced sampling strategy.  Thus, we use $M$ as a threshold past which we will not sample; otherwise, we could inadvertantly sample very close to $L$ or $R$, which would fail to promote exploration.

An example of sampling ``around the bend'' is shown in the inset of Subfigure~\ref{fig:shekel_tabu_demo_iter24}. In iteration 24, grid index 2197 corresponds to the current approximation's minimizer.  Since a previous sample was just taken ``to the right'' of grid index 2197 in iteration 23, Algorithm~\ref{algo:sample_around_the_bend} chooses not to sample directly at index 2197, but instead to sample ``to the left'' at grid index 2179 where there is a larger unexplored interval and hence fewer samples. This sample ultimately improves the approximation quality, relative to taking a sample directly at the minimizer (grid index 2197), as the updated fit better aligns with the curvature at the minimizer and ``around the bend'' in the vicinity of grid index 2179.      

\begin{algorithm} 
\caption{\texttt{sampleAroundTheBend($j$)}}
\label{algo:sample_around_the_bend}
\begin{algorithmic}[1]
\REQUIRE Current fit $\vfhat$; Current samples $\sampled$; Maximum fractional deviation from optimum $\theta \in [0,1]$ 
\STATE $\hat{F}^{\textrm{range}} =\max\{\fhat_i : i \in \mc{I} \} - \min\{\fhat_i : i \in \mc{I} \}$ 
	\STATE Determine the ``left-middle-right'' index triple $(L,M,R)$ where $L = \max\{ i \in \sampled: i < j \}$; $R = \min\{ i \in \sampled: i > j \}$; $M = L + \texttt{round}((R-L)/2)$
	\IF{$(R-j \geq j-L)$}
		\STATE $k=\max\{ i \in \mc{I} \cap \{j,\dots,M\}: |\fhat_i - \fhat_j| \leq \theta \hat{F}^{\textrm{range}} \}$ \label{step:sample_to_the_right}
	\ELSE
		\STATE $k=\min\{ i \in \mc{I} \cap \{M,\dots,j\}: |\fhat_i - \fhat_j| \leq \theta \hat{F}^{\textrm{range}} \}$ \label{step:sample_to_the_left}
	\ENDIF
\RETURN $k \in \mc{I}$
\end{algorithmic}
\end{algorithm}

\subsection{Illustrative example of \texttt{LineWalker-full()} enhancements} 

Having described the enhancements above,  
Figure~\ref{fig:shekel_tabu_demo} showcases three successive iterations of Algorithm~\ref{algo:segment_search_1D_budgetLimited}, which nicely demonstrate tabu indices, aspiration criteria, exploration, and our \texttt{sampleAroundTheBend()} strategy. 
We investigate a 1-dimensional version of the Shekel function \url{http://www.sfu.ca/~ssurjano/shekel.html} defined as  
\begin{equation*}
\sum_{i=1}^{10} \Bigg( \sum_{j=1}^4 (x-C_{ji})^2 + \beta_i \Bigg)^{-1}
\end{equation*}
where $\vbeta$ and $\v{C}$ are defined in Table~\ref{tab:test_fnc_gallery_table}. We investigate it on the domain $x \in [0,9]$ using Algorithm~\ref{algo:segment_search_1D_budgetLimited} with 11 initial uniformly-spaced points, including the endpoints, on a grid of size $N=5000$, 
$E^{\max,\textrm{total}}=30$ total function evaluations, and $E^{\max,\textrm{itr}}=1$. Smoothing parameters are set to $\alpha=0$ and $\mu=0.01$.

\begin{figure} [h!]	
  \begin{subfigure}[b]{0.34\textwidth}
    \includegraphics[width=\textwidth]{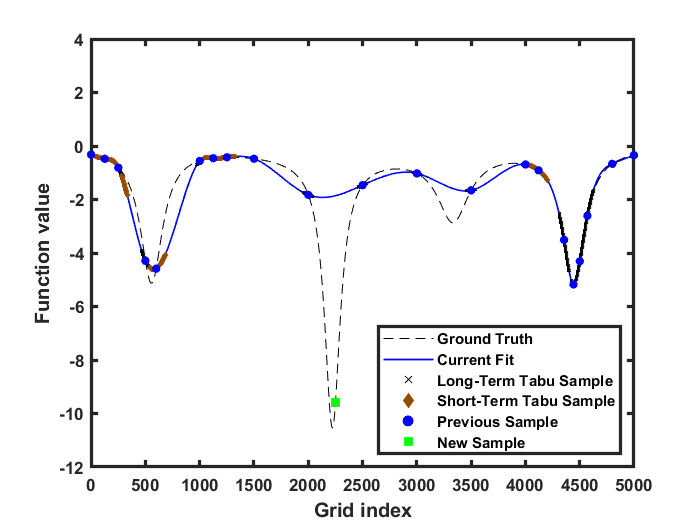}
    \caption{Iteration 22}
    \label{fig:shekel_tabu_demo_iter22}
  \end{subfigure}
    \begin{subfigure}[b]{0.34\textwidth}
    \includegraphics[width=\textwidth]{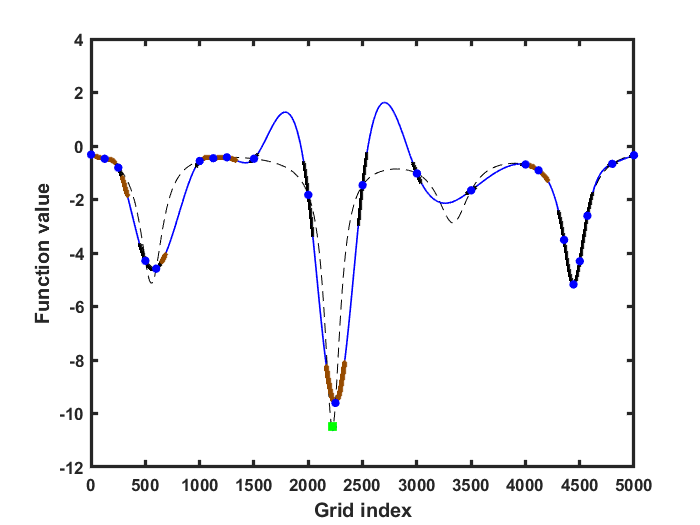}
    \caption{Iteration 23}
    \label{fig:shekel_tabu_demo_iter23}
  \end{subfigure}
      \begin{subfigure}[b]{0.34\textwidth}
      \includegraphics[width=\textwidth]{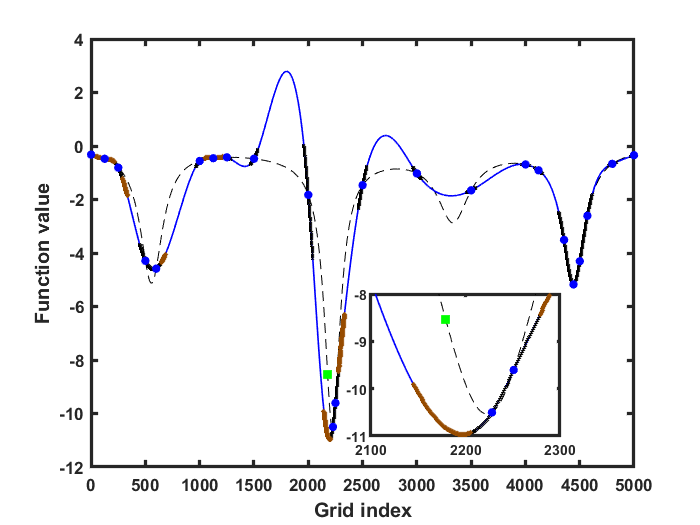}
    \caption{Iteration 24}
      \label{fig:shekel_tabu_demo_iter24}
    \end{subfigure}
  \caption{Example showing short- and long-term tabu neighborhoods (indices) and aspiration criteria being met for the Shekel benchmark function given a maximum of 30 function evaluations. (a) In iteration 22 of Algorithm~\ref{algo:segment_search_1D_budgetLimited}, a new sample is suggested that will lead to a new incumbent minimizer. (b) In the subsequent iteration after the fit has been updated, a new minimizer is found and deemed tabu. However, the first aspiration criterion is satisfied, so this point is accepted. (c) In iteration 24, another new sample deemed tabu is found and the second aspiration criterion is met (note that the minimum objective value reduced by at least 1\% of the range of the fit in the previous iteration). The inset of (c) demonstrates our \texttt{sampleAroundTheBend()} subroutine where a new sample is evaluated at index 2179, which is slightly ``off center'' relative to the fit's minimizer at index 2197.}
  \label{fig:shekel_tabu_demo}
\end{figure}

Since neighborhoods are particularly easy to visualize in one dimension, we explicitly show short- and long-term tabu grid indices around each sample.  
In Subfigure~\ref{fig:shekel_tabu_demo_iter22}, iteration 22 is depicted (i.e., the surrogate based on 21 samples plus the newly-identified 22nd sample).  Although index $i=2131$ is technically a local minimum since $\fhat_i < \min\{\fhat_{i-1},\fhat_{i+1}\}$, it does not satisfy the criterion $\fhat_i < \min\{\fhat_{i-1},\fhat_{i+1}\} - \delta^{\min}$ given in Step~\ref{step:minima_hunt_budgetLimited} of Algorithm~\ref{algo:segment_search_1D_budgetLimited} because $\delta^{\min} = \hat{F}^{\textrm{range}} \times \num{e-6} = \num{4.85e-6}$. 
Consequently, no non-tabu extrema are identified and an exploration step is taken.  There are five consecutive intervals $[1501,2001], \dots, [3001,3501]$ with the same number of unexplored points.  Thus, Algorithm~\ref{algo:find_largest_unexplored_interval} breaks the tie by choosing the one with the smallest $\fhat$ value, which turns out to be the interval whose midpoint is at index 2251, labeled ``New Sample'' in Subfigure~\ref{fig:shekel_tabu_demo_iter22}.

In iteration 23 shown in Subfigure~\ref{fig:shekel_tabu_demo_iter23}, aspiration criterion 1 is invoked since $\fhat_{2249} = -9.6058 \leq -9.4927 = \ftrue_{2251} + 0.01 \hat{F}^{\textrm{range}}$ where $\ftrue_{2251} = -9.6050$ and $\hat{F}^{\textrm{range}} = 11.2314$.
(Note that, in iteration 23, one component of aspiration criterion 2 is also satisfied: the minimum objective function value improved by 39.44\% of the current range of the entire fit. However, index 2249 is ``too close'' to index 2251 and thus the aspiration criterion is not invoked.)
In iteration 24 shown in Subfigure~\ref{fig:shekel_tabu_demo_iter24}, aspiration criterion 2 is invoked because grid index 2197 is ``near, but not too close to'' index 2228 where a new minimum was discovered in the previous iteration and the minimum objective function value improved by 6.52\% of $\hat{F}^{\textrm{range}}$. 
(Note that, in iteration 24, one component of aspiration criterion 1 is satisfied: its objective function value is within 1\% of the best known objective function value. However, the maximum number of neighbors is set to  $N^{\max,\textrm{nbrs}}=1$ and sampling at this point would exceed this limit.)

The astute observer will also spot a subtle detail when transitioning from iteration 22 to 23.  In iteration 22 (Subfigure~\ref{fig:shekel_tabu_demo_iter22}), the right most valley near index 4500 has some non-tabu points to the left of the local minimum. In iteration 23 (Subfigure~\ref{fig:shekel_tabu_demo_iter23}), these non-tabu points disappear, i.e., become tabu.  Why? As outlined in Algorithm~\ref{algo:manage_tabu_struct} and explained in the associated text, the long-term tabu grid distance threshold $\tabuGridDistanceThresholdLong_i$ for $i \in \sampled$ is a function of sample $i$'s distance to $\hat{F}^{\min}$ and $\hat{F}^{\max}$. In iteration 22, the sampled indices near index 4500 are very close to the global minimum of $\vfhat$ labeled ``Current Fit'' in iteration 22. However, after a new, and much lower, minimum is discovered in iteration 22, the samples near index 4500 are no longer close to the global minimum of the ``Current Fit'' in iteration 23.  Thus, their corresponding $\tabuGridDistanceThresholdLong_i$ values increase leading to larger long-term tabu neighborhoods for these points.   

A detailed comparison of Algorithm~\ref{algo:segment_search_1D_budgetLimited} with \texttt{bayesopt} is shown in Figure~\ref{fig:out_shekel_1Dslice}, which reveals that \texttt{bayesopt} finds a better incumbent in 20 iterations (although not quite a global minimum), while Algorithm~\ref{algo:segment_search_1D_budgetLimited} produces a better approximation given 30 and 40 total function evaluations.  Both methods are comparable with 50 total function evaluations in terms of optimality and surrogate quality.

\section{Theory and properties of the LineWalker algorithm} \label{sec:theory}
In this section, we analyze some theoretical properties of the LineWalker algorithms -- Algorithms~\ref{algo:segment_search_1D_budgetLimited} and \ref{algo:LineWalker_pure}. To get a better intuition and understanding of the LineWalker algorithms, in Section~\ref{sec:theory_alternative_perspectives}, we present results on alternative views on the function approximation, as this forms the basis of the proposed algorithms.
Section~\ref{sec:sampling_strategy} motivates our sampling strategy. 
The main convergence result is presented in Section~\ref{sec:conv}. 

\subsection{Theoretical underpinnings: alternative perspectives} \label{sec:theory_alternative_perspectives}

We start by providing an alternative perspective on how our discrete function approximation is constructed. The discrete surrogate can either be viewed as the result of minimizing the sum of squares error of the fit with regularization on the function's first and second derivatives, as in \eqref{model:unconstrained_opt_for_approximate_f}, or as minimizing the sum of squares error with constraints on the first and second derivative. These alternative interpretations are formally stated in Theorem~\ref{thm:constrained_opt_connection}.  Theorem~\ref{thm:surrogate_matrix_is_positive_semidefinite} shows that the matrix $\v{A} + \diag(\v{s})$ is positive semidefinite, revealing that the linear solve in Equation~\eqref{eq:least_squares_linear_system} required to generate our surrogate $\vfhat$ possesses ``nice'' structure. After which, we provide some connections between our surrogate and that of Gaussian Process Regression. 

\begin{theorem} \label{thm:constrained_opt_connection}
For every $\alpha,\mu \geq 0$ there exist $M_1, M_2 \geq 0$ such that the minimizer of 
\begin{equation} \label{model:constrained_nonlinear_regression2}
\begin{aligned}
\min_{\vfhat}~~& \sum_{i=1}^N s_i (\fhat_i - \ftrue_i )^2 && \\
\st~~ 
	& \sum_{i=1}^{N-1} (\fhat_{i+1} - \fhat_i)^2 \leq M_1 & & \\
	& \sum_{i=2}^{N-1} (\fhat_{i+1} + \fhat_{i-1} - 2 \fhat_i)^2 \leq M_2 & & \\
	& \fhat_i \in \Re \qquad \forall i \in \mc{I} & &  
\end{aligned}
\end{equation} 
is also a unique minimizer of the least-squares problem \eqref{model:unconstrained_opt_for_approximate_f}.
\end{theorem}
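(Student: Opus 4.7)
The plan is to exploit the classical correspondence between Tikhonov-regularized and norm-constrained least-squares formulations: the regularization weights $\alpha,\mu$ play the role of Lagrange multipliers, and the corresponding bounds $M_1,M_2$ can simply be read off the unconstrained optimum.

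First I would invoke Theorem~\ref{thm:surrogate_matrix_is_positive_semidefinite} to conclude that \eqref{model:unconstrained_opt_for_approximate_f} is a convex quadratic program in $\vfhat$ and hence admits a minimizer $\vfhat^*$ (unique whenever $\v{A}+\diag(\v{s})$ is positive definite). I would then define
\[
M_1 \;=\; \sum_{i=1}^{N-1} (\fhat^*_{i+1} - \fhat^*_i)^2, \qquad M_2 \;=\; \sum_{i=2}^{N-1} (\fhat^*_{i+1} + \fhat^*_{i-1} - 2\fhat^*_i)^2,
\]
so that $\vfhat^*$ is feasible for \eqref{model:constrained_nonlinear_regression2} by construction.

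Next, I would show that $\vfhat^*$ solves \eqref{model:constrained_nonlinear_regression2}. Writing $g_1(\cdot)$ and $g_2(\cdot)$ for the first- and second-derivative sums, the optimality of $\vfhat^*$ in \eqref{model:unconstrained_opt_for_approximate_f} yields, for any feasible $\vfhat$ of the constrained problem,
\[
\sum_{i=1}^{N} s_i(\fhat^*_i - \ftrue_i)^2 + \alpha g_1(\vfhat^*) + \mu g_2(\vfhat^*) \;\leq\; \sum_{i=1}^{N} s_i(\fhat_i - \ftrue_i)^2 + \alpha g_1(\vfhat) + \mu g_2(\vfhat).
\]
By feasibility, $g_1(\vfhat)\leq M_1 = g_1(\vfhat^*)$ and $g_2(\vfhat)\leq M_2 = g_2(\vfhat^*)$, so transferring the regularization terms across gives $\sum s_i(\fhat^*_i-\ftrue_i)^2 \leq \sum s_i(\fhat_i-\ftrue_i)^2$, i.e.\ $\vfhat^*$ is optimal for the constrained problem. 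For the converse direction, I would run a sandwich argument: any minimizer $\vfhat^{\diamond}$ of \eqref{model:constrained_nonlinear_regression2} achieves the same data-fit value as $\vfhat^*$ and satisfies $g_1(\vfhat^{\diamond})\leq M_1$, $g_2(\vfhat^{\diamond})\leq M_2$, so its unconstrained objective value is no larger than that of $\vfhat^*$; by uniqueness of the unconstrained minimizer, $\vfhat^{\diamond}=\vfhat^*$.

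The main obstacle is precisely this uniqueness step, since Theorem~\ref{thm:surrogate_matrix_is_positive_semidefinite} only delivers positive \emph{semi}definiteness of $\v{A}+\diag(\v{s})$. To upgrade to positive definiteness I would verify that the null space of $\v{A}$ (constant vectors when $\alpha+\mu>0$, and also affine vectors when $\alpha=0$) intersects trivially with $\{\v{v} : v_i=0\ \forall i\in\sampled\}$; a mild nondegeneracy hypothesis such as $|\sampled|\geq 1$ together with $\alpha+\mu>0$ suffices. The fully degenerate case $\alpha=\mu=0$ with $\sampled\subsetneq \mc{I}$ would need to be acknowledged separately, since there every $\vfhat$ interpolating the sampled data is a minimizer and uniqueness fails; the theorem statement implicitly excludes this regime.
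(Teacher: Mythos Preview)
Your proposal is correct and follows the same core construction as the paper: take $\vfhat^*$ to be the unconstrained minimizer, set $M_1$ and $M_2$ equal to the first- and second-derivative sums evaluated at $\vfhat^*$, and then argue that $\vfhat^*$ solves the constrained problem. The paper carries out the last step by writing down the Lagrangian and verifying the KKT conditions with multipliers $\lambda_1=\alpha$, $\lambda_2=\mu$; you instead use a direct inequality (essentially the weak-duality bound), which is slightly more elementary and sidesteps any constraint-qualification discussion. On uniqueness, the paper simply asserts that the constrained objective is strictly convex, while you correctly flag that strict convexity of $\sum_i s_i(\fhat_i-\ftrue_i)^2$ fails when $\sampled\subsetneq\mc{I}$ and instead locate the issue in the positive definiteness of $\v{A}+\diag(\v{s})$, resolving it via a null-space argument---so your treatment of this point is the more careful of the two.
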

\begin{proof}
Let $\v{f}^*$ denote the minimizer of the least-squares problem \eqref{model:unconstrained_opt_for_approximate_f}, with given $\alpha,\mu \geq 0$. Next, we choose $M_1 =  \sum_{i=1}^{N-1} (f^*_{i+1} - f^*_i)^2$ and $M_2 =  \sum_{i=2}^{N-1} (f^*_{i+1} + f^*_{i-1} - 2 f^*_i)^2$, i.e., we choose $M_1$ and $M_2$ such that both constraints in \eqref{model:constrained_nonlinear_regression2} hold with equality if we plug in $\mathbf{f}^*$. We define the Lagrangian function of \eqref{model:constrained_nonlinear_regression2} as  $L(\mathbf{f},\lambda_1,\lambda_2)=\sum_{i=1}^N s_i (\fhat_i - \ftrue_i )^2  + \lambda_1\left(\sum_{i=1}^{N-1} (\fhat_{i+1} - \fhat_i)^2 -M_1\right) +\lambda_2 \left( \sum_{i=2}^{N-1} (\fhat_{i+1} + \fhat_{i-1} - 2 \fhat_i)^2 - M_2\right) $. For $\mathbf{f}^*$ to be optimal for \eqref{model:constrained_nonlinear_regression2} it must satisfy the first order optimality conditions 
\begin{align}
& \nabla_{\mathbf{f}}L(\mathbf{f}^*,\lambda_1,\lambda_2) = \mathbf{0},\\
&  \sum_{i=1}^{N-1} (f^*_{i+1} - f^*_i)^2 \leq M_1,\\
& \sum_{i=2}^{N-1} (f^*_{i+1} + f^*_{i-1} - 2 f^*_i)^2 \leq M_2, \\
&  \lambda_1\left(\sum_{i=1}^{N-1} (f^*_{i+1} - f^*_i)^2 - M_1\right) = 0,\\
&  \lambda_2\left(\sum_{i=2}^{N-1} (f^*_{i+1} + f^*_{i-1} - 2 f^*_i)^2 - M_2\right) = 0,
\end{align}
for some $\lambda_1, \lambda_2 \geq 0$. The first optimality condition, the stationarity condition, holds by setting $\lambda_1 = \alpha$ and  $\lambda_2 = \mu$, as the objective in problem \eqref{model:unconstrained_opt_for_approximate_f} coincides with $L(\v{f},\lambda_1,\lambda_2)$ for this choice of $\lambda_1$ and $\lambda_2$. The other four optimality conditions, primal feasibility and complementary slackness, are clearly satisfied due to the choice of $M_1$ and $M_2$. Due to the strict convexity of the objective function in \eqref{model:constrained_nonlinear_regression2}, we know that the solution is a unique global minimizer. Thus, by properly selecting $M_1$ and $M_2$, the minimizer of  problem  \eqref{model:constrained_nonlinear_regression2}  is also a minimizer of the least-squares problem \eqref{model:unconstrained_opt_for_approximate_f}. \qed
\end{proof}

Formulation~\eqref{model:constrained_nonlinear_regression2} shows that other constraints could easily be imposed on the function approximation to utilize prior knowledge. For example, if one knew bounds on $\vfhat$, or its derivatives,  at the outset, these could easily be incorporated into \eqref{model:constrained_nonlinear_regression2} to improve the function approximation.
Such a bounding approach is quite common and is analogous to what is done in ridge regression (see, e.g., Section 3.4.1 of \cite{hastie2009elements}).

While Equation~\eqref{eq:smoothing_matrix_A} shows that $\v{A}$ is sparse, symmetric, and pentadiagonal, the next theorem confirms that $\v{A}$ is also positive semidefinite.  More importantly, these properties reveal that the linear solve in Equation~\eqref{eq:least_squares_linear_system} is highly structured. 
\begin{theorem} \label{thm:surrogate_matrix_is_positive_semidefinite}
The matrix $\v{A} + \diag(\v{s})$ is positive semidefinite.
\end{theorem}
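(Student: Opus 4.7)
The plan is to exhibit $\v{A} + \diag(\v{s})$ as a Gram-matrix sum, since any matrix of the form $\v{M}^\top \v{M}$ is positive semidefinite and sums of positive semidefinite matrices remain positive semidefinite. To do so, I would introduce the discrete first-difference matrix $\v{D}_1 \in \Re^{(N-1)\times N}$ with $(\v{D}_1 \v{v})_i = v_{i+1} - v_i$, and the discrete second-difference matrix $\v{D}_2 \in \Re^{(N-2)\times N}$ with $(\v{D}_2 \v{v})_i = v_{i+1} + v_{i-1} - 2 v_i$. I would also use that $\diag(\v{s})^\top \diag(\v{s}) = \diag(\v{s})$, since $s_i \in \{0,1\}$.

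The key step is to recognize that the objective in \eqref{model:unconstrained_opt_for_approximate_f} can be written as
\begin{equation*}
\|\diag(\v{s})(\hat{\v{f}} - \vftrue)\|_2^2 + \alpha \|\v{D}_1 \hat{\v{f}}\|_2^2 + \mu \|\v{D}_2 \hat{\v{f}}\|_2^2,
\end{equation*}
whose Hessian with respect to $\hat{\v{f}}$ is the Gram-matrix sum $2\bigl(\diag(\v{s}) + \alpha \v{D}_1^\top \v{D}_1 + \mu \v{D}_2^\top \v{D}_2\bigr)$. On the other hand, the derivation of the linear system \eqref{eq:least_squares_linear_system} as the stationarity condition of the same objective shows that its Hessian equals $2(\v{A} + \diag(\v{s}))$. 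Equating the two Hessians yields
\begin{equation*}
\v{A} + \diag(\v{s}) \;=\; \diag(\v{s}) + \alpha \v{D}_1^\top \v{D}_1 + \mu \v{D}_2^\top \v{D}_2,
\end{equation*}
which, with $\alpha,\mu \geq 0$, is manifestly positive semidefinite as a nonnegatively-weighted sum of Gram matrices. Equivalently, for every $\v{v} \in \Re^N$ one has $\v{v}^\top(\v{A}+\diag(\v{s}))\v{v} = \sum_i s_i v_i^2 + \alpha \sum_i (v_{i+1}-v_i)^2 + \mu \sum_i (v_{i+1}+v_{i-1}-2v_i)^2 \geq 0$.

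The only place that needs care is the matching identity between the two Hessian expressions. If one prefers to avoid leaning on the paper's derivation of \eqref{eq:least_squares_linear_system}, the identity can be verified directly by expanding $\v{D}_1^\top \v{D}_1$ (tridiagonal, stencil $(-1,2,-1)$ in the interior) and $\v{D}_2^\top \v{D}_2$ (pentadiagonal, stencil $(1,-4,6,-4,1)$ in the interior) entrywise and checking that $\alpha \v{D}_1^\top \v{D}_1 + \mu \v{D}_2^\top \v{D}_2$ reproduces the pattern of $\v{A}$ in \eqref{eq:smoothing_matrix_A}. I expect the main (purely mechanical) obstacle to lie in the asymmetric boundary rows, where the $2\alpha+5\mu$ and $-\alpha-2\mu$ entries arise from the truncated stencils near indices $1,2,N-1,N$; this is routine bookkeeping rather than a conceptual difficulty.
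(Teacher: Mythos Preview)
Your proof is correct and is essentially the same argument as the paper's: both establish that the quadratic form $\v{v}^\top(\v{A}+\diag(\v{s}))\v{v}$ equals $\sum_i s_i v_i^2 + \alpha\sum_i (v_{i+1}-v_i)^2 + \mu\sum_i (v_{i+1}+v_{i-1}-2v_i)^2 \geq 0$, with your Gram matrices $\v{D}_1^\top\v{D}_1$ and $\v{D}_2^\top\v{D}_2$ being exactly the paper's $\v{Q}_1=\sum_i \v{C}_i$ and $\v{Q}_2=\sum_i \v{M}_i$ written in more compact difference-operator notation. The paper merely decomposes each Gram matrix as an explicit sum of local rank-one blocks, whereas you invoke the operators directly; conceptually there is no difference.
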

\begin{proof}
First, $\diag(\v{s})$ is diagonally dominant and hence positive semidefinite.
Next, we show that $\v{A}$ can be written as a nonnegative linear combination of two positive semidefinite matrices $\v{Q}_1$ and $\v{Q}_2$. 
Since sums of squares are nonnegative, 
we have 
\begin{equation}
0
\leq
\sum_{i=1}^{N-1} (\fhat_{i+1} - \fhat_i)^2
=
\sum_{i=1}^{N-1} [\fhat_{i} ~ \fhat_{i+1}]^{\top}\v{B} 
\begin{bmatrix*}[l]
\fhat_{i} \\
\fhat_{i+1}
\end{bmatrix*} 
=
\sum_{i=1}^{N-1} \vfhat^{\top} \v{C}_i \vfhat
=
\vfhat^{\top}\v{Q}_1 \vfhat
\qquad \forall \vfhat \in \Re^N 
\end{equation}
where $\v{B} = \begin{bmatrix*}[r]
 1 & -1 \\
-1 &  1
\end{bmatrix*}$, 
$\v{C}_i$ is an $N \times N$ matrix whose $j$th row and $k$th column ($c_{i,j,k}$) satisfy
\begin{equation}
c_{i,j,k} = \left\{ 
\begin{array}{ll}
b_{j,k} & j,k \in \{i,i+1\} \\
0 & \text{o.w.} \\
\end{array}
\right.
\text{for}~i=1,\dots,N-1,
\end{equation}
and $\v{Q}_1 = \sum_{i=1}^{N-1} \v{C}_i$. Hence, $\v{Q}_1$ is positive semidefinite.
By a similar line of reasoning, we have 
\begin{equation}
0
\leq
\sum_{i=2}^{N-1} (\fhat_{i+1} + \fhat_{i-1} - 2 \fhat_i)^2
=
\sum_{i=2}^{N-1} \v{g}_i^{\top}\v{H}\v{g}_i 
=
\sum_{i=2}^{N-1} \vfhat^{\top}\v{M}_i \vfhat
=
\vfhat^{\top} \v{Q}_2 \vfhat
\qquad \forall \vfhat \in \Re^N 
\end{equation}
where $\v{g}_i = (\fhat_{i-1},\fhat_i,\fhat_{i+1})^{\top}$, $\v{H} = \begin{bmatrix*}[r]
 1 & -2 &  1 \\
-2 &  4 & -2 \\
 1 & -2 &  1
\end{bmatrix*}$, 
$\v{M}_i$ is an $N \times N$ matrix such that
\begin{equation}
m_{i,j,k} = \left\{ 
\begin{array}{ll}
h_{j,k} & j,k \in \{i-1,i,i+1\} \\
0 & \text{o.w.} \\
\end{array}
\right.
\text{for}~i=2,\dots,N-1,
\end{equation} 
and $\v{Q}_2 = \sum_{i=2}^{N-1} \v{M}_i$.
Hence, $\v{Q}_2$ is positive semidefinite.
Since $\v{A} = \lambda \v{Q}_1 + \mu \v{Q}_2$ (with $\lambda,\mu \in \Re_+$) and positive semidefiniteness is preserved under addition and nonnegative scaling, it follows that $\v{A} + \diag(\v{s})$ is positive semidefinite. \qed
\end{proof}

We continue analyzing the function approximation and show a weak resemblance to Gaussian Process Regression. Following the previously introduced notation in \eqref{eq:least_squares_linear_system}, 
the surrogate $\vfhat$ is given by 
\begin{equation*}
\vfhat = [\v{A}+\diag(\v{s})]^{-1}\diag(\v{s}){\vftrue}.
\end{equation*}
By defining  $\v{W}=[\v{A}+\diag(\v{s})]^{-1}$ and $\v{y}=\diag(\v{s})\vftrue$, we can write the function approximation as
\begin{equation*}
\vfhat = \v{W}\v{y}.
\end{equation*}
Furthermore, the approximated function value at the grid point $i$ is given by
\begin{equation} \label{eq:weighted_view}
\fhat_i =  \sum_{j =1}^N w_{ij}y_j = \sum_{j \in \sampled} w_{ij}\ftrue_j=\v{w}_i\v{f}_{1:t},
\end{equation}
where $w_{ij}$ corresponds to the entry in row $i$ and column $j $ of $\v{W}$, $\v{w}_i$ contains the elements $j\in \sampled$ of the $i$th row, and $\v{f}_{1:t}$ is a $t$-dimensional vector denoting the $t$ sampled function values.  
Equation~\eqref{eq:weighted_view} has an interesting interpretation: The approximation at an unsampled point $i \in \mc{I} \setminus \sampled$ is a weighted sum of the function values at already sampled points $j \in \sampled$.  Next, we show that this is somewhat similar to the estimation in GPR. Using the notation of \citet{brochu2010tutorial}, the equation for the mean prediction of a new point $x_{t+1}$ in GPR is
\begin{equation*} \label{eq:mean_GPR_new_sample}
\mu(x_{t+1}) = \v{k}^{\top}\v{K}^{-1}\v{f}_{1:t},
\end{equation*}   
where $\v{K}$ is a $t \times t$ kernel matrix, $\v{k} = [k(x_1,x_{t+1}),\ldots,k(x_t,x_{t+1})]^{\top}$ is a $t$-dimensional vector denoting the kernel distance between the new point $x_{t+1}$ and each sampled point. Thus, in some specific circumstances and for a specific choice of kernel, such that $\v{k}^{\top}\v{K}^{-1}=\v{w}_i$, the predictions of the LineWalker algorithms and GPR would be equivalent at this point. Note that we cannot guarantee that there exists a kernel satisfying $\v{k}^{\top}\v{K}^{-1}=\v{w}_i$, and it is unlikely that any kernel could satisfy this in each iteration. Nonetheless, this shows an interesting resemblance of the LineWalker function approximation and GPR.

\subsection{Motivation for the sampling strategy} \label{sec:sampling_strategy}
Where to sample is an essential question when trying to improve a surrogate model. Here, we provide a brief motivation and intuition to the strategy of sampling at extrema of the surrogate function. Sampling at, or close to, minima of the surrogate function seems natural when searching for minima of the true function.  However, when the goal is to improve the surrogate, an ideal sampling strategy could be to sample at points where the error between the true function and surrogate function is the greatest. The error function $e$ can be defined as 
\begin{equation}
e(x) = \left(\ftrue(x) -  \fhat(x)\right)^2.
\end{equation}
Finding the maximizer of the error function is, in general, not tractable as $\ftrue$ is unknown. But, in some circumstances, the extrema of $\hat{f}$ will also be good estimates for the extrema of $e$.  
 
Let's consider three specific cases: I) $\ftrue$ is a constant function, II) $\ftrue$ is a piecewise constant function, and III) $\ftrue$ is an affine function with a moderate slope. For the first case, it is clear that the extrema of $\fhat$ will also be extrema of $e$ as the derivative of $e$ is zero at these points. For the second case, the derivative of $e$ will be zero at the extreme points of $\fhat$, but some extreme points of $e$ may also be at the non-differentiable points where $\ftrue$ make a step change. As the location of the non-differentiable points of $\ftrue$ are unknown, there might be some additional extreme points that we cannot locate. In the third case, if $\fhat$ has enough curvature at an extreme point, i.e., the absolute value of the second derivative is large enough, in relation to the slope of $\ftrue$, then an extremum of $e$ will occur close to this point. These cases might seem unlikely, but keep in mind these cases might appear locally for more general functions $\ftrue$.
 
The arguments presented above do not imply that the sampling strategy is ideal, but serve as a motivation for why it can be an efficient strategy. As described in Section~\ref{sec:enhanced_linewalker_algorithms}, we also propose some modifications to the sampling strategy to promote more exploration in unsampled regions. 

\subsection{Proof of convergence to a global minimum/maximum}\label{sec:conv}
A relevant question for any optimization algorithm is whether it is guaranteed to find a global minimum/maximum or not. In the setting of expensive black box functions, such convergence proofs can become somewhat irrelevant, as the sampling budget is typically restricted to the extent that meaningful bounds or optimality proofs cannot be obtained. However, from an algorithmic perspective, such convergence proofs are still valuable as they also serve as a ``correctness'' certificate of the optimization algorithm.  

We prove that the \texttt{LineWalker-full} (Algorithm~\ref{algo:segment_search_1D_budgetLimited}) converges to an $\epsilon$-accurate global minimum or maximum if we allow enough function evaluations and if the grid is chosen small enough. But, we first provide a clear definition of an $\epsilon$-accurate global minimum or maximum. 

\begin{definition}
Let $x$ be a global minimum (or maximum) of the function $f$ on $[a, b]$ and let $\epsilon > 0$. Then $y$ is an $\epsilon$-accurate global minimum (or maximum) if
\begin{equation*}
f(y) \leq f(x) + \epsilon \quad (\geq f(x) - \epsilon \ \ \text{for maxima}).
\end{equation*}
\end{definition}

The main convergence property is presented in the following theorem.
\begin{theorem}
If the function $f^\text{true}$ is Lipschitz continuous with Lipschitz constant $L$, then the \emph{\texttt{LineWalker-full}} algorithm will find all $\epsilon$-accurate global minima or maxima on the interval $[a , b]$ by setting the number of grid points $N$ and maximum allowed function evaluations $E^{\text{max,total}}$ as
\begin{equation}
N = E^{\text{max,total}} = \left\lceil\frac{L|b-a|}{\epsilon}\right\rceil,
\end{equation}
where $\lceil \cdot \rceil$ denotes the round up operator.
\end{theorem}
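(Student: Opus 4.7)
The overall plan rests on two observations. First, because the budget $E^{\max,\textrm{total}}$ exactly equals the number of grid points $N$, the main \textbf{while} loop in Algorithm~\ref{algo:segment_search_1D_budgetLimited} (which iterates while $|\sampled| < E^{\max,\textrm{total}}$) will only terminate once every grid point has been sampled. Second, with the chosen grid resolution, the Lipschitz bound on $\ftrue$ guarantees that every point of $[a,b]$ lies within $\epsilon/L$ of some grid point, so every grid point's sampled value is within $\epsilon$ of the true minimum/maximum at the nearest grid point.

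The first step I would argue carefully is that the algorithm never gets ``stuck'' without producing a new sample in an iteration, so that each pass of the loop strictly increases $|\sampled|$. Step~\ref{step:findNonTabuPoints} may return an empty set $\nontabuPoints$, but in that case Step~\ref{step:findLargestUnexploredInterval} invokes \texttt{findLargestUnexploredInterval} on $\sampled$; so long as $\sampled \neq \mc{I}$, there exists some index $i \in \sampled$ with $R_i - i \geq 2$, and Algorithm~\ref{algo:find_largest_unexplored_interval} returns a valid unsampled index. Thus each iteration adds at least one new index to $\sampled$ (the ``around the bend'' step can only substitute one non-tabu candidate for a nearby point, so the growth of $|\sampled|$ is still strictly positive). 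Hence after at most $N$ iterations, $\sampled = \mc{I}$ and the true function value $\ftrue_i$ has been stored for every $i \in \mc{I}$.

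The second step is a direct Lipschitz estimate. Set the grid to $N = \lceil L|b-a|/\epsilon \rceil$ equally spaced points on $[a,b]$, so the spacing $h = |b-a|/(N-1) \le \epsilon/L$ (or $|b-a|/N \le \epsilon/L$ depending on the endpoint convention; one extra grid point absorbs any off-by-one). Let $x^\star$ be any global minimum of $\ftrue$ on $[a,b]$ and let $x_{i^\star}$ be the nearest grid point, with $|x_{i^\star}-x^\star|\le h$. By Lipschitz continuity,
\begin{equation*}
\ftrue(x_{i^\star}) \;\le\; \ftrue(x^\star) + L\,|x_{i^\star}-x^\star| \;\le\; \ftrue(x^\star) + \epsilon.
\end{equation*}
Since every grid point is eventually sampled, the algorithm's recorded minimum $\min_{i \in \mc{I}}\ftrue_i$ is at most $\ftrue(x_{i^\star}) \le \ftrue(x^\star)+\epsilon$, and by symmetry the same argument handles maxima. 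Moreover, for any global minimizer $x^\star$ the nearest sampled grid point is an $\epsilon$-accurate global minimum, so ``all'' $\epsilon$-accurate global minima/maxima are located in the sense that each one has a representative among the samples.

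The main obstacle is really the first step: verifying that the algorithm actually exhausts the grid rather than terminating earlier or cycling. Nothing in the tabu/aspiration logic causes the \textbf{while} loop to exit early (the only exit condition is $|\sampled| \ge E^{\max,\textrm{total}}$), and the exploration subroutine is guaranteed to produce a fresh sample whenever $\nontabuPoints=\emptyset$ and $\sampled \neq \mc{I}$. The only subtle point is \texttt{sampleAroundTheBend}: it could, in principle, redirect a candidate to an index that has already been sampled, so one should note that in that case the enclosing \textbf{for} loop still advances and subsequent iterations will invoke \texttt{findLargestUnexploredInterval}, which always returns a strictly unsampled index. Once this monotonicity is established, the Lipschitz argument closes the proof with essentially one line.
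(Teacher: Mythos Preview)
Your proposal is correct and follows essentially the same two-step approach as the paper: first argue that setting $E^{\max,\textrm{total}}=N$ forces the algorithm to sample every grid point, then use the Lipschitz bound with grid spacing at most $\epsilon/L$ to conclude that some sampled point is an $\epsilon$-accurate extremum. In fact you are considerably more careful than the paper on the first step (the paper simply asserts ``all grid points will eventually be explored'' without discussing the tabu logic, \texttt{findLargestUnexploredInterval}, or \texttt{sampleAroundTheBend}); your additional scrutiny is warranted but does not change the underlying strategy.
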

\begin{proof}
With this choice of $N$, the distance between any two adjacent grid points $x_i$ and $x_{i+1}$ is bounded from above by $\frac{\epsilon}{L}$. As $f^{\text{true}}$ is Lipschitz continuous, we know that 
\begin{equation*}
\left|f^{\text{true}}(x_i) - f^{\text{true}}(y) \right| \leq L | x_i -x_{i+1}| \leq L \frac{\epsilon}{L} \leq \epsilon \quad \forall y \in [x_i , x_{i+1}].
\end{equation*} 
Thus, the function $f^{\text{true}}$ varies by at most $\epsilon$ in between any two grid points. By the choice of maximum allowed function evaluations, all grid points will eventually be explored proving that the algorithm will find all $\epsilon$-accurate global minima or maxima.  \qed
\end{proof}

The theorem shows that the algorithm can find all global extrema, but it does not provide an insight into the computational efficiency, which is experimentally evaluated in Section~\ref{sec:Numerical_experiments}. However, the theorem does suggest a suitable choice for the number of grid points if a rough estimate of the Lipschitz constant is known.

\section{Numerical experiments} \label{sec:Numerical_experiments}

This section details our numerical experiments and showcases the performance of our LineWalker algorithms against state-of-the-art methods.  
Subsection~\ref{sec:numerical_setup} outlines the suite of benchmark functions used for comparison, the algorithms compared, and the key performance metrics used for evaluation.
Subsection~\ref{sec:optimality_comparison} compares the various methods from that vantage point of a DFO practitioner seeking optimality.
Subsection~\ref{sec:surrogate_comparison} contrasts the competing algorithms in terms of overall approximation quality.

\subsection{Experimental set up} \label{sec:numerical_setup}

\subsubsection{Test suite of functions}

We consider 20 one-dimensional test functions to demonstrate the suitability of our LineWalker algorithms. 
Figure~\ref{fig:test_fnc_gallery} depicts each function on its respective domain. 
Table~\ref{tab:test_fnc_functional_form_table} provides the precise analytical form of each test function, along with the set of global minimizers and minimum objective function values. Table~\ref{tab:test_fnc_gallery_table} categorizes these test functions based on their shape and number of extrema.  Throughout we use the terms ``test function'' and ``instance'' interchangeably as the latter is more common in optimization parlance.

\begin{figure}
  \begin{subfigure}[b]{0.24\textwidth}
    \includegraphics[width=\linewidth]{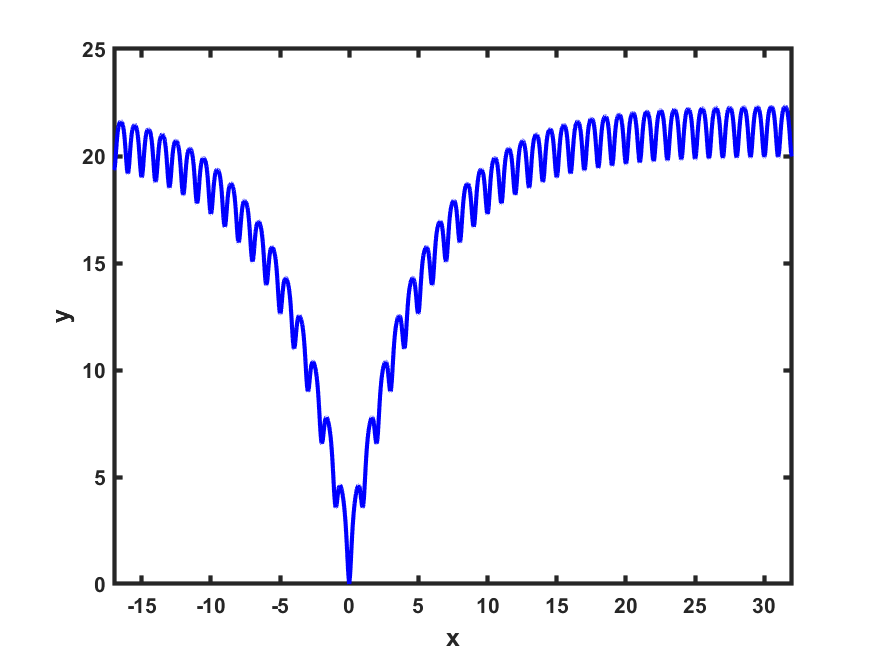}
    \caption{\href{http://www.sfu.ca/~ssurjano/ackley.html}{Ackley}}
    \label{fig:gallery_ackley}
  \end{subfigure}
  \begin{subfigure}[b]{0.24\textwidth}
    \includegraphics[width=\textwidth]{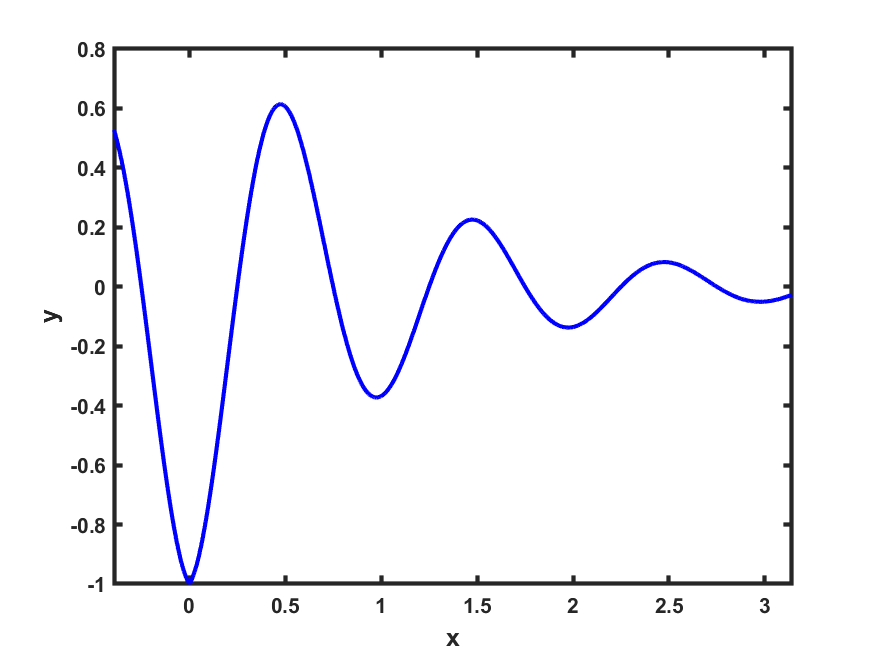}
    \caption{{\tiny \href{http://hyperphysics.phy-astr.gsu.edu/hbase/oscda.html}{Damped harmonic oscillator}}}
    \label{fig:gallery_damped_harmonic_oscillator}
  \end{subfigure}
  \begin{subfigure}[b]{0.24\textwidth}
    \includegraphics[width=\textwidth]{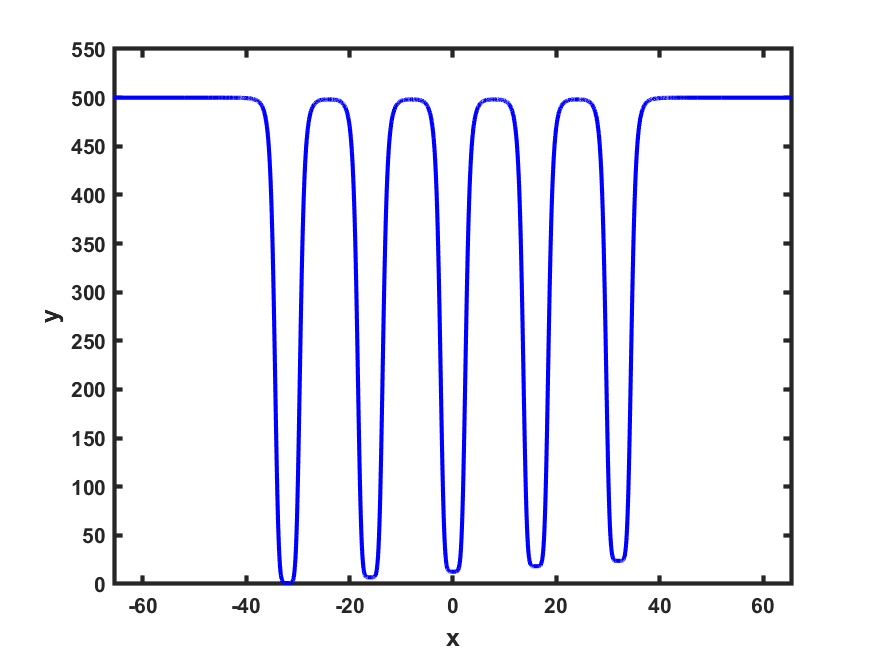}
    \caption{\href{http://www.sfu.ca/~ssurjano/dejong5.html}{dejong5}}
    \label{fig:gallery_dejong5_1Dslice}
  \end{subfigure}
  \begin{subfigure}[b]{0.24\textwidth}
    \includegraphics[width=\textwidth]{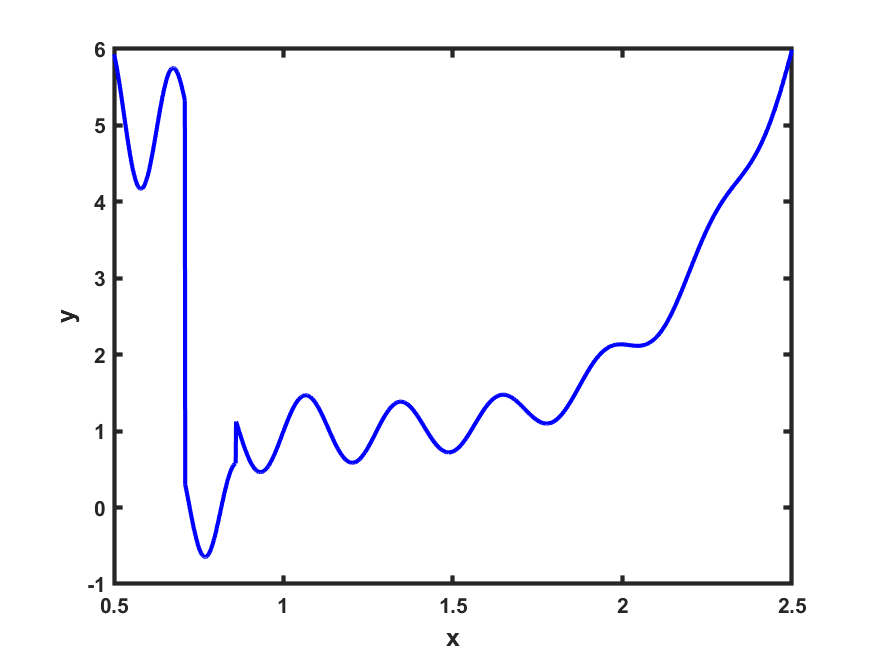}
    \caption{{\tiny \href{http://www.sfu.ca/~ssurjano/grlee12.html}{Grimacy \& Lee Variant} [\sout{S}]}}
    \label{fig:gallery_grlee12Step}
  \end{subfigure}
  \begin{subfigure}[b]{0.24\textwidth}
    \includegraphics[width=\textwidth]{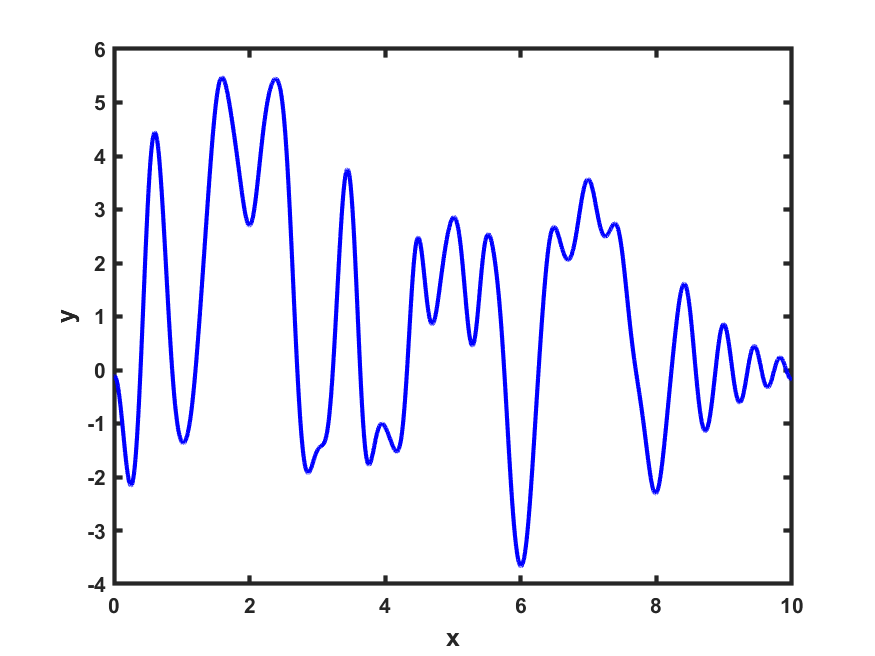}
    \caption{\href{http://www.sfu.ca/~ssurjano/langer.html}{Langermann}}
    \label{fig:gallery_langer}
  \end{subfigure}
  \begin{subfigure}[b]{0.24\textwidth}
    \includegraphics[width=\textwidth]{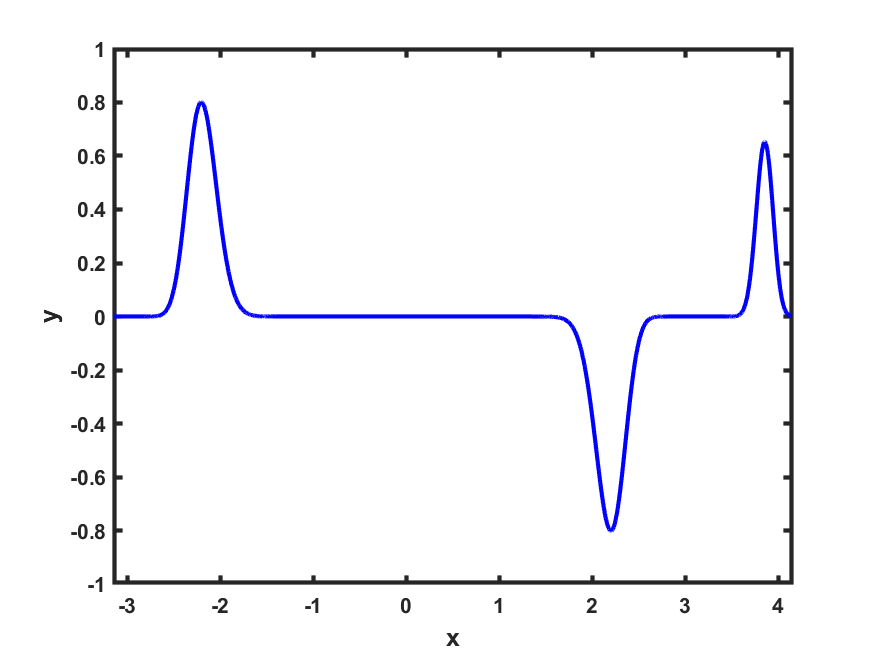}
    \caption{\href{http://www.sfu.ca/~ssurjano/michal.html}{Michalewicz}}
    \label{fig:gallery_michal}
  \end{subfigure}
  \begin{subfigure}[b]{0.24\textwidth}
    \includegraphics[width=\textwidth]{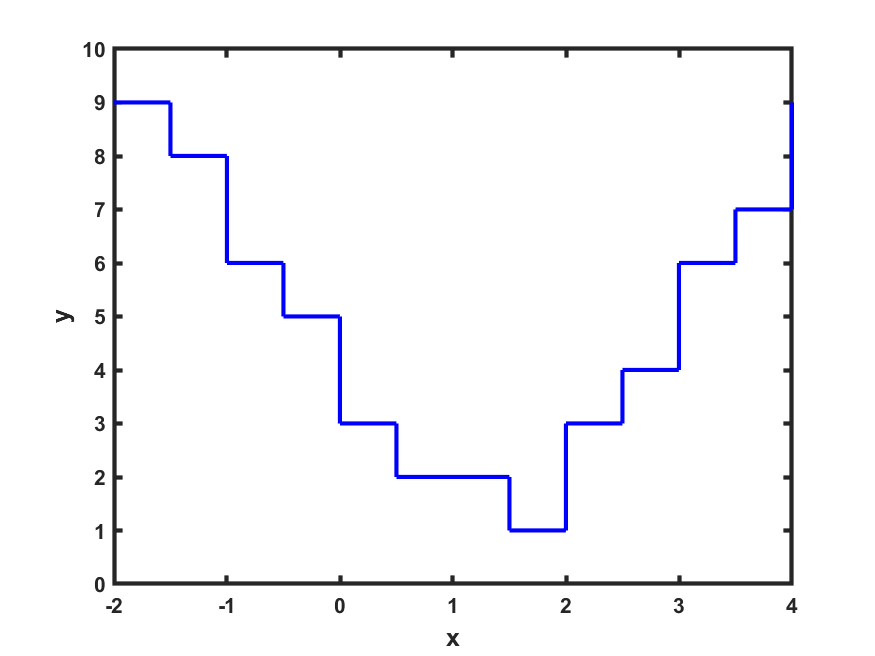}
    \caption{Plateau [\sout{S}]}
    \label{fig:gallery_plateau_1Dslice}
  \end{subfigure}
  \begin{subfigure}[b]{0.24\textwidth}
    \includegraphics[width=\textwidth]{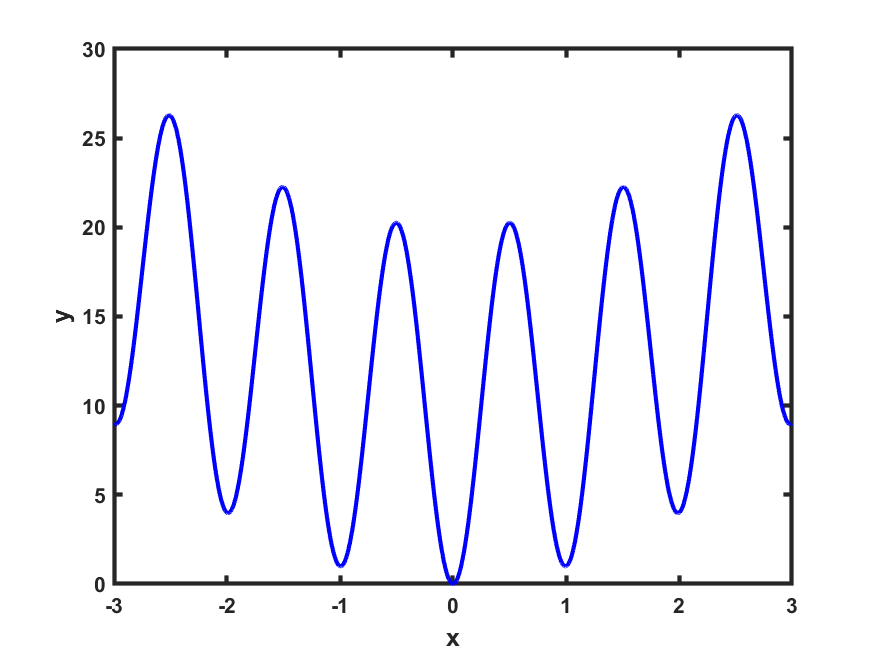}
    \caption{\href{http://www.sfu.ca/~ssurjano/rastr.html}{Rastrigin}}
    \label{fig:gallery_rastr}
  \end{subfigure}
  \begin{subfigure}[b]{0.24\textwidth}
    \includegraphics[width=\textwidth]{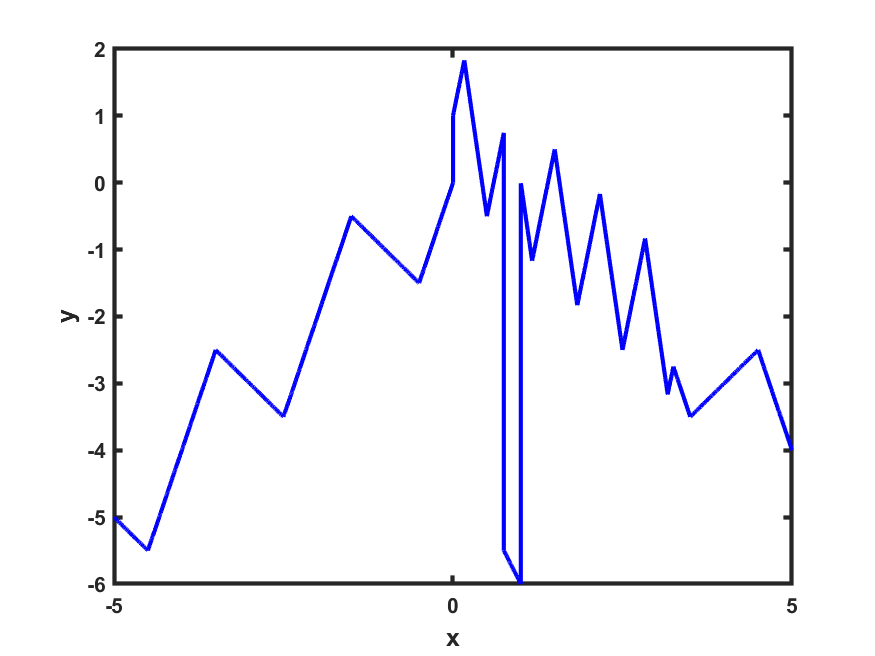}
    \caption{SawtoothD [\sout{S}]}
    \label{fig:gallery_sawtoothD}
  \end{subfigure}
  \begin{subfigure}[b]{0.24\textwidth}
    \includegraphics[width=\textwidth]{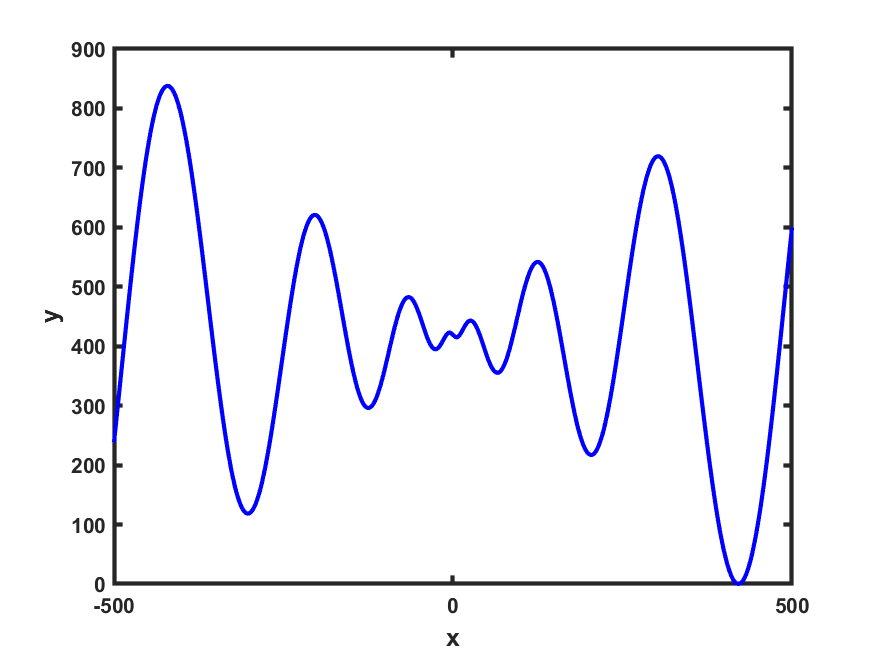}
    \caption{\href{http://www.sfu.ca/~ssurjano/schwef.html}{Schwefel}}
    \label{fig:gallery_schwef}
  \end{subfigure}
  \begin{subfigure}[b]{0.24\textwidth}
    \includegraphics[width=\textwidth]{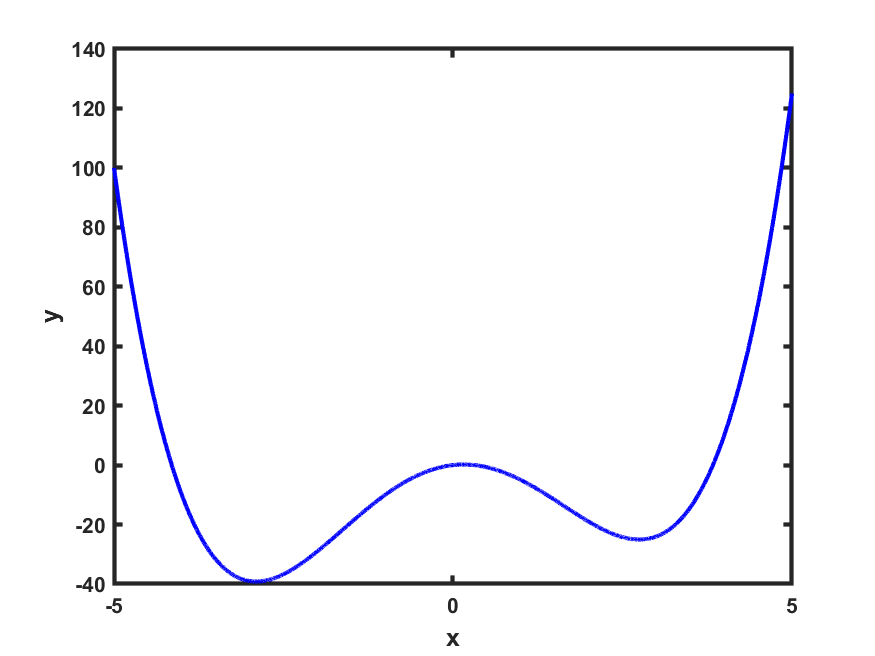}
    \caption{\href{http://www.sfu.ca/~ssurjano/stybtang.html}{Styblinski-Tang}}
    \label{fig:gallery_stybtang}
  \end{subfigure}
  \begin{subfigure}[b]{0.24\textwidth}
    \includegraphics[width=\textwidth]{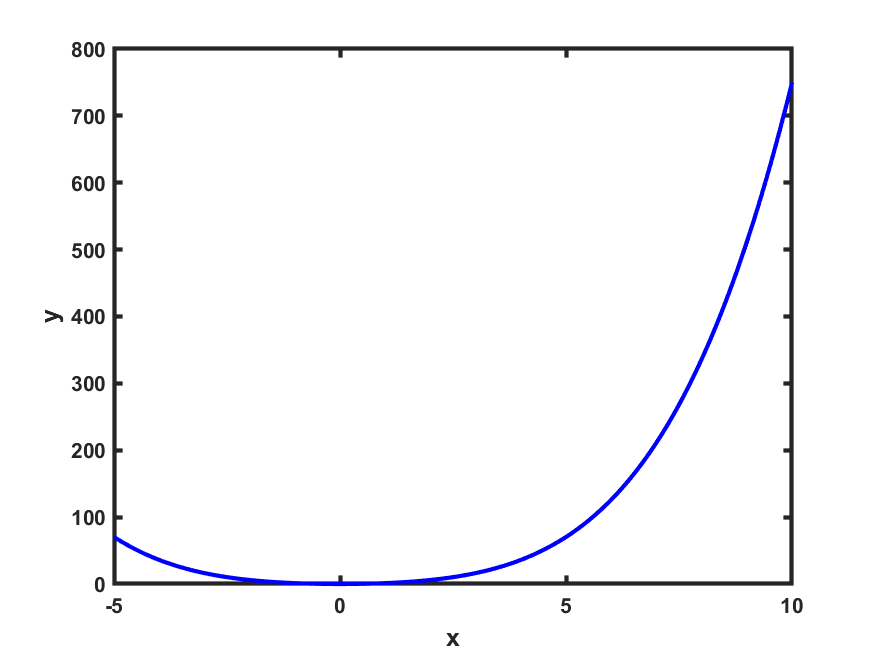}
    \caption{\href{http://www.sfu.ca/~ssurjano/zakharov.html}{Zakharov}}
    \label{fig:gallery_zakharov}
  \end{subfigure}
  \begin{subfigure}[b]{0.24\textwidth}
    \includegraphics[width=\textwidth]{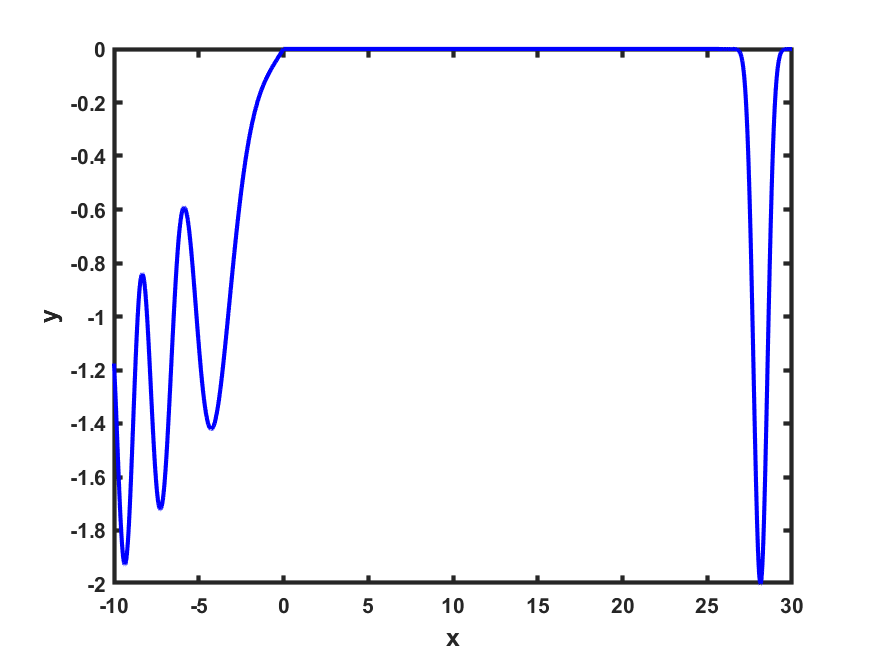}
    \caption{Easom-Schaffer2A}
    \label{fig:gallery_easom_schaffer2A}
  \end{subfigure}
  \begin{subfigure}[b]{0.24\textwidth}
    \includegraphics[width=\textwidth]{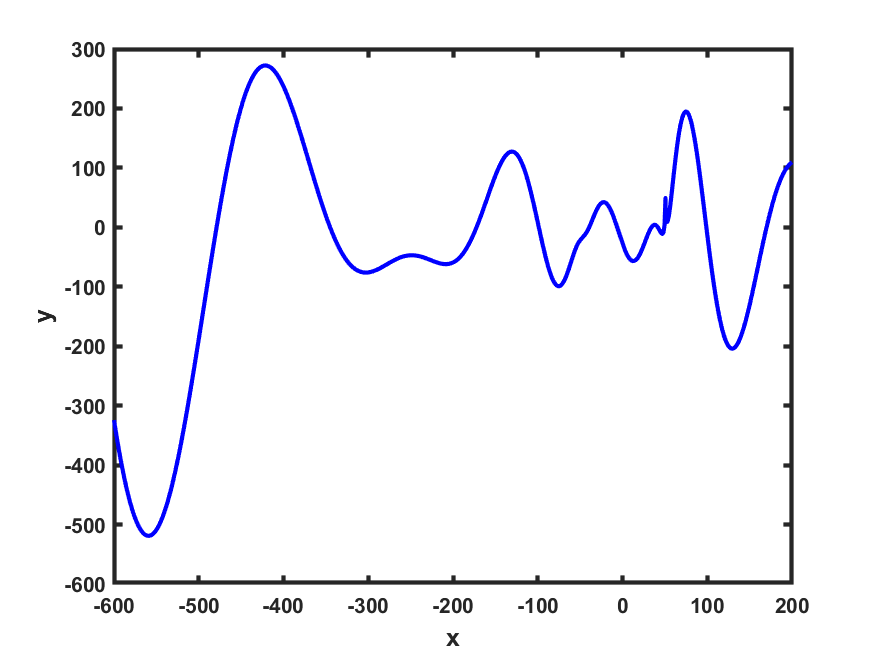}
    \caption{\href{http://www.sfu.ca/~ssurjano/egg.html}{Egg2} [\sout{S}]}
    \label{fig:gallery_egg2}
  \end{subfigure}
  \begin{subfigure}[b]{0.24\textwidth}
    \includegraphics[width=\textwidth]{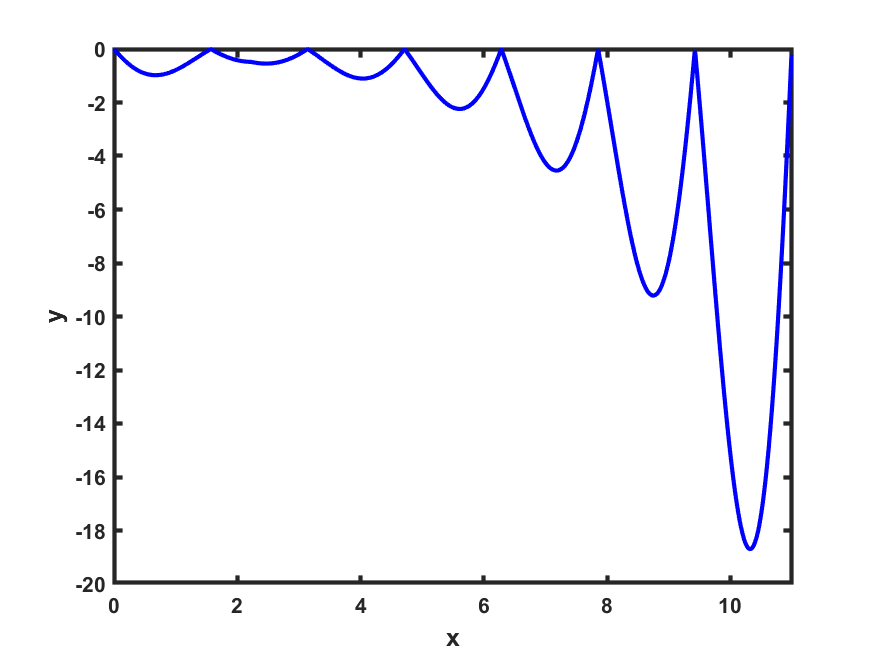}
    \caption{\href{http://www.sfu.ca/~ssurjano/holder.html}{Holder} [\sout{S}]}
    \label{fig:gallery_holder}
  \end{subfigure}
  \begin{subfigure}[b]{0.24\textwidth}
    \includegraphics[width=\textwidth]{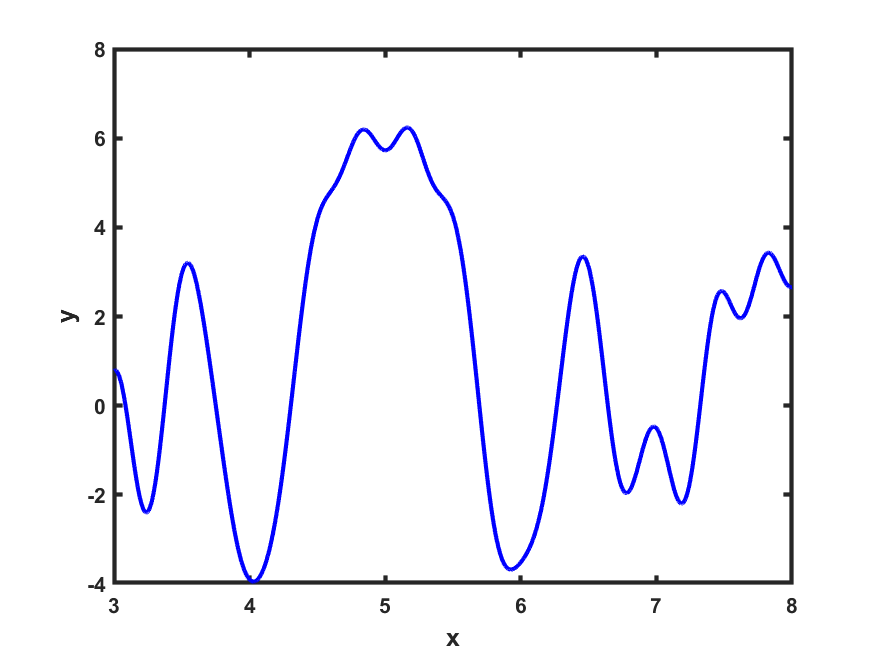}
    \caption{\href{http://www.sfu.ca/~ssurjano/langer.html}{Langermann2}}
    \label{fig:gallery_langermann2A}
  \end{subfigure}
  \begin{subfigure}[b]{0.24\textwidth}
    \includegraphics[width=\textwidth]{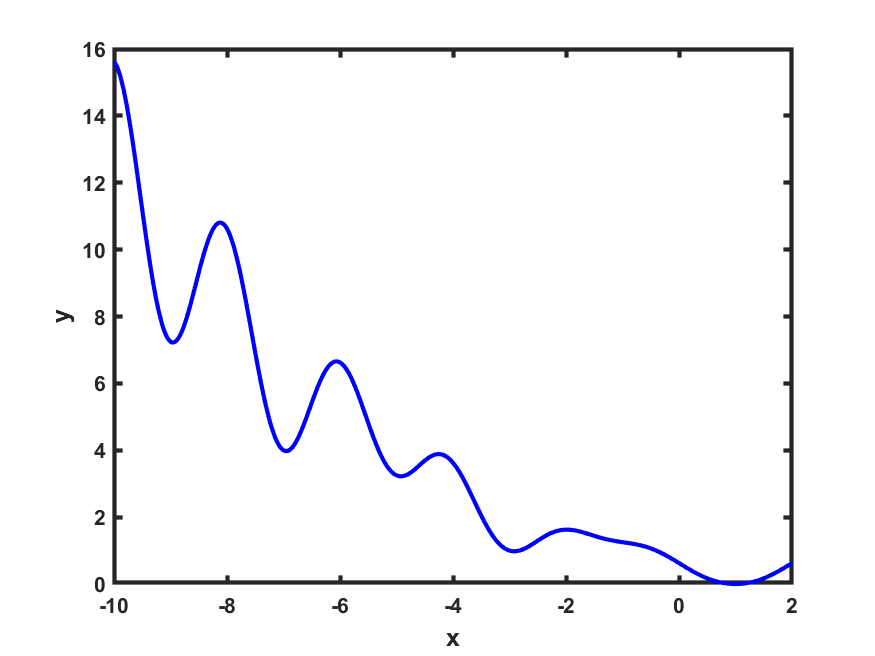}
    \caption{\href{http://www.sfu.ca/~ssurjano/levy.html}{Levy}}
    \label{fig:gallery_levy}
  \end{subfigure}
  \begin{subfigure}[b]{0.24\textwidth}
    \includegraphics[width=\textwidth]{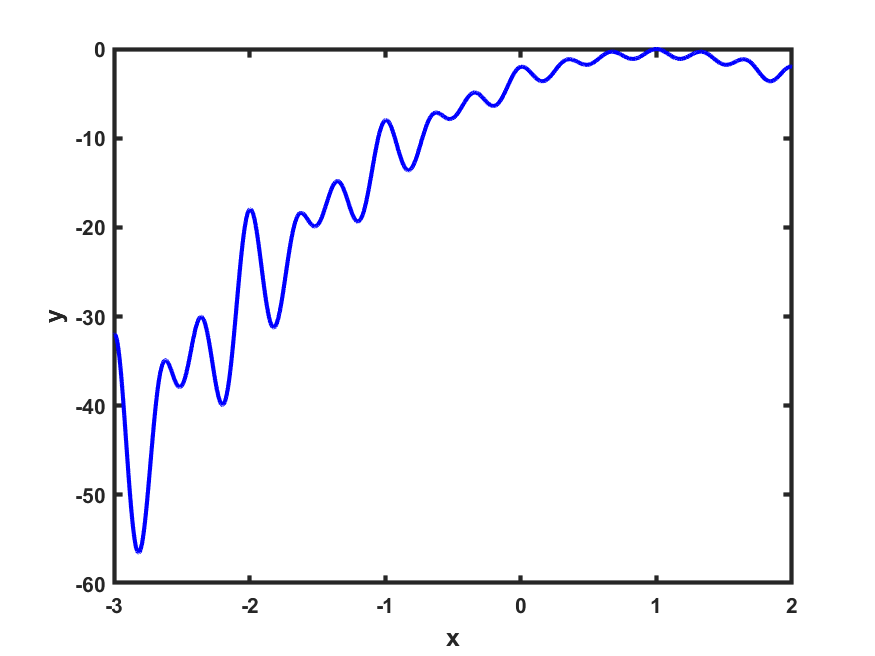}
    \caption{\href{http://www.sfu.ca/~ssurjano/levy13.html}{Levy13}}
    \label{fig:gallery_levy13}
  \end{subfigure}
  \begin{subfigure}[b]{0.24\textwidth}
    \includegraphics[width=\textwidth]{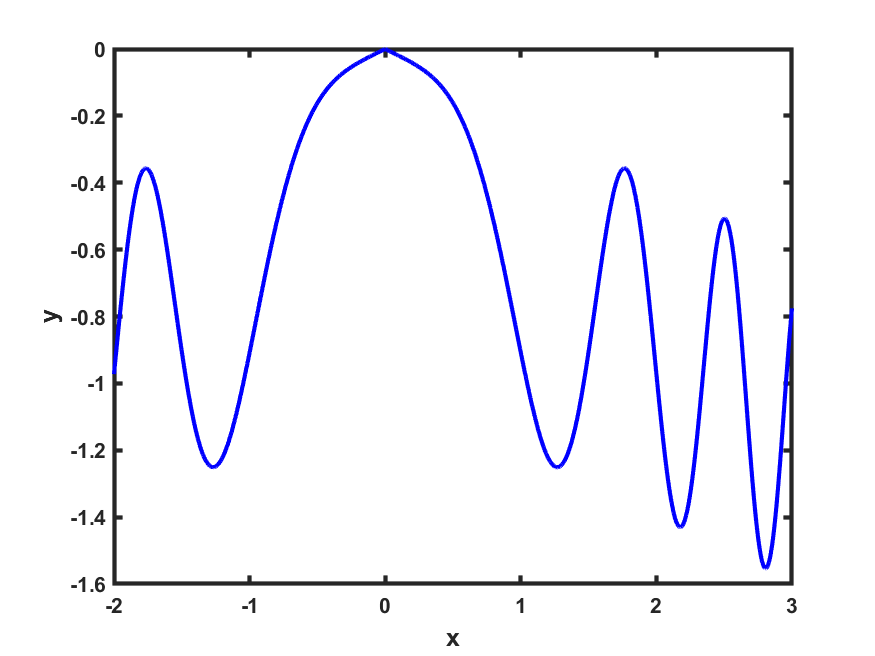}
    \caption{\href{http://www.sfu.ca/~ssurjano/schaffer2.html}{Schaffer2A} [\sout{S}]}
    \label{fig:gallery_schaffer2A}
  \end{subfigure}
  \begin{subfigure}[b]{0.24\textwidth}
    \includegraphics[width=\textwidth]{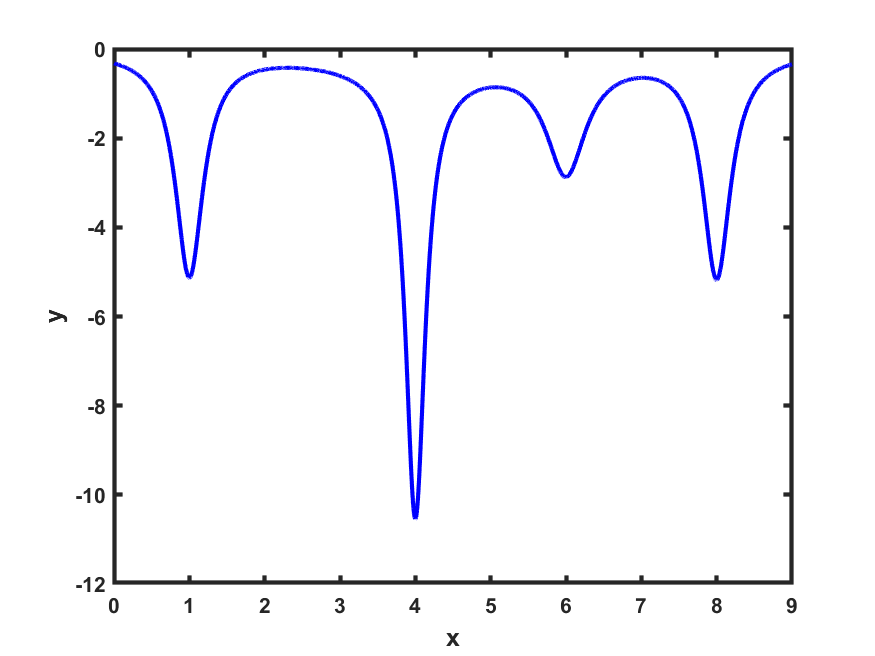}
    \caption{\href{http://www.sfu.ca/~ssurjano/shekel.html}{Shekel}}
    \label{fig:gallery_shekel}
  \end{subfigure}
  \caption{Gallery of test functions. Functions with an [\sout{S}] are non-smooth.}
  \label{fig:test_fnc_gallery}
\end{figure}

As shown in Table~\ref{tab:test_fnc_gallery_table}, 17 of the test functions come from the website by \cite{surjanovic2013virtual}, while the remaining three (Damped Harmonic Oscillator, Plateau, SawtoothD) come from other sources.  We included functions from different categories according to the classification by \cite{surjanovic2013virtual}.  In general, bowl- and plate-shaped (i.e., strictly convex) functions are ``easy'' for all of the algorithms that we explored in the sense that the algorithms are quite effective at optimizing these functions and creating a high-quality surrogate.  Consequently, we only included a single convex function (Zakharov) and instead focused more heavily on nonconvex functions with few or many local extrema.  Since some functions possess extrema occurring at regular intervals (which we call ``periodic'' functions, perhaps with some abuse of terminology), we also include some functions that are nonconvex and aperiodic.  Such functions are particularly challenging because a certain amount of exploration must occur in order to find all extrema and/or obtain a high-quality fit. The Eason-Schaffer2A and Michalewicz functions are two such functions that are constant on a large interval where sampling is needed to ensure that no hidden extrema are present. Despite the caveats that our LineWalker methods are meant for smooth functions, we also consider several non-smooth functions to demonstrate how our methods perform under non-ideal conditions. For example, the plateau function possesses a staircase structure giving rise to an infinite number of local extrema. 

\subsubsection{Algorithms compared}
To benchmark the performance of our LineWalker algorithms, we compare against MATLAB's Bayesian optimization algorithm \href{https://www.mathworks.com/help/stats/bayesopt.html}{\texttt{bayesopt}} in MATLAB R2018b, NOMAD 3.9.1 \cite{Le2011a}, MATLAB's \href{https://www.mathworks.com/help/matlab/ref/fminbnd.html}{\texttt{fminbnd}} function, MATLAB's \href{https://www.mathworks.com/help/matlab/ref/fminsearch.html}{\texttt{fminsearch}} function, and ALAMO version 2021.12.28 \cite{cozad2014learning}. \texttt{bayesopt} proves to be quite competitive on these instances and therefore illustrates the power of GPR on one-dimensional nonconvex functions. We chose NOMAD as the DFO literature shows that NOMAD continues to be one of the best DFO solvers available. MATLAB's \texttt{fminbnd} and \texttt{fminsearch} may seem like odd choices, but many non-experts use these algorithms out of sheer convenience since they are readily available in MATLAB. Finally, ALAMO is widely regarded as a strong surrogate modeling tool.  We briefly describe each below and how we use it.

All algorithms are furnished the same initial samples, when initial samples are required. More precisely, for all algorithms except \texttt{fminbnd}, which does not require an initial starting point, we generated an initial set $\mc{I}^{\textrm{init}} = \{1 + (N-1) \tfrac{(i-1)}{10}: i=1,\dots,11\}$ of 11 uniformly-spaced grid indices corresponding to the samples $\{x^L + (x^U-x^L) \tfrac{(i-1)}{10}: i=1,\dots,11\}$ samples on the interval $[x^L,x^U]$ (see the ``Domain'' column of Table~\ref{tab:test_fnc_functional_form_table}).
For the algorithms (\texttt{fminsearch} and NOMAD) that only require a single starting point, we furnished them the set $\mc{I}^{\textrm{init}}$ and then, via a loop, launched a search beginning from each starting point. 
For the algorithms that make use of a sample set to construct an initial surrogate (LineWalker variants, \texttt{bayesopt}, and ALAMO), $\mc{I}^{\textrm{init}}$ was used for exactly this purpose. 

\texttt{LineWalker-pure} (Algorithm~\ref{algo:LineWalker_pure}) and its enhanced version \texttt{LineWalker-full} (Algorithm~\ref{algo:segment_search_1D_budgetLimited}) use the parameters listed in Table~\ref{table:linewalker_param_values}.
We set $E^{\max,\textrm{itr}}=1$ so that only one function evaluation is made per iteration, consistent with what other methods require. We use $N=$ 5,000 grid points for all test functions except for Ackley where 10,000 gridpoints are needed because, with only 5,000 grid points, there is no way to be within 0.01 of the true optimal objective function value of 0, i.e., there does not exist a grid point near the true optimum with an objective function value satisfying the optimality requirement. Doing so results in a single grid point (at index 3470 corresponding to $x=\num{-2.0002e-04}$) having an objective function value capable of satisfying the ``solve'' requirement~\ref{def:instance_is_solved}; all other points do not satisfy this condition.

\begin{table} 
\caption{LineWalker parameter values for computational experiments.}
\label{table:linewalker_param_values}
\begin{tabular}{lcl}
\toprule
\textbf{Parameter} & \textbf{Value} & \textbf{Comment} \\
\midrule
$E^{\max,\textrm{total}}$ & $\{20,30,40,50\}$ & maximum number of total function evaluations allowed \\
$E^{\max,\textrm{itr}}$ & 1 & maximum number of function evaluations allowed per major iteration \\
$\tabuGridDistanceThresholdShort_i$ & $N/(2 E^{\max,\textrm{total}})$ & short-term tabu grid distance threshold for grid index $i \in \mc{I}$ \\
$\tabuGridDistanceThresholdLong_i$ & dynamic & long-term tabu grid distance threshold for grid index $i \in \mc{I}$ \\
$N$ & 5,000 & number of equally-spaced grid points ($N=10,000$ for ackley) \\
$\alpha$ & 0 & first-derivative smoothing parameter \\
$\mu$ & 0.01 &  second-derivative smoothing parameter \\
$\delta^{\min}$ & $\hat{F}^{\textrm{range}} \times \num{e-6}$ & objective function tolerance for local minima \\
$\delta^{\max}$ & $\hat{F}^{\textrm{range}} \times \num{e-6}$ & objective function tolerance for local maxima \\
$\tabuTenureShort$ & 5 & Short-term tabu tenure is initialized to 5, but changes dynamically \\
$\theta$ & 0.01 & Maximum fractional deviation from optimum in Algorithm~\ref{algo:sample_around_the_bend} \\
$\nu^{\min}$ & 0.10 & Minimum grid point separation multiplier  \\
$\nu^{\max}$ & 0.25 & Maximum grid point separation multiplier  \\
\bottomrule
\end{tabular}
\end{table}

The \texttt{bayesopt} algorithm is a powerful and versatile algorithm whose main purpose is to find a global minimum of a (possibly multivariate and stochastic) black box function. While optimizing this function, it also produces a surrogate function based on a GPR's posterior mean distribution. We compare the resulting fit from our LineWalker algorithms with this posterior mean distribution.  Note that \texttt{bayesopt} is first and foremost designed to ``chase global minima,'' not to construct an accurate surrogate model at every point in the domain. We supply \texttt{bayesopt} with a vector of initial sample locations (\texttt{InitialX}).
We flag that the objective function is deterministic.
Most importantly, we use the \texttt{expected-improvement-plus} acquisition function described in \href{https://www.mathworks.com/help/stats/bayesian-optimization-algorithm.html}{MATLAB's Bayesian optimization algorithm}.

NOMAD (Nonlinear Optimization with the MADS algorithm) is ``a C++ implementation of the Mesh Adaptive Direct Search algorithm (MADS), designed for difficult blackbox optimization problems'' \url{https://www.gerad.ca/en/software/nomad/}. NOMAD can handle nonsmooth functions, constraints, as well as integer and categorical decision variables. It regularly appears as one of the most consistent and dominant DFO solvers in the literature and serves as an important state-of-the-art benchmark. 
Since NOMAD requires a starting point, we loop over all of the initial samples and perform the search from this starting point.  In this way, all methods are privy to the same initial samples. 
We set $\texttt{min\_mesh\_size} = \num{1e-4}$ and $\texttt{initial\_mesh\_size} = 10$.
Otherwise, we used the default parameter settings. 

MATLAB's \texttt{fminbnd} attempts to find a local minimum of a one-dimensional function on a bounded interval using an algorithm based on golden section search and parabolic interpolation.
For a strictly unimodal function with an extremum in the interior of the domain, it will find the extremum and do so in the most asymptotically economic manner, i.e., using the fewest function evaluations for a prescribed accuracy \citet{snyman2018line}. 
According to the online documentation \url{https://www.mathworks.com/help/matlab/ref/fminbnd.html}, ``Unless the left endpoint $x_1$ [of the domain interval] is very close to the right endpoint $x_2$, \texttt{fminbnd} never evaluates [the function] at the endpoints, so [the function] need only be defined for $x$ in the interval $x_1 < x < x_2$.''  This is not a concern since no global minimum occurs at an endpoint in our benchmark suite.
We set $\texttt{TolFun} = \num{1e-6}$. 
Otherwise, we used the default parameter settings. 

Applicable to multivariate functions, MATLAB's \texttt{fminsearch} uses the Nelder-Mead simplex algorithm, ``perhaps the most widely used direct-search method'' \cite[p.6]{larson2019derivative}, as described in \cite{lagarias1998convergence}.  We were particularly interested to understand its performance in light of the high expectations placed upon it by DFO experts:
``Nelder-Mead is an incredibly popular method, in no small part due to its inclusion in Numerical Recipes [Press et al., 2007], which has been cited over 125,000 times and no doubt used many times more. The method (as implemented by \cite{lagarias2012convergence}) is also the algorithm underlying \texttt{fminsearch} in MATLAB'' \citep[p.7]{larson2019derivative}.
Like NOMAD, \texttt{fminsearch} requires the user to provide an initial starting point, so we adopted the same loop as used for NOMAD. 
Since \texttt{fminsearch} does not handle variable bounds, we check that each solution returned is feasible, i.e., within the function's given domain.
We set $\texttt{TolFun} = \num{1e-6}$. 
Otherwise, we used the default parameter settings.  

Finally, we compared against ALAMO (Automated Learning of Algebraic Models), which claims to be ``the only software that can impose physical constraints on machine learning models, enabling users to build accurate models even from small datasets'' \url{https://minlp.com/alamo-modeling-tool}.
In contrast to \texttt{bayesopt}, ALAMO's main purpose is to construct accurate and simple algebraic surrogate models from data.  These algebraic surrogates can then be plugged back into a larger optimization or simulation tool to reduce computation time.  

All experiments (excluding running ALAMO, which is a standalone software) were conducted in MATLAB R2018b (9.5.0.944444).
All computing was performed on a Dell Precision 5540 x64-based PC laptop running Intel(R) Core(TM) i7-9850H CPU @ 2.60GHz, 2592 Mhz, 6 Core(s), 12 Logical Processor(s).
For each algorithm, we supplied the same set of 11 uniformly-spaced starting points, two of which correspond to the endpoints of the domain. 

\subsubsection{Key performance metrics} \label{sec:metrics}

While there are many possible metrics to assess performance for DFO methods and surrogate builders, we chose to focus on two commonly used metrics -- (1) number of instances solved and (2) total absolute scaled error of the resulting fit -- as a function of the number of samples taken.  Both are defined and described below.
Conspicuously absent from this list is computation time.  Since our assumption is that the black box function evaluations themselves could potentially be very time consuming (requiring hours to days), we found the remaining time negligible in the grand scheme of things.
For our LineWalker algorithms, the primary bottleneck is solving the linear system in Step~\ref{step:solve_linear_system} in Algorithm~\ref{algo:segment_search_1D}, which depends on the number of grid points $N$ used in the surrogate. This system can be solved in seconds for $N=10k$. See also Figure~\ref{fig:cpu_time_per_iteration_shekel}. 
For \texttt{bayesopt}, optimizing the acquisition function was typically the main bottleneck.
In general, we observed that the non-function evaluation computation time for both the LineWalker algorithms and \texttt{bayesopt} was on the order of seconds to tens of seconds and thus would pale in comparison to the time spent performing function evaluations for computationally-expensive, real-world black box simulations and oracle functions. 

\textbf{Number of instances solved}. We follow typical requirements used in the DFO community in which ``A solver [is] considered to have successfully solved a problem [i.e., optimized a test function] if it returned a solution with an objective function value within 1\% or 0.01 of the global optimum, whichever was larger'' \cite{ploskas2021review}.
Mathematically, let $f^{*,\textrm{true}}$ and $\fhat^*$ denote the optimal objective function value of the true underlying function and the approximate function, respectively. Let $x^{*,\textrm{eval}}$ denote the sample with the smallest evaluated objective function value. A test function is declared ``solved'' if 
\begin{equation} \label{def:instance_is_solved}
|\fhat^*-f^{*,\textrm{true}}| \leq 0.01 \max\{1,|f^{*,\textrm{true}}|\} 
\qquad
\textrm{or}
\qquad
|\fhat(x^{*,\textrm{eval}})-f^{*,\textrm{true}}| \leq 0.01 \max\{1,|f^{*,\textrm{true}}|\} 
\end{equation}

\textbf{Total absolute scaled error}. See Section~\ref{sec:surrogate_comparison}.

\subsection{Optimality comparison} \label{sec:optimality_comparison}

We first compare our LineWalker algorithms with other methods as one would do in a DFO context where the key metric is how many instances are solved within a given budget of function evaluations. Salient DFO-related observations are listed below.

\begin{figure}[h!] 
\centering
\includegraphics[width=12cm]{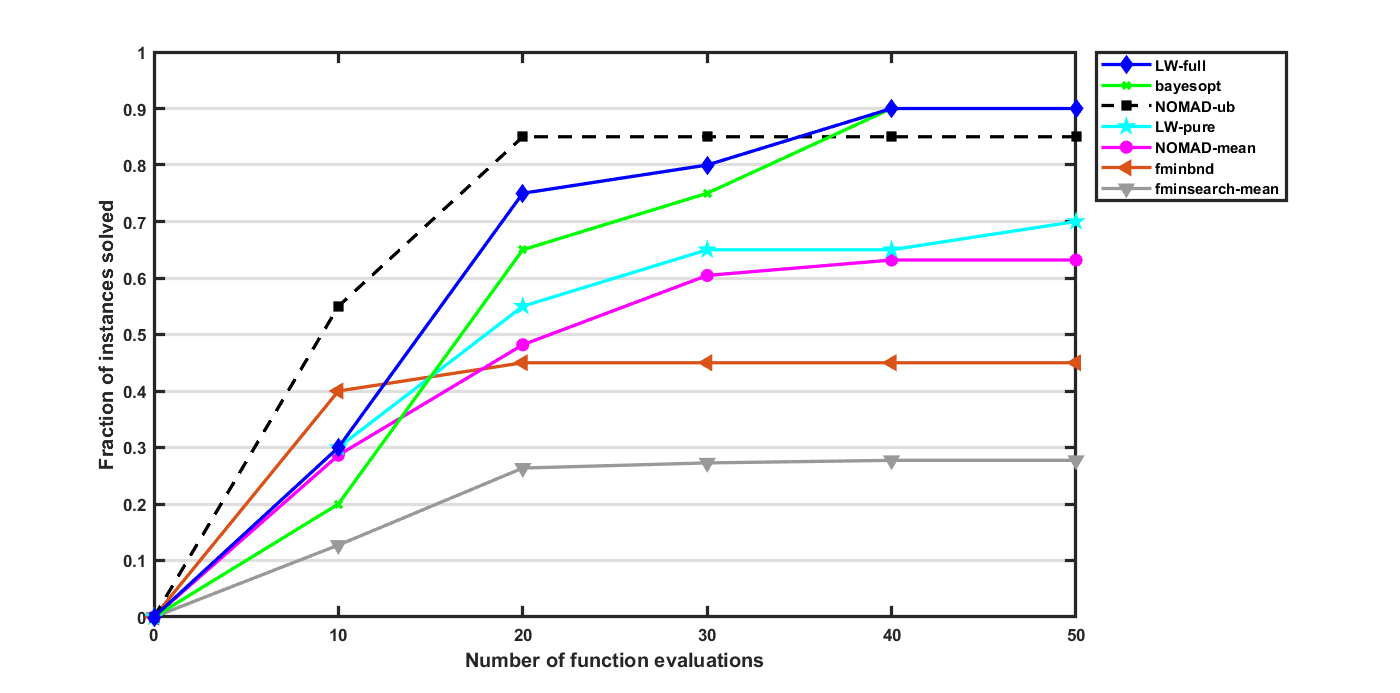}
\caption{Fraction of instances solved (see definition \eqref{def:instance_is_solved}) as a function of the number of black box evaluations. 
\texttt{LW-full} refers to Algorithm~\ref{algo:segment_search_1D_budgetLimited}. 
\texttt{LW-pure} refers to Algorithm~\ref{algo:LineWalker_pure}.
NOMAD-ub is not empirically achievable and therefore serves as an optimistic (or oracle-initialized) upper bound.
No algorithm could solve the SawtoothD instance.
}
\label{fig:fraction_of_instances_solved}
\end{figure} 

\textbf{fminsearch}. MATLAB's \texttt{fminsearch} function performed relatively poorly, solving just over 25\% of all instances on average, revealing that the popular Nelder-Mead simplex approach is challenged on our benchmark functions. 

\textbf{fminbnd}. MATLAB's \texttt{fminbnd} function, which relies on golden section search and parabolic interpolation, performs the best out of all methods when limited to only 10 samples, finding a global minimum in 8 out of 20 instances. However, when given a larger sample budget, it can only find one more global minimum and fares poorly. Recall that it is theoretically the ``best'' one can do on unimodal functions. 

\textbf{NOMAD}. As shown under ``NOMAD-mean,'' the state-of-the-art DFO solver NOMAD's average performance surpasses \texttt{fminbnd} with a budget of 30 samples or greater.  On average, NOMAD is able to solve just over half of the instances. Meanwhile, an upper bound on NOMAD's performance (``NOMAD-ub'') shows that if one had an oracle and could select the best starting point amongst the 11 that we offered, NOMAD can perform quite well, but is still unable to solve 15\% of the instances.  As a reminder, this performance should not be considered practical since it requires prior knowledge of the best starting point.

\textbf{Bayesian optimization (bayesopt)}. MATLAB's \texttt{bayesopt} function exhibits superior performance amongst the state-of-the-art methods given 20 or more function evaluations, identifying a global minimum in 18 of the 20 instances.

\textbf{LineWalker}. Somewhat surprisingly, our most basic \texttt{LineWalker-pure} (``LW-pure'') Algorithm~\ref{algo:LineWalker_pure}, which is essentially an extrema hunter procedure with little to no exploration, is able to outperform NOMAD on this benchmark suite. It solves 11 out of 20 instances with just 20 samples and ultimately solves 3 more instances with a budget of 50 samples. It is inferior, however, to \texttt{bayesopt} with 20 or more samples. Meanwhile, \texttt{LineWalker-full} (``LW-full'') Algorithm~\ref{algo:segment_search_1D_budgetLimited}, which includes a tabu structure and exploration, proves to be quite competitive with \texttt{bayesopt}, even achieving more ``solve'' successes when given a budget of fewer than 40 function evaluations. 

\texttt{LW-full}, \texttt{LW-pure}, and \texttt{bayesopt} fail to solve two instances (Ackley and SawtoothD) within 50 function evaluations. For Ackley, \texttt{LW-full} returns the sample at index 3468 as the minimizer with objective function value -0.10. As mentioned above, index 3470 is the only index that satisfies the optimality criteria.

\subsection{Surrogate comparison} \label{sec:surrogate_comparison}

While global optimality is one useful metric for comparing methods, it does not tell the full story. Practitioners may also wish to assess, and have confidence in, the overall function approximation obtained from a given method.  
Figure~\ref{fig:damped_harmonic_oscillator_surrogate_comparison} compares the approximation quality of \texttt{LineWalker-full}, \texttt{bayesopt}, and ALAMO on the damped harmonic oscillator function, a rather well-behaved, oscillating nonconvex test instance with a constant periodicity.  The results are striking.  While all three methods are nearly able to identify a global optimum (\texttt{LineWalker-full} and \texttt{bayesopt} solve this instance), \texttt{LineWalker-full} provides a much better fit relative to the other methods. Indeed, with fewer than 40 function evaluations, \texttt{bayesopt} tends to underestimate every local (non-boundary) maxima. A detailed visual comparison between \texttt{LineWalker-full} and \texttt{bayesopt} for all 20 functions is given in the appendix.      

\begin{figure} [h!]	
  \begin{subfigure}[b]{0.33\textwidth}
    \includegraphics[width=\textwidth]{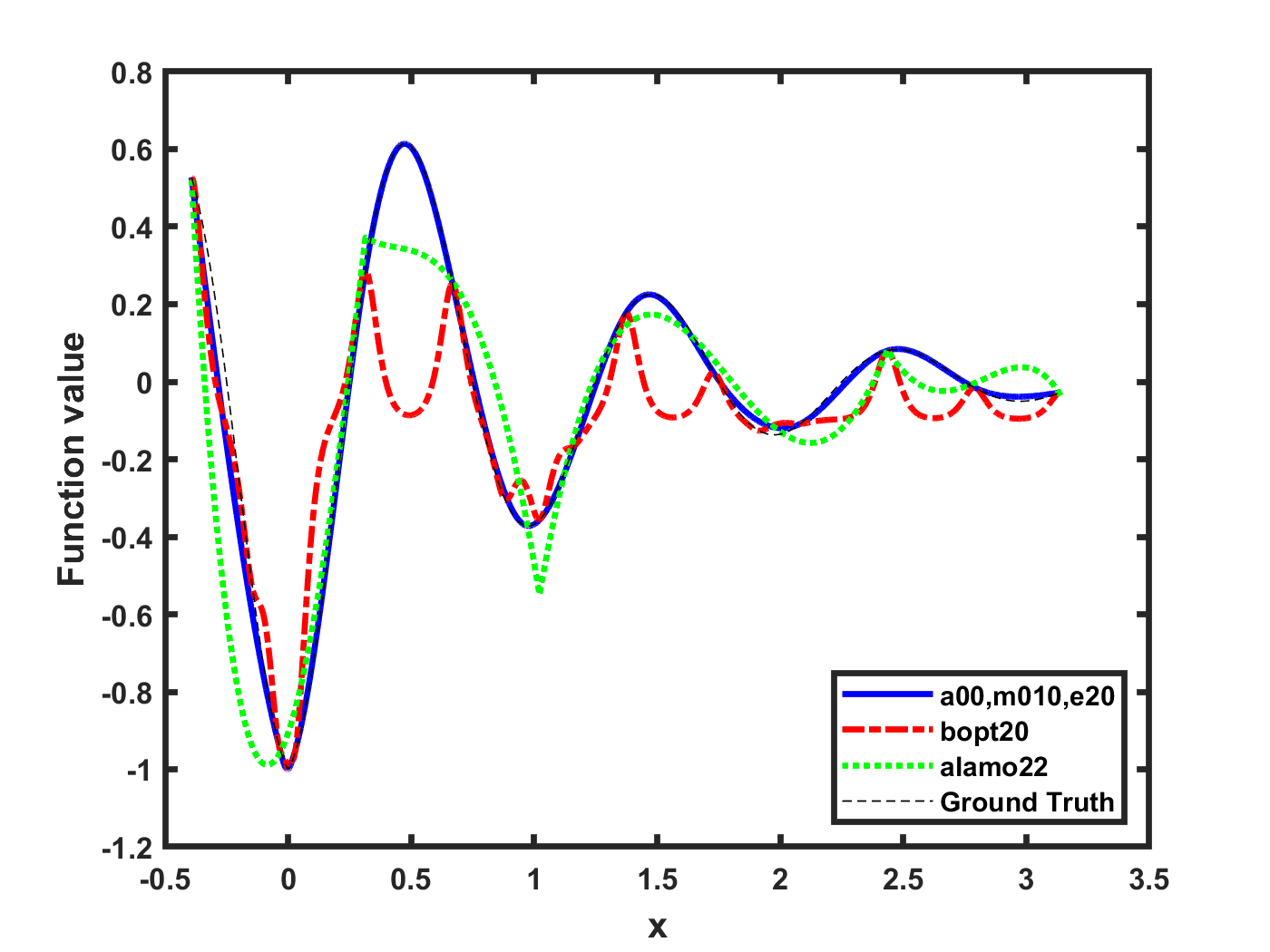}
    \caption{$E^{\max,\textrm{total}} = 20$}
    \label{fig:damped_harmonic_oscillator_e20_comparison}
  \end{subfigure}
    \begin{subfigure}[b]{0.33\textwidth}
    \includegraphics[width=\textwidth]{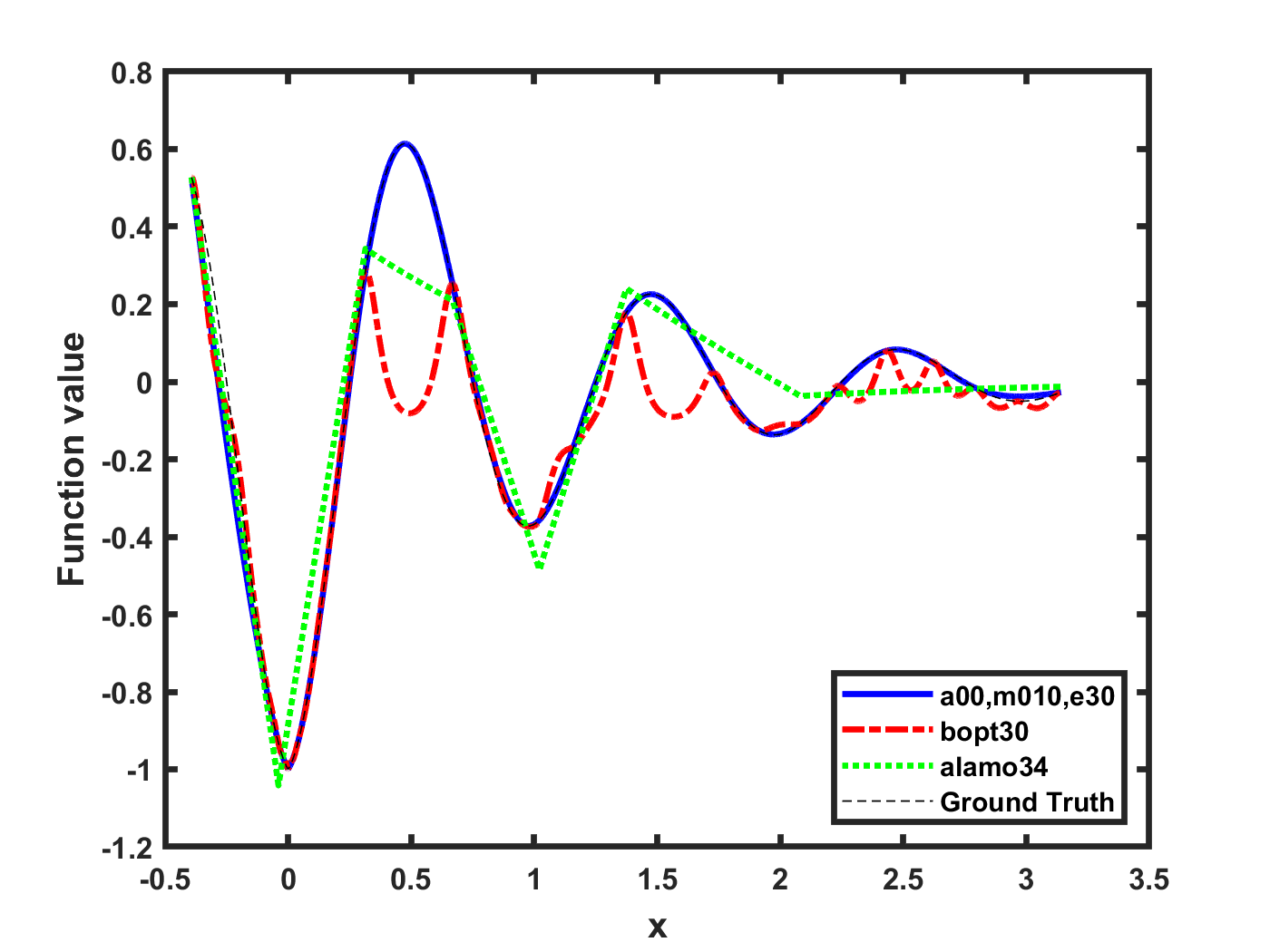}
    \caption{$E^{\max,\textrm{total}} = 30$}
    \label{fig:damped_harmonic_oscillator_e30_comparison}
  \end{subfigure}
      \begin{subfigure}[b]{0.33\textwidth}
      \includegraphics[width=\textwidth]{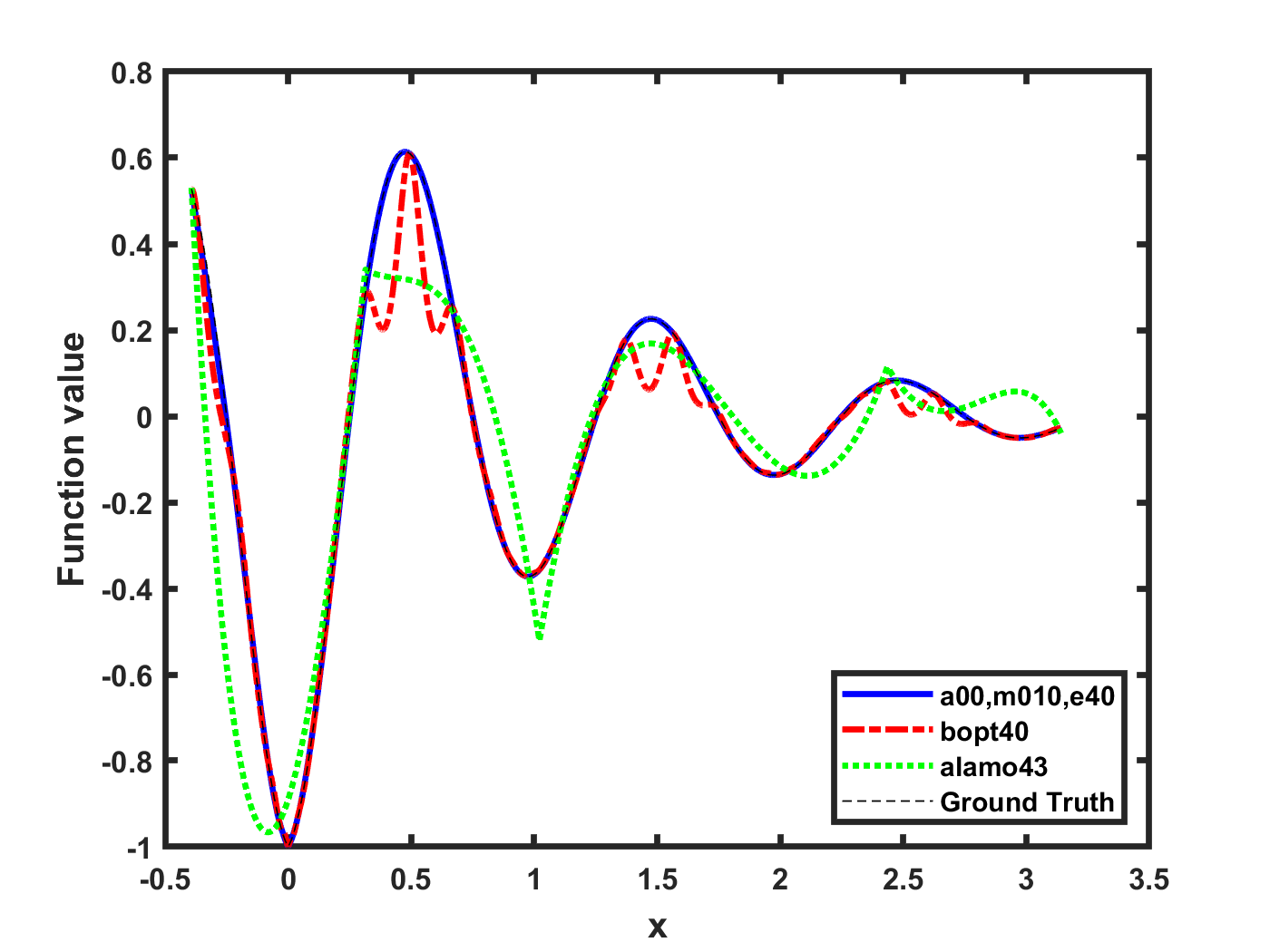}
    \caption{$E^{\max,\textrm{total}} = 40$}
      \label{fig:damped_harmonic_oscillator_e40_comparison}
    \end{subfigure}
  \caption{Surrogate comparison, on the damped harmonic oscillator function, between \texttt{LineWalker-full} (``a00,m010*'', i.e., with parameters $\alpha=0$ and $\mu=0.10$), \texttt{bayesopt} (``bopt*''), and ALAMO given a minimum budget of 20, 30, and 40 function evaluations.
  \texttt{LineWalker-full} effectively coincides with the ground truth in all cases, while the others do not.  \texttt{LineWalker-full} and \texttt{bayesopt} use the precise budget allotted, while ALAMO sometimes required additional function evaluations. The two-digit number following each legend name indicates the total number of function evaluations used.}
  \label{fig:damped_harmonic_oscillator_surrogate_comparison}
\end{figure}

Figure~\ref{fig:surrogate_quality_TASE} attempts to quantify the approximation quality of each method using a metric called Total Absolute Scaled Error (TASE). 
Figure~\ref{fig:surrogate_quality_TASE} depicts the mean TASE over all 20 benchmark functions.
TASE is based on the Mean Absolute Scaled Error (MASE) metric, proffered in \cite{hyndman2006another}, which has garnered considerable attention in the forecasting community.  TASE is a scale-free error metric that relates the total maximum absolute error of one method (e.g., a new or superior method) to that of a reference method. Like MASE, by being scale-free, it allows one to compare algorithmic performance across different benchmark functions whose scale may vary significantly. Loosely speaking, TASE allows one to quantify how much of the (total absolute) error of a reference method is explained by another, typically superior method.

\begin{figure}[h!] 
\centering
\includegraphics[width=12cm]{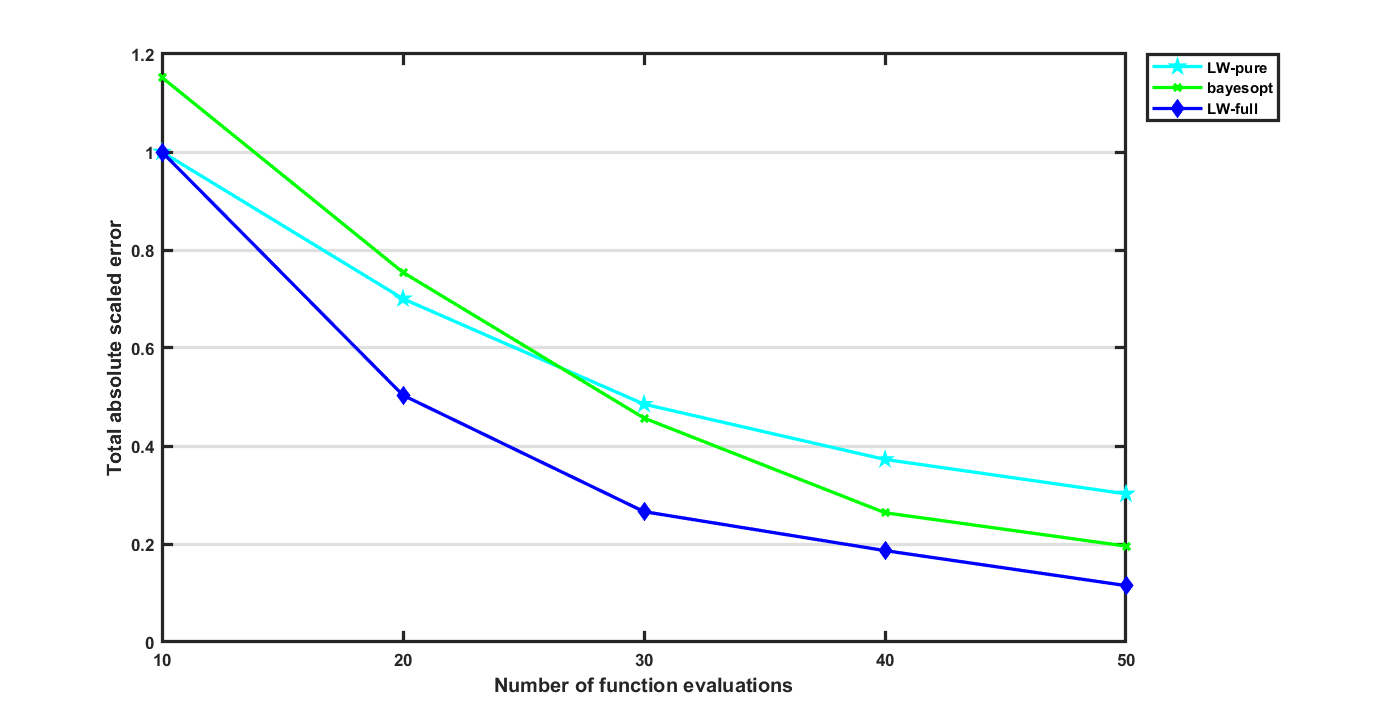}
\caption{Average total absolute scaled error (see Definition~\ref{def:TASE}) as a function of the number of black box evaluations. }
\label{fig:surrogate_quality_TASE}
\end{figure} 

Using notation similar to that given in \cite{hyndman2006another}, let
\begin{equation}
e_{i}^m = \fhat_i - \ftrue_i
\end{equation}
denote the error at grid index $i \in \mc{I}$ of an approximate function $\fhat$ trained (obtained) using method $m$ with exactly $E$ function evaluations. A more verbose notation for $e_{i}^m$ is $e_{i}^m(\ftrue,E^m)$ to emphasize that the error depends on the true function $\ftrue$ and a surrogate constructed from method $m$'s $E^m$ function evaluations.
The total absolute scaled error of method $m$ for the true function $\ftrue$, trained using exactly $E^m$ function evaluations, relative to a reference method ``ref'' trained using $E^{\textrm{ref}}$ function evaluations, is defined as
\begin{equation} \label{def:TASE}
\textrm{TASE}_{\ftrue,E^m,E^{\textrm{ref}}}^m = 
\frac{\sum_{i \in \mc{I}}|e_{i}^m|}{\sum_{i \in \mc{I}}|e_{i}^{\textrm{ref}}|}. 
\end{equation}
The denominator sums the absolute error associated with a reference (typically, na\"{i}ve) method using $E^{\textrm{ref}}$ function evaluations, and 
the numerator sums the absolute error associated with a new method $m$ using $E^m$ function evaluations. 
The mean TASE for method $m$, using $E^m$ function evaluations, with respect to a reference method using $E^{\textrm{ref}}$ function evaluations is 
\begin{equation}
\overline{\textrm{TASE}}_{E^m,E^{\textrm{ref}}}^m 
= \tfrac{1}{20} \sum_{\ftrue} \textrm{TASE}_{\ftrue,E^m,E^{\textrm{ref}}}^m 
\end{equation}
In our experiments, our reference ``method'' is the fit provided by solving the system of linear equations in \eqref{eq:least_squares_linear_system} (which is the same as \texttt{LineWalker-pure} and \texttt{LineWalker-full}) using only $E^{\textrm{ref}}=11$ function evaluations. Recall that these are the same 11 uniformly-spaced samples used to initialize both \texttt{LineWalker} methods and \texttt{bayesopt}. Thus, we always have $E^m \geq E^{\textrm{ref}}$ so that the surrogate used in the numerator always has at least as many function evaluations as the method used in the denominator.  The reference method has the same $\alpha$ and $\mu$ parameters as all \texttt{LineWalker} methods. 

With only 11 samples (the same samples used for each method), both \texttt{LineWalker} algorithms construct a better overall fit than \texttt{bayesopt}, which is why \texttt{bayesopt}'s TASE is nearly 1.2 times that of both \texttt{LineWalker} methods.  As all methods sample more points, their fit improves revealing an across-the-board reduction in TASE.  The fact that \texttt{LineWalker-pure}'s TASE reductions are less rapid than those of \texttt{LineWalker-full} indicates that the additional enhancements in \texttt{LineWalker-full} are responsible for accelerating the fit improvement. This result should not be surprising as \texttt{LineWalker-pure} is essentially an exploitation method, sampling extrema of the current approximation until the approximation fails to identify new extrema to sample. In contrast, \texttt{LineWalker-full} incorporates an explicit exploration component, as well as a simple tabu structure, to promote more diverse sampling, which ultimately leads to better approximation quality.

\subsection{LineWalker hyperparameters and their impact}

Our \texttt{LineWalker} algorithms depend on the hyperparameters listed in Table~\ref{table:linewalker_param_values}, the most influential of which are $N$, $\alpha$, and $\mu$, as they govern the surrogate quality, much like the kernel in Gaussian Process Regression.
Recall that $N$ denotes the number of grid indices, while $\alpha$ and $\mu$ regulate the penalty on the first- and second-derivative changes (computed via finite differences), respectively. 
We obtain the same results using $N=10,000$ grid indices for all test instances with a small increase in computation time (i.e., tens of seconds -- see Figure~\ref{fig:cpu_time_per_iteration_shekel}).  We also obtain the same results setting $\mu=0.001$. For a highly nonconvex function and a small optimality tolerance, $N$ should be chosen sufficiently large (e.g., $N=10,000$). Otherwise, our results indicate that a smaller number of grid indices suffices.

The tabu search-related parameters appear to have a less pronounced impact and mainly affect the sampling selection, which ultimately impacts the number of samples needed to declare optimality.
The short-term tabu tenure parameter $\tabuTenureShort$ governs the number of iterations until the neighborhood of a previously sampled surrogate extrema is revisited. If one wishes to exploit more than explore, then one can reduce $\tabuTenureShort$ or turn off the short-term tabu tenure completely.

\section{Conclusions and Future Work} \label{sec:conclusions}

Contrary to what some DFO aficionados may profess, this work has shown that one-dimensional, deterministic line search for nonconvex black box functions is not a solved problem and that there is still room for improvement. In doing so, we introduced two line search algorithms -- \texttt{LineWalker-pure} and \texttt{LineWalker-full} -- that chase a novel surrogate's extrema to guide the sampling strategy. Whereas \texttt{LineWalker-pure} behaves in a highly ``exploitative'' manner, which may lead to oversampling in a particular neighborhood, \texttt{LineWalker-full} incorporates additional tabu search concepts to induce a balance between exploration and exploitation. Somewhat surprisingly, even our most na\"{i}ve \texttt{LineWalker-pure} implementation is superior to NOMAD, a leading state-of-the-art DFO solver. Off-the-shelf methods like \texttt{fminbound} and \texttt{fminsearch} (Nelder-Mead simplex) perform rather poorly on our benchmark suite of (mostly) complicated nonconvex functions. Bayesian optimization proves to be the most competitive with our enhanced \texttt{LineWalker-full} method, while the latter yields, on average, superior surrogate approximations for any fixed number of function evaluations.

Empirical evidence suggests that our underlying ``extrema hunting'' philosophy, a cornerstone of our \texttt{LineWalker} algorithms, is a sensible strategy. Thus, perhaps unexpectedly, even if our goal is to find a global minimum, we show that there are some benefits to sampling maxima of our surrogate as these samples improve the surrogate quality (see, e.g., Figure~\ref{fig:out_holder_1Dslice}) and ultimately guide the algorithm to better samples. In other words, our surrogate's extrema appear, at least empirically, to be more ``information-rich'' than other unexplored samples.
 

Given LineWalker's success in one dimensional function approximation, it is natural to ask how the method scales to higher dimensions. Clearly, the discretization, which affords considerable flexibility in one dimension, is the algorithm's Achilles heel as it quickly becomes prohibitive in even two or three dimensions.  For example, given $N=10^4$, one would need $N^2 = 10^8$ grid points in $\Re^2$ alone. Moreover, the tabu structures must be modified to explicitly define neighborhoods and what it means to ``sample around the bend.''  Bayesian optimization and basis function approaches are better-suited for higher dimensional surrogates.

Another potential criticism of our approach is that we make no attempt to offer (deterministic or probabilistic) bounds on our surrogate.  This choice stems from our assumption that no prior information is known beyond the presence of a deterministic smooth (or mostly smooth) function. 
In contrast, if one assumes that more information is available, then Bayesian optimization, for example, may be a worthy choice as one can assume a prior distribution on the data. Even in the one-dimensional setting, however, this assumption may lead to overconfidence. For example, inspecting the interval $[2.5,3]$ in Figure~\ref{fig:out_schaffer2A_1Dslice}(a), the lower bounds suggested by \texttt{bayesopt} are clearly wrong because it has not sampled the interior of this interval to detect a better solution. Instead of using an acquisition function to balance exploration and exploitation, our algorithms identify all non-tabu extrema of the surrogate and sort them increasing order.  If no non-tabu candidates are identified, then we simply find the largest unexplored interval, breaking ties by choosing the one with the smallest (in terms of objective function value) endpoints.

There are numerous opportunities for future work and extensions.
One could 
(i) incorporate other information, e.g., Lipschitz constant bounds, into the objective function \eqref{model:unconstrained_opt_for_approximate_f} or constraints of \eqref{model:constrained_nonlinear_regression2}; 
(ii) sample saddle points, not just extrema, of the surrogate; 
(iii) randomize the \texttt{sort()} function in Step~\ref{step:sort_points_in_ascending_order} of Algorithm~\ref{algo:segment_search_1D_budgetLimited} so that candidate samples are sorted in a random order and thus the non-tabu candidate with the lowest approximate objective function value is not always selected first;
(iv) pursue an ensemble approach in which multiple surrogates are simultaneously constructed and used to generate multiple candidate samples. For example, one could choose different values for the regularization parameter value $(\alpha,\mu)$. One would have to determine the weight to ascribe to each surrogate to determine which candidate to select for sampling. 

\small
\bibliographystyle{plainnat}
\bibliography{linewalker_refs}

\newpage
\section{Appendix}

In this section, we offer detailed supplementary material to give a more holistic picture of our approach.
Section~\ref{sec:benchmark_suite_of_functions} furnishes detailed information about our benchmark suite of functions.
Section~\ref{sec:cpu_time_per_iteration} shows the CPU time per iteration for \texttt{LineWalker-full} as a function of the number $N$ of grid indices.
Section~\ref{sec:alamo} explains our experiments with ALAMO.
Section~\ref{sec:linewalker_vs_bayesopt} showcases a detailed visual comparison between \texttt{bayesopt} and \texttt{LineWalker-full}.

\subsection{Benchmark suite of functions} \label{sec:benchmark_suite_of_functions}

Table~\ref{tab:test_fnc_gallery_table} categorizes these test functions based on their shape and number of extrema.
Tables~\ref{tab:test_fnc_functional_form_table} and \ref{tab:test_fnc_functional_form_table2} provide the precise analytical form of each test function, along with the set of global minimizers and minimum objective function values.

\begin{table} [h!]
\centering														
\begin{tabular}{p{4cm}p{3cm}cccc}																
\toprule												
	\textbf{Function Name}	&	\textbf{Category}	&	\textbf{Periodic}	&	\textbf{Nonsmooth}	&	\textbf{\# Max}	&	\textbf{\# Min}	\\
\hline												
\href{http://www.sfu.ca/~ssurjano/ackley.html}{	ackley	} &	Many local extrema	&	$\checkmark$	&		&	48	&	49	\\
{\footnotesize \href{http://hyperphysics.phy-astr.gsu.edu/hbase/oscda.html}{	damped harmonic oscillator	}} &	Few local extrema	&	$\checkmark$	&		&	4	&	3	\\
\href{http://www.sfu.ca/~ssurjano/dejong5.html}{	dejong5	} &	Steep ridges	&		&		&	6	&	5	\\
\href{http://www.sfu.ca/~ssurjano/grlee12.html}{	grlee12	} &	Many local extrema	&		&	$\checkmark$	&	7	&	6	\\
\href{http://www.sfu.ca/~ssurjano/langer.html}{	langer	} &	Many local extrema	&		&		&	16	&	16	\\
\href{http://www.sfu.ca/~ssurjano/michal.html}{	michal	} &	Steep ridges	&		&		&	7	&	10	\\
\href{http://infinity77.net/global_optimization/test_functions_nd_P.html#go_benchmark.Plateau}{	plateau	} &	Bowl-shaped	&		&	$\checkmark$	&	Infinite	&	Infinite	\\
\href{http://www.sfu.ca/~ssurjano/rastr.html}{	rastrigin	} &	Many local extrema	&	$\checkmark$	&		&	7	&	6	\\
	sawtoothD	&	Many local extrema	&	$\checkmark$	&	$\checkmark$	&	10	&	10	\\
\href{http://www.sfu.ca/~ssurjano/schwef.html}{	schwefel	} &	Many local extrema	&	$\checkmark$	&		&	7	&	7	\\
\href{http://www.sfu.ca/~ssurjano/stybtang.html}{	stybtang	} &	Simple	&		&		&	2	&	1	\\
\href{http://www.sfu.ca/~ssurjano/zakharov.html}{	zakharov	} &	Plate-shaped	&		&		&	1	&	0	\\
	Easom-Schaffer2A	&	Few local extrema	&		&		&	3	&	4	\\
\href{http://www.sfu.ca/~ssurjano/egg.html}{	egg2	} &	Many local extrema	&		&	$\checkmark$	&	7	&	8	\\
\href{http://www.sfu.ca/~ssurjano/holder.html}{	holder	} &	Many local extrema	&	$\checkmark$	&	$\checkmark$	&	6	&	7	\\
\href{http://www.sfu.ca/~ssurjano/langer.html}{	langer2	} &	Many local extrema	&		&		&	7	&	7	\\
\href{http://www.sfu.ca/~ssurjano/levy.html}{	levy	} &	Few local extrema	&	$\checkmark$	&		&	4	&	5	\\
\href{http://www.sfu.ca/~ssurjano/levy13.html}{	levy13	} &	Many local extrema	&	$\checkmark$	&		&	14	&	15	\\
\href{http://www.sfu.ca/~ssurjano/schaffer2.html}{	schaffer2A	} &	Few local extrema	&		&	$\checkmark$	&	4	&	4	\\
\href{http://www.sfu.ca/~ssurjano/shekel.html}{	shekel	} &	Few local extrema	&		&		&	3	&	4	\\
\bottomrule																
\end{tabular}																
\caption{Gallery of test functions. Category: This classification is taken from \cite{surjanovic2013virtual} with some amendments and additions where appropriate.
Periodic: There is regularity/frequency to the spacing between the extrema over the entire domain.
Nonsmooth: Does not possess continuous derivatives over the domain.
All functions are lower semi-continuous, except plateau.
The number of local minima (\# Min) and maxima (\# Max) excludes endpoints.}
  \label{tab:test_fnc_gallery_table}
\end{table}

\begin{landscape}
\begin{table}
\centering
\begin{tabular}{p{2cm} ccrr}														
\toprule															
	\textbf{Function Name}	&	\textbf{$f(x)$}	&				\textbf{Domain}		&	\textbf{$x^*$}	&	\textbf{$f(x^*)$}	\\	
\hline															
\href{http://www.sfu.ca/~ssurjano/ackley.html}{	ackley	} &	$-20 \exp(-0.2 x) -\exp(\cos(2\pi x)) + 20 + \exp(1)$	&	$[	-17	,	32	$]	&	0	&	0	\\	[10pt]
{\scriptsize \href{http://hyperphysics.phy-astr.gsu.edu/hbase/oscda.html}{	damped harmonic oscillator	}} &	$-\exp(-|x|)\cos(2\pi |x|)$	&		$[-\pi/8	,	\pi]$		&	0	&	-1	\\	[10pt]
\href{http://www.sfu.ca/~ssurjano/dejong5.html}{	dejong5	} &	$\begin{array}{l} \Bigg( .002 + \sum_{i=1}^{25} \frac{1}{i + (x_1 - a_{1i})^6 + (x_2 - a_{2i})^6} \Bigg)^{-1} \textrm{ where } \\ \v{a}= {\tiny \begin{bmatrix} -32 & -16 &   0 &  16 &  32 & -32 & \cdots &  0 & 16 & 32 \\ -32 & -32 & -32 & -32 & -32 & -16 & \cdots & 32 & 32 & 32 \\ \end{bmatrix}} \end{array}$	&	$[	-65.536	,	65.536	$]	&	$-31.97600$	&	0.99800	\\	[30pt]
\href{http://www.sfu.ca/~ssurjano/grlee12.html}{	grlee12	} &	$\left\{ \begin{array}{cl} \frac{\sin(10\pi x^{1.10})}{2x} + (x-1)^4 + 5 & \mbox{if $x < 0.71$} \\  \frac{\sin(10\pi x^{0.75})}{2x} + (x-1)^4 + 5 & \mbox{if $0.71 \leq x \leq 0.86$} \\   \frac{\sin(10\pi x^{0.75})}{2x} + (x-1)^4 + 1 & \mbox{if $x > 0.86$} \\ \end{array} \right.$	&	$[	0.5	,	2.5	$]	&	0.76879	&	$-0.64708$	\\	[30pt]
\href{http://www.sfu.ca/~ssurjano/langer.html}{	langer	} &	$\begin{array}{l} \sum_{i=1}^{m=5} c_i \exp\Big( -\tfrac{1}{\pi}(x-a_i)^2 \Big) \cos\Big( \pi(x-a_i)^2 \Big), \\ \textrm{ where } \v{c}=(1, 2, 5, 2, 3), \v{a}=(3, 5, 2, 1, 7) \\ \end{array}$	&	$[	0	,	10	$]	&	6.00295	&	$-3.66452$	\\	[20pt]
\href{http://www.sfu.ca/~ssurjano/michal.html}{	michal	} &	$-\sin(x)\sin^{20}\Big(\tfrac{x^2}{\pi}\Big)$	&	$[	0	,	13	$]	&	8.00922	&	$-0.98795$	\\	[10pt]
\href{http://infinity77.net/global_optimization/test_functions_nd_P.html#go_benchmark.Plateau}{	plateau	} &	$| \lfloor x \rfloor | + | \lfloor 2x-3 \rfloor |$	&	$[	-2	,	4	$]	&	[1.5,2)	&	1	\\	[10pt]
\href{http://www.sfu.ca/~ssurjano/rastr.html}{	rastrigin	} &	$10 + x^2 - 10\cos(2\pi x)$	&	$[	-3	,	3	$]	&	0	&	0	\\	[10pt]
	sawtoothD	&	$\left\{ \begin{array}{ll} \tfrac{2}{\pi}\sin^{-1}\big(\sin( \pi x)\big) - |x| + 0 & \mbox{if $x \leq 0$} \\ \tfrac{2}{\pi}\sin^{-1}\big(\sin(3\pi x)\big) - |x| + 1 & \mbox{if $x \in (0,0.75)$} \\ \tfrac{2}{\pi}\sin^{-1}\big(\sin( \pi x)\big)       - 6 & \mbox{if $x \in [0.75,1]$} \\ \tfrac{2}{\pi}\sin^{-1}\big(\sin(3\pi x)\big) - |x| + 1 & \mbox{if $x \in (1,3.25)$} \\ \tfrac{2}{\pi}\sin^{-1}\big(\sin( \pi x)\big) - |x| + 1 & \mbox{if $x \geq 3.25$} \\ \end{array} \right.$	&	$[	-5	,	5	$]	&	1	&	$-6$	\\	[40pt]
\bottomrule															
\end{tabular}	
\caption{Functional form of 10 test functions. dejong5 is a 2-dimensional function, which we evaluate on the line segment $\{(x_1,x_2): x_1=x_2, x_1 \in [-65.536,65.536]\}$.
plateau is a 2-dimensional function $f(\v{x})=\sum_{i=1}^2 | \lfloor x_i \rfloor |$, which we evaluate on the 2D line segment with endpoints $\v{x}_1=(-2,-7)$ and $\v{x}_2=(4,5)$ or equivalently on the 2D domain $\{(x_1,x_2) : x_2=2x_1 - 3, x_1 \in [-2,4]\}$.
}
  \label{tab:test_fnc_functional_form_table}													
\end{table}

\begin{table}
\centering
\begin{tabular}{p{2cm} ccrr}														
\toprule															
	\textbf{Function Name}	&	\textbf{$f(x)$}	&				\textbf{Domain}		&	\textbf{$x^*$}	&	\textbf{$f(x^*)$}	\\	
\hline															
\href{http://www.sfu.ca/~ssurjano/schwef.html}{	schwefel	} &	$418.9829 - x\sin(\sqrt{|x|})$	&	$[	-500	,	500	$]	&	420.96870	&	\num{1.27278e-05}	\\	[10pt]
\href{http://www.sfu.ca/~ssurjano/stybtang.html}{	stybtang	} &	$\tfrac{1}{2}(x^4 - 16x^2 + 5x)$	&	$[	-5	,	5	$]	&	$-2.903534$	&	$-39.16599$	\\	[10pt]
\href{http://www.sfu.ca/~ssurjano/zakharov.html}{	zakharov	} &	$\tfrac{3}{2} x^2 + \tfrac{1}{2} x^4$	&	$[	-5	,	10	$]	&	0	&	0	\\	[10pt]
	easom-schaffer2A	&	$\begin{array}{ll}
    -2\cos^2(w) \exp\Big(-2(w-\pi)^2\Big), \textrm{where } w=x-25 & \textrm{if } x \geq 0 \\
    -0.5 - \frac{\sin^2(w^2) - 0.5}{(1 + 0.001 w^2)^2} - 0.1 |w|, \textrm{where } w=0.3x & \textrm{o.w.}
\end{array}$	&	$[	-10	,	30	$]	&	28.14363	&	-2	\\	[10pt]
\href{http://www.sfu.ca/~ssurjano/egg.html}{	egg2	} &	$-(x+47) \sin\Big(\sqrt{\big\vert x+\tfrac{x^{1/3}}{2}+47 \big\vert}\Big)-x \sin\Big(\sqrt{\big\vert x^{2/3}-47\big\vert}\Big)$	&	$[	-600	,	200	$]	&	-559.35187	&	-518.98768	\\	[10pt]
\href{http://www.sfu.ca/~ssurjano/holder.html}{	holder	} &	$- \Big\vert \sin(x)\cos(x)+\exp\Big(\big\vert 1 - \frac{\sqrt{2x^2}}{\pi} \big\vert \Big) \Big\vert$	&	$[	0	,	11	$]	&	10.32006	&	-18.69332	\\	[10pt]
\href{http://www.sfu.ca/~ssurjano/langer.html}{	langer2	} &	langer with $\v{a}=(5, 1, 5, 2, 8)$	&	$[	3	,	8	$]	&	4.02921	&	-3.94660	\\	[10pt]
\href{http://www.sfu.ca/~ssurjano/levy.html}{	levy	} &	$\begin{array}{l} \sin^2(\pi w) + (w-1)^2[1+\sin^2(2\pi w)], \\ \textrm{ where } w=1+\tfrac{x-1}{4} \\ \end{array}$	&	$[	-10	,	2	$]	&	1	&	0	\\	[10pt]
\href{http://www.sfu.ca/~ssurjano/levy13.html}{	levy13	} &	$-\sin^2(3\pi x) - (x-1)^2[2+\sin^2(3\pi x)+\sin^2(2\pi x)]$	&	$[	-3	,	2	$]	&	-2.81896	&	-56.48262	\\	[10pt]
\href{http://www.sfu.ca/~ssurjano/schaffer2.html}{	schaffer2A	} &	$-0.5 - \frac{\sin^2(x^2) - 0.5}{(1 + 0.001 x^2)^2} - 0.2 |x|$	&	$[	-2	,	3	$]	&	2.80596	&	-1.55304	\\	[10pt]
\href{http://www.sfu.ca/~ssurjano/shekel.html}{	shekel	} &	$\sum_{i=1}^{10} \Big( \sum_{j=1}^4 (x-C_{ji})^2 + \beta_i \Big)^{-1}$	&	$[	0	,	9	$]	&	4	&	-10.53626	\\	[10pt]
\bottomrule															
\end{tabular}	
\caption{Functional form of remaining 10 test functions.
}
  \label{tab:test_fnc_functional_form_table2}													
\end{table}

\end{landscape}

\subsection{CPU time per iteration} \label{sec:cpu_time_per_iteration}

While we assume that computation time will be dominated by calls to an expensive simulator or oracle, for sake of completeness, we provide some empirical evidence of \texttt{LineWalker-full}'s computation time. Specifically, using the Shekel function, we varied the grid size $N$ from 1k to 14k points. For each value of $N$, we took 50 samples, of which the first 11 are taken before the main loop.  Figure~\ref{fig:cpu_time_per_iteration_shekel} shows that, for $N=$5k, it takes \texttt{LineWalker-full} roughly 0.5 s, and nearly 3 s for $N=$10k. The equation in the figure indicates that the CPU time increases at a cubic rate in $N$, which is not surprising since \texttt{LineWalker-full} must solve a linear system whose computational complexity $O(N^3)$.
\begin{figure}[h!] 
\centering
\includegraphics[width=8cm]{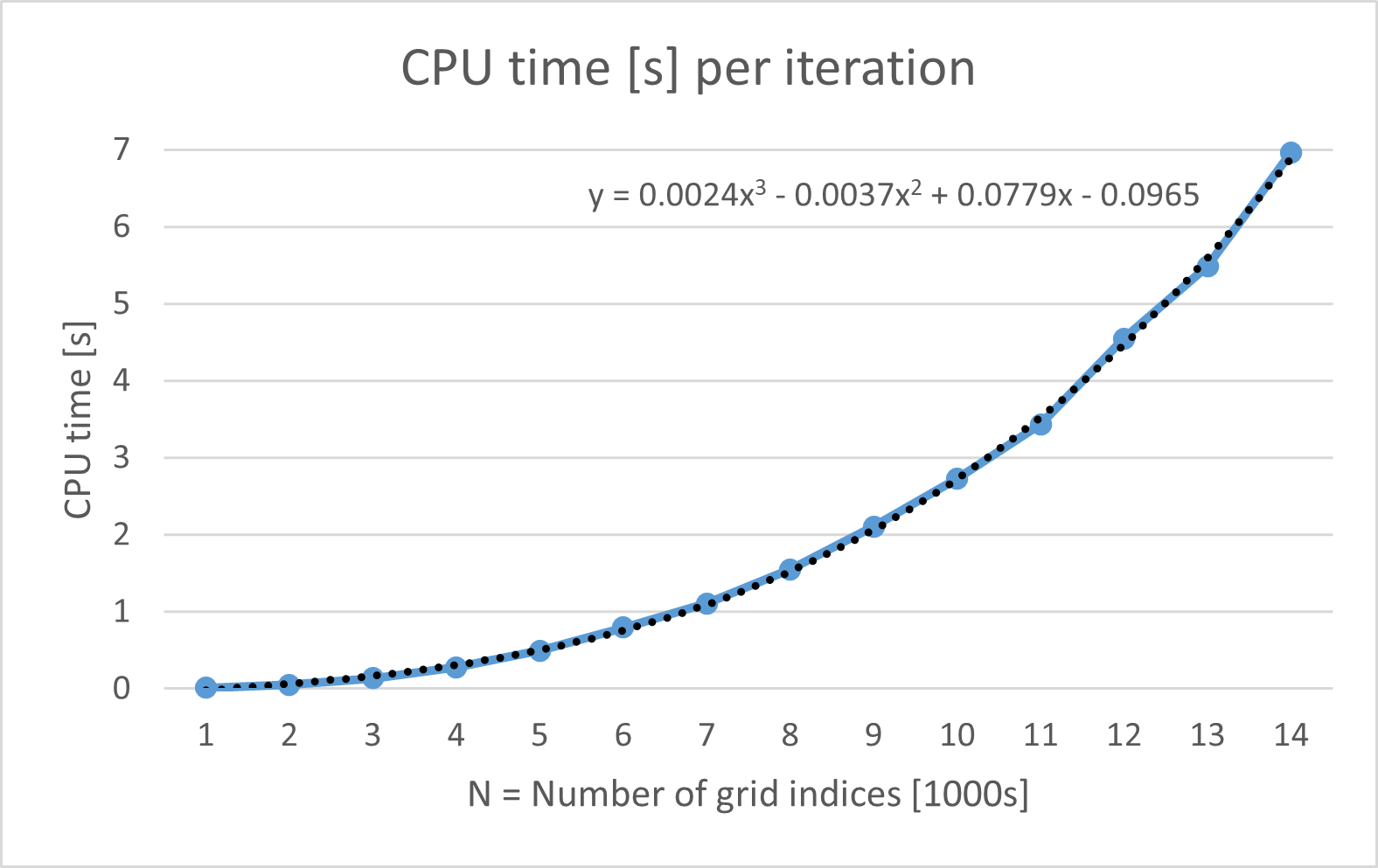}
\caption{CPU time [s] per iteration for \texttt{LineWalker-full} as a function of the number $N$ of grid indices. The polynomial fit equation suggests a cubic relationship in the CPU time per iteration ($y$) and the number of grid indices ($x$). 
}
\label{fig:cpu_time_per_iteration_shekel}
\end{figure} 

\subsection{ALAMO} \label{sec:alamo}
In our experiments with ALAMO version 2021.12.28,
we permitted ALAMO to include the following functions in its surrogate model construction:
constant; linear; logarithmic; exponential; sine; cosine; monomials with powers 0.5, 2, 3, 4, and 5; pairwise combinations of basis functions (see line ``multi2power 2 3 4”); and Gaussian radial basis functions.
Out of fairness, we did not include any custom basis functions as we attempted to mimic the likely assumptions an agnostic user with a truly unknown black box function would do with ALAMO. 
We provided ALAMO with 11 initial samples (the same points given to all methods) from which to begin the surrogate construction.  
We minimized mean square error (MSE) so that ALAMO could perform well in the RMSE metric even though this might lead to surrogates with a larger number of basis functions.
Figure~\ref{fig:alamo_input_file_for_grlee12Step} shows an example ALAMO input file for grlee12Step.

Critical to ALAMO's exploration is the choice of adaptive sampling technique.  We selected the popular DFO method SNOBFIT rather than a random sampler to avoid stochasticity and having to average results.  While this pairing of ALAMO and SNOBFIT worked reasonably well for eight out of the first twelve functions, four functions - ackley, dejong5, langer, and schwefel - caused issues that we do not understand and could not resolve.  Specifically, we could not force SNOBFIT to make additional samples for these four functions.  With no additional samples, the surrogate quality stagnated as no improvements were made based on additional information.  Since we could not resolve this issue (even after trying different objective function criteria, e.g., BIC and AICc), we chose not to compare against ALAMO for these functions. 
Finally, for reasons that we do not understand, we could not force SNOBFIT to evaluate only one new sample point in each iteration even after setting the ALAMO parameter \texttt{maxpoints} to 1.  As a consequence, we permitted ALAMO to have more samples than the other methods to construct its surrogates.  This explains why in  Figure~\ref{fig:damped_harmonic_oscillator_surrogate_comparison}, ALAMO performed 22, 34, and 43 function evaluations when the other methods were given a strict limit of 20, 30, and 40 function evaluations, respectively.

\begin{figure}
\begin{verbatim}
! ALAMO input file for grlee12Step
ninputs 1
noutputs 1
xmin 0.500000
xmax 2.500000
ndata 11
minpoints 1
maxpoints 1
xlabels X1
zlabels Z
expfcns 1
linfcns 1
logfcns 1
sinfcns 1
cosfcns 1
constant 1
monomialpower 0.5 2 3 4 5
multi2power 2 3 4
grbfcns 1
rbfparam 1.0
modeler 5
simulator grlee12Step_alamo.exe
maxiter 10
sampler 2
simin SIMIN
simout SIMOUT
BEGIN_DATA
0.500000 5.930875
0.700040 5.504902
0.900080 0.618779
1.100120 1.329576
1.300160 1.209880
1.500200 0.734344
1.700240 1.340402
1.900280 1.801388
2.100320 2.231071
2.300360 4.043257
2.500000 5.989909
END_DATA
\end{verbatim}
\caption{ALAMO input file for the test function grlee12Step}
\label{fig:alamo_input_file_for_grlee12Step}
\end{figure}

\newpage
\subsection{LineWalker vs. bayesopt: visual comparison of each benchmark function approximation} \label{sec:linewalker_vs_bayesopt}

Figures~\ref{fig:out_ackley}-\ref{fig:out_zakharov} provide a tantalizing visual comparison between \texttt{LineWalker-full} and \texttt{bayesopt} for all 20 functions given a budget $E^{\max,\textrm{total}}$ of 20, 30, 40, and 50 function evaluations.
No legend is given for the \texttt{LineWalker-full} figures; see the Figure~\ref{fig:rastrigin_demo_iter1} legend for details. 
The \texttt{bayesopt} legend requires some explanation. Because the objective function is noise-free (i.e., deterministic), the ``Model mean'' and ``Noise error bars'' coincide and represent the mean of the GPR posterior distribution.  Meanwhile, the ``Model error bars'' show the 95\% confidence bounds for the posterior mean.  
 
In addition to our motivating example shown in Figure~\ref{fig:damped_harmonic_oscillator_surrogate_comparison}
\texttt{LineWalker-full} produces a much better surrogate than \texttt{bayesopt} on several functions:
\begin{itemize}
\item Eason-Schaffer2A (Figure~\ref{fig:out_easom_schaffer2A_1Dslice}): Along the long plateau in the interval $[0,24]$, \texttt{bayesopt} produces a surrogate that looks far more like a ``sagging electric cable transmission line'' than a straight line fit.  
\item Grimacy \& Lee (Figure~\ref{fig:out_grlee12Step}): \texttt{bayesopt} never resolves the local maximum at $x \approx 0.7$ such that, even with 50 function evaluations, the mean predicted objective value at this point according to the posterior distribution is 7, not 6. \texttt{LineWalker-full} resolves this local maximum. 
\item Holder (Figure~\ref{fig:out_holder_1Dslice}): \texttt{LineWalker-full} is far more successful at finding many of the function's peaks than \texttt{bayesopt}.
\item Langer (Figure~\ref{fig:out_langer}): \texttt{LineWalker-full} produces a more accurate approximation than \texttt{bayesopt} at the local extrema near $x=0.5$ (grid index 300) and $x=3.75$ (grid index 1750), and in the interval $[6.5,7.5]$.
\item Levy13 (Figure~\ref{fig:out_levy13_1Dslice}): When the budget $E^{\max,\textrm{total}} \leq 40$, \texttt{bayesopt} underestimates the local maximum around $x=-2.25$ (grid index 700) and, with $E^{\max,\textrm{total}} \leq 20$, struggles in the interval $[-1,2]$.
\item SawtoothD (Figure~\ref{fig:out_sawtoothD}): \texttt{bayesopt} produces a surrogate that looks more like a ``sagging electric cable transmission line'' relative to the true function and what \texttt{LineWalker-full} generates.
\item Shekel (Figure~\ref{fig:out_shekel_1Dslice}): Although \texttt{bayesopt} finds a near global minimum in 20 samples while \texttt{LineWalker-full}, its surrogate is inferior to that of  \texttt{LineWalker-full} when $E^{\max,\textrm{total}} \in \{30,40\}$.
\end{itemize}

\begin{figure} [h!]
\centering
\begin{subfigure}[b]{0.270\textwidth}
\includegraphics[width=\textwidth]{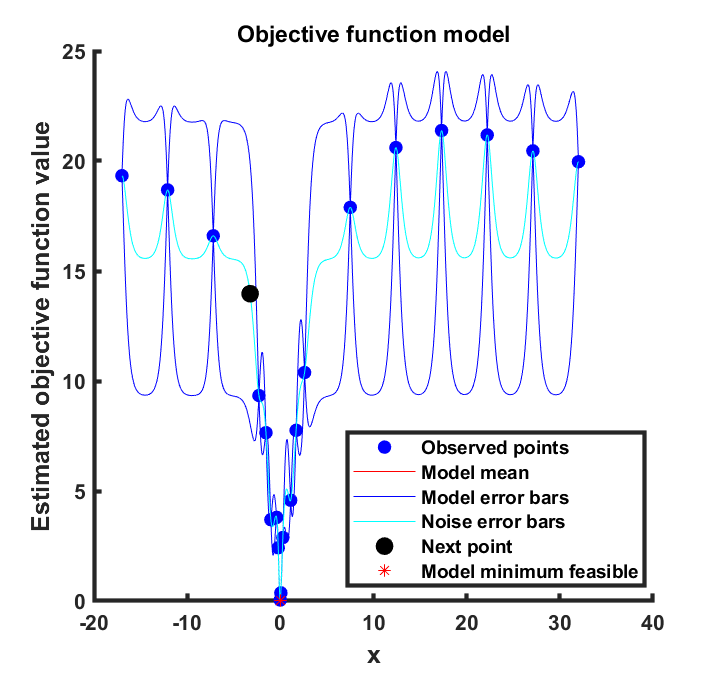}
\caption{bayesopt20}
\end{subfigure}
\begin{subfigure}[b]{0.330\textwidth}
\includegraphics[width=\textwidth]{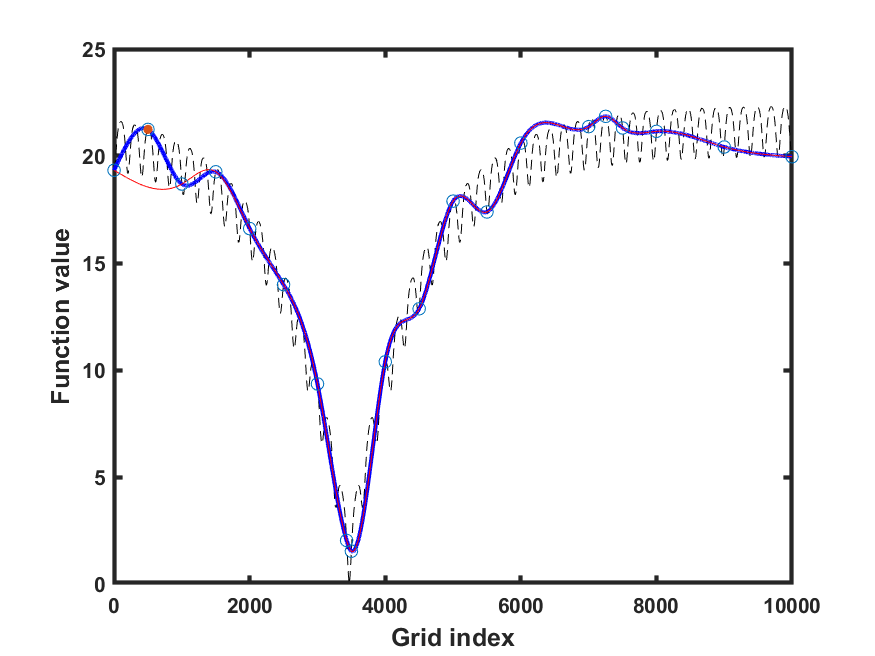}
\caption{LineWalker20}
\end{subfigure}
\newline
\begin{subfigure}[b]{0.270\textwidth}
\includegraphics[width=\textwidth]{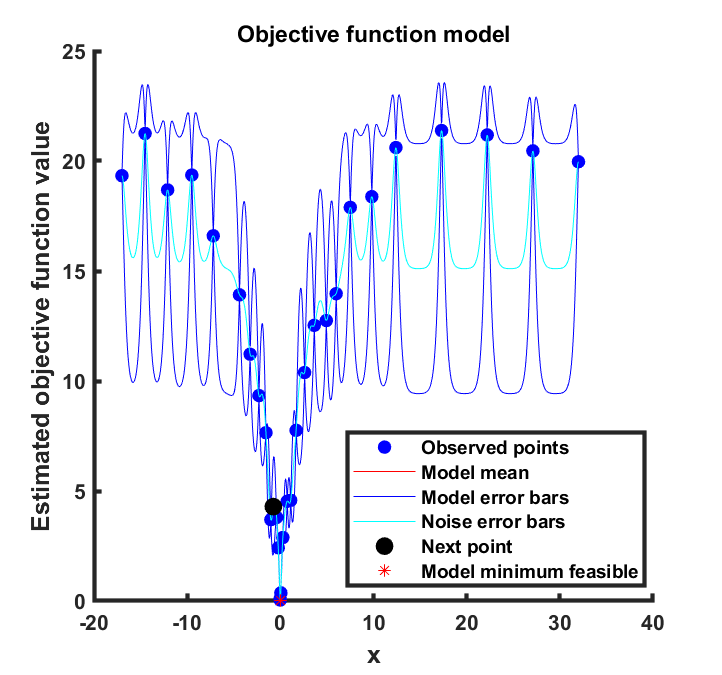}
\caption{bayesopt30}
\end{subfigure}
\begin{subfigure}[b]{0.330\textwidth}
\includegraphics[width=\textwidth]{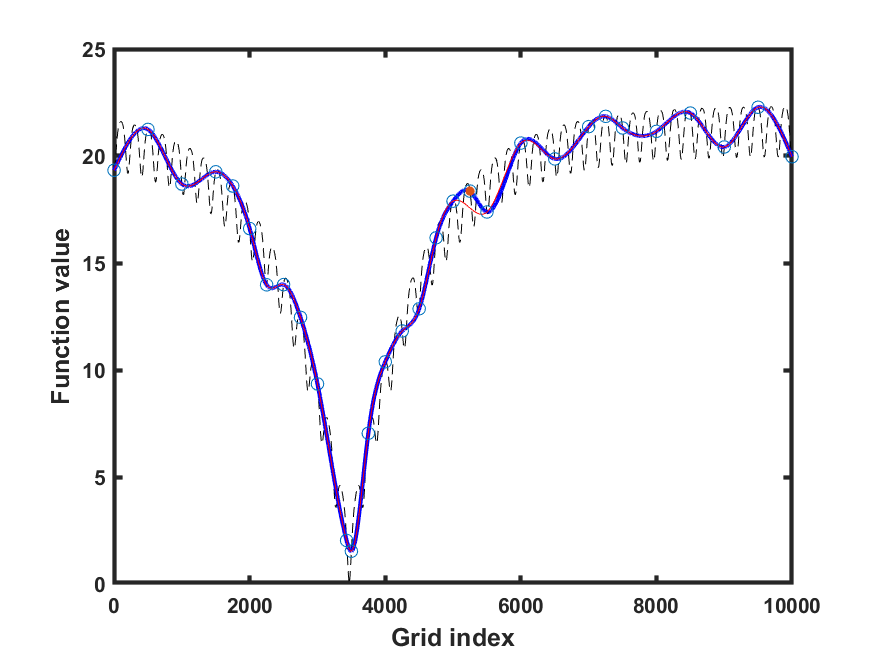}
\caption{LineWalker30}
\end{subfigure}
\newline
\begin{subfigure}[b]{0.270\textwidth}
\includegraphics[width=\textwidth]{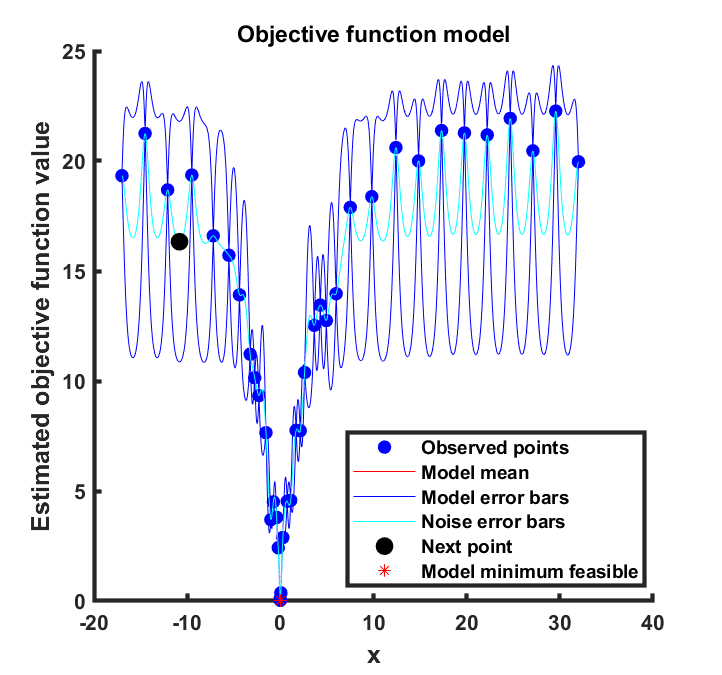}
\caption{bayesopt40}
\end{subfigure}
\begin{subfigure}[b]{0.330\textwidth}
\includegraphics[width=\textwidth]{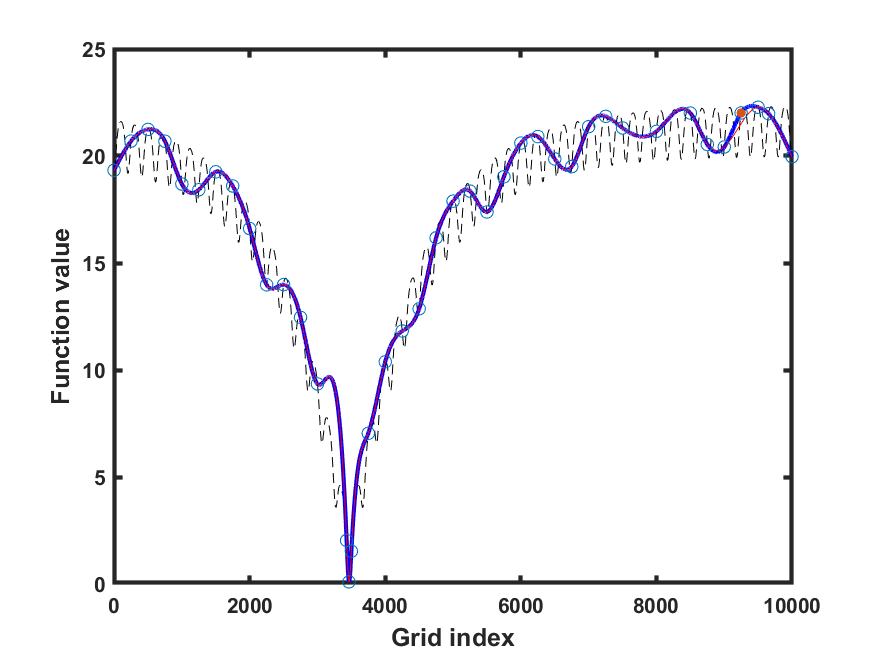}
\caption{LineWalker40}
\end{subfigure}
\newline
\begin{subfigure}[b]{0.270\textwidth}
\includegraphics[width=\textwidth]{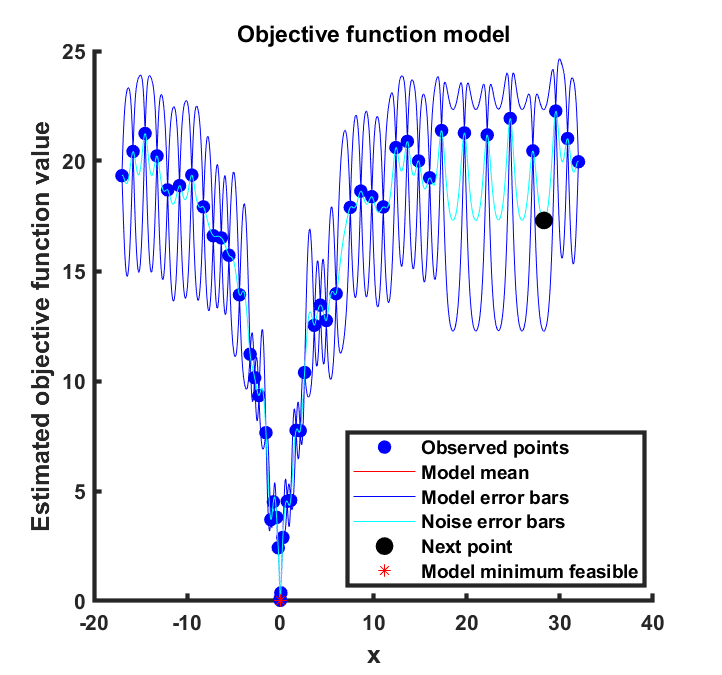}
\caption{bayesopt50}
\end{subfigure}
\begin{subfigure}[b]{0.330\textwidth}
\includegraphics[width=\textwidth]{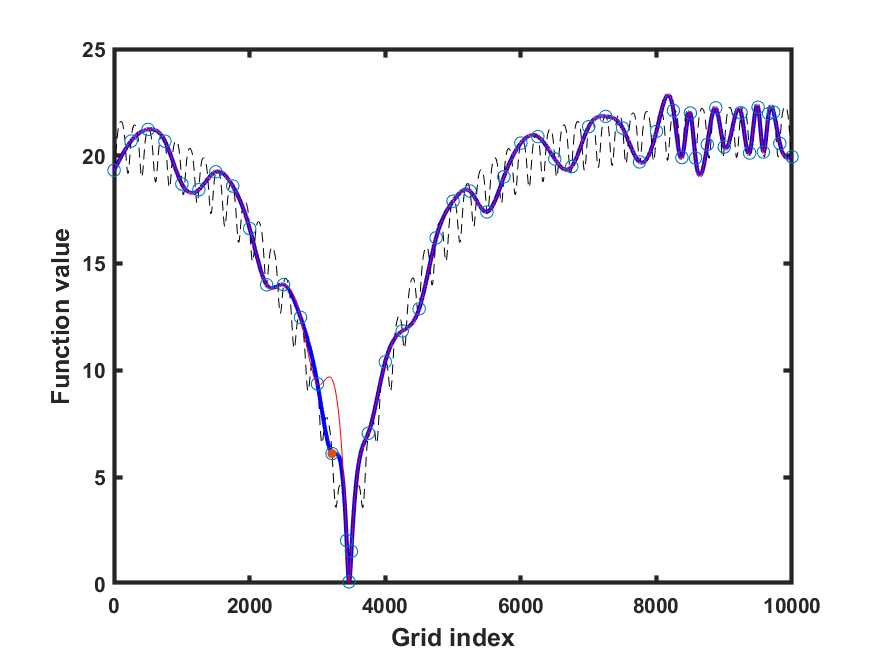}
\caption{LineWalker50}
\end{subfigure}
\newline
\caption{ackley. Left column = \texttt{bayesopt}. Right column = \texttt{LineWalker-full}}
\label{fig:out_ackley}
\end{figure}

\begin{figure} [h!]
\centering
\begin{subfigure}[b]{0.270\textwidth}
\includegraphics[width=\textwidth]{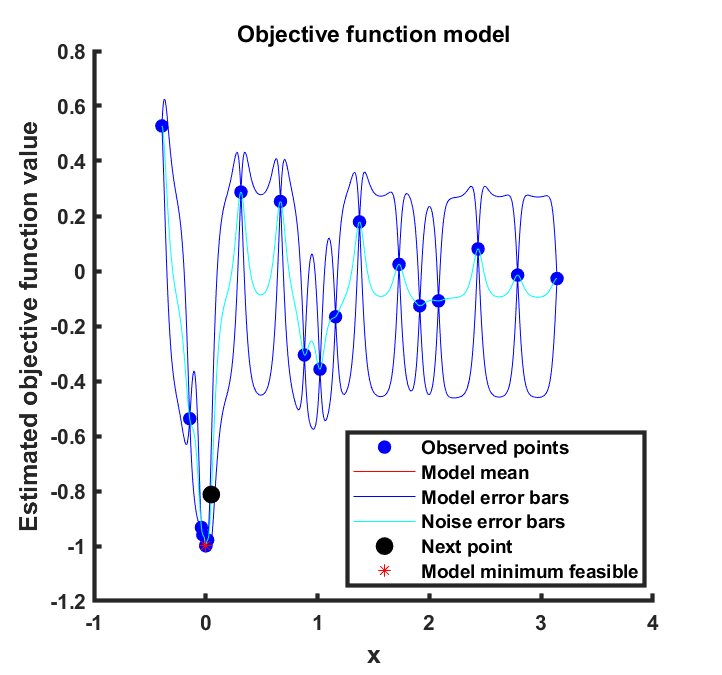}
\caption{bayesopt20}
\end{subfigure}
\begin{subfigure}[b]{0.330\textwidth}
\includegraphics[width=\textwidth]{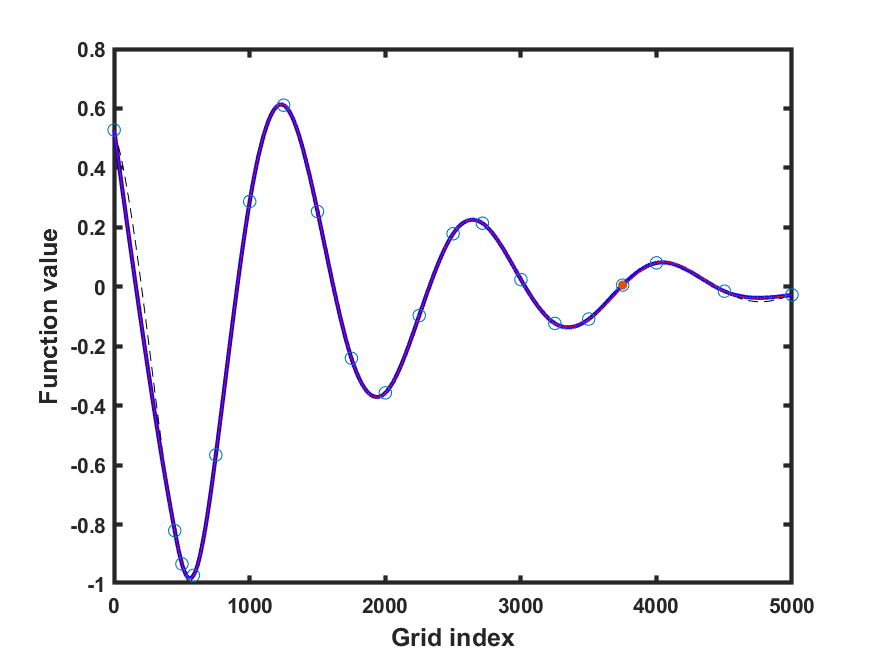}
\caption{LineWalker20}
\end{subfigure}
\newline
\begin{subfigure}[b]{0.270\textwidth}
\includegraphics[width=\textwidth]{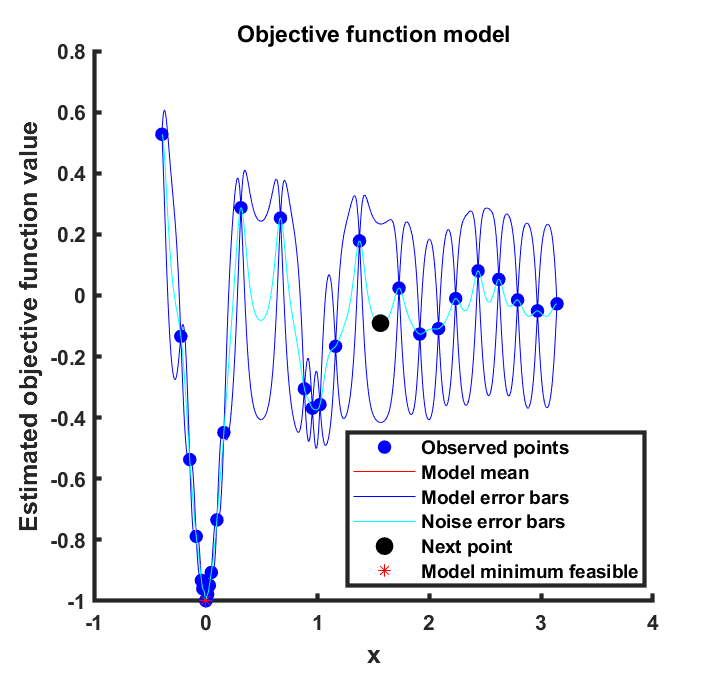}
\caption{bayesopt30}
\end{subfigure}
\begin{subfigure}[b]{0.330\textwidth}
\includegraphics[width=\textwidth]{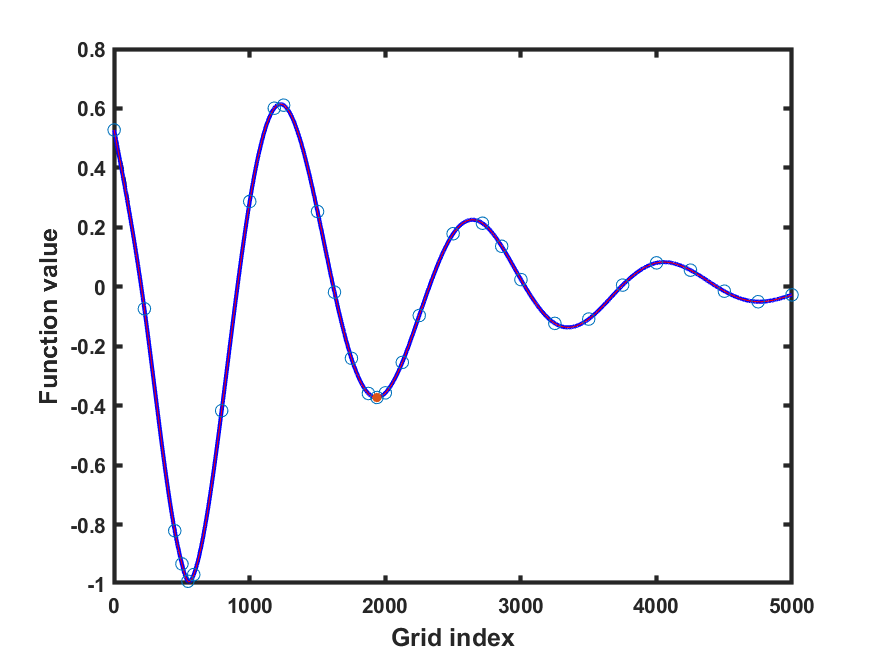}
\caption{LineWalker30}
\end{subfigure}
\newline
\begin{subfigure}[b]{0.270\textwidth}
\includegraphics[width=\textwidth]{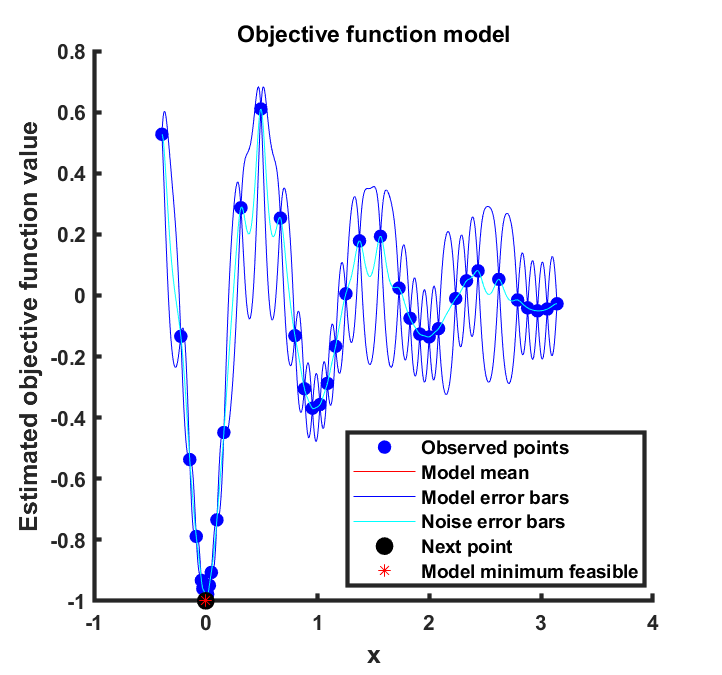}
\caption{bayesopt40}
\end{subfigure}
\begin{subfigure}[b]{0.330\textwidth}
\includegraphics[width=\textwidth]{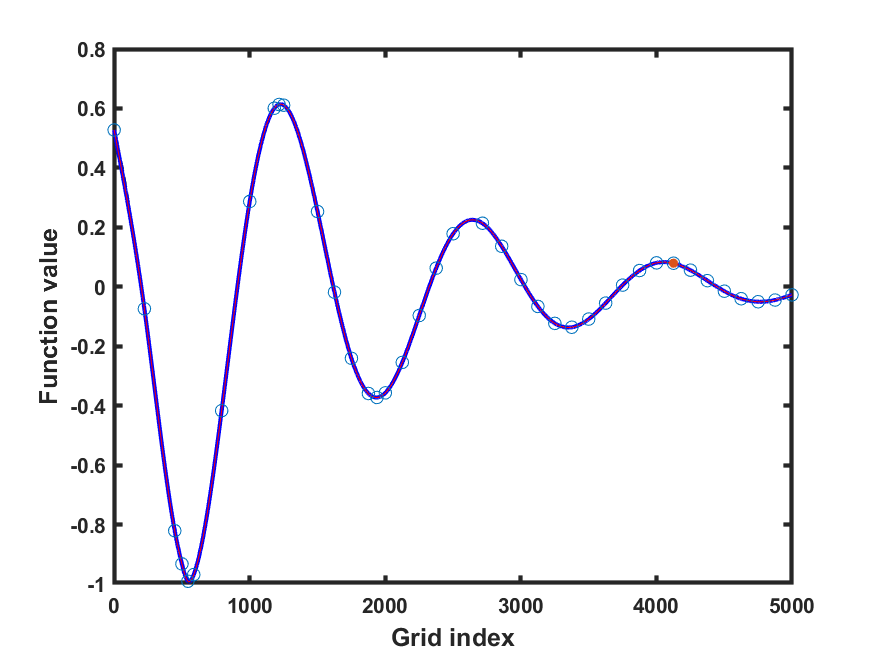}
\caption{LineWalker40}
\end{subfigure}
\newline
\begin{subfigure}[b]{0.270\textwidth}
\includegraphics[width=\textwidth]{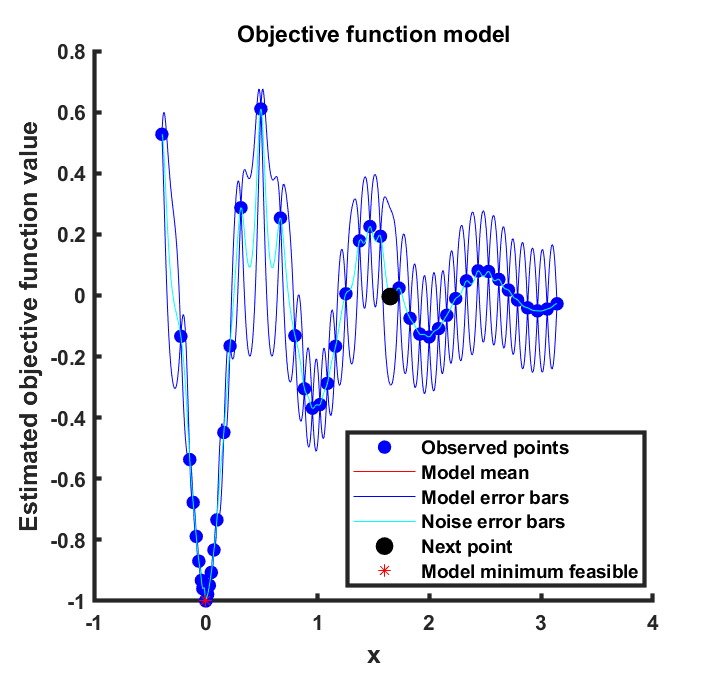}
\caption{bayesopt50}
\end{subfigure}
\begin{subfigure}[b]{0.330\textwidth}
\includegraphics[width=\textwidth]{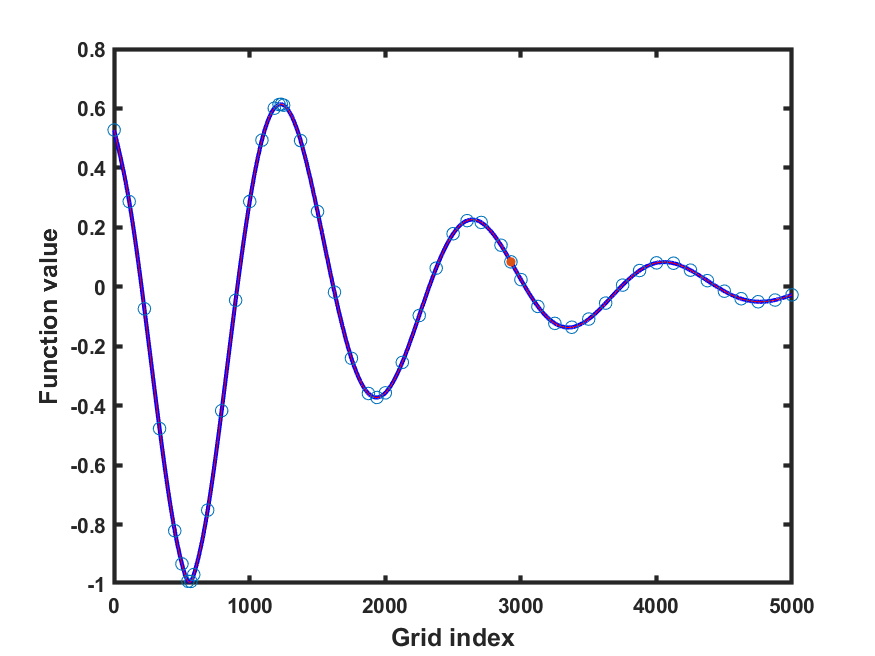}
\caption{LineWalker50}
\end{subfigure}
\newline
\caption{damped harmonic oscillator. Left column = \texttt{bayesopt}. Right column = \texttt{LineWalker-full}}
\label{fig:out_damped_harmonic_oscillator}
\end{figure}

\begin{figure} [h!]
\centering
\begin{subfigure}[b]{0.270\textwidth}
\includegraphics[width=\textwidth]{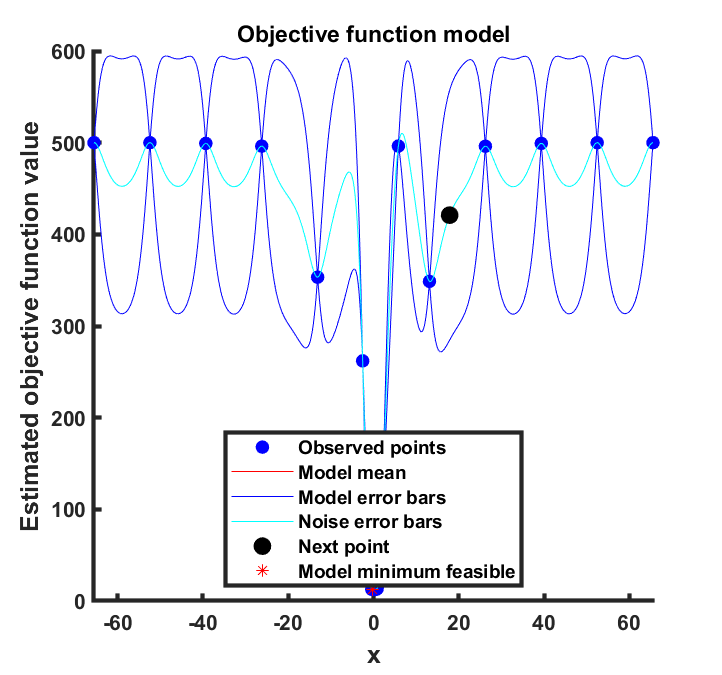}
\caption{bayesopt20}
\end{subfigure}
\begin{subfigure}[b]{0.330\textwidth}
\includegraphics[width=\textwidth]{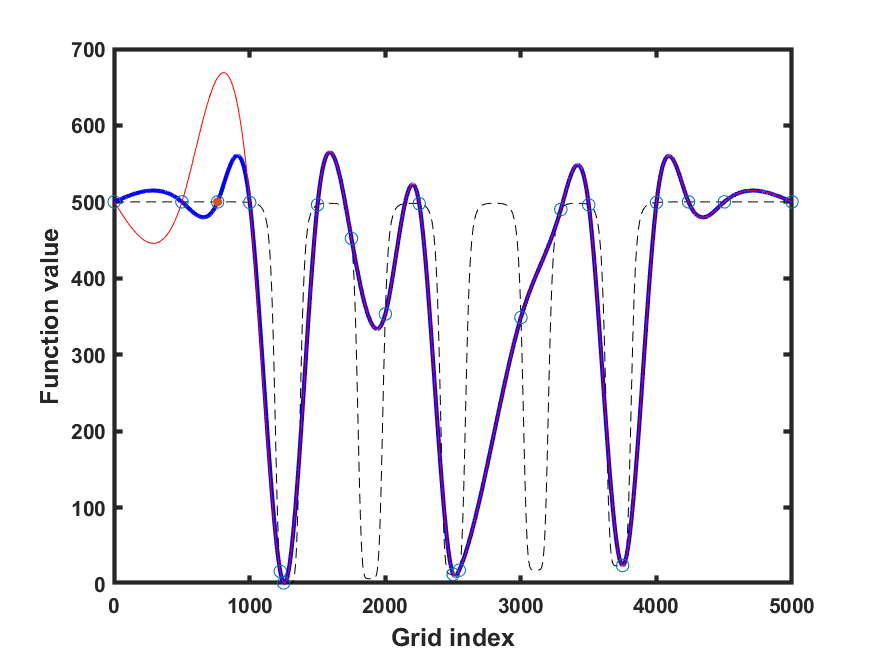}
\caption{LineWalker20}
\end{subfigure}
\newline
\begin{subfigure}[b]{0.270\textwidth}
\includegraphics[width=\textwidth]{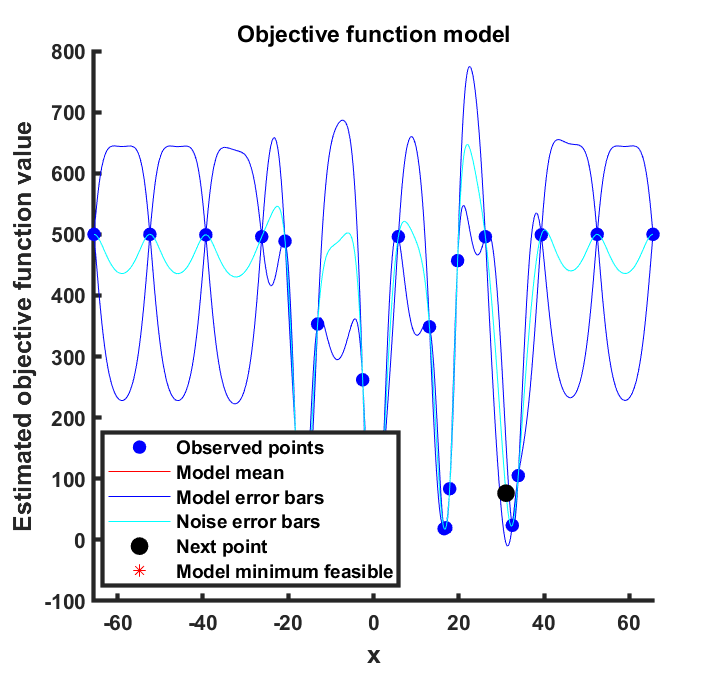}
\caption{bayesopt30}
\end{subfigure}
\begin{subfigure}[b]{0.330\textwidth}
\includegraphics[width=\textwidth]{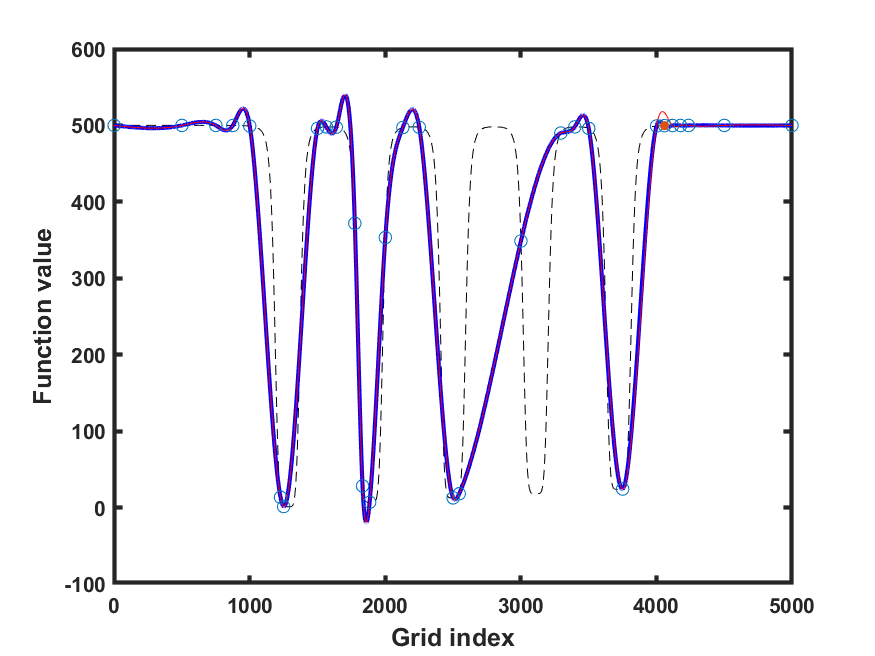}
\caption{LineWalker30}
\end{subfigure}
\newline
\begin{subfigure}[b]{0.270\textwidth}
\includegraphics[width=\textwidth]{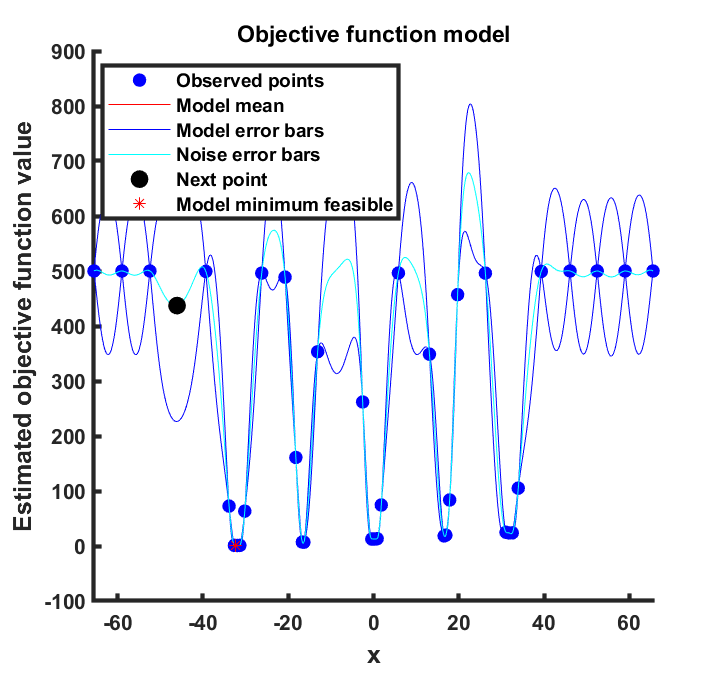}
\caption{bayesopt40}
\end{subfigure}
\begin{subfigure}[b]{0.330\textwidth}
\includegraphics[width=\textwidth]{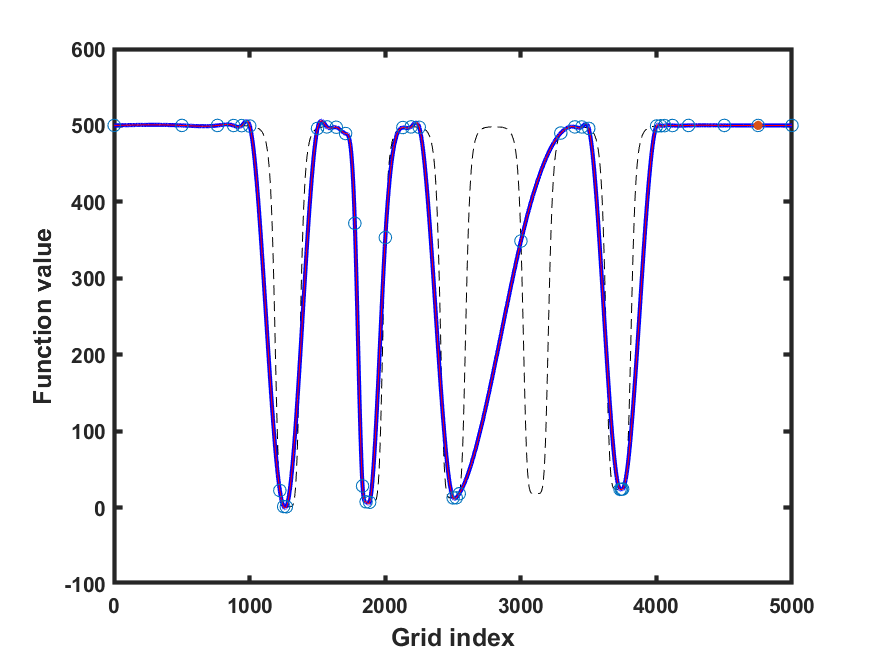}
\caption{LineWalker40}
\end{subfigure}
\newline
\begin{subfigure}[b]{0.270\textwidth}
\includegraphics[width=\textwidth]{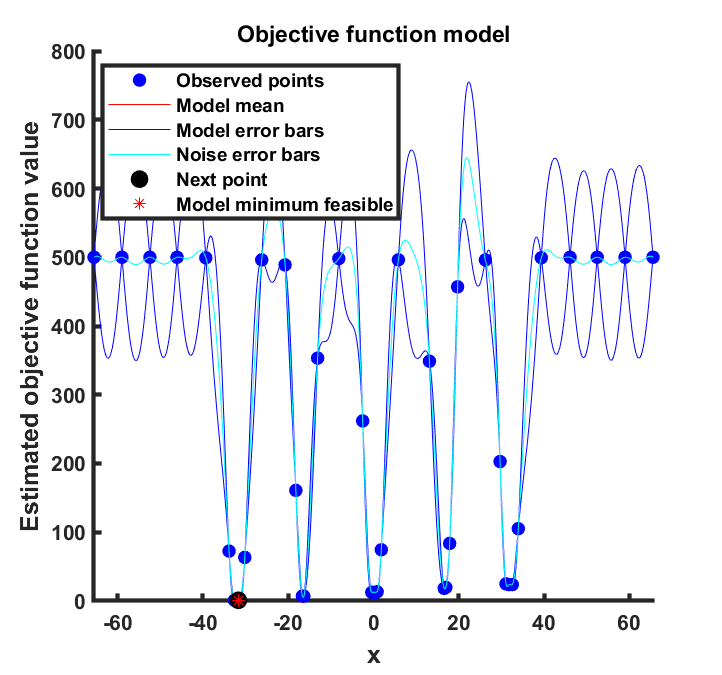}
\caption{bayesopt50}
\end{subfigure}
\begin{subfigure}[b]{0.330\textwidth}
\includegraphics[width=\textwidth]{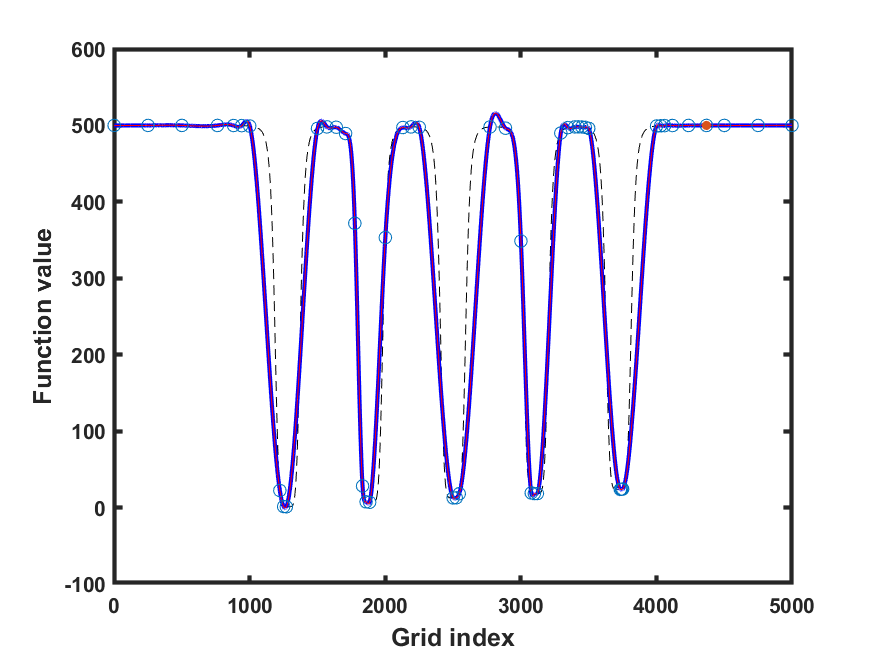}
\caption{LineWalker50}
\end{subfigure}
\newline
\caption{dejong5. Left column = \texttt{bayesopt}. Right column = \texttt{LineWalker-full}}
\label{fig:out_dejong5_1Dslice}
\end{figure}

\begin{figure} [h!]
\centering
\begin{subfigure}[b]{0.270\textwidth}
\includegraphics[width=\textwidth]{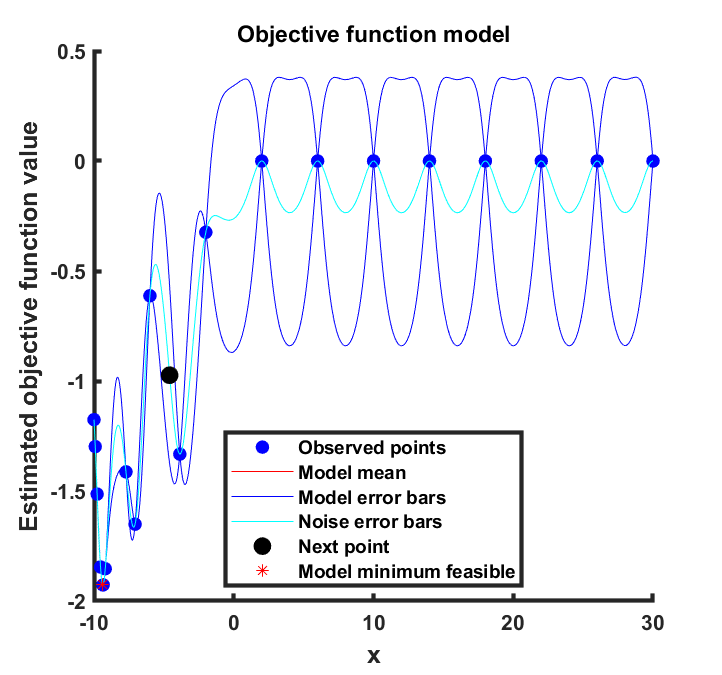}
\caption{bayesopt20}
\end{subfigure}
\begin{subfigure}[b]{0.330\textwidth}
\includegraphics[width=\textwidth]{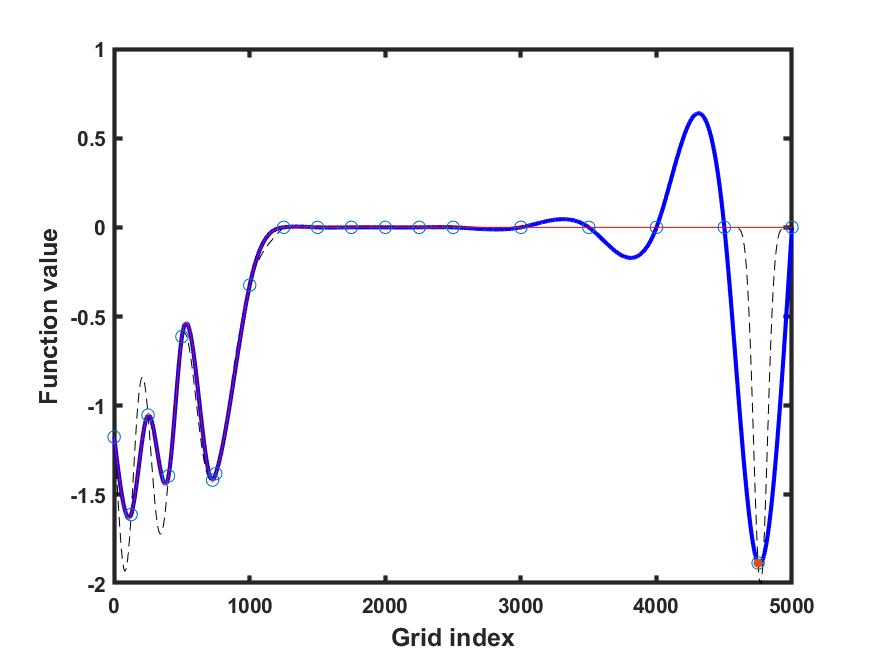}
\caption{LineWalker20}
\end{subfigure}
\newline
\begin{subfigure}[b]{0.270\textwidth}
\includegraphics[width=\textwidth]{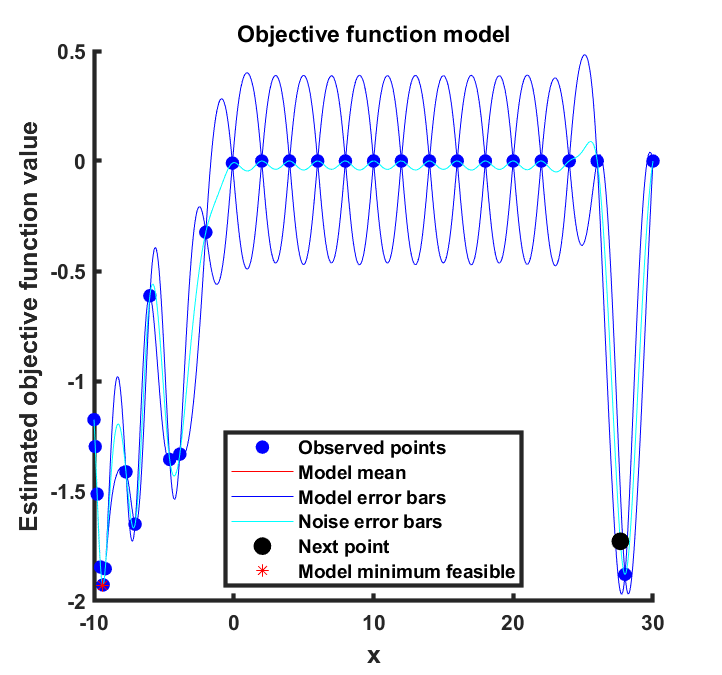}
\caption{bayesopt30}
\end{subfigure}
\begin{subfigure}[b]{0.330\textwidth}
\includegraphics[width=\textwidth]{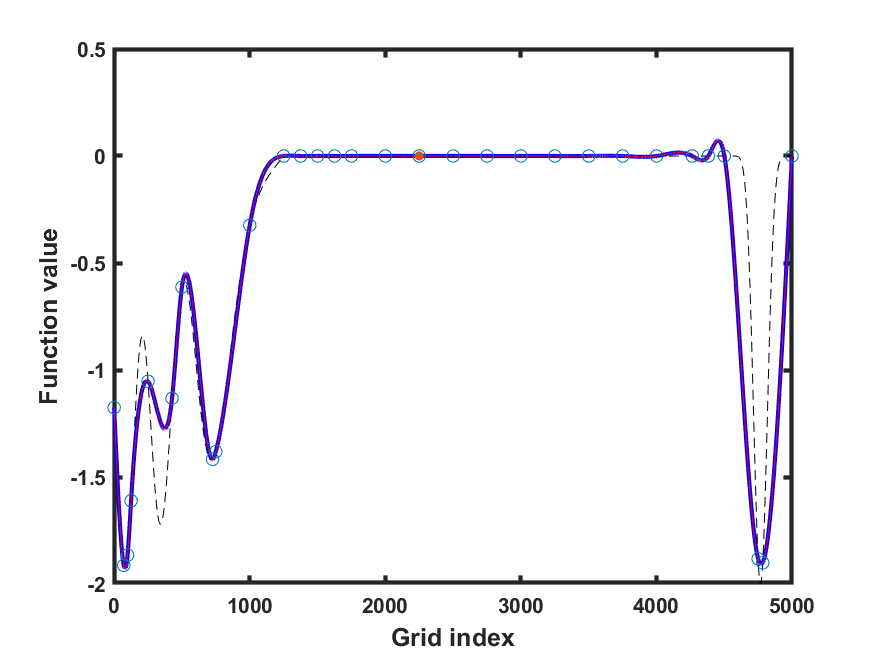}
\caption{LineWalker30}
\end{subfigure}
\newline
\begin{subfigure}[b]{0.270\textwidth}
\includegraphics[width=\textwidth]{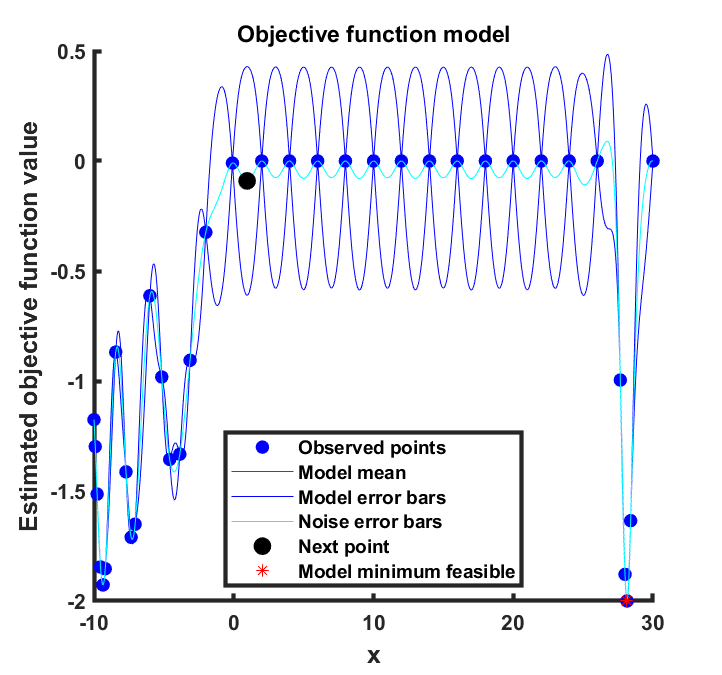}
\caption{bayesopt40}
\end{subfigure}
\begin{subfigure}[b]{0.330\textwidth}
\includegraphics[width=\textwidth]{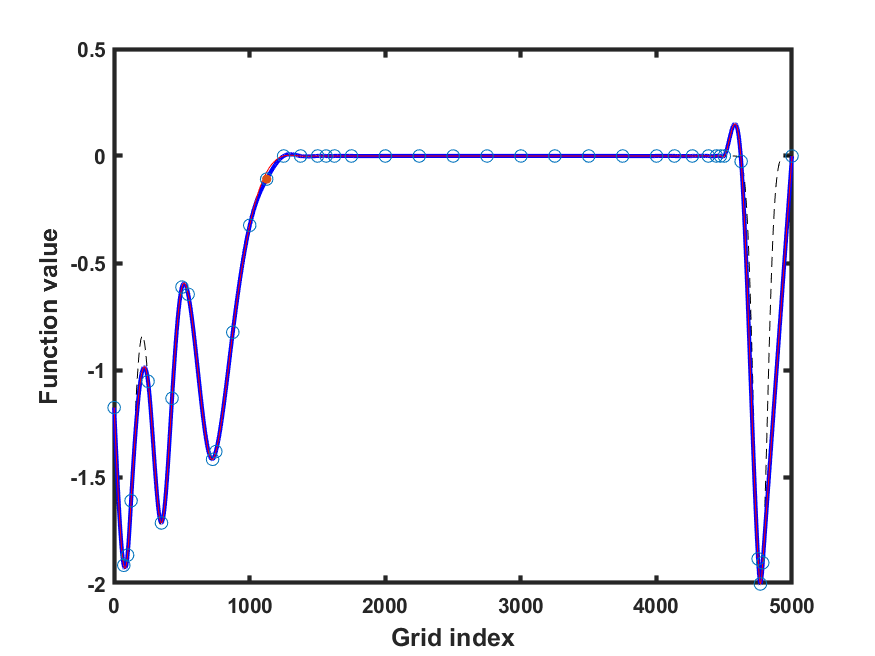}
\caption{LineWalker40}
\end{subfigure}
\newline
\begin{subfigure}[b]{0.270\textwidth}
\includegraphics[width=\textwidth]{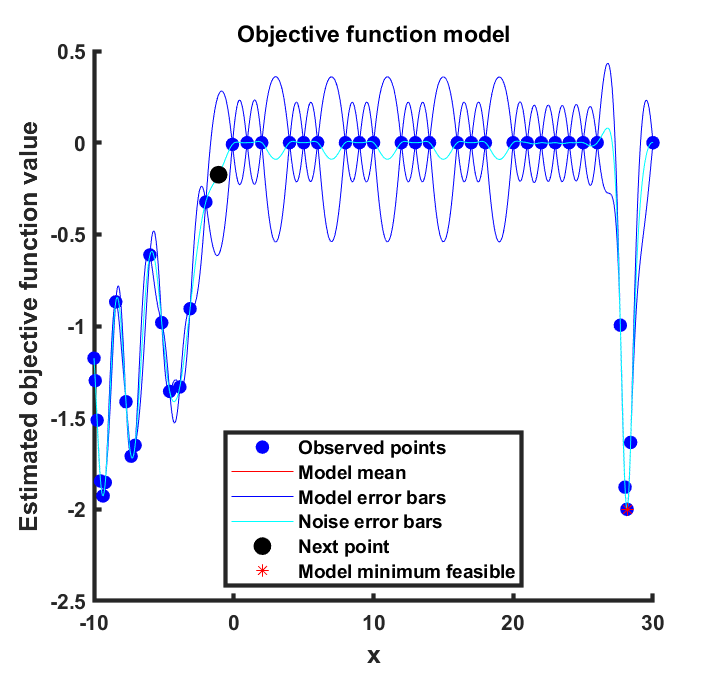}
\caption{bayesopt50}
\end{subfigure}
\begin{subfigure}[b]{0.330\textwidth}
\includegraphics[width=\textwidth]{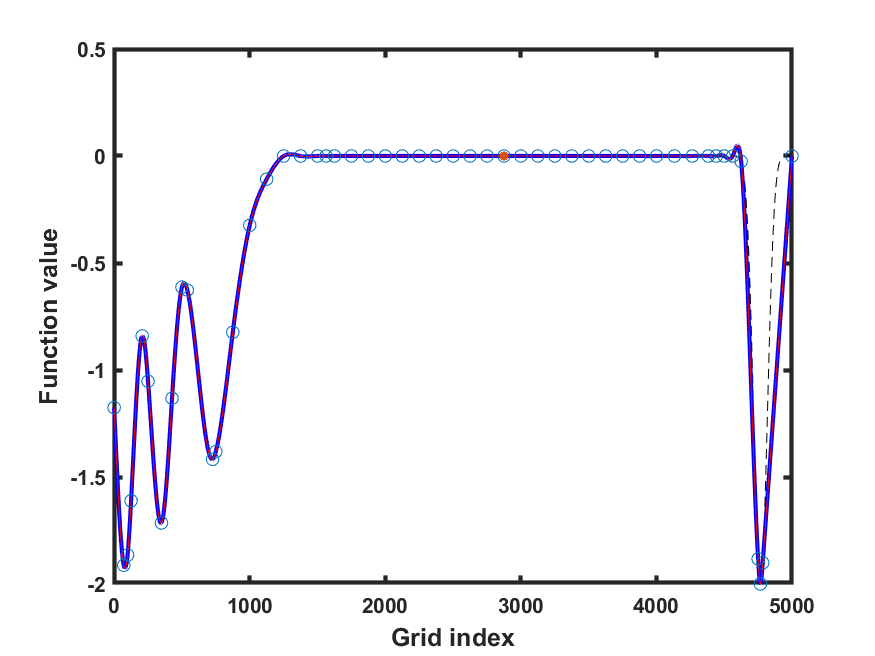}
\caption{LineWalker50}
\end{subfigure}
\newline
\caption{easom schaffer2A. Left column = \texttt{bayesopt}. Right column = \texttt{LineWalker-full}}
\label{fig:out_easom_schaffer2A_1Dslice}
\end{figure}

\begin{figure} [h!]
\centering
\begin{subfigure}[b]{0.270\textwidth}
\includegraphics[width=\textwidth]{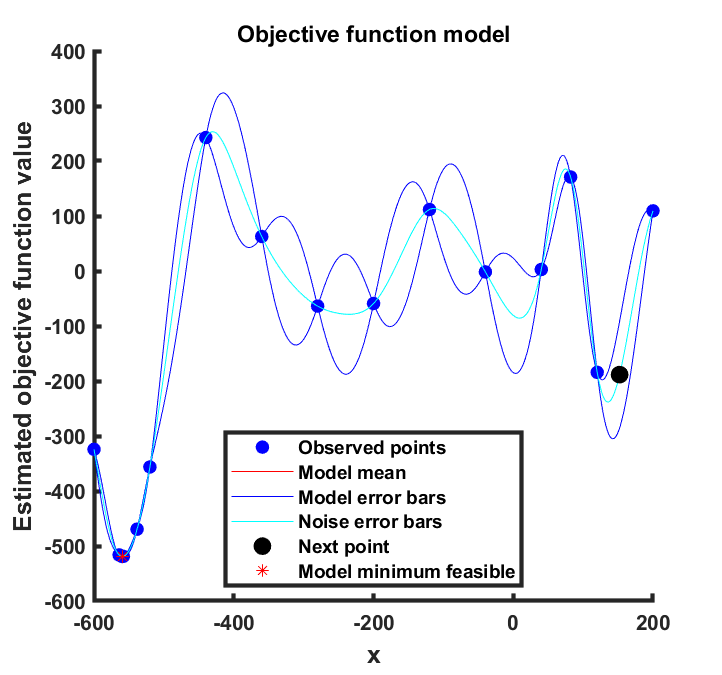}
\caption{bayesopt20}
\end{subfigure}
\begin{subfigure}[b]{0.330\textwidth}
\includegraphics[width=\textwidth]{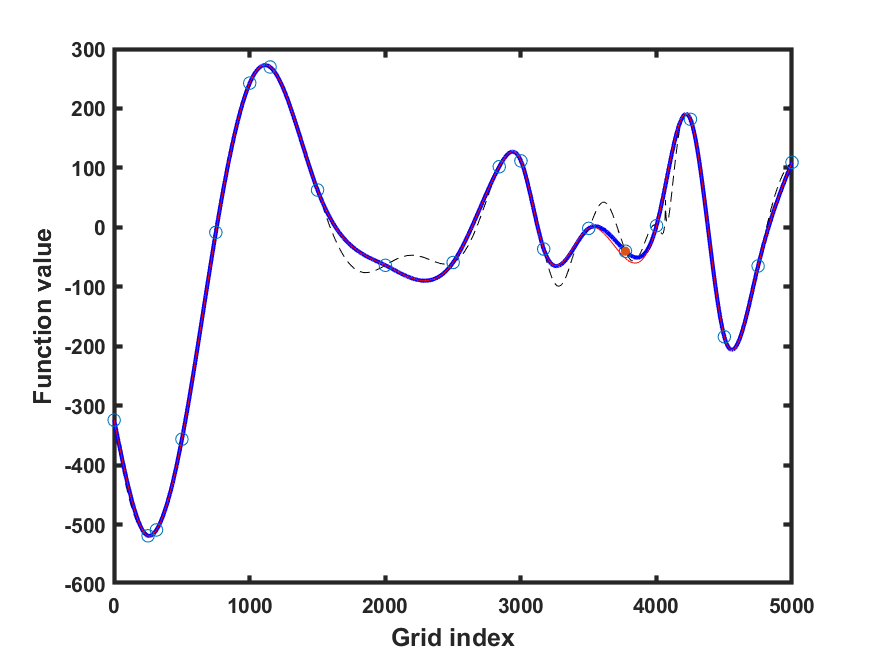}
\caption{LineWalker20}
\end{subfigure}
\newline
\begin{subfigure}[b]{0.270\textwidth}
\includegraphics[width=\textwidth]{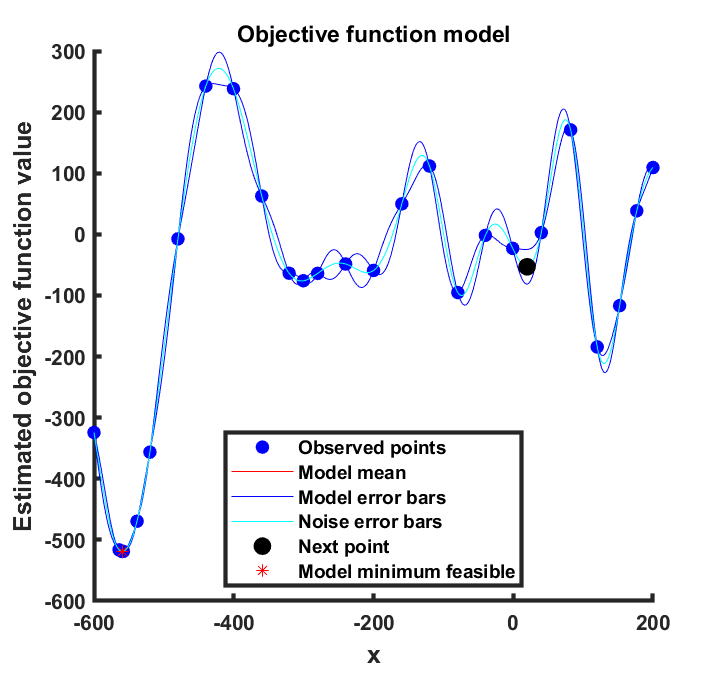}
\caption{bayesopt30}
\end{subfigure}
\begin{subfigure}[b]{0.330\textwidth}
\includegraphics[width=\textwidth]{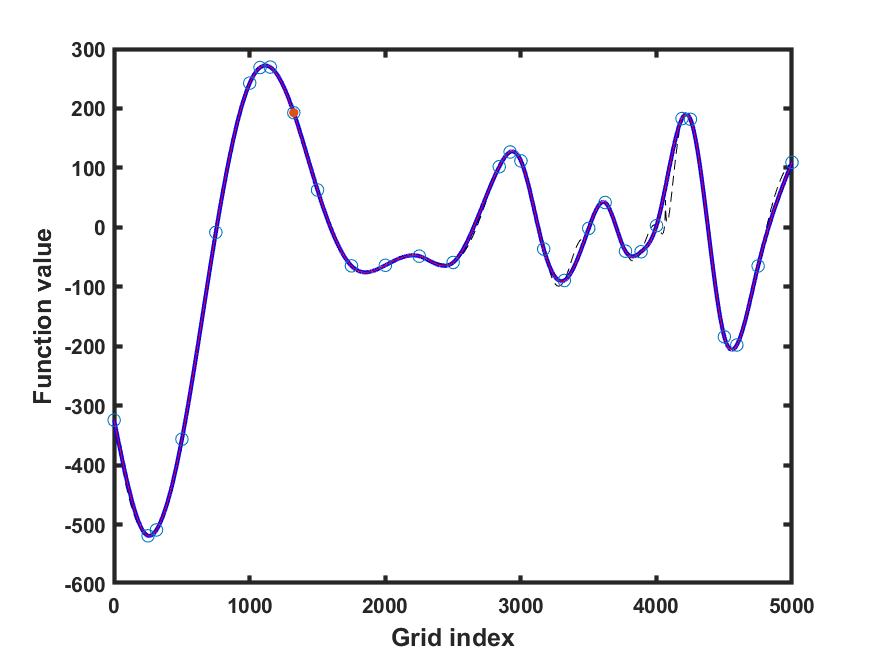}
\caption{LineWalker30}
\end{subfigure}
\newline
\begin{subfigure}[b]{0.270\textwidth}
\includegraphics[width=\textwidth]{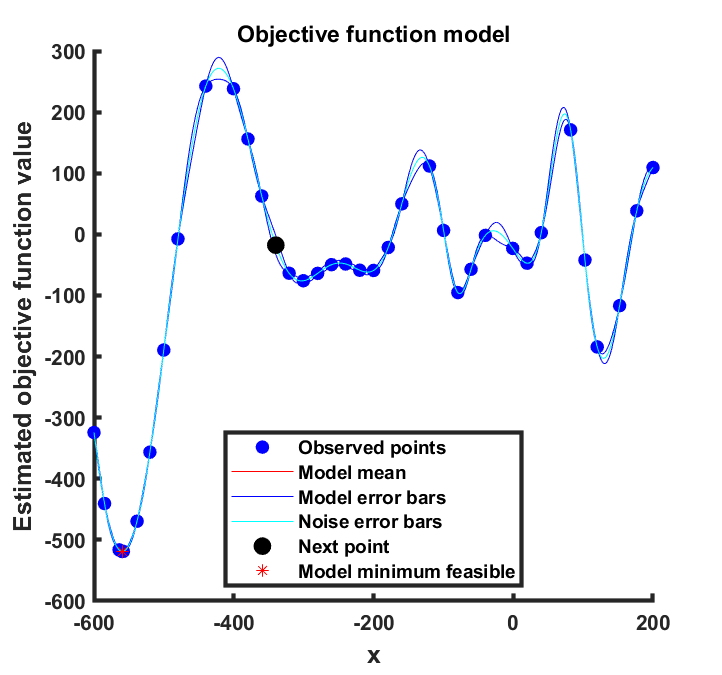}
\caption{bayesopt40}
\end{subfigure}
\begin{subfigure}[b]{0.330\textwidth}
\includegraphics[width=\textwidth]{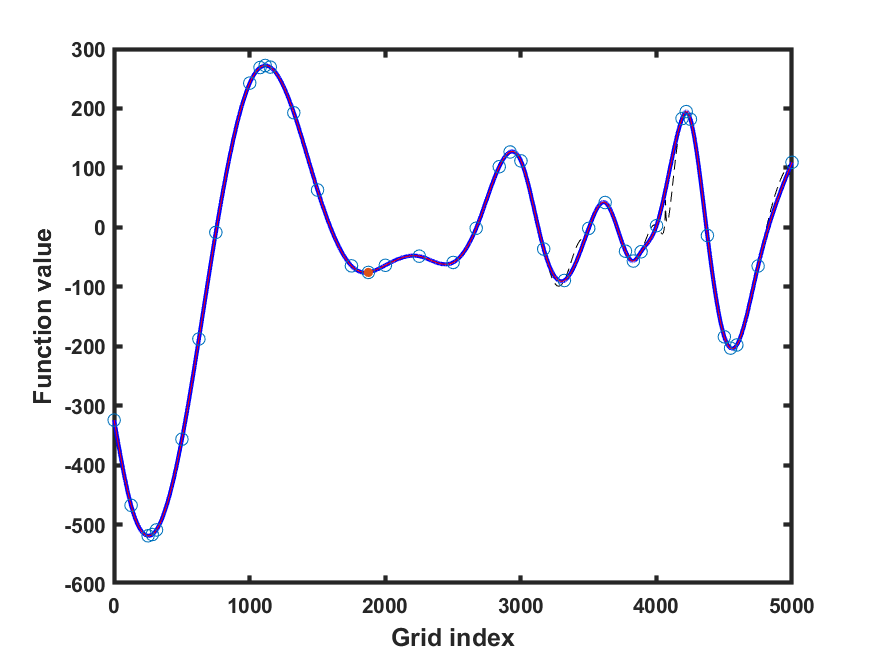}
\caption{LineWalker40}
\end{subfigure}
\newline
\begin{subfigure}[b]{0.270\textwidth}
\includegraphics[width=\textwidth]{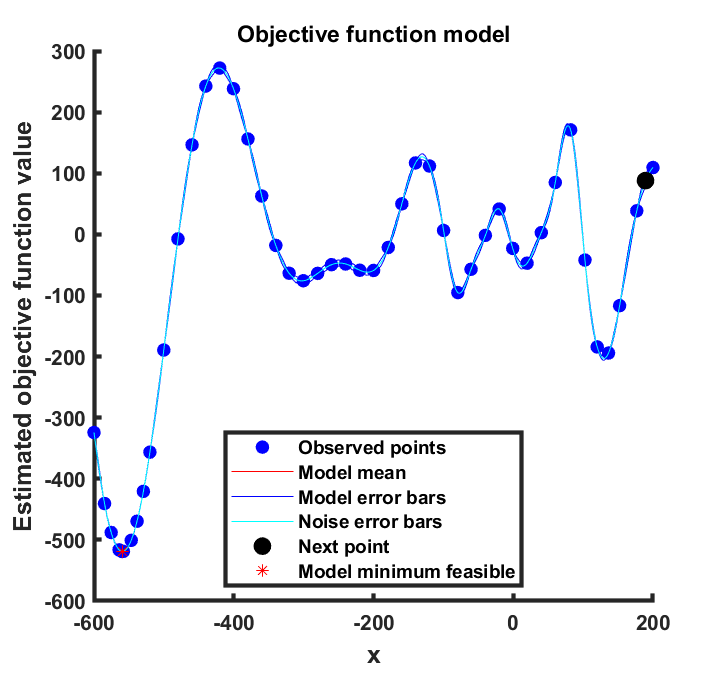}
\caption{bayesopt50}
\end{subfigure}
\begin{subfigure}[b]{0.330\textwidth}
\includegraphics[width=\textwidth]{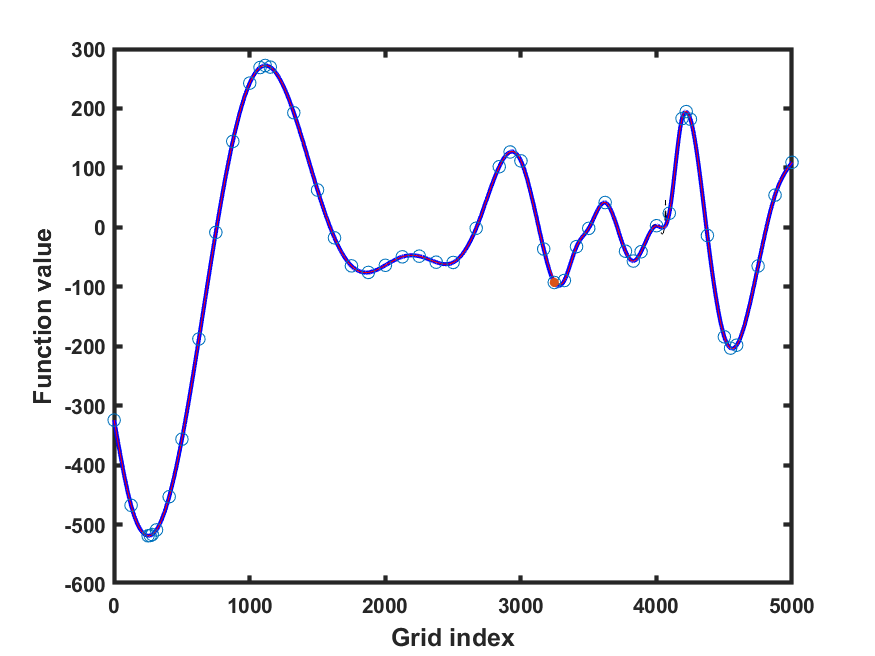}
\caption{LineWalker50}
\end{subfigure}
\newline
\caption{egg2. Left column = \texttt{bayesopt}. Right column = \texttt{LineWalker-full}}
\label{fig:out_egg2_1Dslice}
\end{figure}

\begin{figure} [h!]
\centering
\begin{subfigure}[b]{0.270\textwidth}
\includegraphics[width=\textwidth]{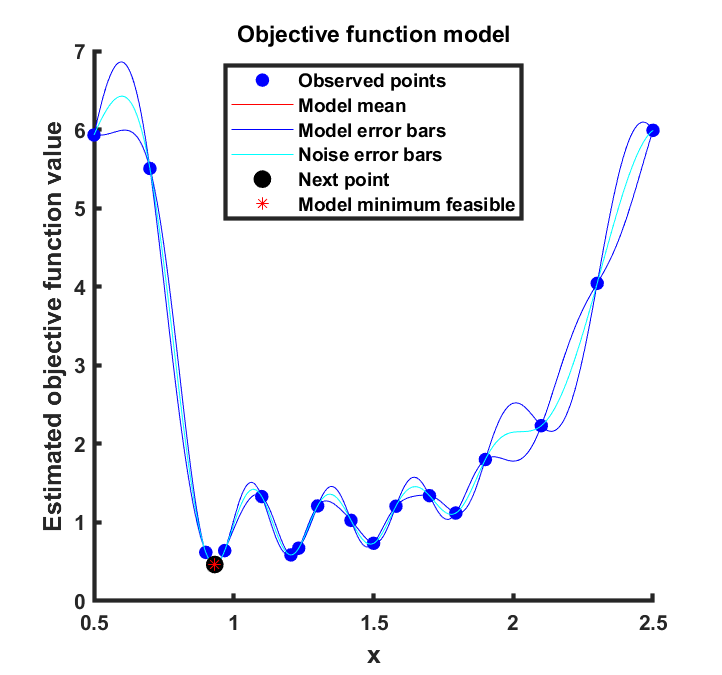}
\caption{bayesopt20}
\end{subfigure}
\begin{subfigure}[b]{0.330\textwidth}
\includegraphics[width=\textwidth]{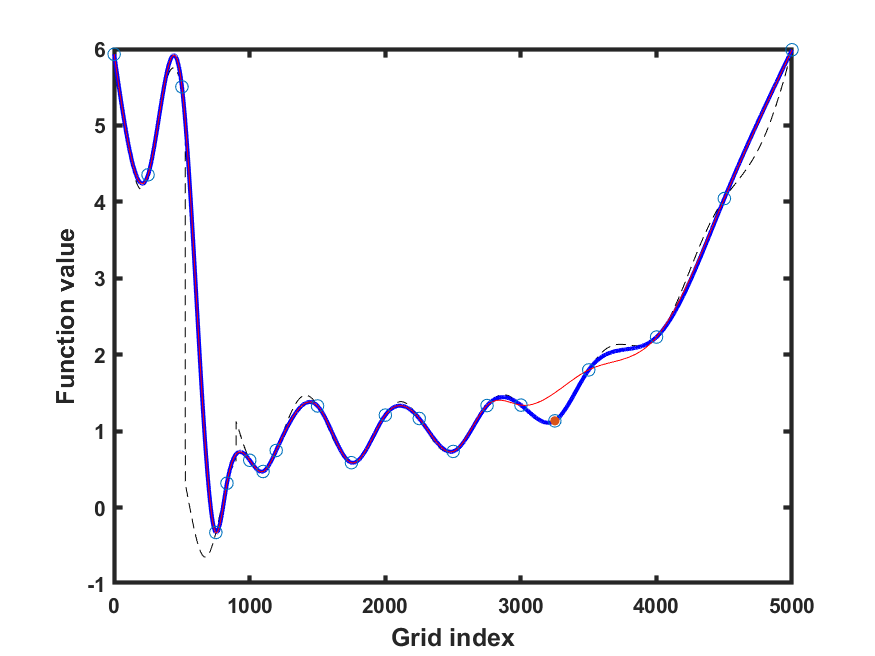}
\caption{LineWalker20}
\end{subfigure}
\newline
\begin{subfigure}[b]{0.270\textwidth}
\includegraphics[width=\textwidth]{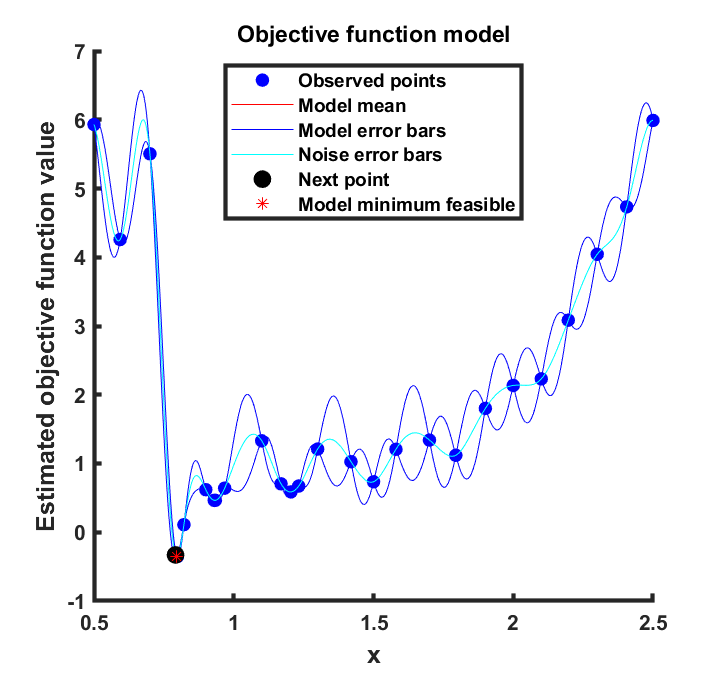}
\caption{bayesopt30}
\end{subfigure}
\begin{subfigure}[b]{0.330\textwidth}
\includegraphics[width=\textwidth]{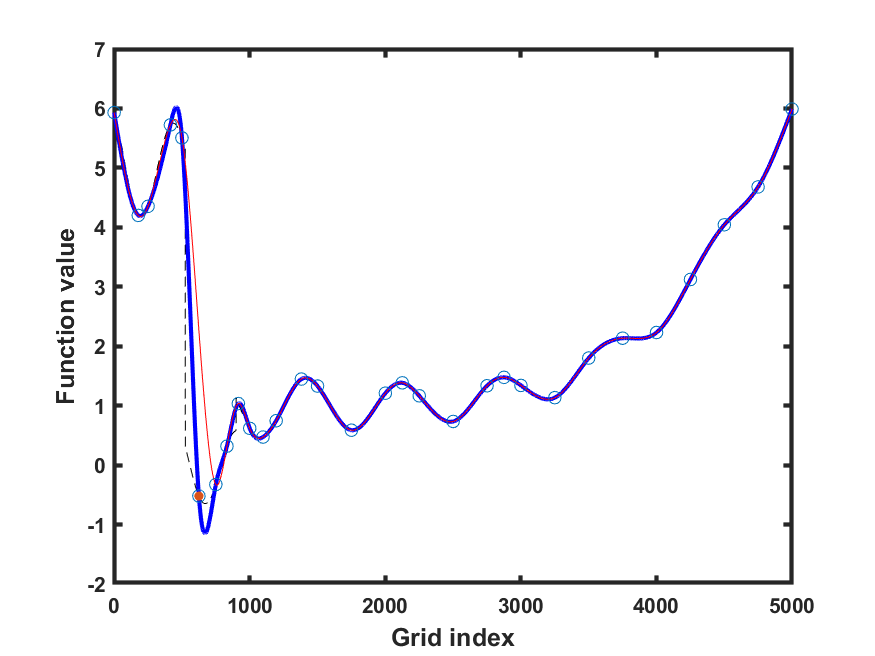}
\caption{LineWalker30}
\end{subfigure}
\newline
\begin{subfigure}[b]{0.270\textwidth}
\includegraphics[width=\textwidth]{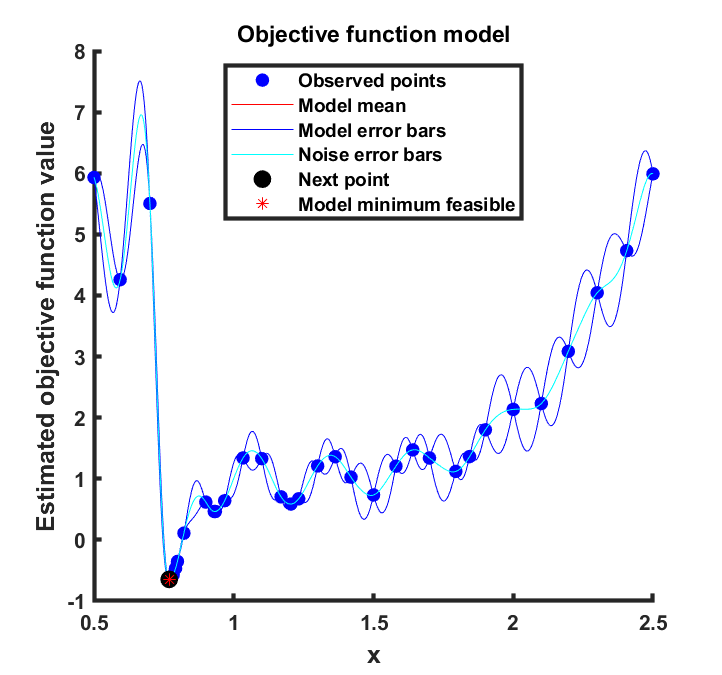}
\caption{bayesopt40}
\end{subfigure}
\begin{subfigure}[b]{0.330\textwidth}
\includegraphics[width=\textwidth]{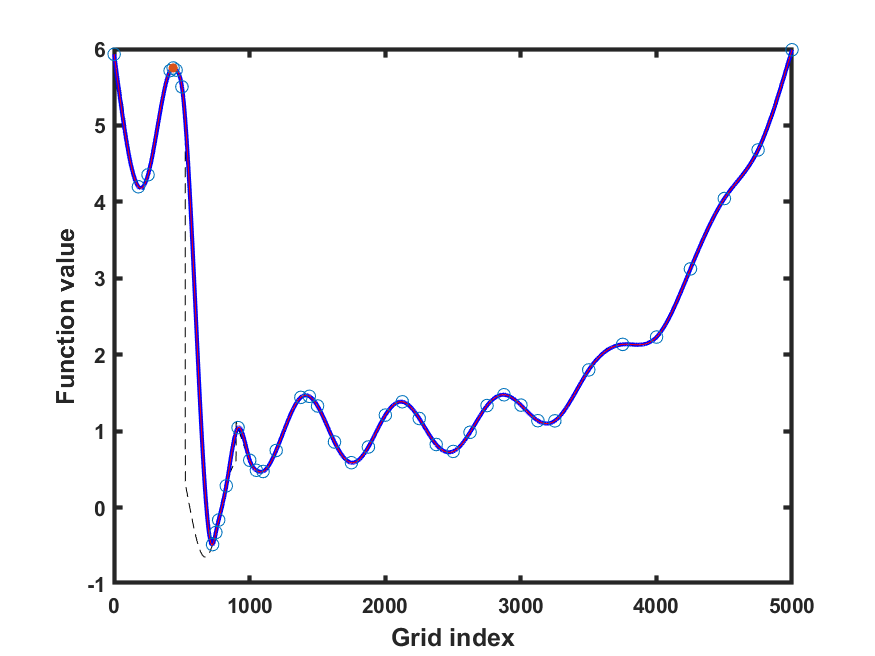}
\caption{LineWalker40}
\end{subfigure}
\newline
\begin{subfigure}[b]{0.270\textwidth}
\includegraphics[width=\textwidth]{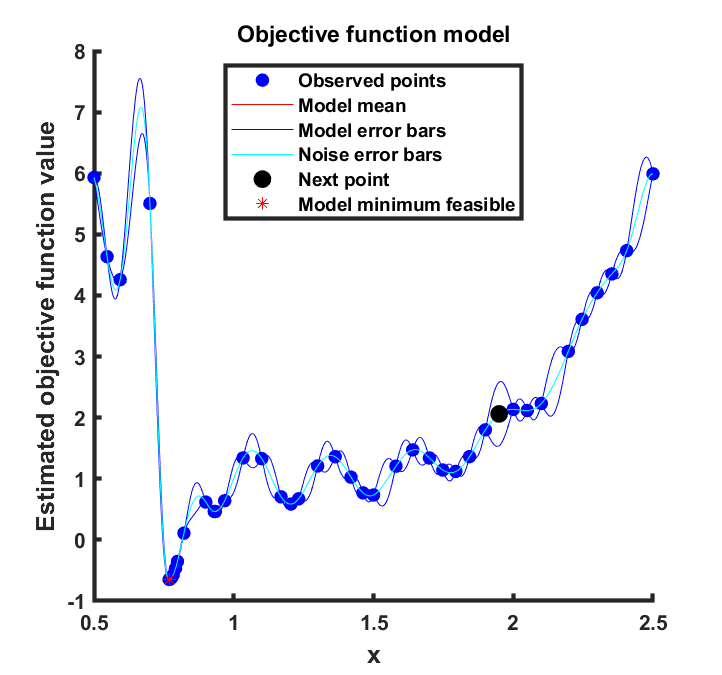}
\caption{bayesopt50}
\end{subfigure}
\begin{subfigure}[b]{0.330\textwidth}
\includegraphics[width=\textwidth]{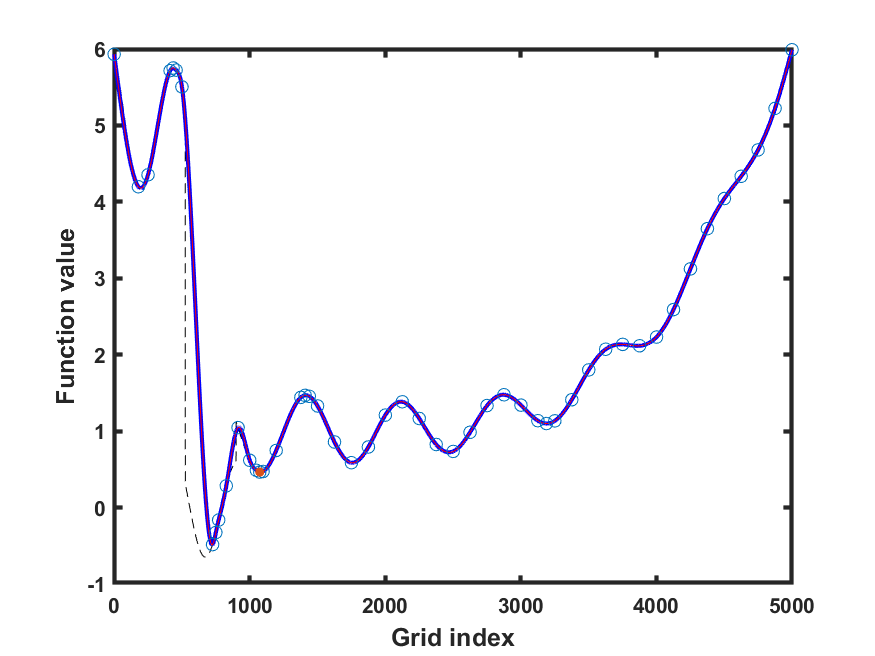}
\caption{LineWalker50}
\end{subfigure}
\newline
\caption{grlee12Step. Left column = \texttt{bayesopt}. Right column = \texttt{LineWalker-full}}
\label{fig:out_grlee12Step}
\end{figure}

\begin{figure} [h!]
\centering
\begin{subfigure}[b]{0.270\textwidth}
\includegraphics[width=\textwidth]{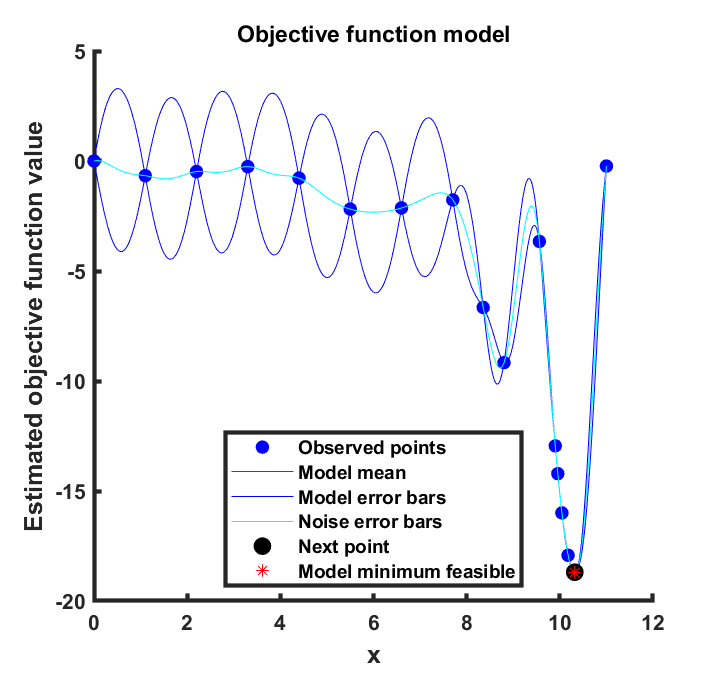}
\caption{bayesopt20}
\end{subfigure}
\begin{subfigure}[b]{0.330\textwidth}
\includegraphics[width=\textwidth]{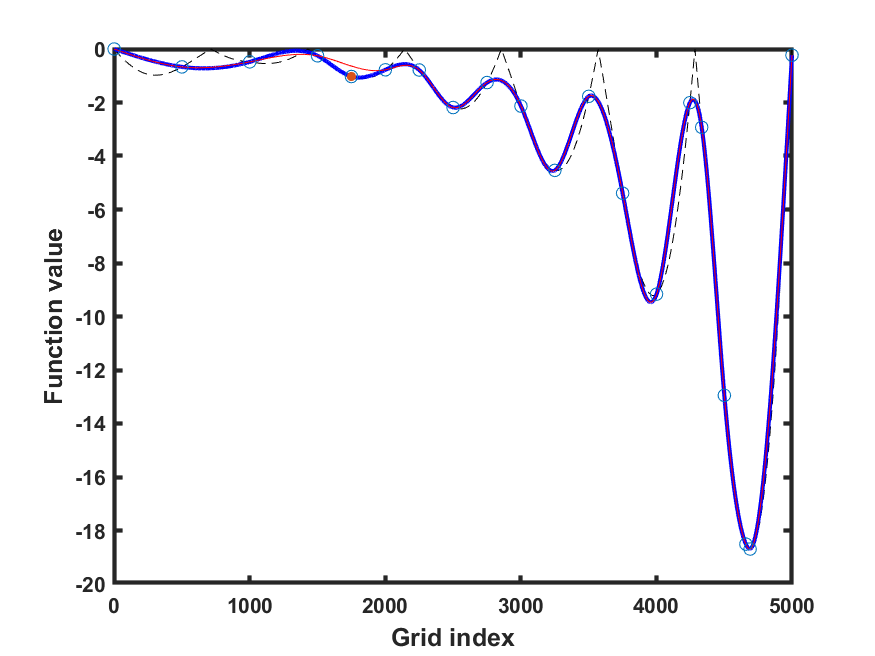}
\caption{LineWalker20}
\end{subfigure}
\newline
\begin{subfigure}[b]{0.270\textwidth}
\includegraphics[width=\textwidth]{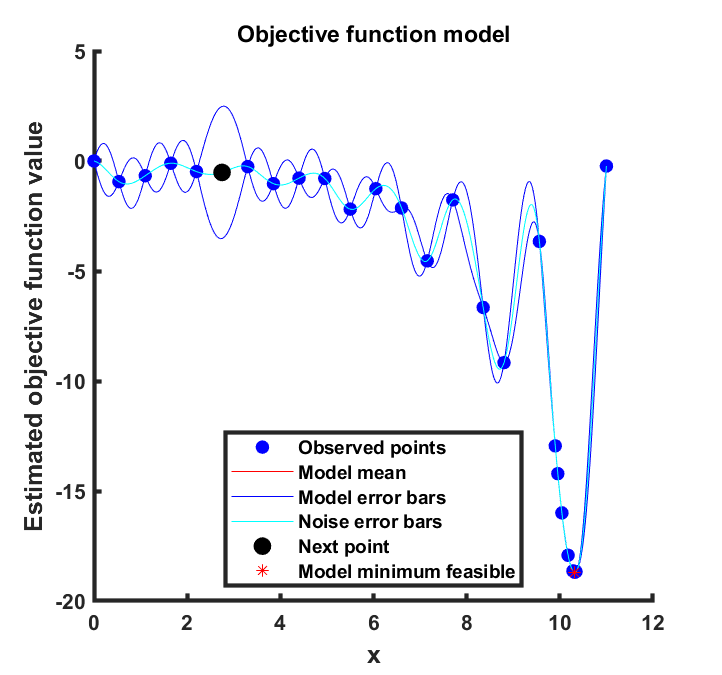}
\caption{bayesopt30}
\end{subfigure}
\begin{subfigure}[b]{0.330\textwidth}
\includegraphics[width=\textwidth]{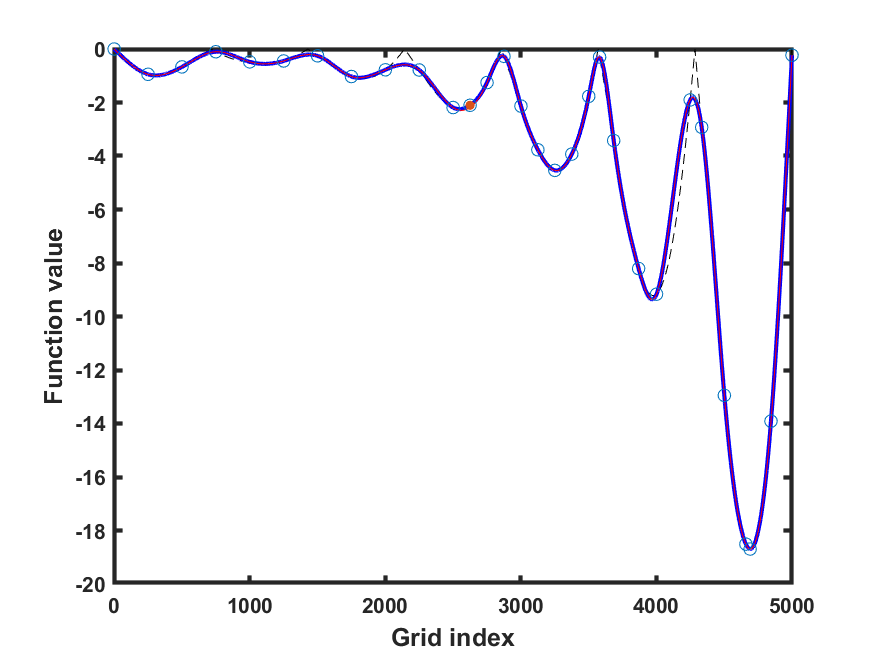}
\caption{LineWalker30}
\end{subfigure}
\newline
\begin{subfigure}[b]{0.270\textwidth}
\includegraphics[width=\textwidth]{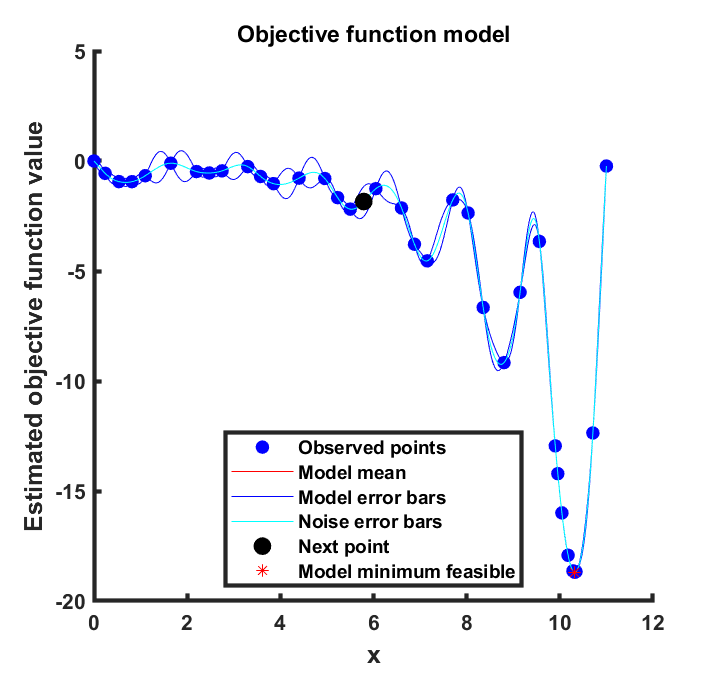}
\caption{bayesopt40}
\end{subfigure}
\begin{subfigure}[b]{0.330\textwidth}
\includegraphics[width=\textwidth]{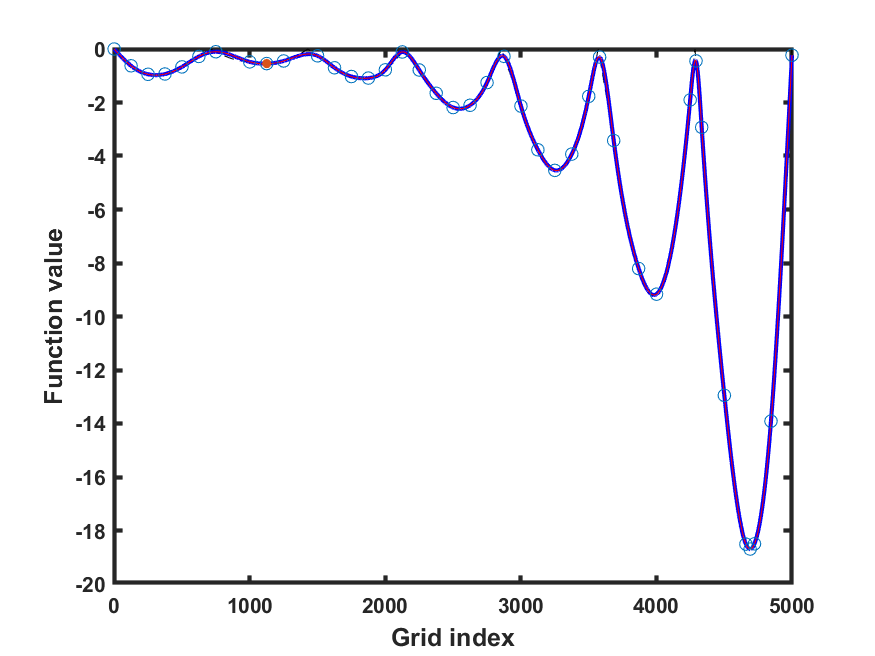}
\caption{LineWalker40}
\end{subfigure}
\newline
\begin{subfigure}[b]{0.270\textwidth}
\includegraphics[width=\textwidth]{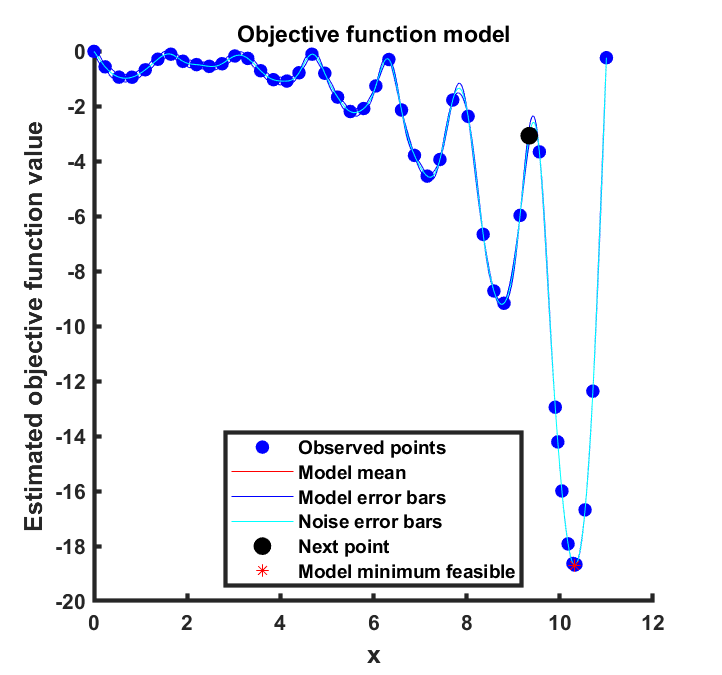}
\caption{bayesopt50}
\end{subfigure}
\begin{subfigure}[b]{0.330\textwidth}
\includegraphics[width=\textwidth]{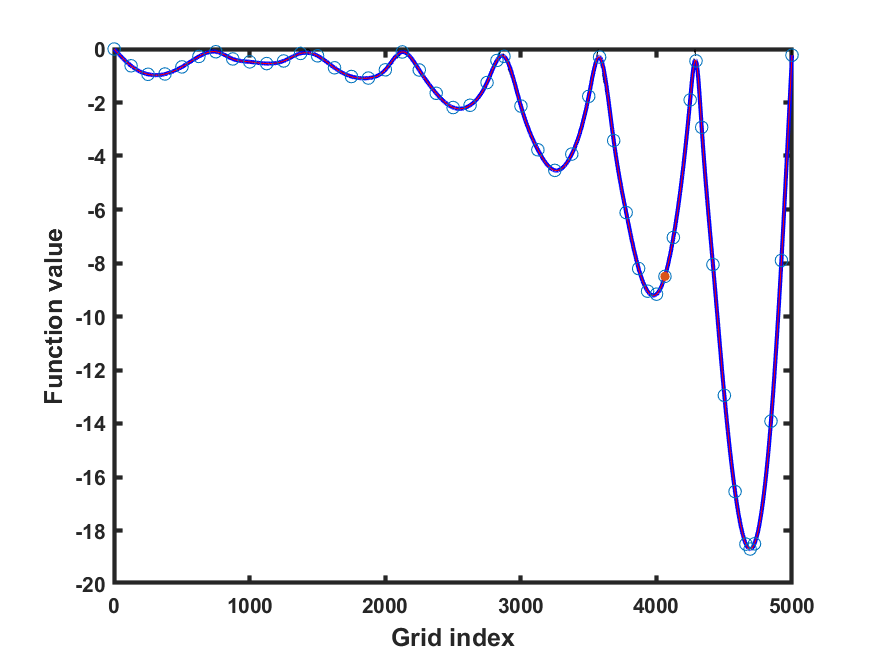}
\caption{LineWalker50}
\end{subfigure}
\newline
\caption{holder. Left column = \texttt{bayesopt}. Right column = \texttt{LineWalker-full}}
\label{fig:out_holder_1Dslice}
\end{figure}

\begin{figure} [h!]
\centering
\begin{subfigure}[b]{0.270\textwidth}
\includegraphics[width=\textwidth]{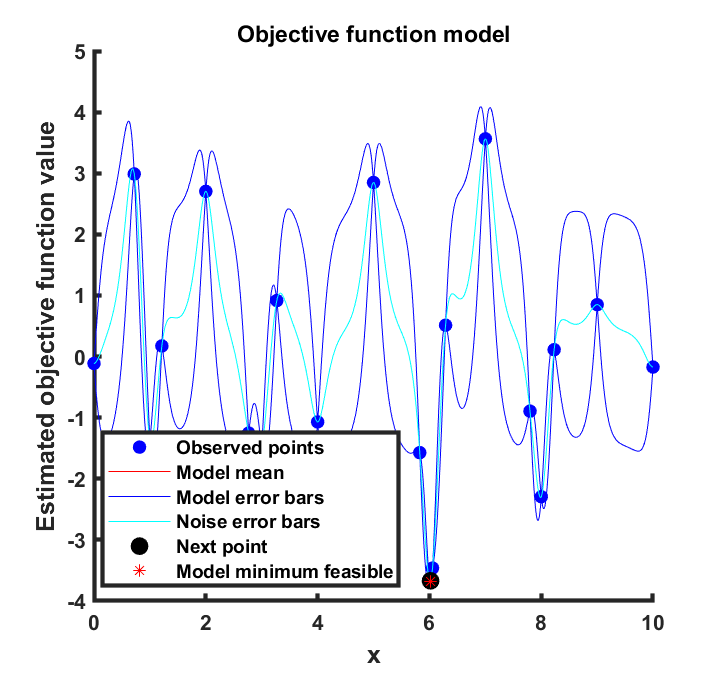}
\caption{bayesopt20}
\end{subfigure}
\begin{subfigure}[b]{0.330\textwidth}
\includegraphics[width=\textwidth]{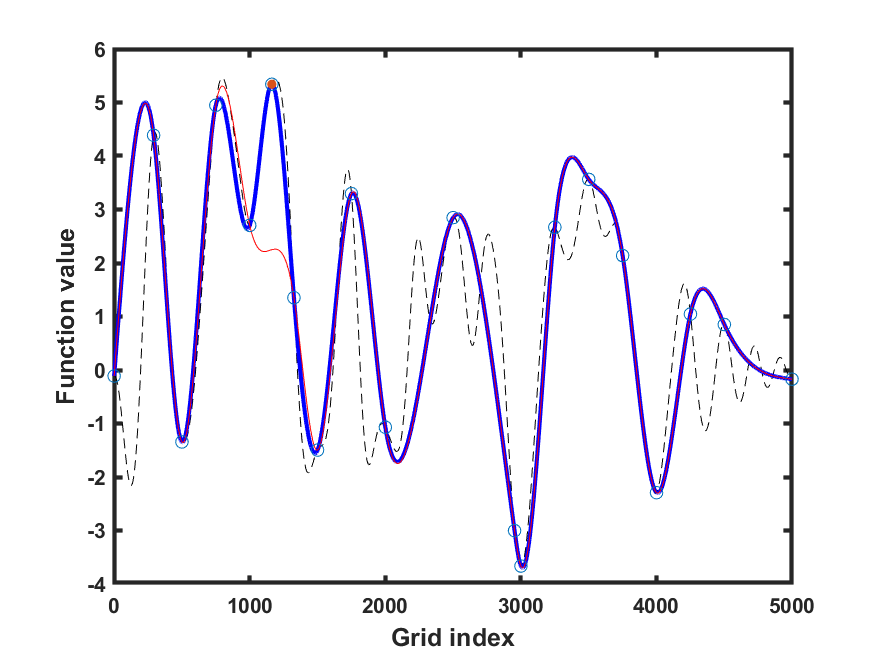}
\caption{LineWalker20}
\end{subfigure}
\newline
\begin{subfigure}[b]{0.270\textwidth}
\includegraphics[width=\textwidth]{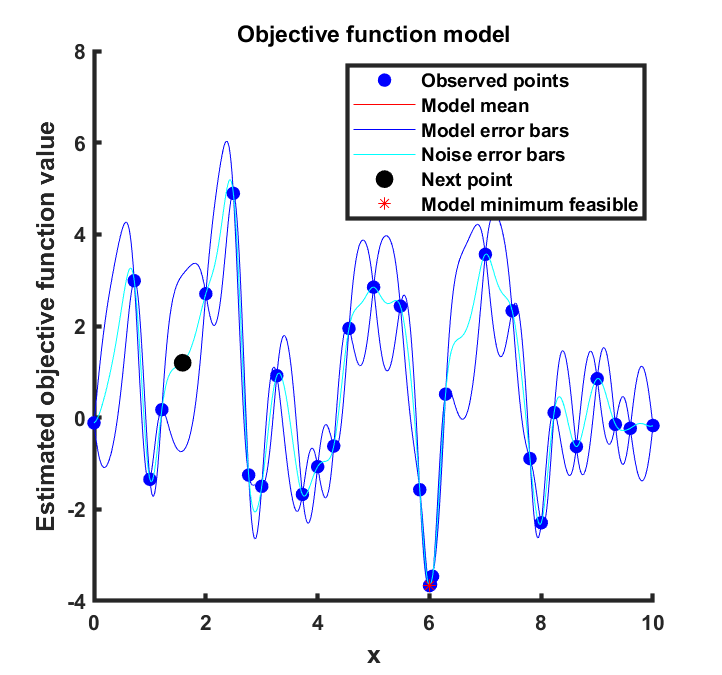}
\caption{bayesopt30}
\end{subfigure}
\begin{subfigure}[b]{0.330\textwidth}
\includegraphics[width=\textwidth]{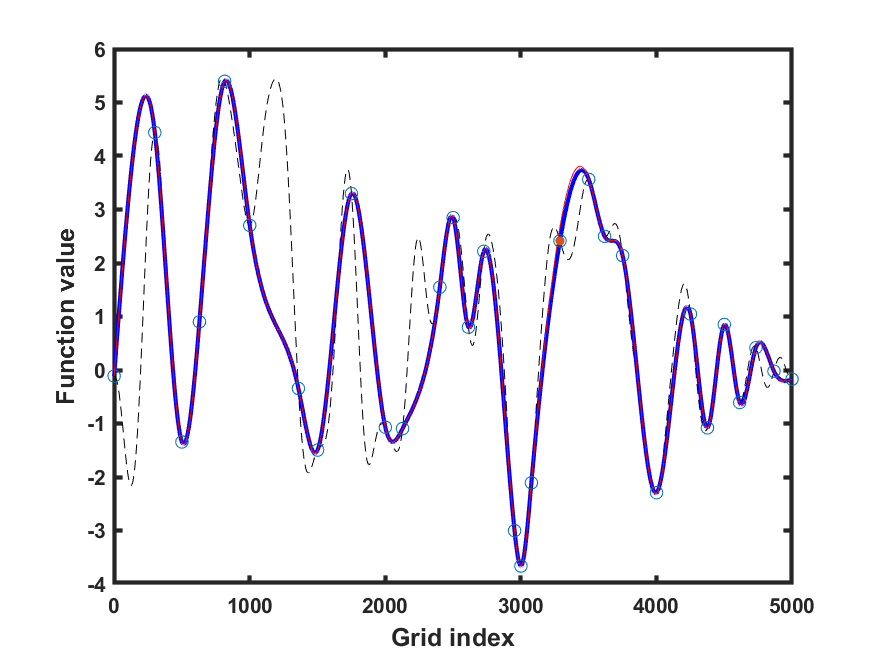}
\caption{LineWalker30}
\end{subfigure}
\newline
\begin{subfigure}[b]{0.270\textwidth}
\includegraphics[width=\textwidth]{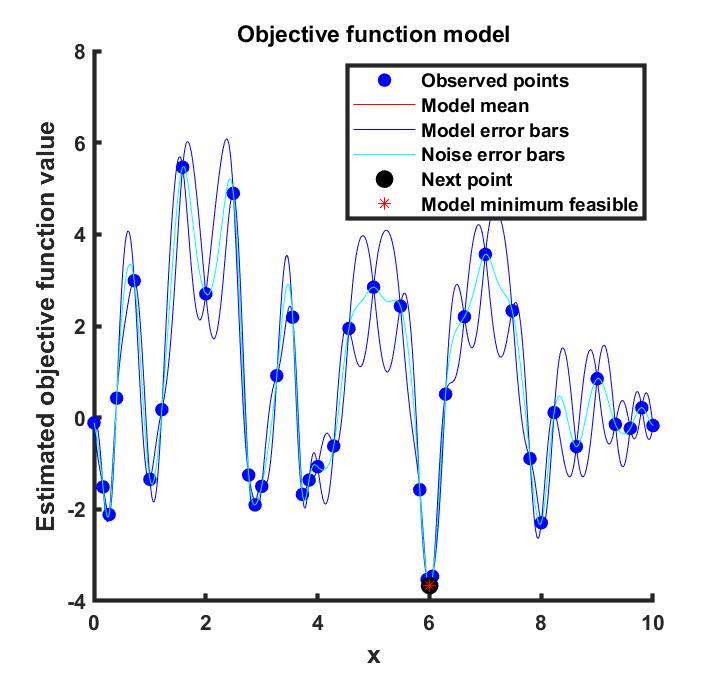}
\caption{bayesopt40}
\end{subfigure}
\begin{subfigure}[b]{0.330\textwidth}
\includegraphics[width=\textwidth]{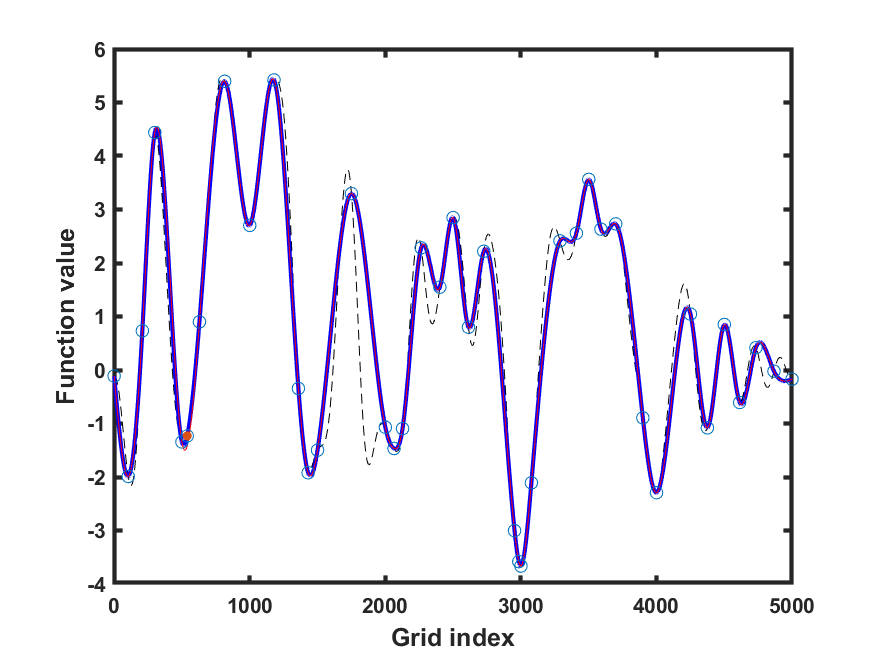}
\caption{LineWalker40}
\end{subfigure}
\newline
\begin{subfigure}[b]{0.270\textwidth}
\includegraphics[width=\textwidth]{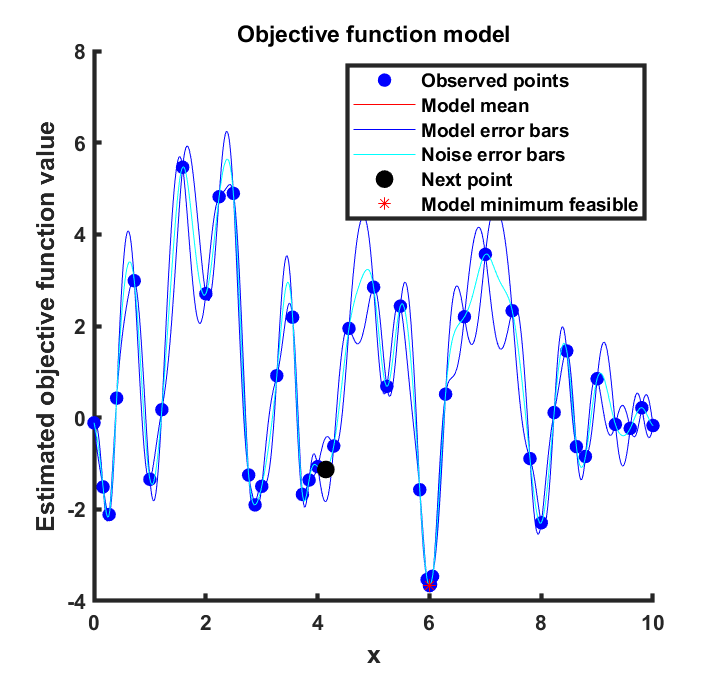}
\caption{bayesopt50}
\end{subfigure}
\begin{subfigure}[b]{0.330\textwidth}
\includegraphics[width=\textwidth]{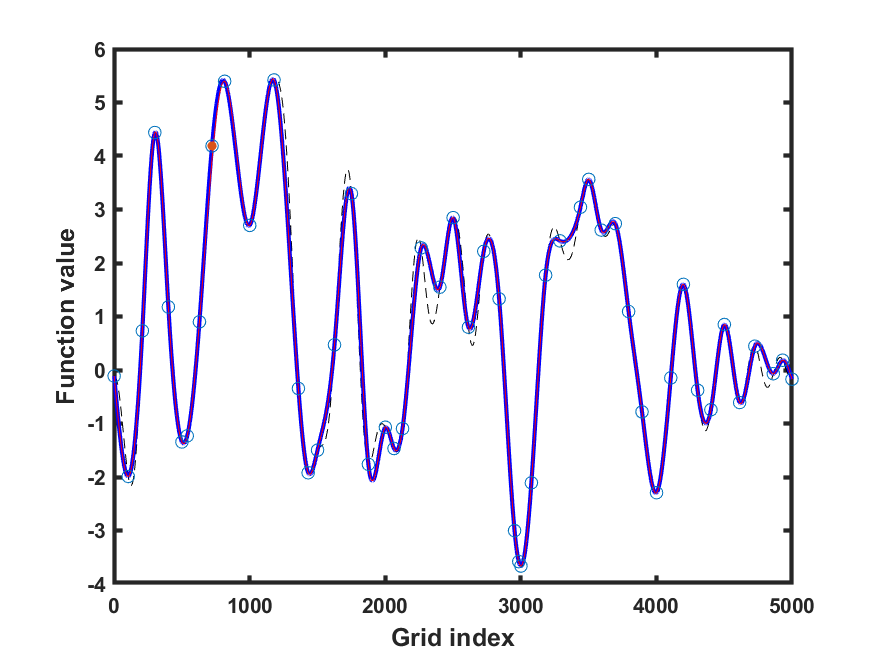}
\caption{LineWalker50}
\end{subfigure}
\newline
\caption{langer. Left column = \texttt{bayesopt}. Right column = \texttt{LineWalker-full}}
\label{fig:out_langer}
\end{figure}

\begin{figure} [h!]
\centering
\begin{subfigure}[b]{0.270\textwidth}
\includegraphics[width=\textwidth]{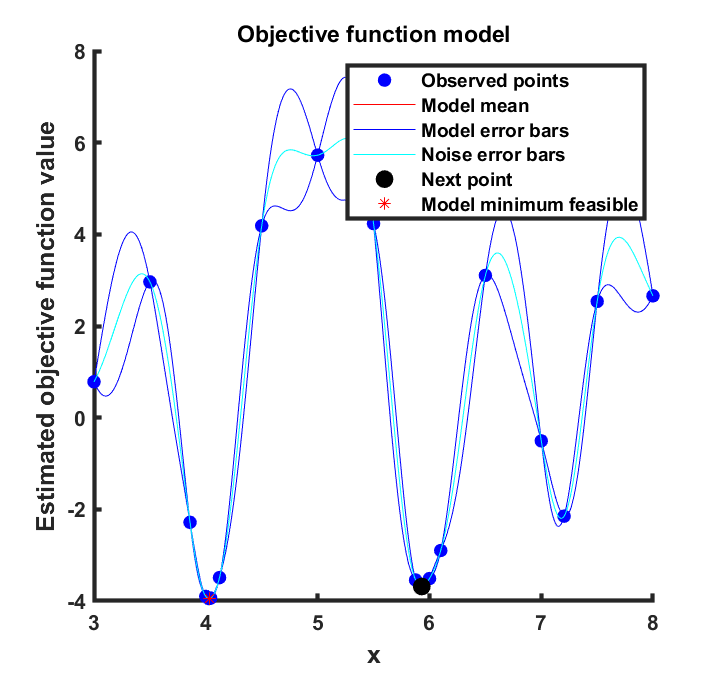}
\caption{bayesopt20}
\end{subfigure}
\begin{subfigure}[b]{0.330\textwidth}
\includegraphics[width=\textwidth]{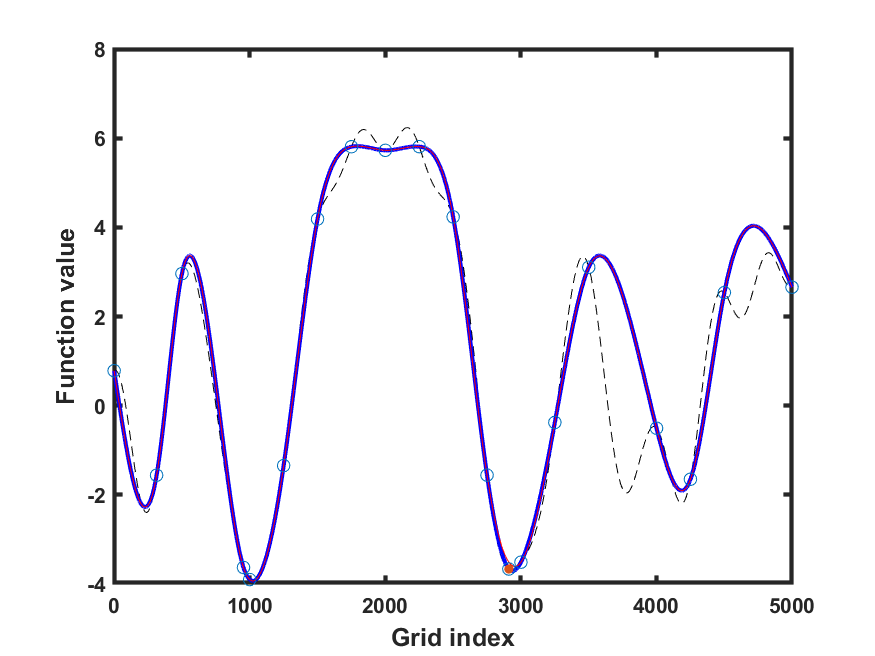}
\caption{LineWalker20}
\end{subfigure}
\newline
\begin{subfigure}[b]{0.270\textwidth}
\includegraphics[width=\textwidth]{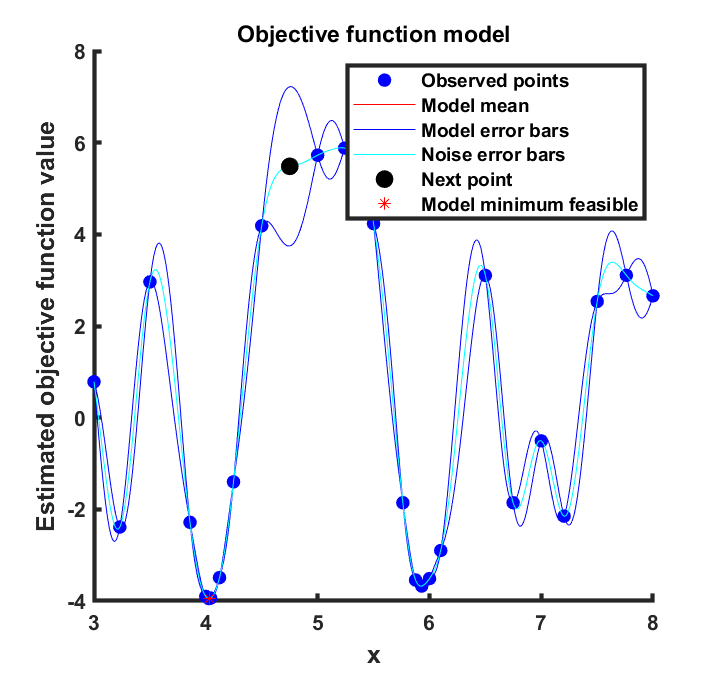}
\caption{bayesopt30}
\end{subfigure}
\begin{subfigure}[b]{0.330\textwidth}
\includegraphics[width=\textwidth]{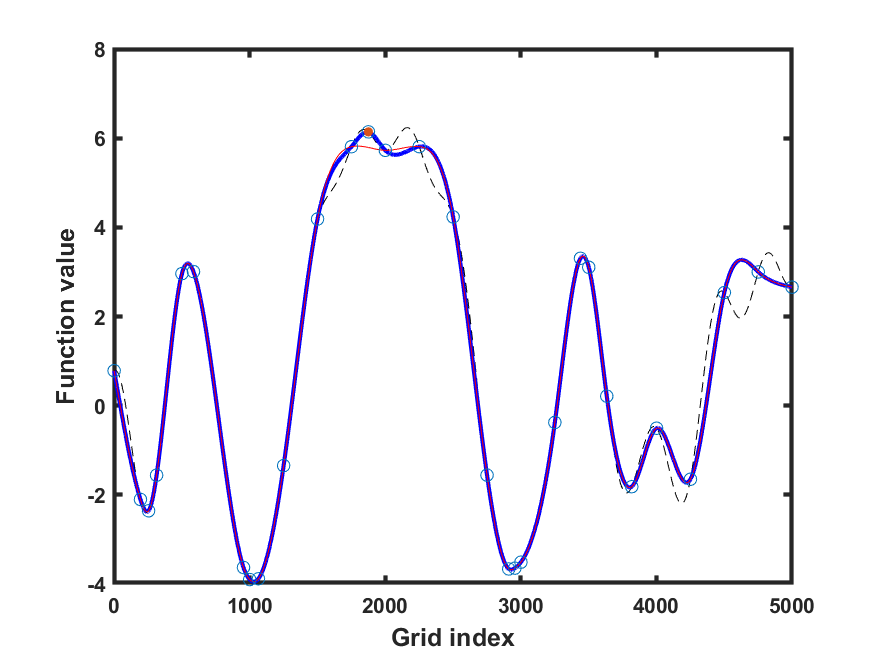}
\caption{LineWalker30}
\end{subfigure}
\newline
\begin{subfigure}[b]{0.270\textwidth}
\includegraphics[width=\textwidth]{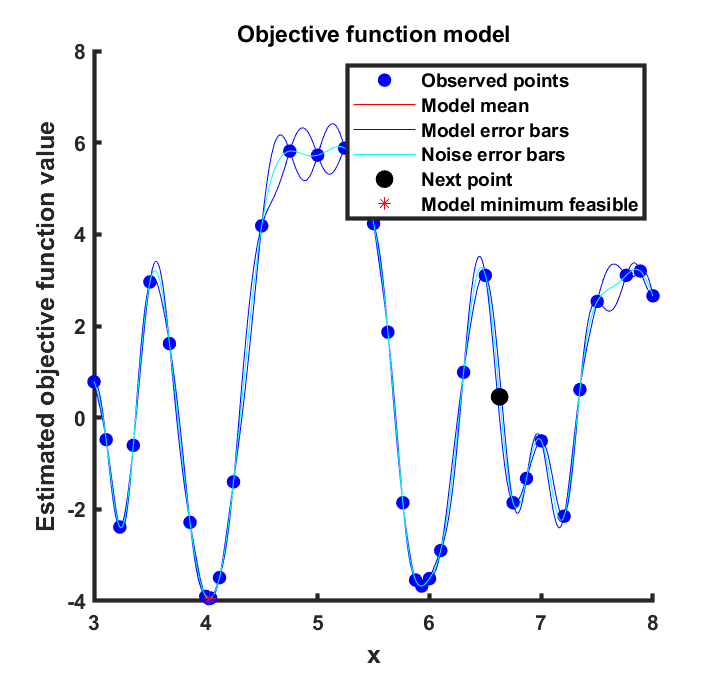}
\caption{bayesopt40}
\end{subfigure}
\begin{subfigure}[b]{0.330\textwidth}
\includegraphics[width=\textwidth]{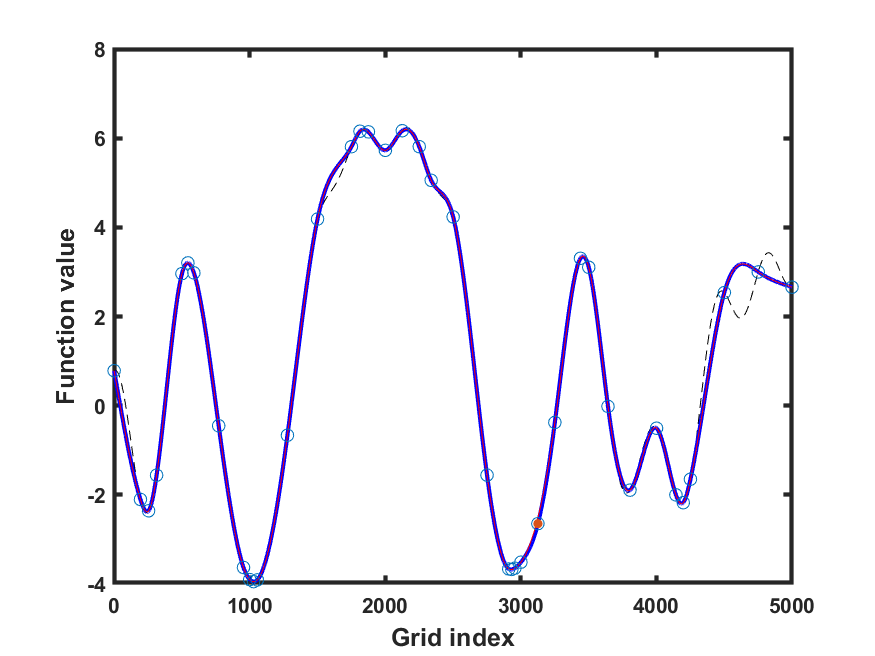}
\caption{LineWalker40}
\end{subfigure}
\newline
\begin{subfigure}[b]{0.270\textwidth}
\includegraphics[width=\textwidth]{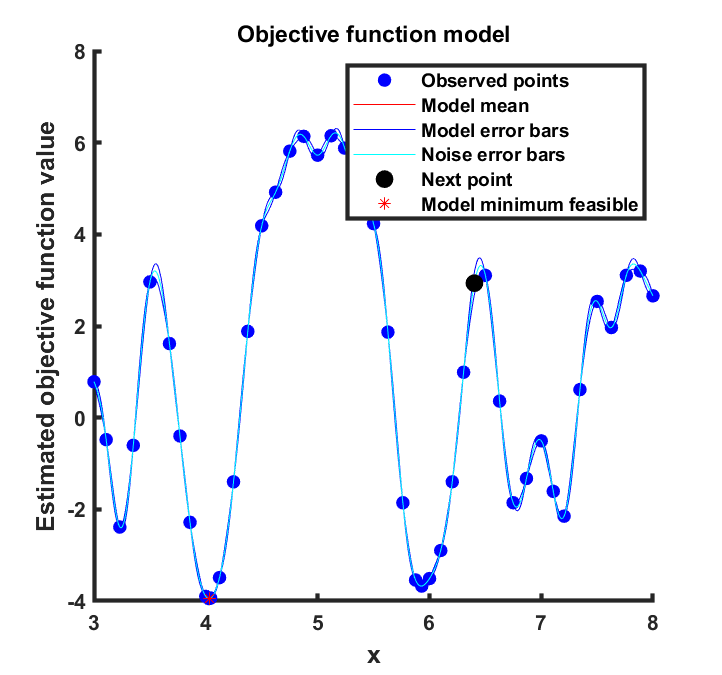}
\caption{bayesopt50}
\end{subfigure}
\begin{subfigure}[b]{0.330\textwidth}
\includegraphics[width=\textwidth]{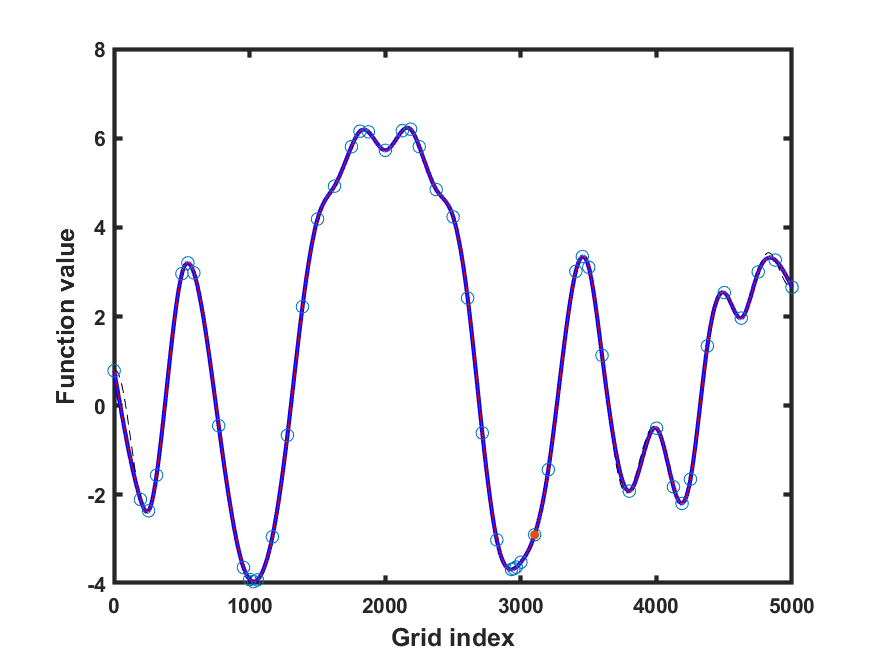}
\caption{LineWalker50}
\end{subfigure}
\newline
\caption{langer2. Left column = \texttt{bayesopt}. Right column = \texttt{LineWalker-full}}
\label{fig:out_langer2}
\end{figure}

\begin{figure} [h!]
\centering
\begin{subfigure}[b]{0.270\textwidth}
\includegraphics[width=\textwidth]{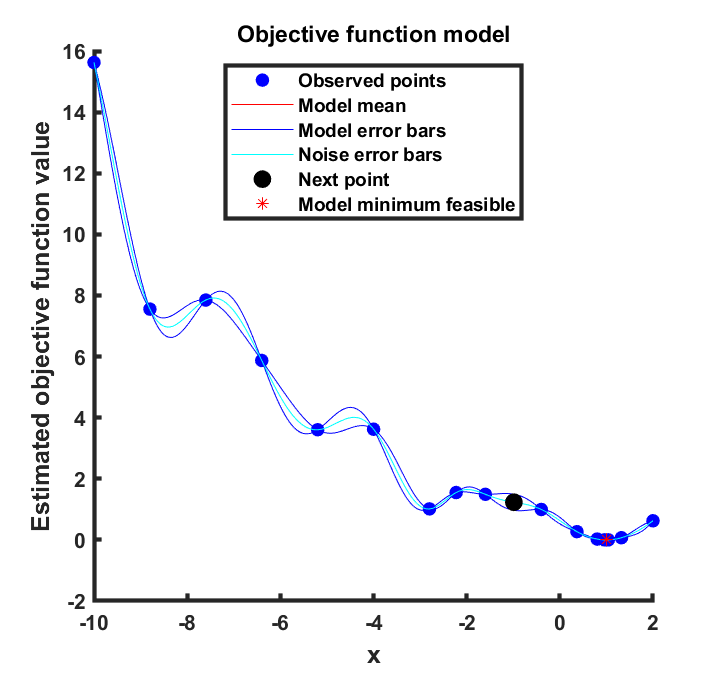}
\caption{bayesopt20}
\end{subfigure}
\begin{subfigure}[b]{0.330\textwidth}
\includegraphics[width=\textwidth]{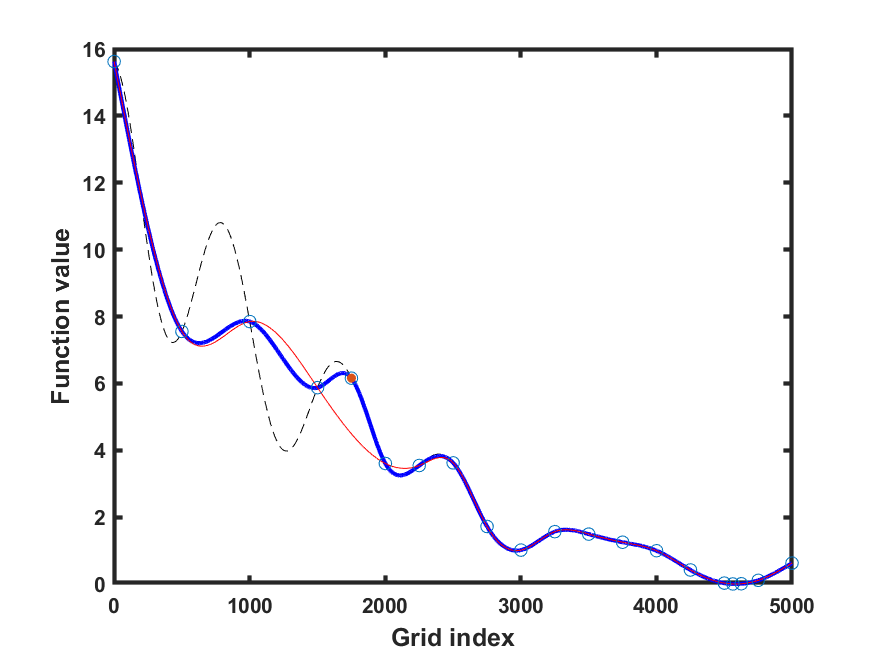}
\caption{LineWalker20}
\end{subfigure}
\newline
\begin{subfigure}[b]{0.270\textwidth}
\includegraphics[width=\textwidth]{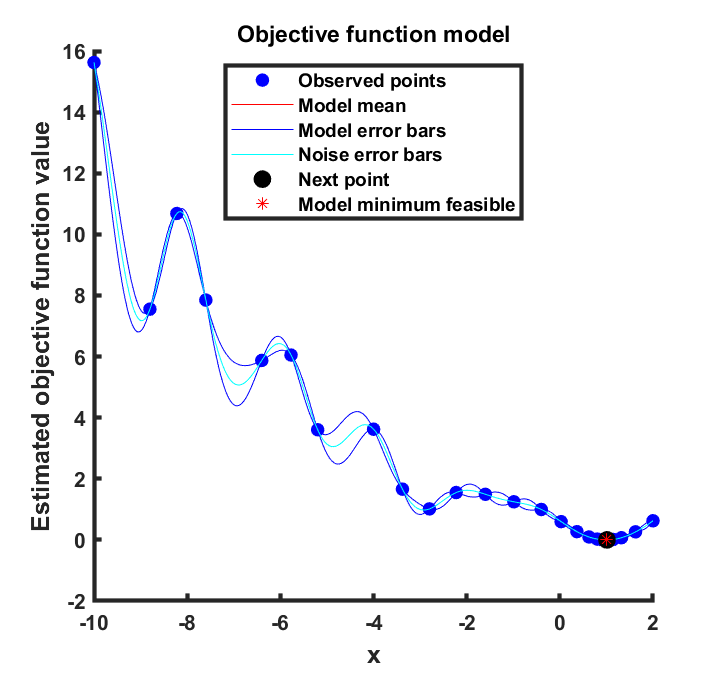}
\caption{bayesopt30}
\end{subfigure}
\begin{subfigure}[b]{0.330\textwidth}
\includegraphics[width=\textwidth]{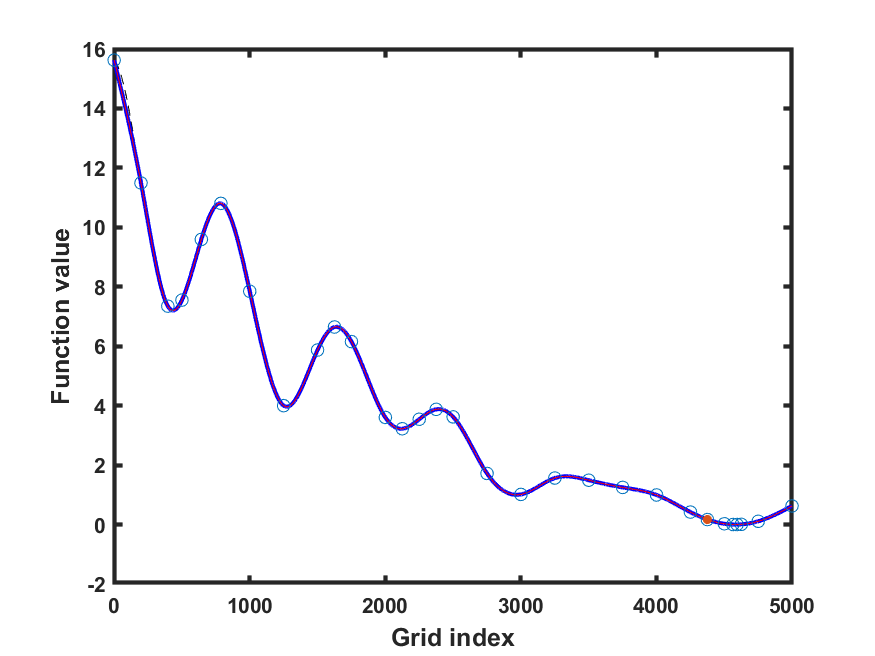}
\caption{LineWalker30}
\end{subfigure}
\newline
\begin{subfigure}[b]{0.270\textwidth}
\includegraphics[width=\textwidth]{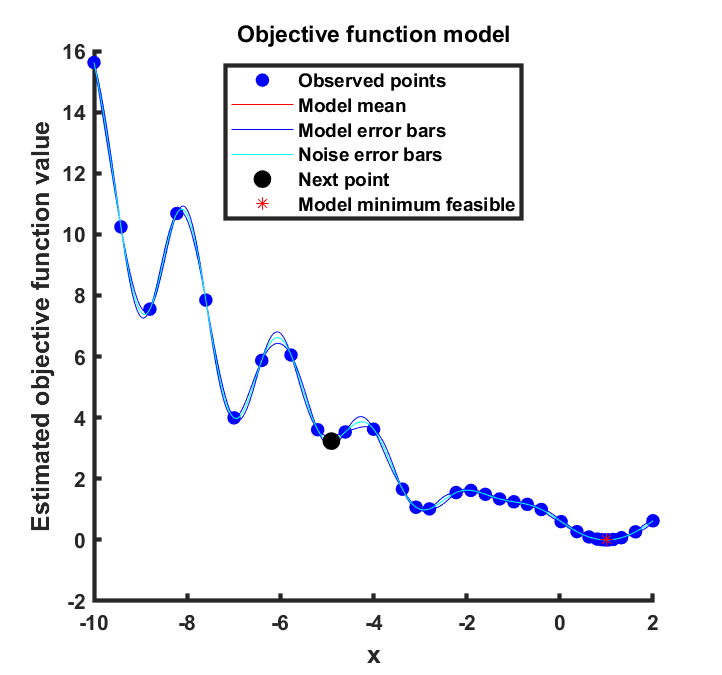}
\caption{bayesopt40}
\end{subfigure}
\begin{subfigure}[b]{0.330\textwidth}
\includegraphics[width=\textwidth]{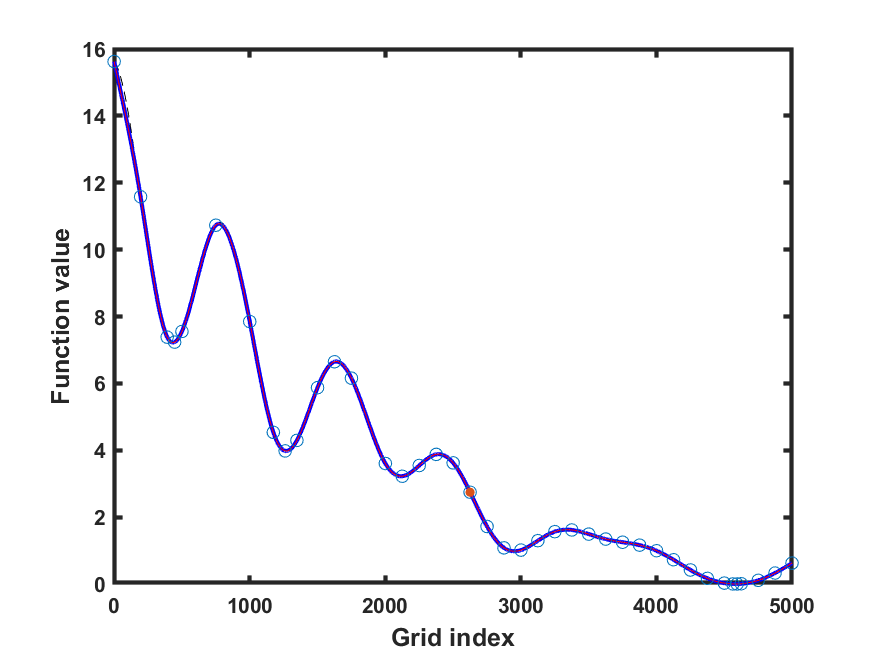}
\caption{LineWalker40}
\end{subfigure}
\newline
\begin{subfigure}[b]{0.270\textwidth}
\includegraphics[width=\textwidth]{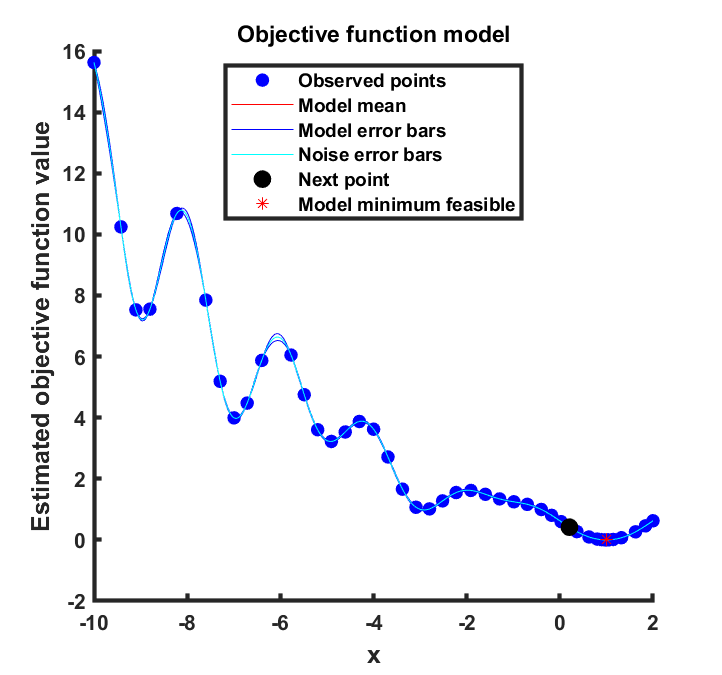}
\caption{bayesopt50}
\end{subfigure}
\begin{subfigure}[b]{0.330\textwidth}
\includegraphics[width=\textwidth]{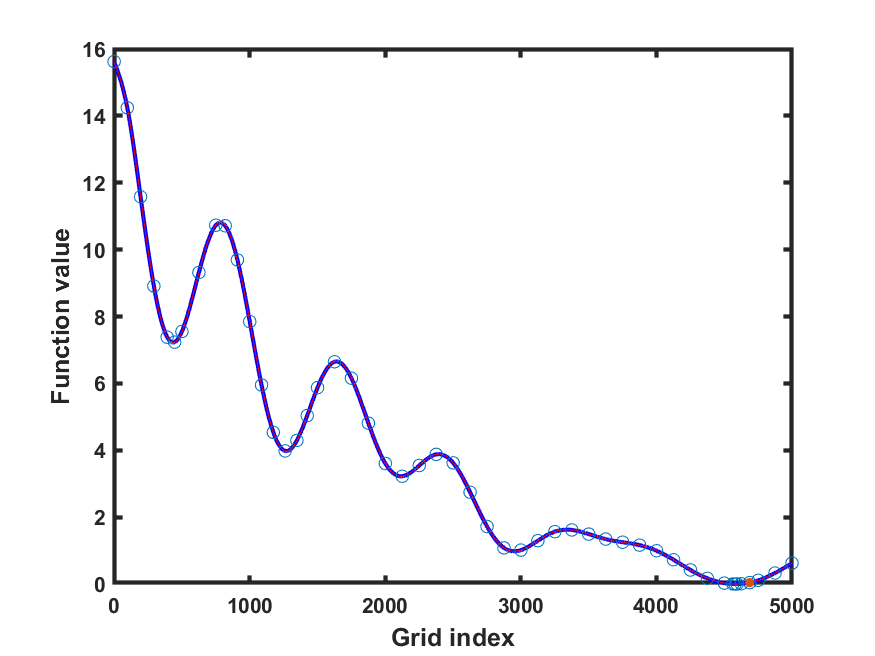}
\caption{LineWalker50}
\end{subfigure}
\newline
\caption{levy. Left column = \texttt{bayesopt}. Right column = \texttt{LineWalker-full}}
\label{fig:out_levy}
\end{figure}

\begin{figure} [h!]
\centering
\begin{subfigure}[b]{0.270\textwidth}
\includegraphics[width=\textwidth]{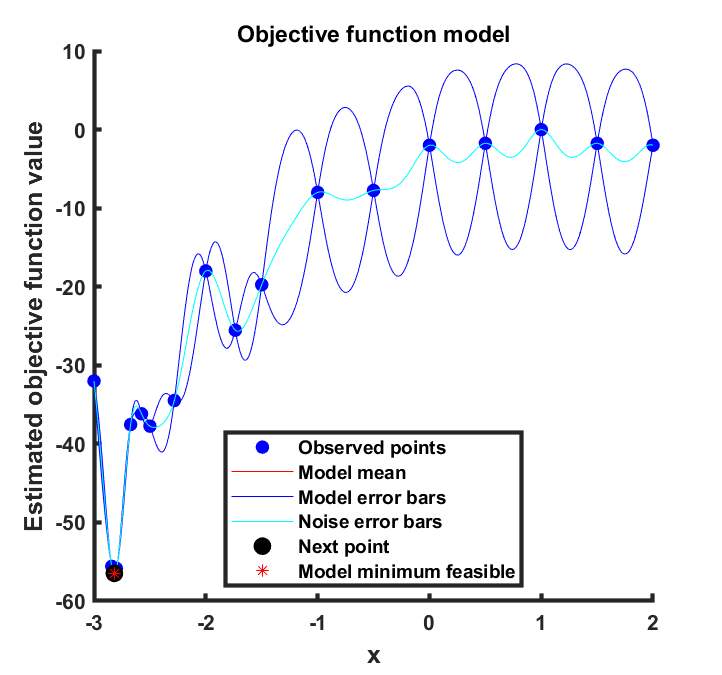}
\caption{bayesopt20}
\end{subfigure}
\begin{subfigure}[b]{0.330\textwidth}
\includegraphics[width=\textwidth]{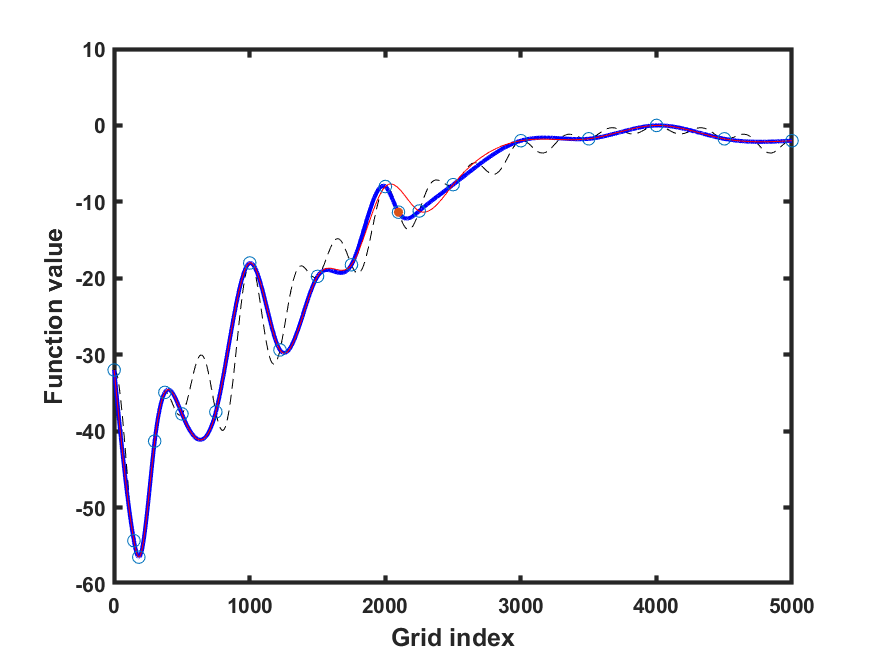}
\caption{LineWalker20}
\end{subfigure}
\newline
\begin{subfigure}[b]{0.270\textwidth}
\includegraphics[width=\textwidth]{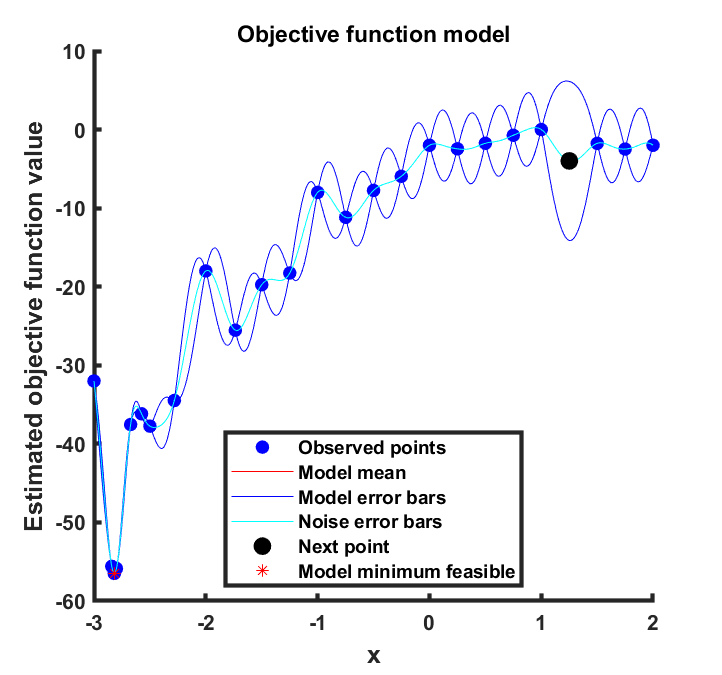}
\caption{bayesopt30}
\end{subfigure}
\begin{subfigure}[b]{0.330\textwidth}
\includegraphics[width=\textwidth]{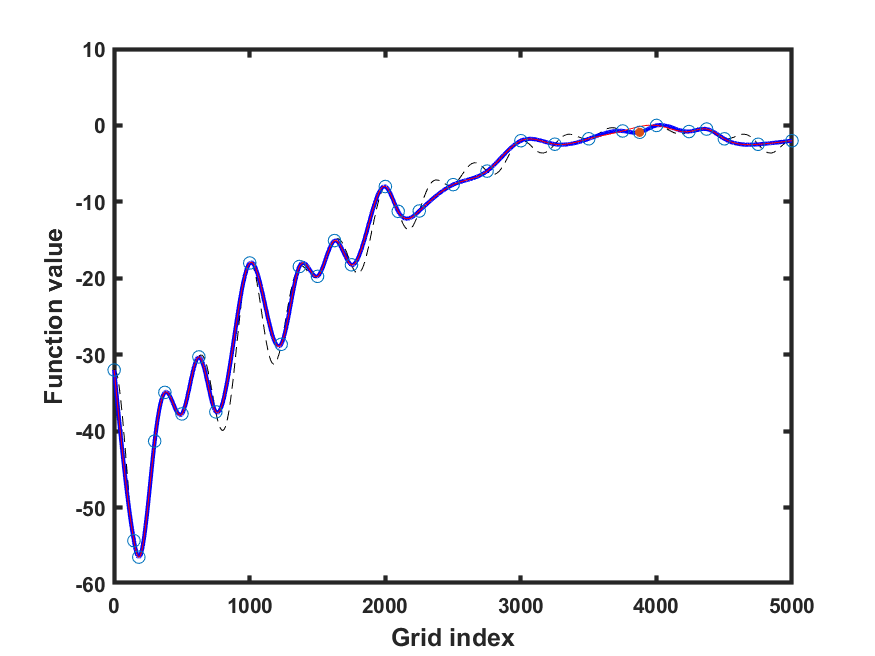}
\caption{LineWalker30}
\end{subfigure}
\newline
\begin{subfigure}[b]{0.270\textwidth}
\includegraphics[width=\textwidth]{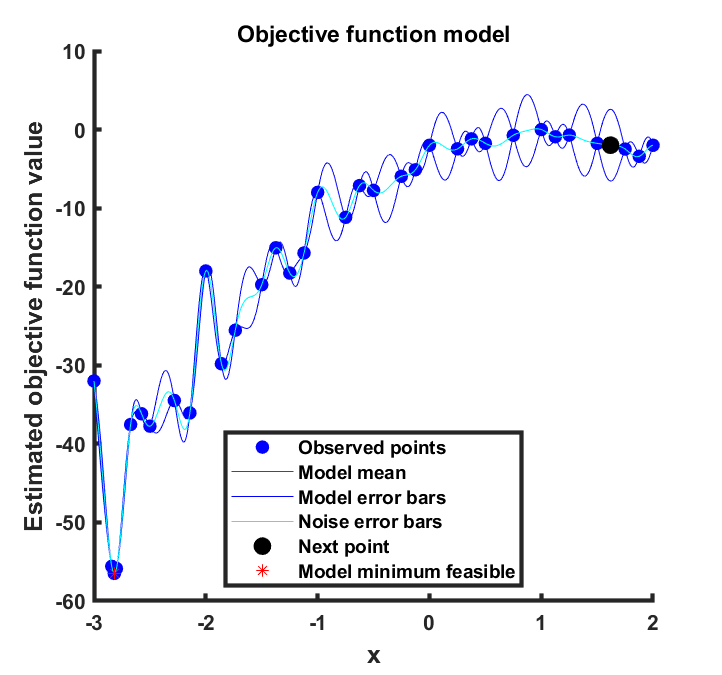}
\caption{bayesopt40}
\end{subfigure}
\begin{subfigure}[b]{0.330\textwidth}
\includegraphics[width=\textwidth]{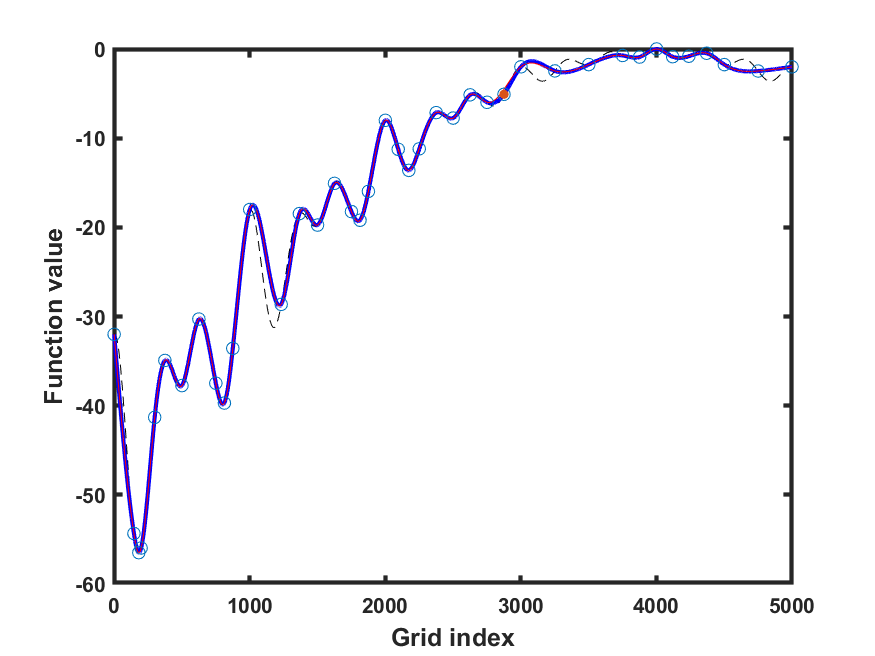}
\caption{LineWalker40}
\end{subfigure}
\newline
\begin{subfigure}[b]{0.270\textwidth}
\includegraphics[width=\textwidth]{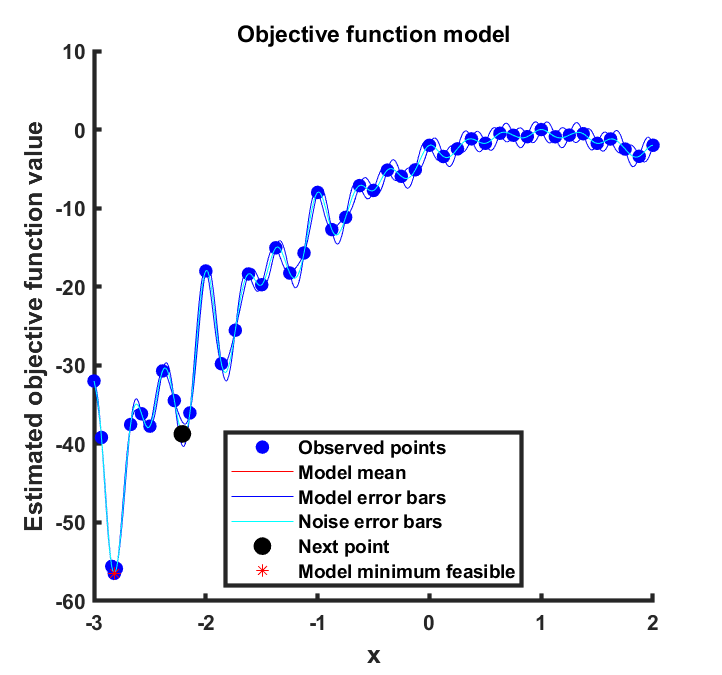}
\caption{bayesopt50}
\end{subfigure}
\begin{subfigure}[b]{0.330\textwidth}
\includegraphics[width=\textwidth]{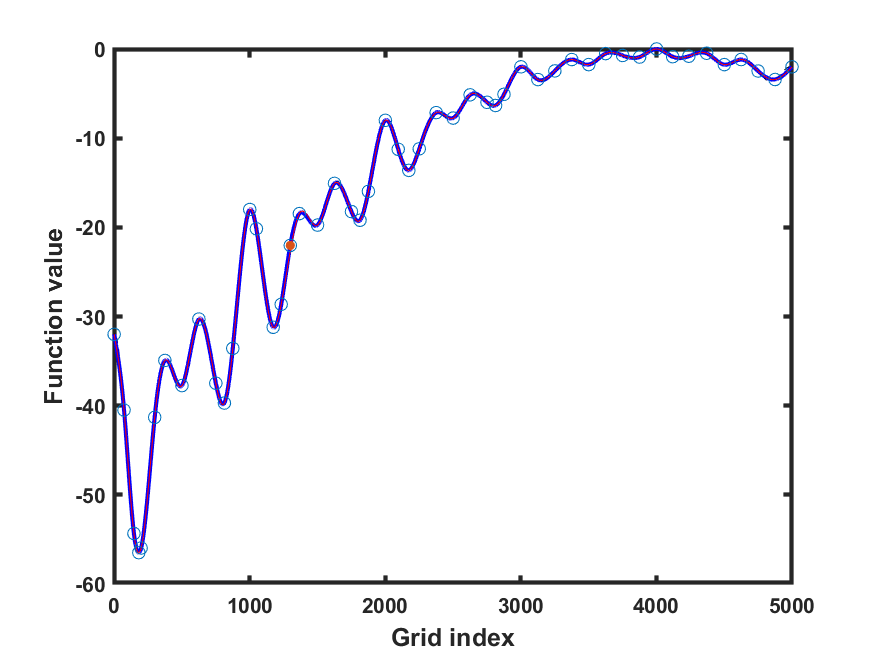}
\caption{LineWalker50}
\end{subfigure}
\newline
\caption{levy13. Left column = \texttt{bayesopt}. Right column = \texttt{LineWalker-full}}
\label{fig:out_levy13_1Dslice}
\end{figure}

\begin{figure} [h!]
\centering
\begin{subfigure}[b]{0.270\textwidth}
\includegraphics[width=\textwidth]{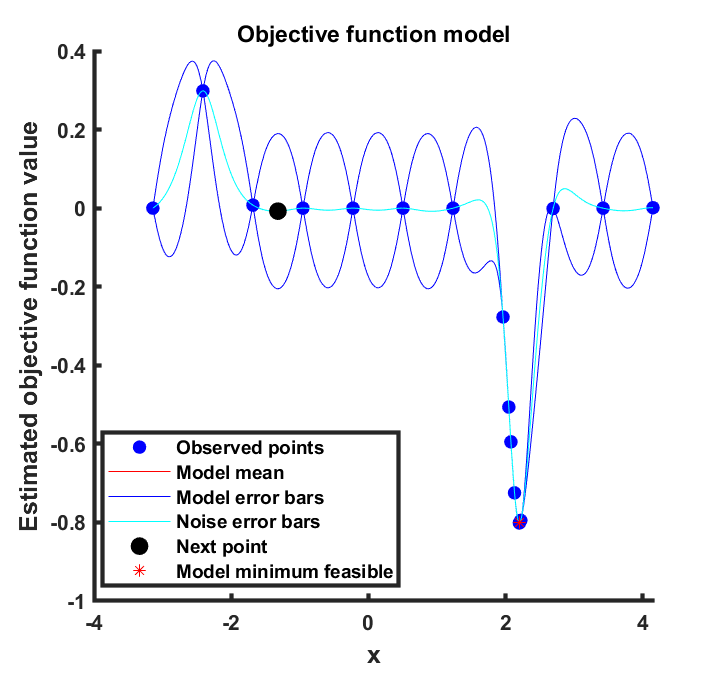}
\caption{bayesopt20}
\end{subfigure}
\begin{subfigure}[b]{0.330\textwidth}
\includegraphics[width=\textwidth]{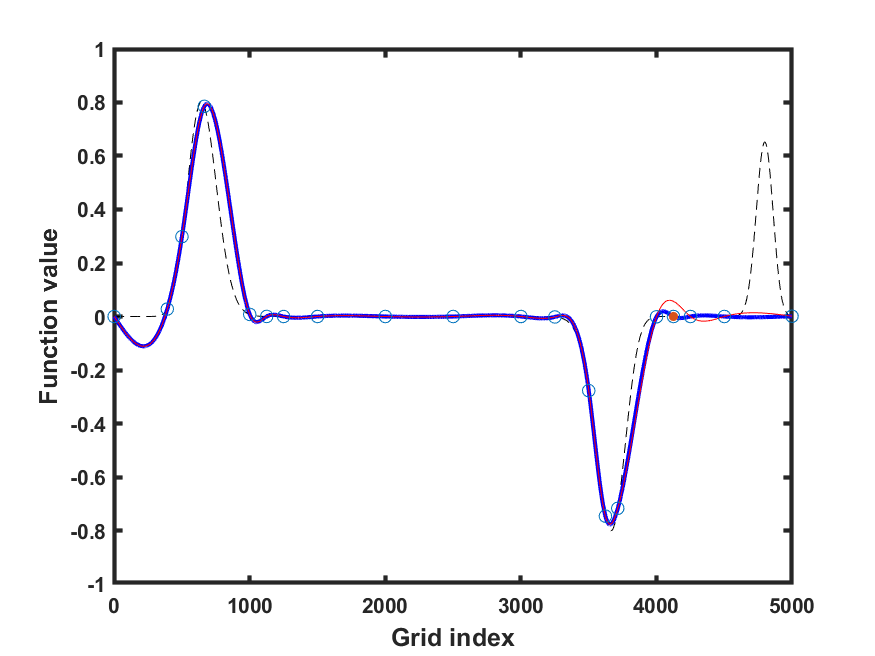}
\caption{LineWalker20}
\end{subfigure}
\newline
\begin{subfigure}[b]{0.270\textwidth}
\includegraphics[width=\textwidth]{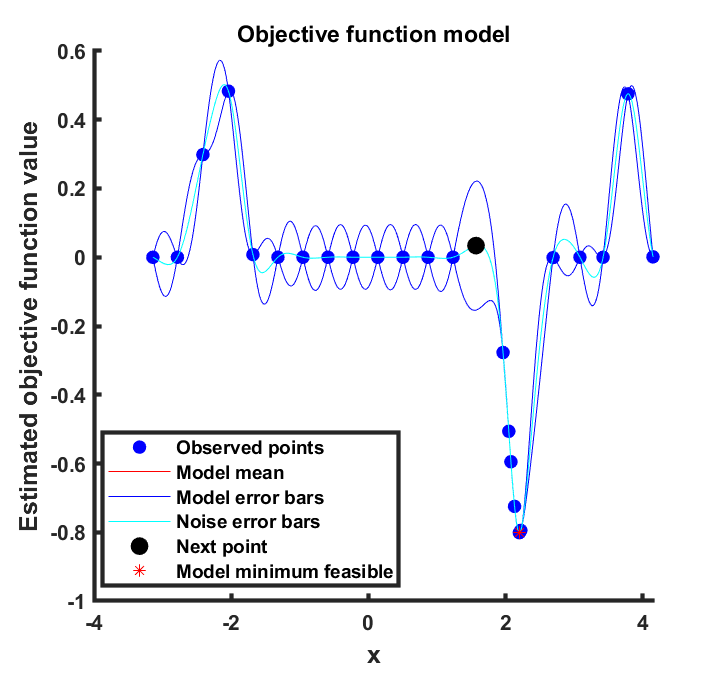}
\caption{bayesopt30}
\end{subfigure}
\begin{subfigure}[b]{0.330\textwidth}
\includegraphics[width=\textwidth]{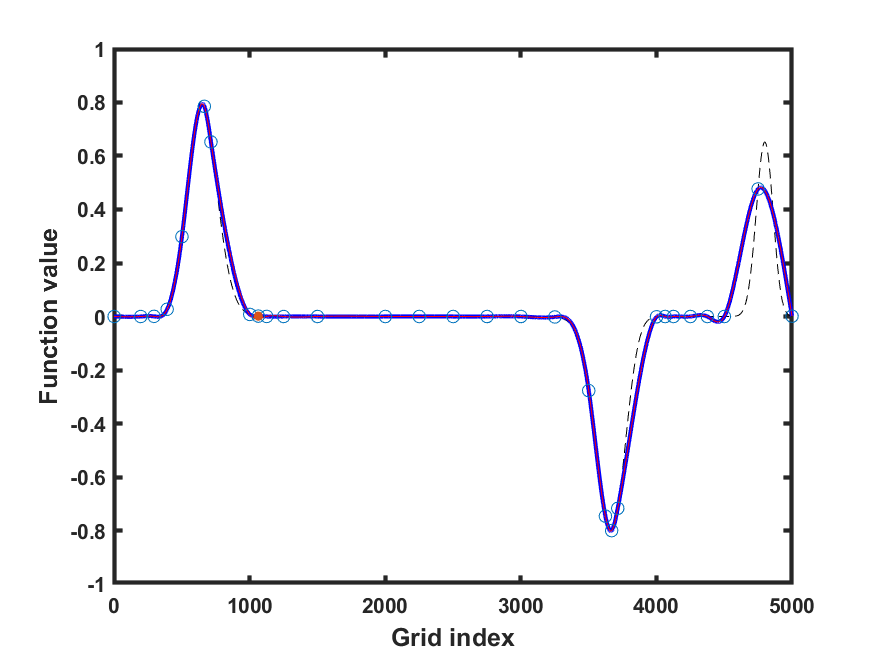}
\caption{LineWalker30}
\end{subfigure}
\newline
\begin{subfigure}[b]{0.270\textwidth}
\includegraphics[width=\textwidth]{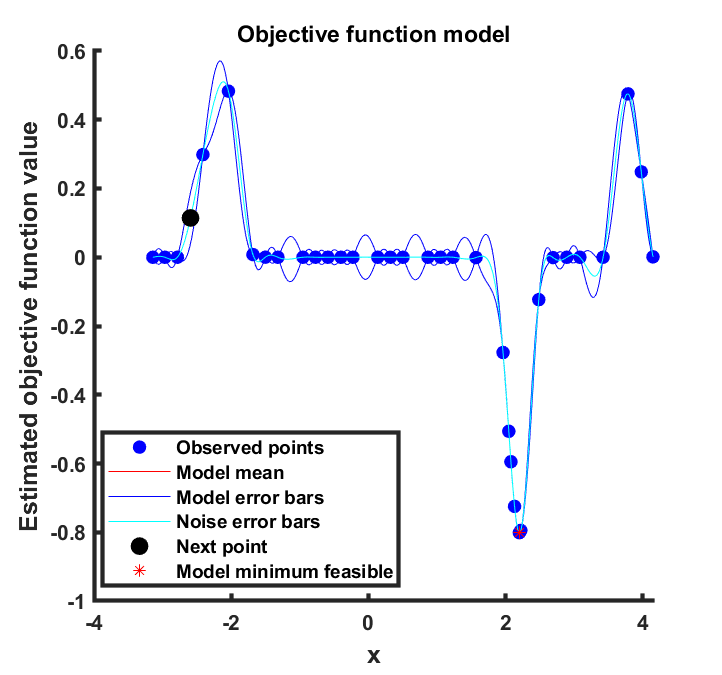}
\caption{bayesopt40}
\end{subfigure}
\begin{subfigure}[b]{0.330\textwidth}
\includegraphics[width=\textwidth]{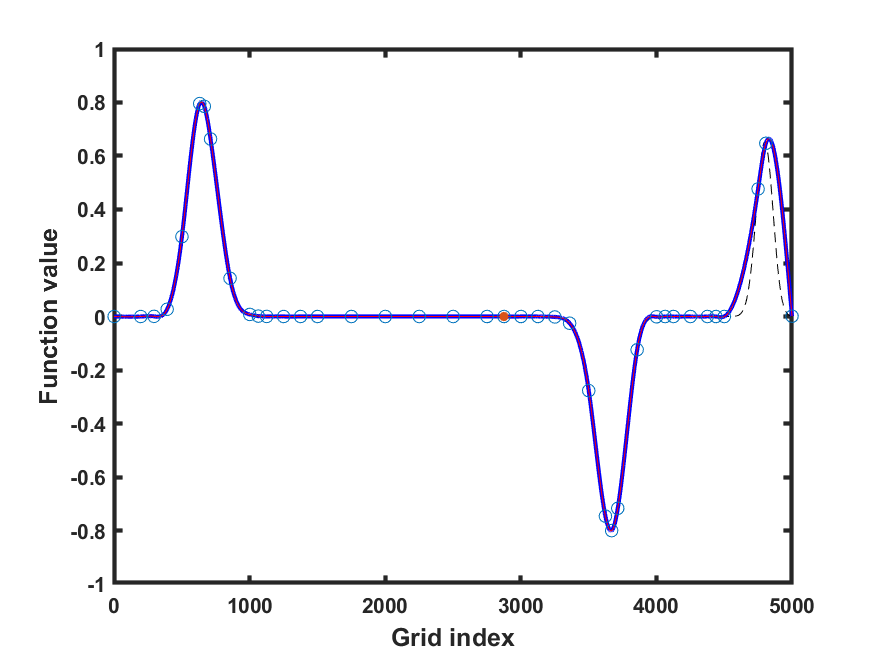}
\caption{LineWalker40}
\end{subfigure}
\newline
\begin{subfigure}[b]{0.270\textwidth}
\includegraphics[width=\textwidth]{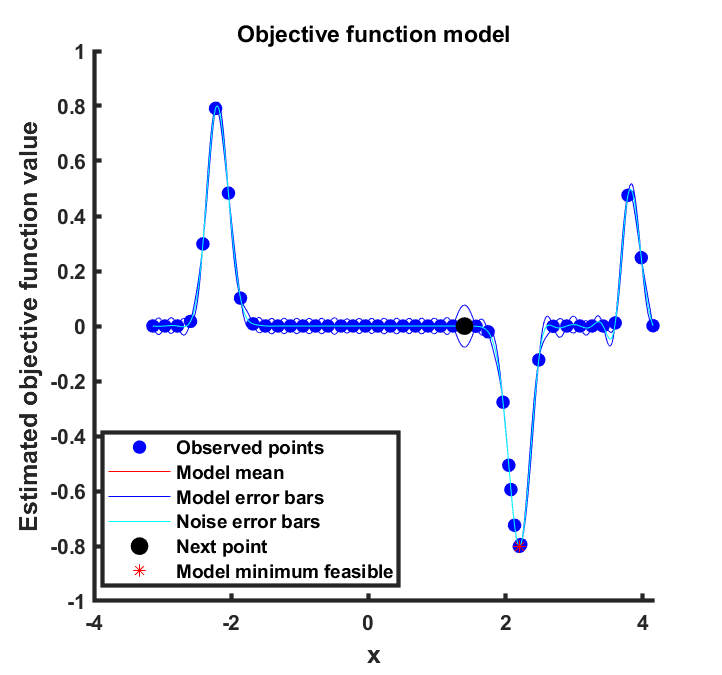}
\caption{bayesopt50}
\end{subfigure}
\begin{subfigure}[b]{0.330\textwidth}
\includegraphics[width=\textwidth]{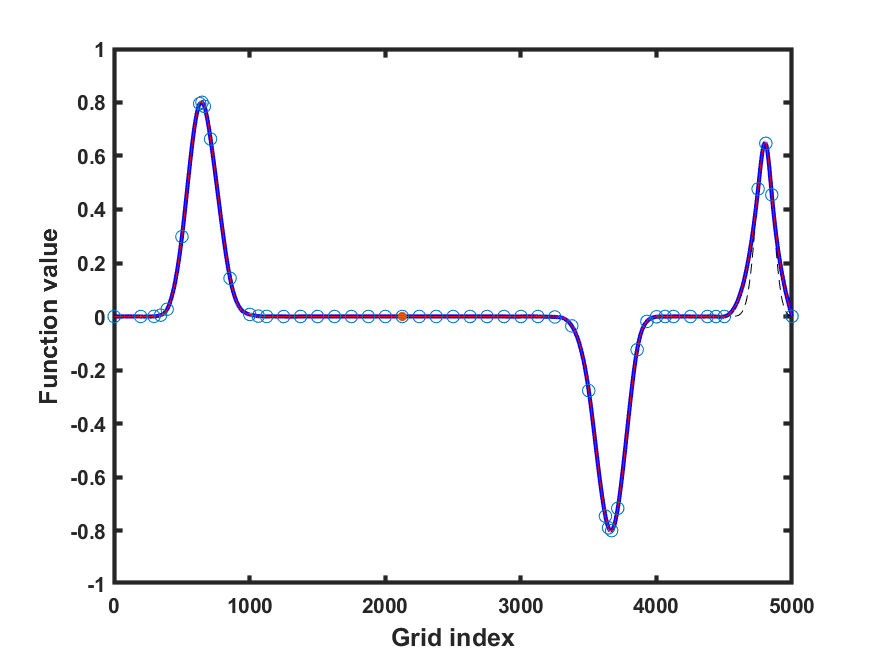}
\caption{LineWalker50}
\end{subfigure}
\newline
\caption{michal. Left column = \texttt{bayesopt}. Right column = \texttt{LineWalker-full}}
\label{fig:out_michal}
\end{figure}

\begin{figure} [h!]
\centering
\begin{subfigure}[b]{0.270\textwidth}
\includegraphics[width=\textwidth]{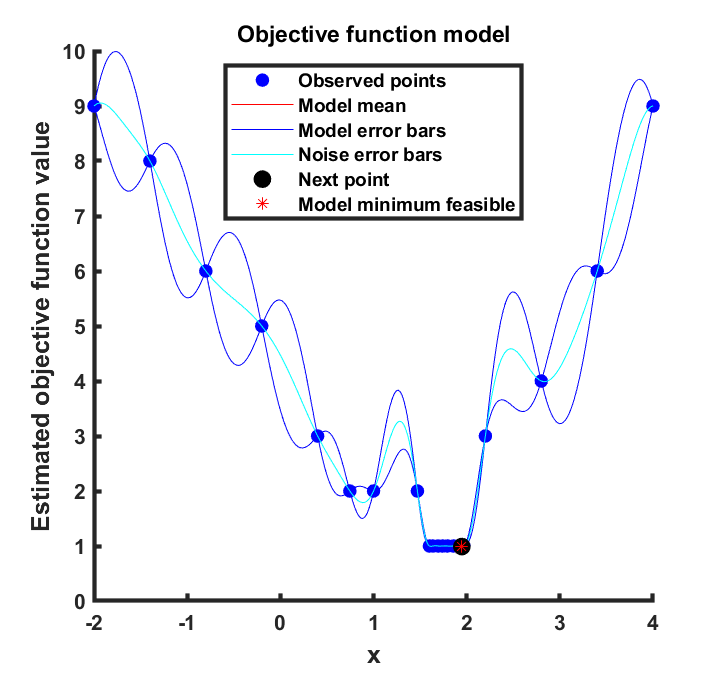}
\caption{bayesopt20}
\end{subfigure}
\begin{subfigure}[b]{0.330\textwidth}
\includegraphics[width=\textwidth]{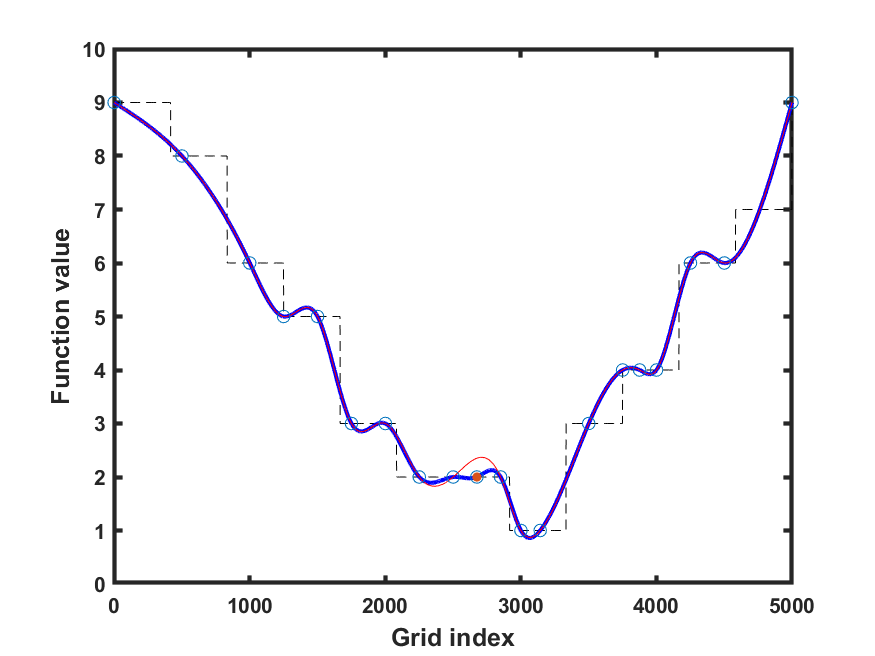}
\caption{LineWalker20}
\end{subfigure}
\newline
\begin{subfigure}[b]{0.270\textwidth}
\includegraphics[width=\textwidth]{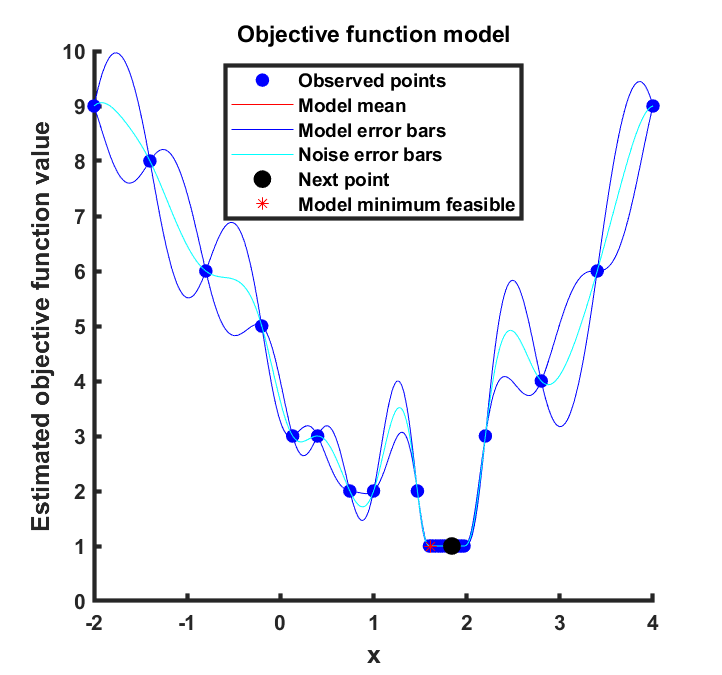}
\caption{bayesopt30}
\end{subfigure}
\begin{subfigure}[b]{0.330\textwidth}
\includegraphics[width=\textwidth]{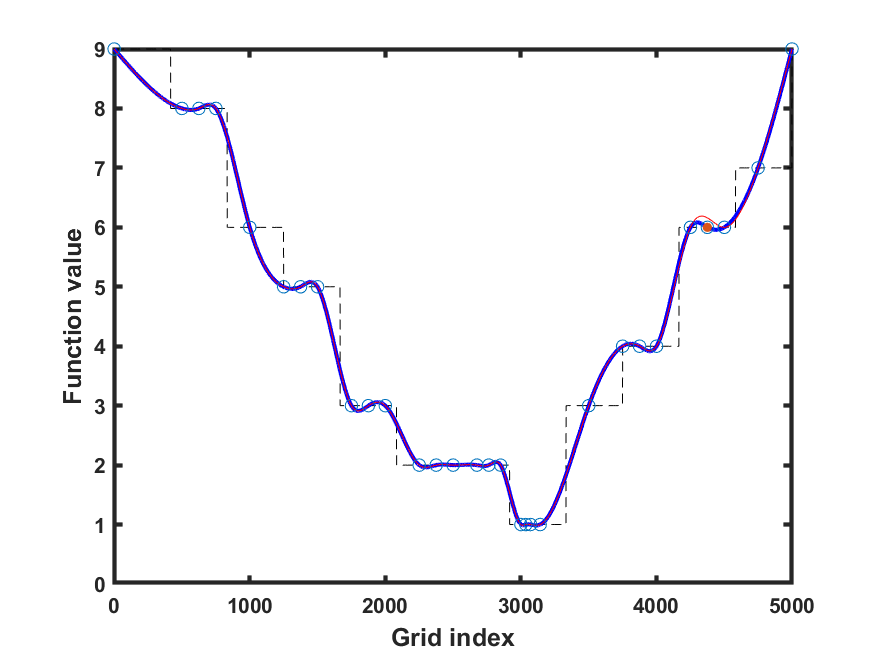}
\caption{LineWalker30}
\end{subfigure}
\newline
\begin{subfigure}[b]{0.270\textwidth}
\includegraphics[width=\textwidth]{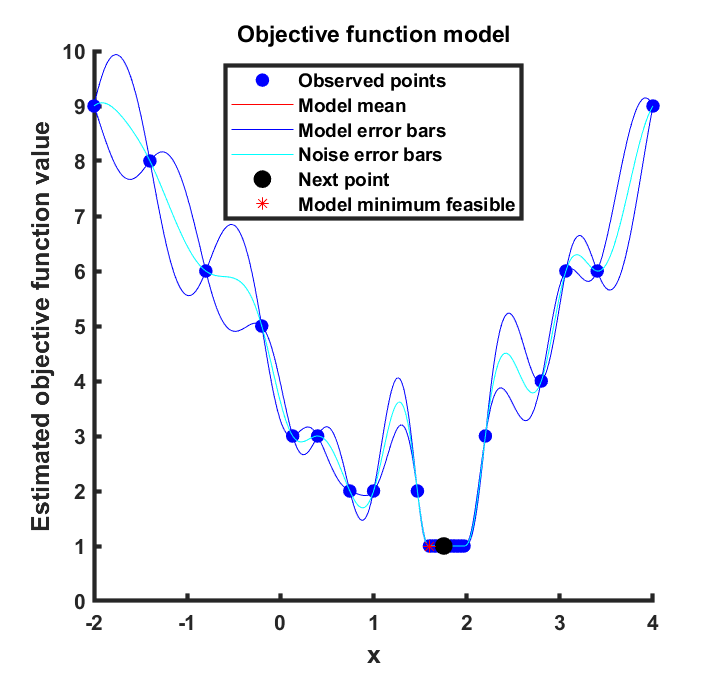}
\caption{bayesopt40}
\end{subfigure}
\begin{subfigure}[b]{0.330\textwidth}
\includegraphics[width=\textwidth]{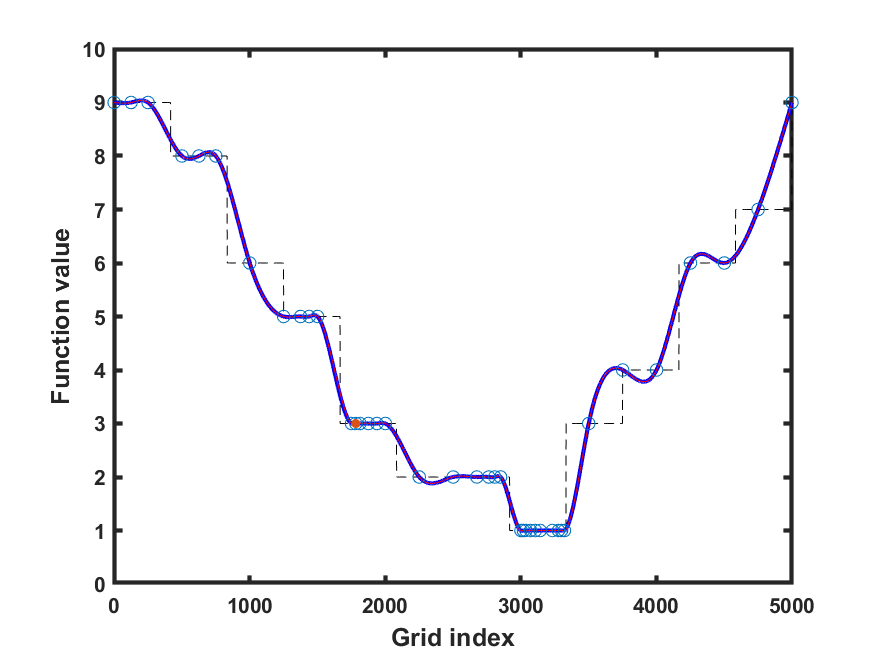}
\caption{LineWalker40}
\end{subfigure}
\newline
\begin{subfigure}[b]{0.270\textwidth}
\includegraphics[width=\textwidth]{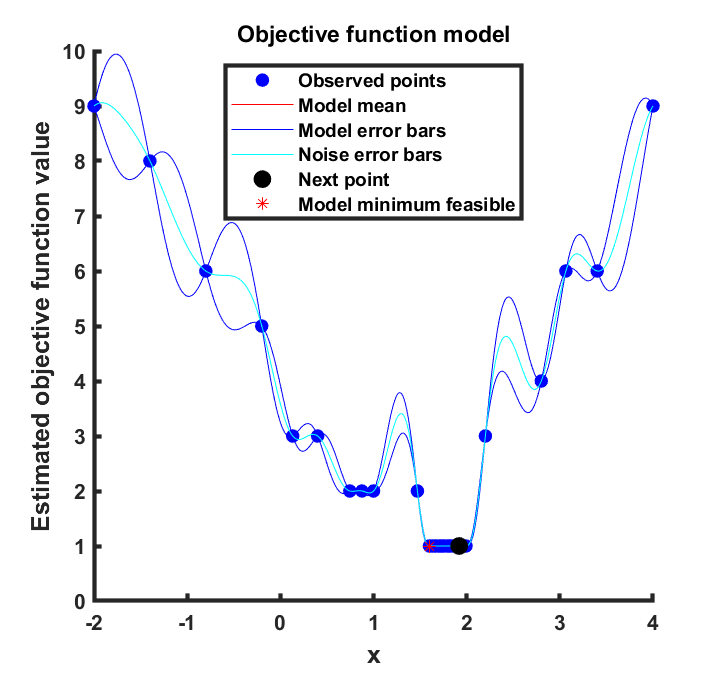}
\caption{bayesopt50}
\end{subfigure}
\begin{subfigure}[b]{0.330\textwidth}
\includegraphics[width=\textwidth]{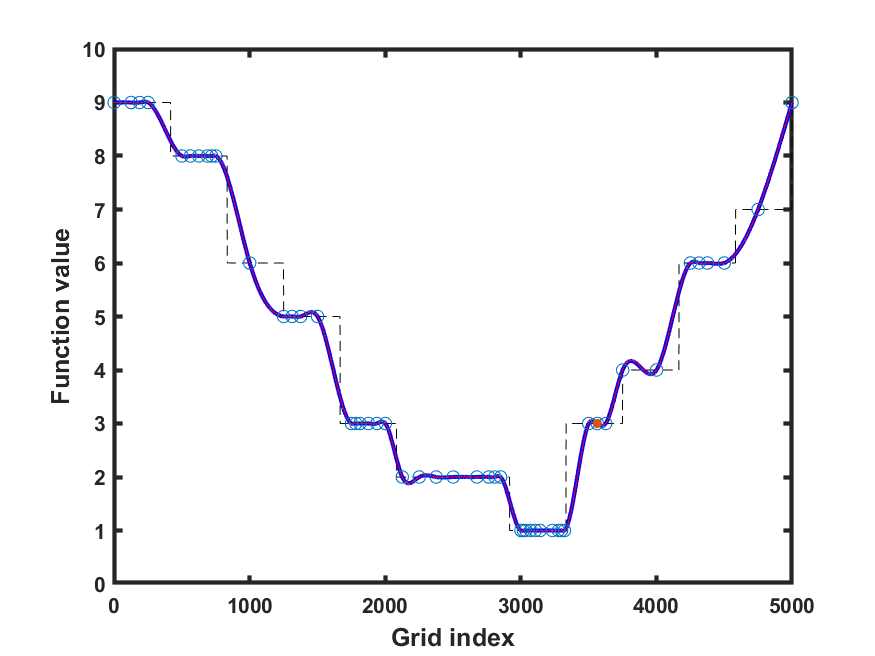}
\caption{LineWalker50}
\end{subfigure}
\newline
\caption{plateau. Left column = \texttt{bayesopt}. Right column = \texttt{LineWalker-full}}
\label{fig:out_plateau_1Dslice}
\end{figure}

\begin{figure} [h!]
\centering
\begin{subfigure}[b]{0.270\textwidth}
\includegraphics[width=\textwidth]{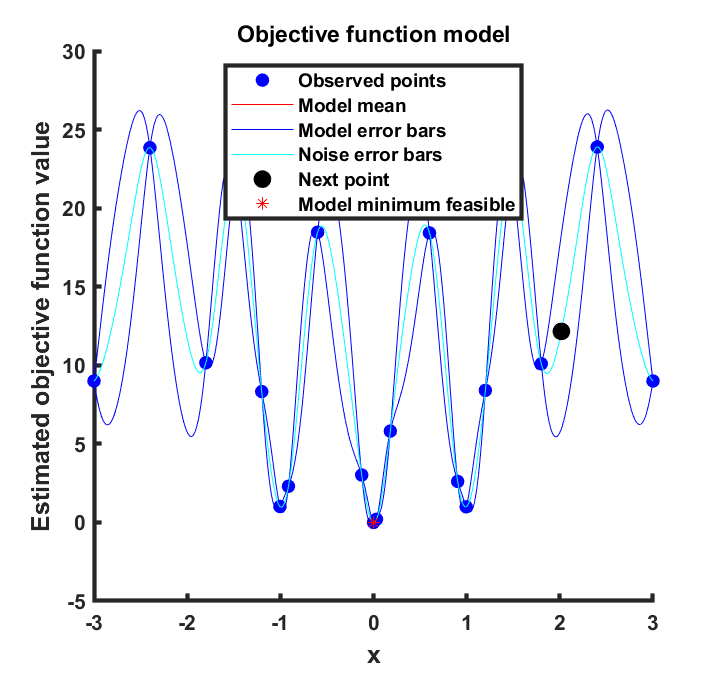}
\caption{bayesopt20}
\end{subfigure}
\begin{subfigure}[b]{0.330\textwidth}
\includegraphics[width=\textwidth]{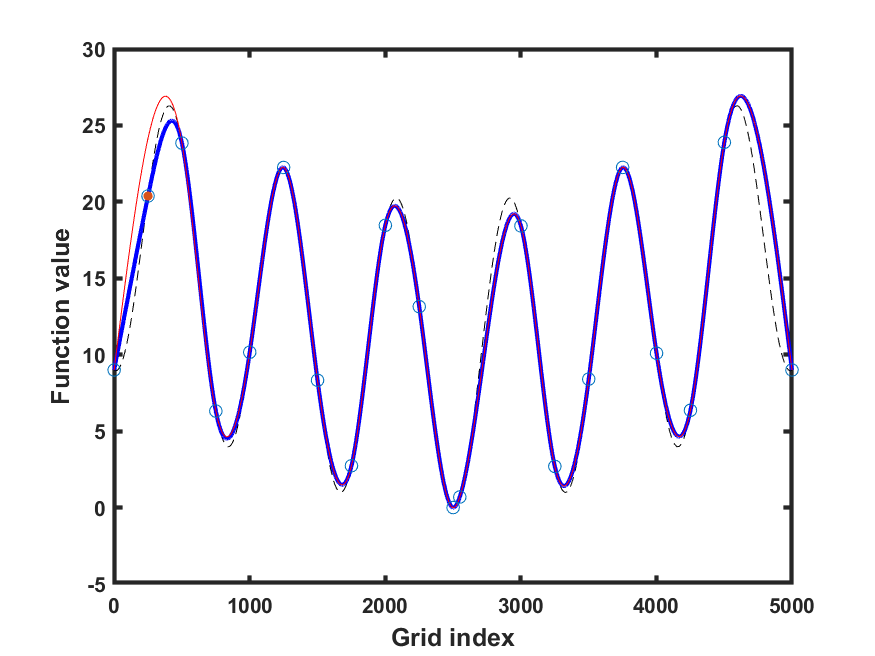}
\caption{LineWalker20}
\end{subfigure}
\newline
\begin{subfigure}[b]{0.270\textwidth}
\includegraphics[width=\textwidth]{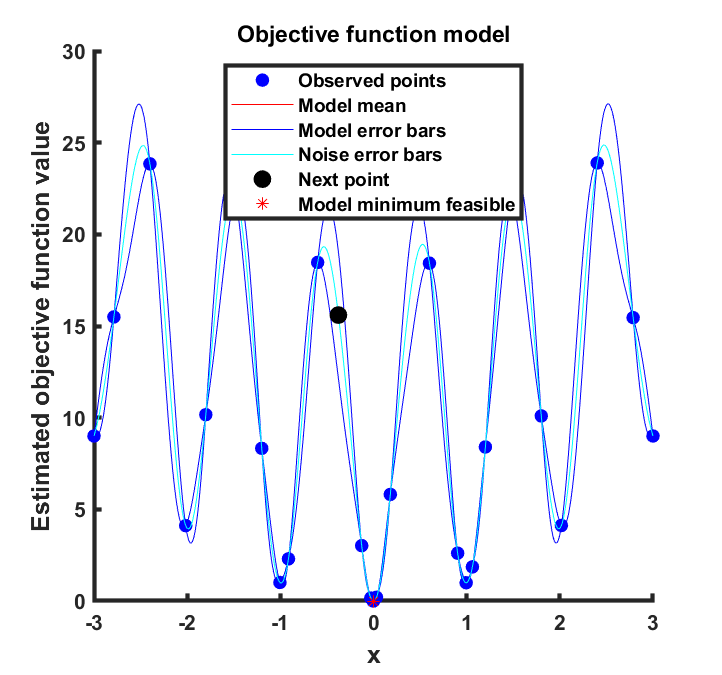}
\caption{bayesopt30}
\end{subfigure}
\begin{subfigure}[b]{0.330\textwidth}
\includegraphics[width=\textwidth]{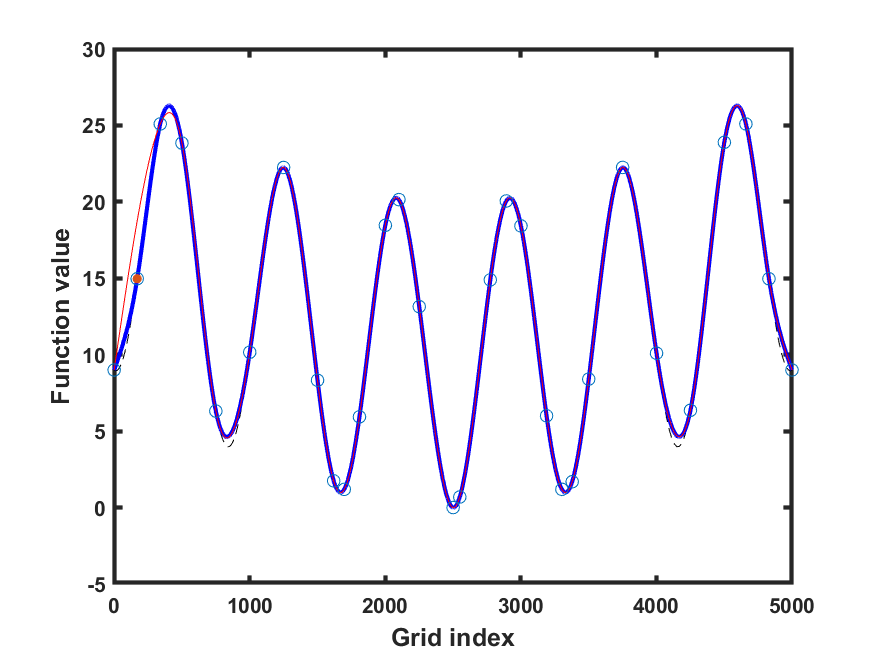}
\caption{LineWalker30}
\end{subfigure}
\newline
\begin{subfigure}[b]{0.270\textwidth}
\includegraphics[width=\textwidth]{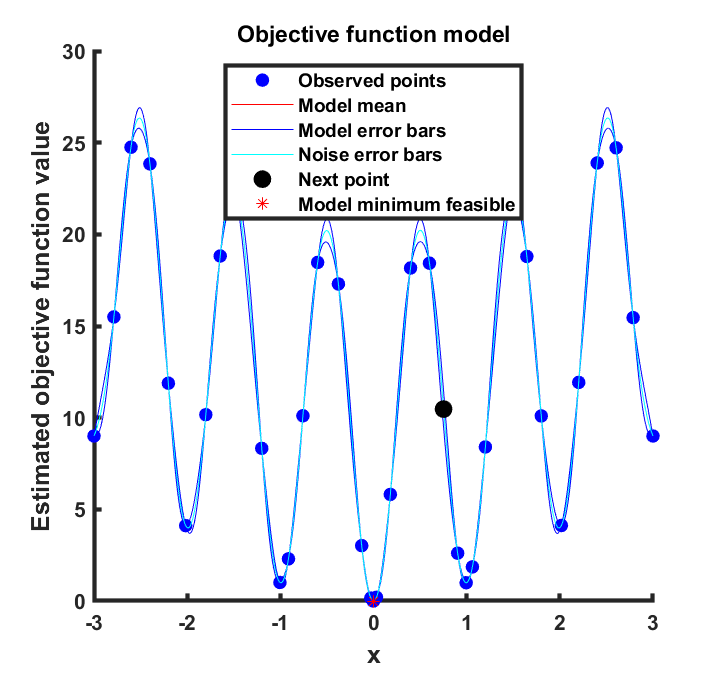}
\caption{bayesopt40}
\end{subfigure}
\begin{subfigure}[b]{0.330\textwidth}
\includegraphics[width=\textwidth]{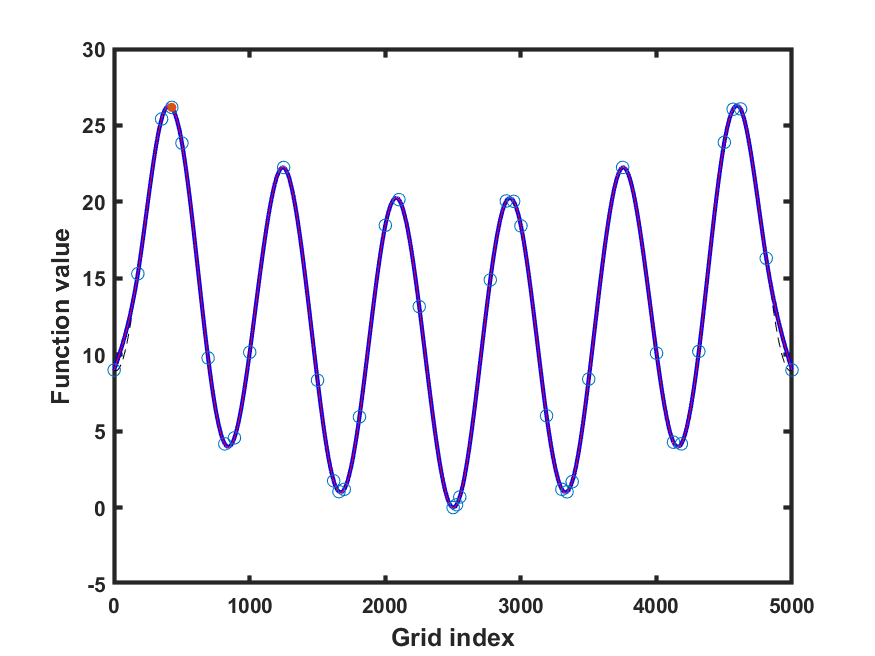}
\caption{LineWalker40}
\end{subfigure}
\newline
\begin{subfigure}[b]{0.270\textwidth}
\includegraphics[width=\textwidth]{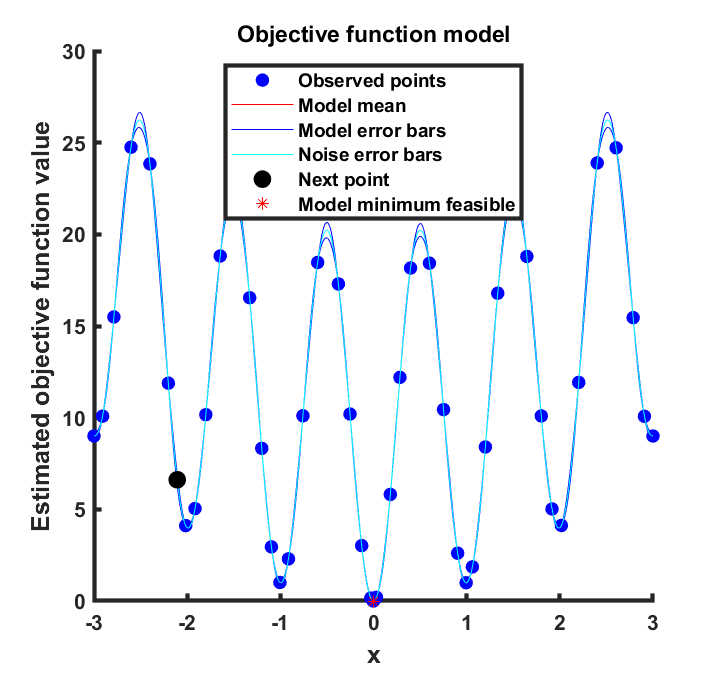}
\caption{bayesopt50}
\end{subfigure}
\begin{subfigure}[b]{0.330\textwidth}
\includegraphics[width=\textwidth]{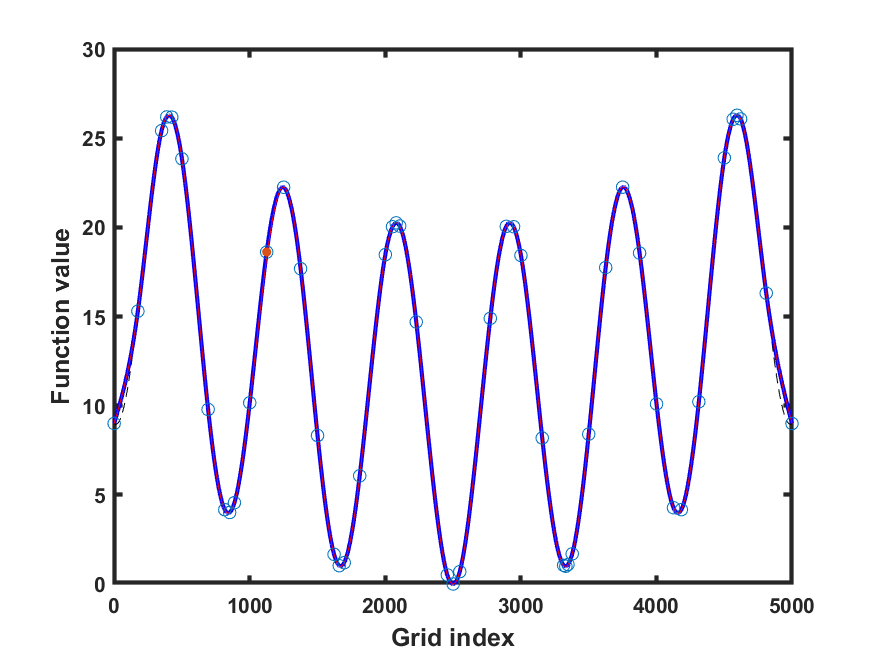}
\caption{LineWalker50}
\end{subfigure}
\newline
\caption{rastr. Left column = \texttt{bayesopt}. Right column = \texttt{LineWalker-full}}
\label{fig:out_rastr}
\end{figure}

\begin{figure}
\centering
\begin{subfigure}[b]{0.270\textwidth}
\includegraphics[width=\textwidth]{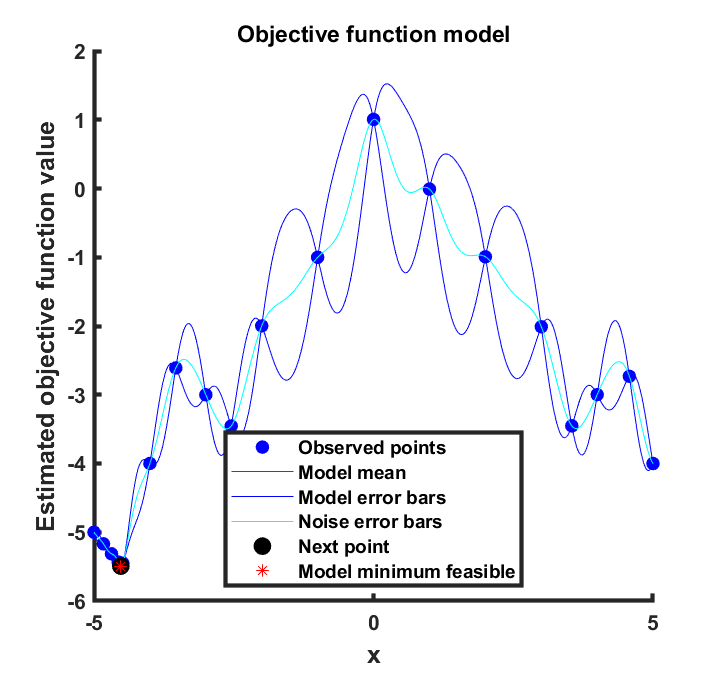}
\caption{bayesopt20}
\end{subfigure}
\begin{subfigure}[b]{0.330\textwidth}
\includegraphics[width=\textwidth]{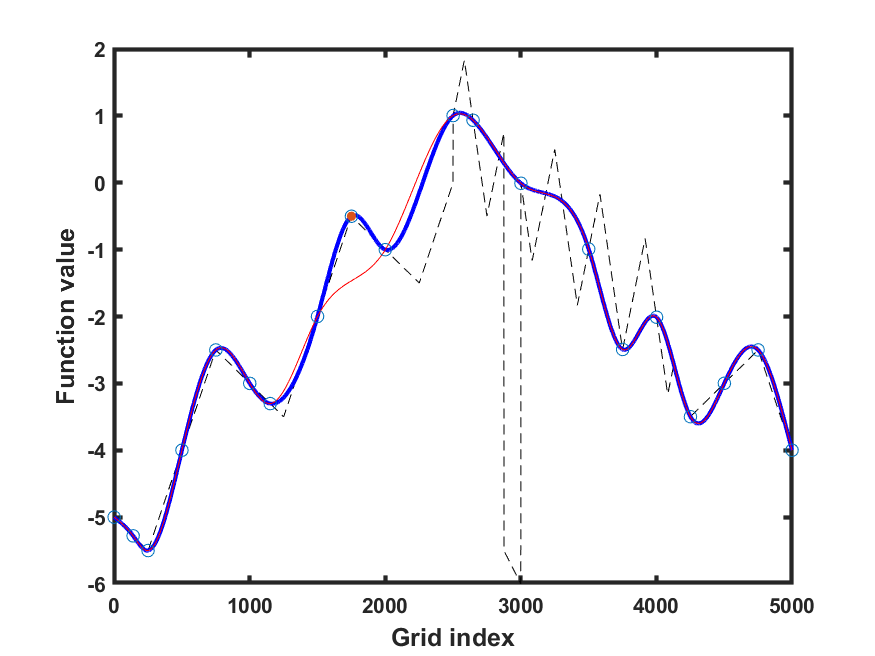}
\caption{LineWalker20}
\end{subfigure}
\newline
\begin{subfigure}[b]{0.270\textwidth}
\includegraphics[width=\textwidth]{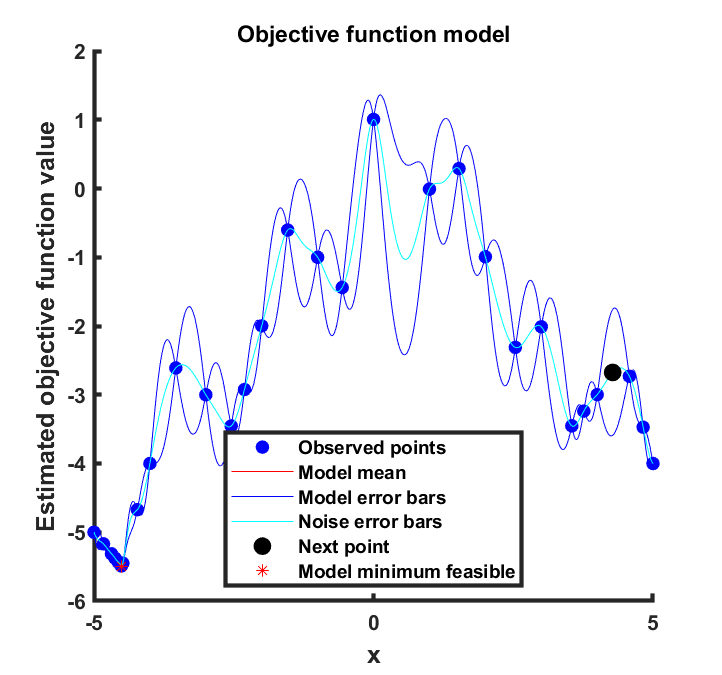}
\caption{bayesopt30}
\end{subfigure}
\begin{subfigure}[b]{0.330\textwidth}
\includegraphics[width=\textwidth]{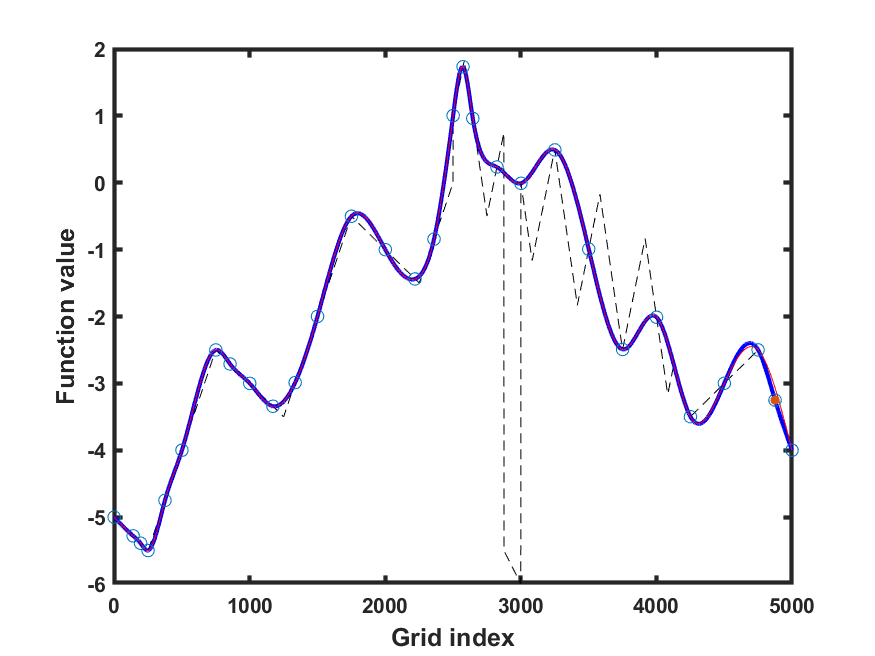}
\caption{LineWalker30}
\end{subfigure}
\newline
\begin{subfigure}[b]{0.270\textwidth}
\includegraphics[width=\textwidth]{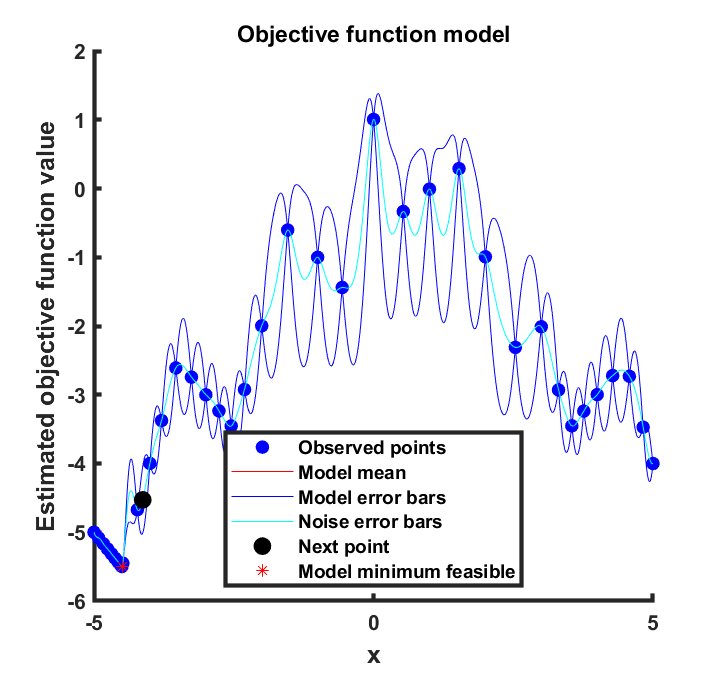}
\caption{bayesopt40}
\end{subfigure}
\begin{subfigure}[b]{0.330\textwidth}
\includegraphics[width=\textwidth]{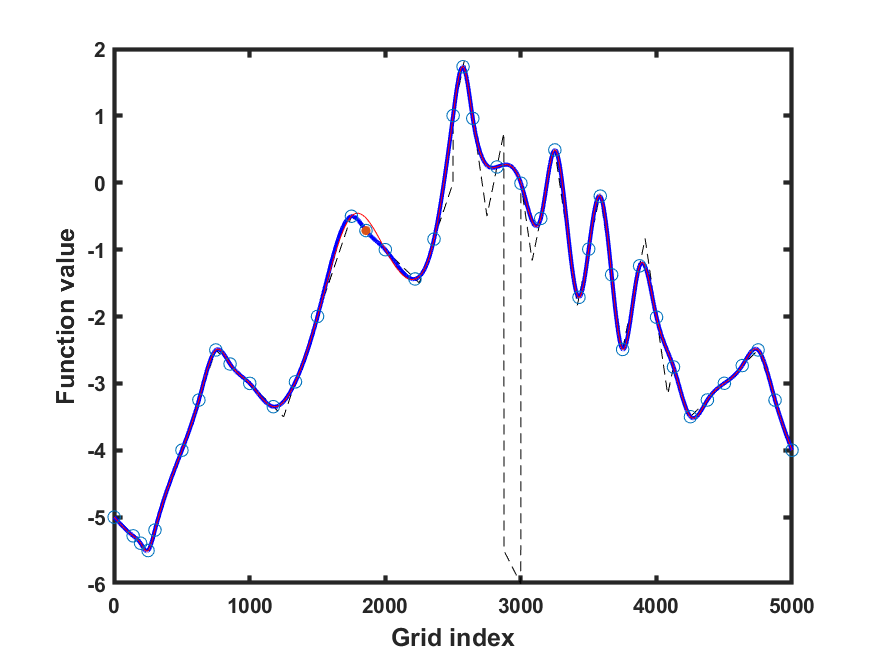}
\caption{LineWalker40}
\end{subfigure}
\newline
\begin{subfigure}[b]{0.270\textwidth}
\includegraphics[width=\textwidth]{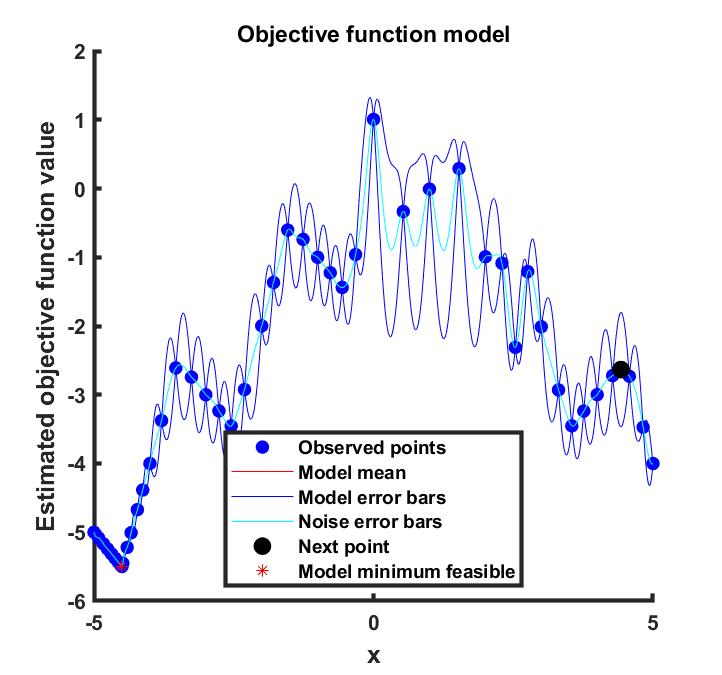}
\caption{bayesopt50}
\end{subfigure}
\begin{subfigure}[b]{0.330\textwidth}
\includegraphics[width=\textwidth]{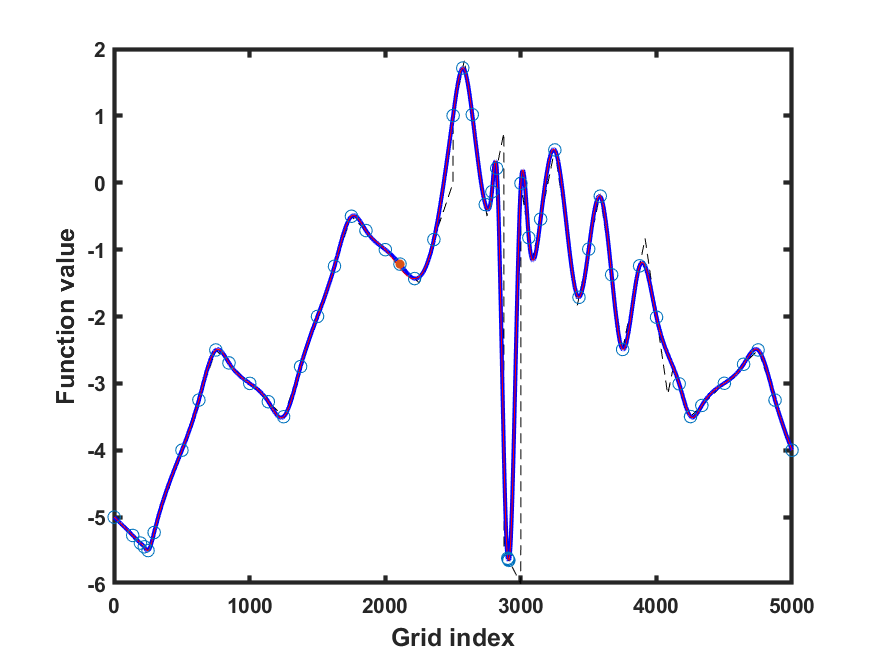}
\caption{LineWalker50}
\end{subfigure}
\newline
\caption{sawtoothD. Left column = \texttt{bayesopt}. Right column = \texttt{LineWalker-full}}
\label{fig:out_sawtoothD}
\end{figure}

\begin{figure}
\centering
\begin{subfigure}[b]{0.270\textwidth}
\includegraphics[width=\textwidth]{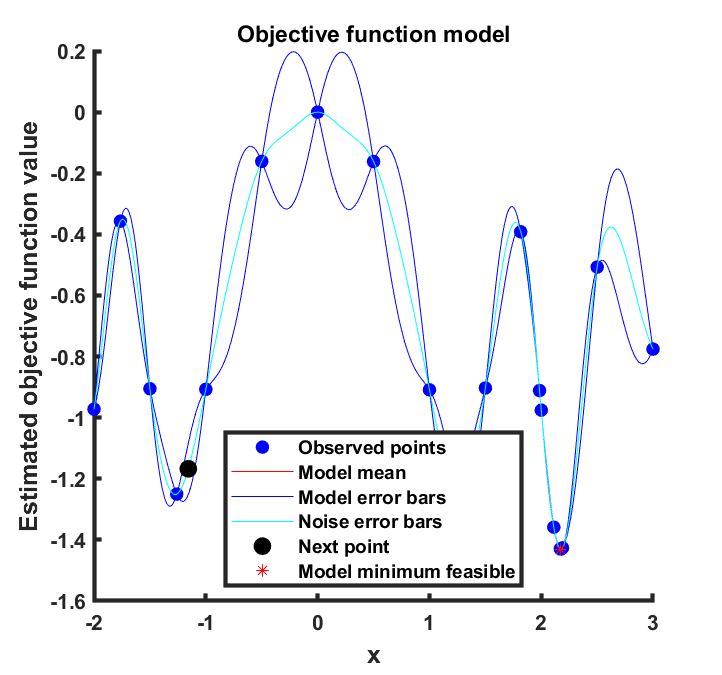}
\caption{bayesopt20}
\end{subfigure}
\begin{subfigure}[b]{0.330\textwidth}
\includegraphics[width=\textwidth]{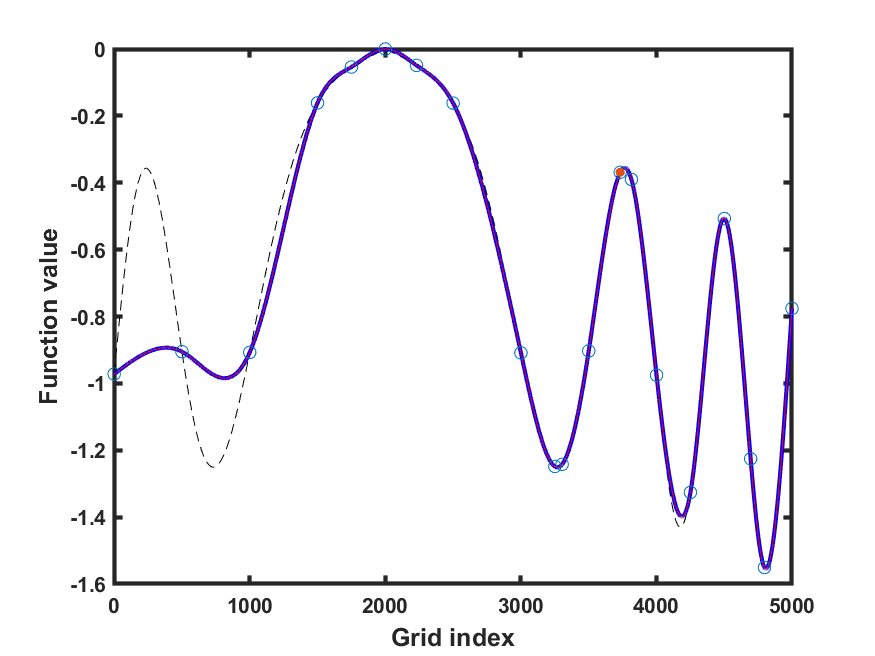}
\caption{LineWalker20}
\end{subfigure}
\newline
\begin{subfigure}[b]{0.270\textwidth}
\includegraphics[width=\textwidth]{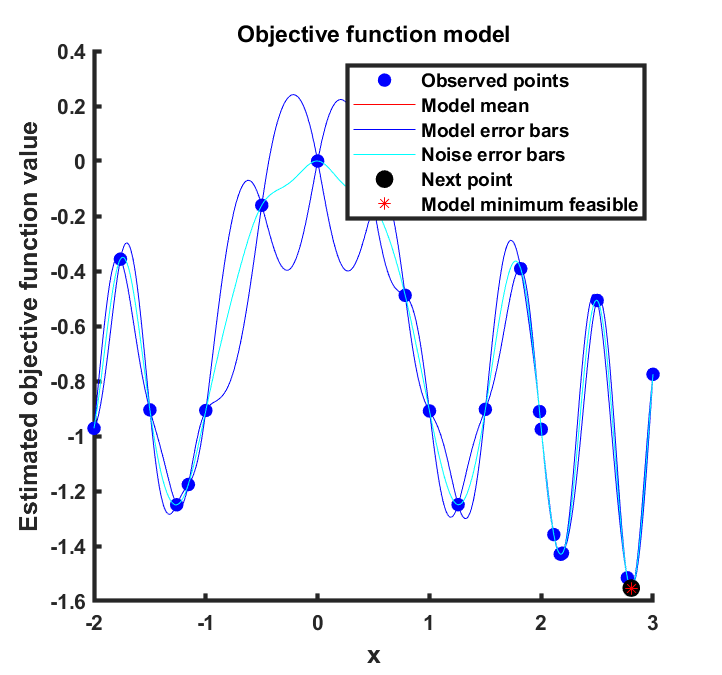}
\caption{bayesopt30}
\end{subfigure}
\begin{subfigure}[b]{0.330\textwidth}
\includegraphics[width=\textwidth]{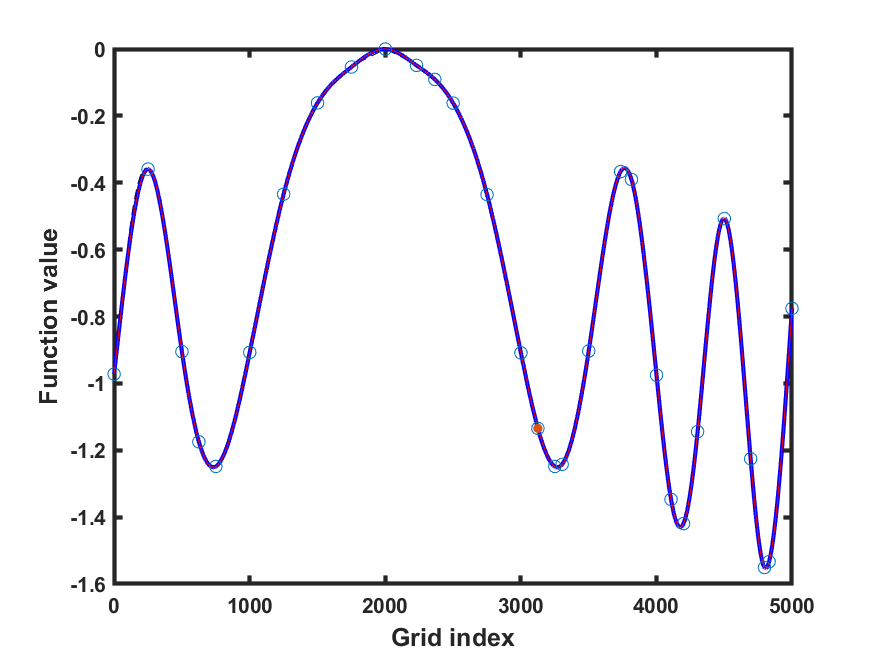}
\caption{LineWalker30}
\end{subfigure}
\newline
\begin{subfigure}[b]{0.270\textwidth}
\includegraphics[width=\textwidth]{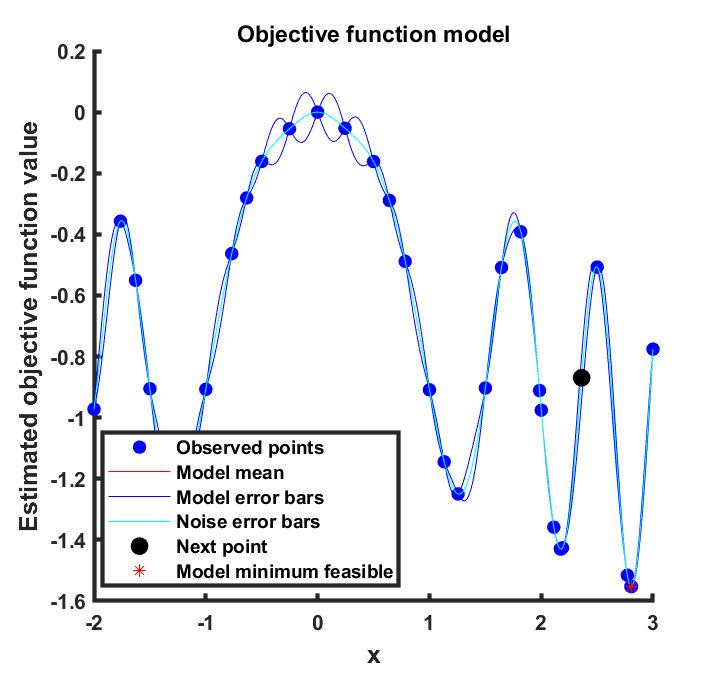}
\caption{bayesopt40}
\end{subfigure}
\begin{subfigure}[b]{0.330\textwidth}
\includegraphics[width=\textwidth]{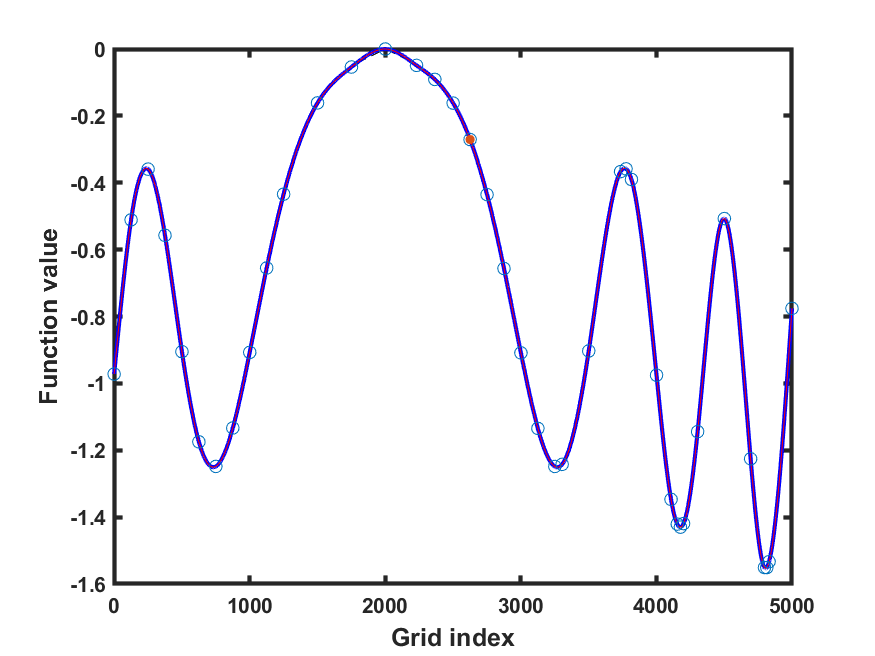}
\caption{LineWalker40}
\end{subfigure}
\newline
\begin{subfigure}[b]{0.270\textwidth}
\includegraphics[width=\textwidth]{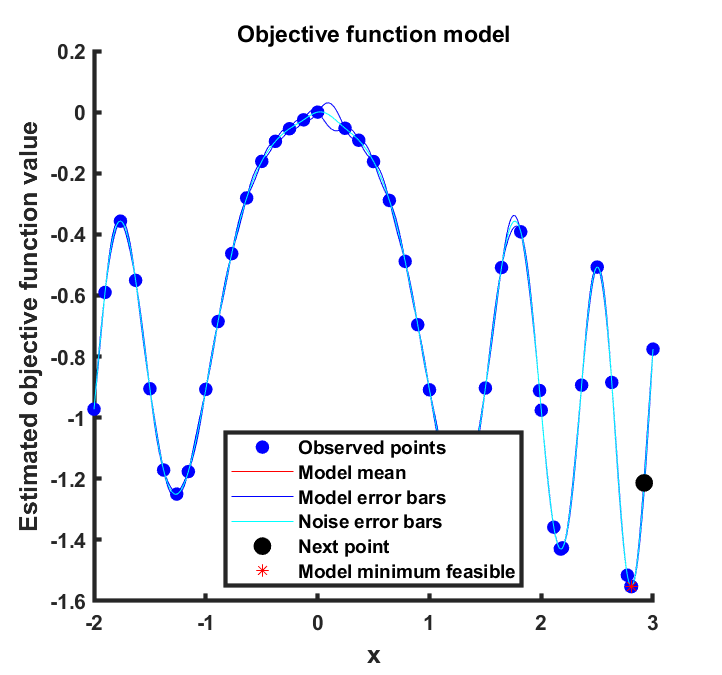}
\caption{bayesopt50}
\end{subfigure}
\begin{subfigure}[b]{0.330\textwidth}
\includegraphics[width=\textwidth]{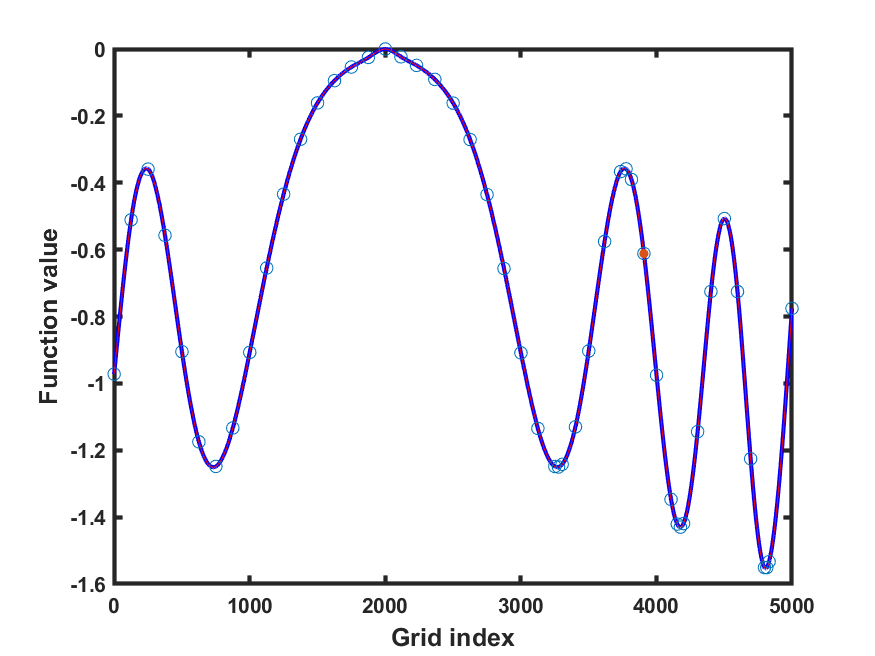}
\caption{LineWalker50}
\end{subfigure}
\newline
\caption{schaffer2A. Left column = \texttt{bayesopt}. Right column = \texttt{LineWalker-full}}
\label{fig:out_schaffer2A_1Dslice}
\end{figure}

\begin{figure}
\centering
\begin{subfigure}[b]{0.270\textwidth}
\includegraphics[width=\textwidth]{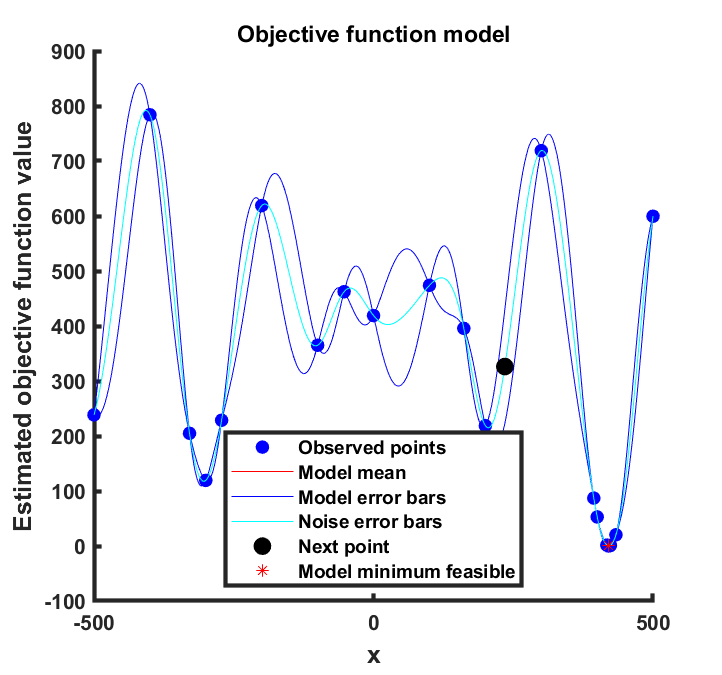}
\caption{bayesopt20}
\end{subfigure}
\begin{subfigure}[b]{0.330\textwidth}
\includegraphics[width=\textwidth]{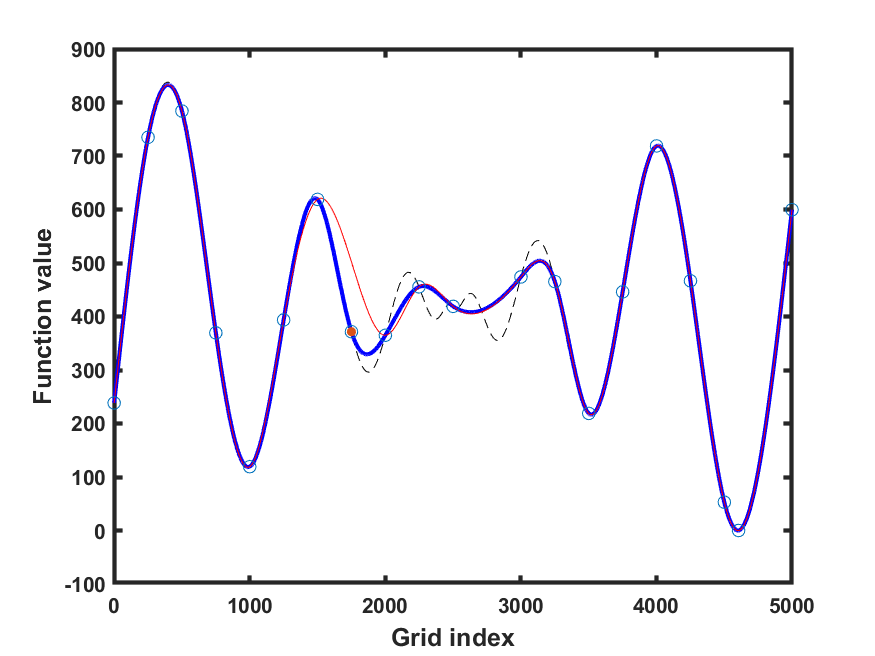}
\caption{LineWalker20}
\end{subfigure}
\newline
\begin{subfigure}[b]{0.270\textwidth}
\includegraphics[width=\textwidth]{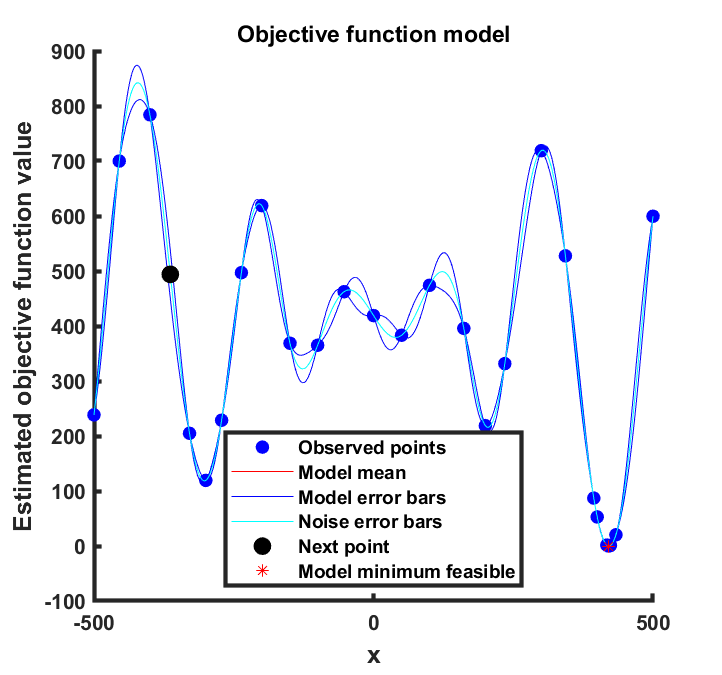}
\caption{bayesopt30}
\end{subfigure}
\begin{subfigure}[b]{0.330\textwidth}
\includegraphics[width=\textwidth]{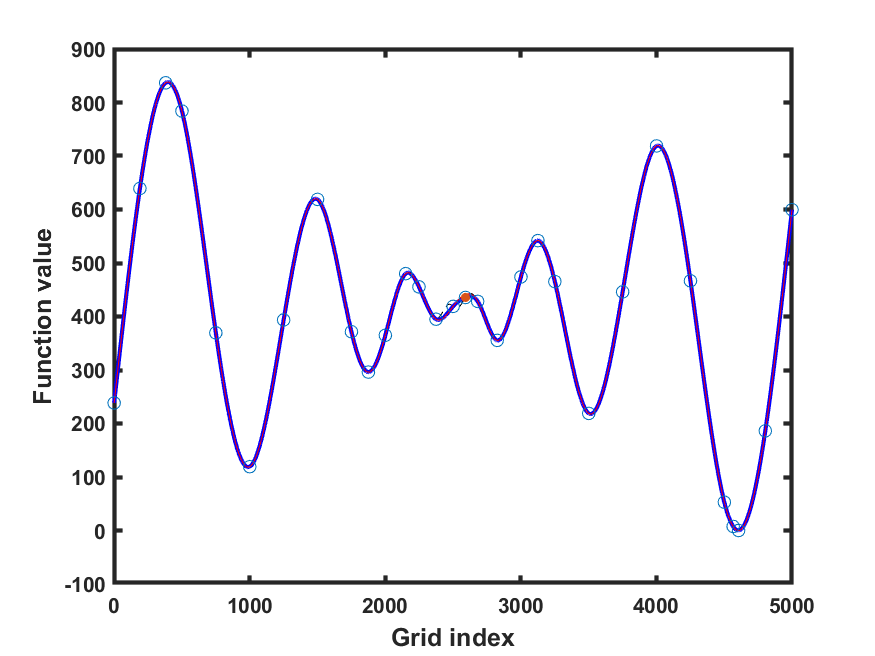}
\caption{LineWalker30}
\end{subfigure}
\newline
\begin{subfigure}[b]{0.270\textwidth}
\includegraphics[width=\textwidth]{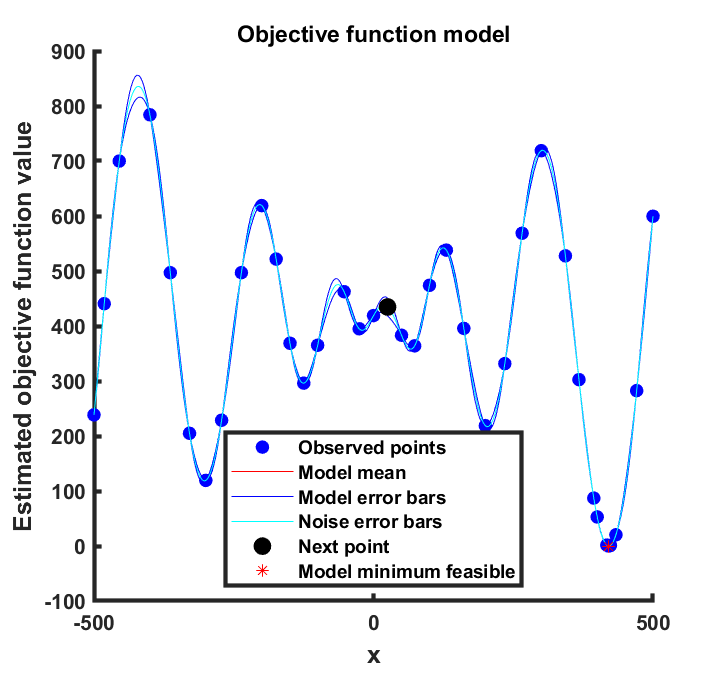}
\caption{bayesopt40}
\end{subfigure}
\begin{subfigure}[b]{0.330\textwidth}
\includegraphics[width=\textwidth]{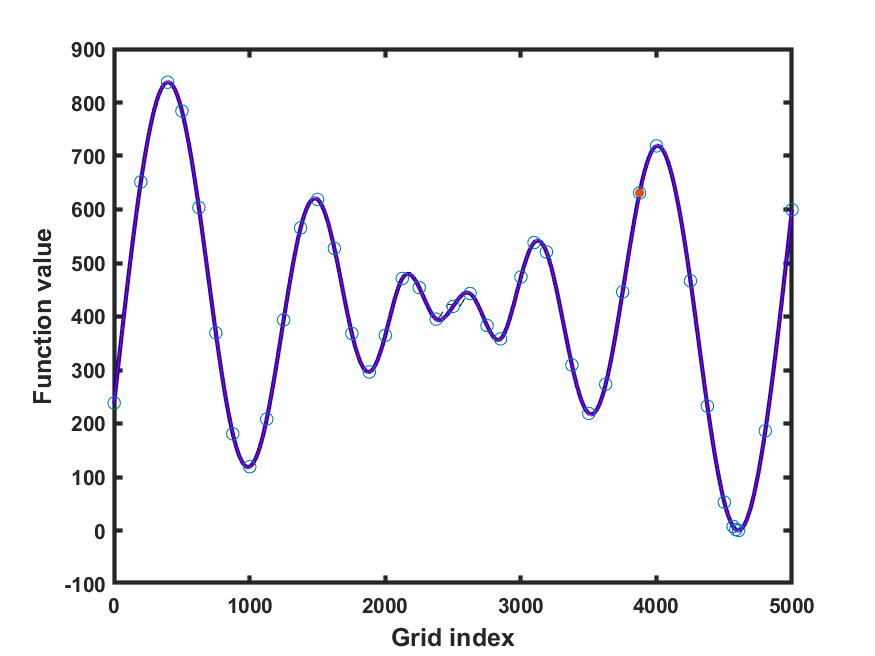}
\caption{LineWalker40}
\end{subfigure}
\newline
\begin{subfigure}[b]{0.270\textwidth}
\includegraphics[width=\textwidth]{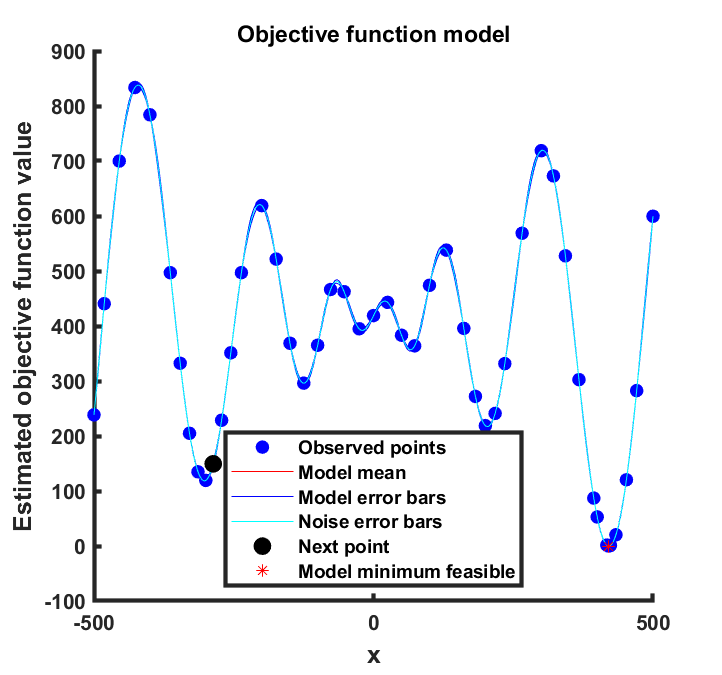}
\caption{bayesopt50}
\end{subfigure}
\begin{subfigure}[b]{0.330\textwidth}
\includegraphics[width=\textwidth]{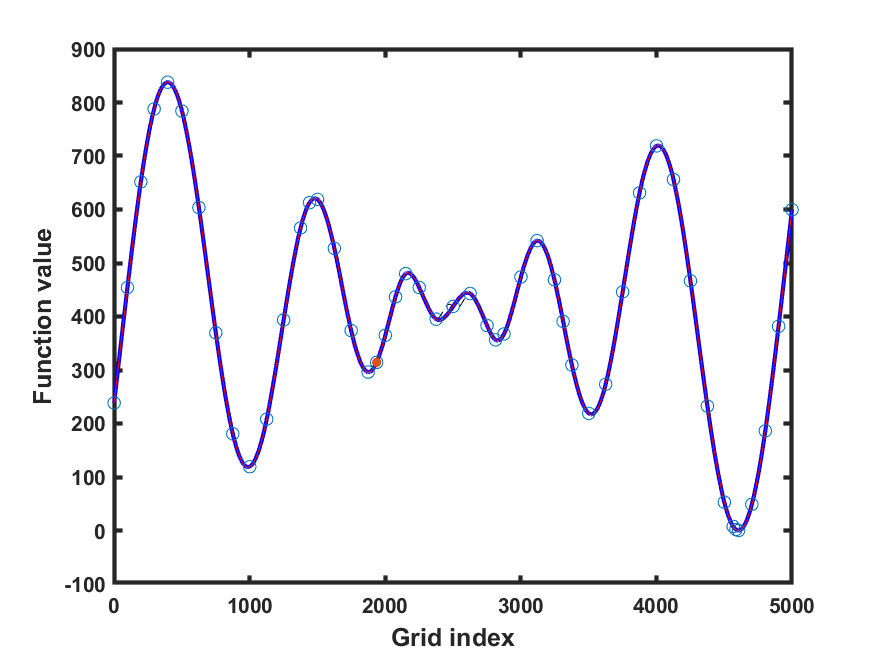}
\caption{LineWalker50}
\end{subfigure}
\newline
\caption{schwef. Left column = \texttt{bayesopt}. Right column = \texttt{LineWalker-full}}
\label{fig:out_schwef}
\end{figure}

\begin{figure}
\centering
\begin{subfigure}[b]{0.270\textwidth}
\includegraphics[width=\textwidth]{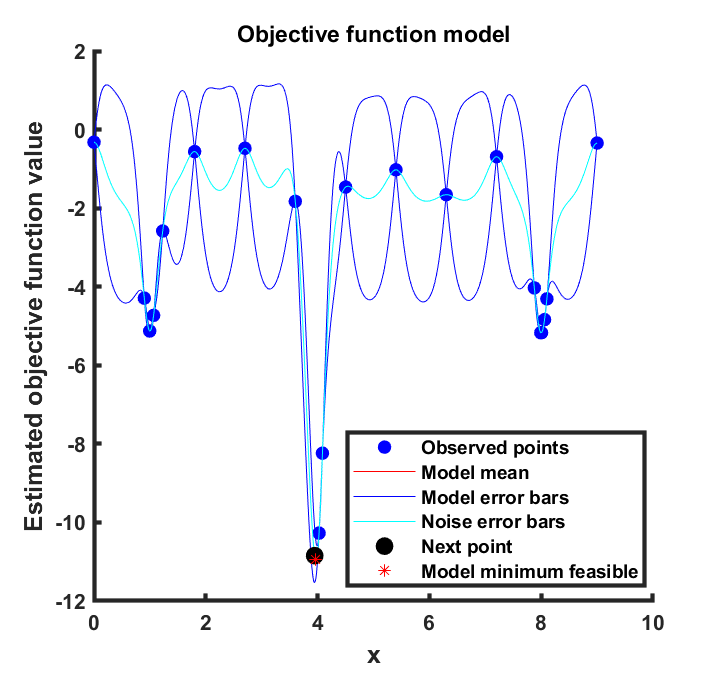}
\caption{bayesopt20}
\end{subfigure}
\begin{subfigure}[b]{0.330\textwidth}
\includegraphics[width=\textwidth]{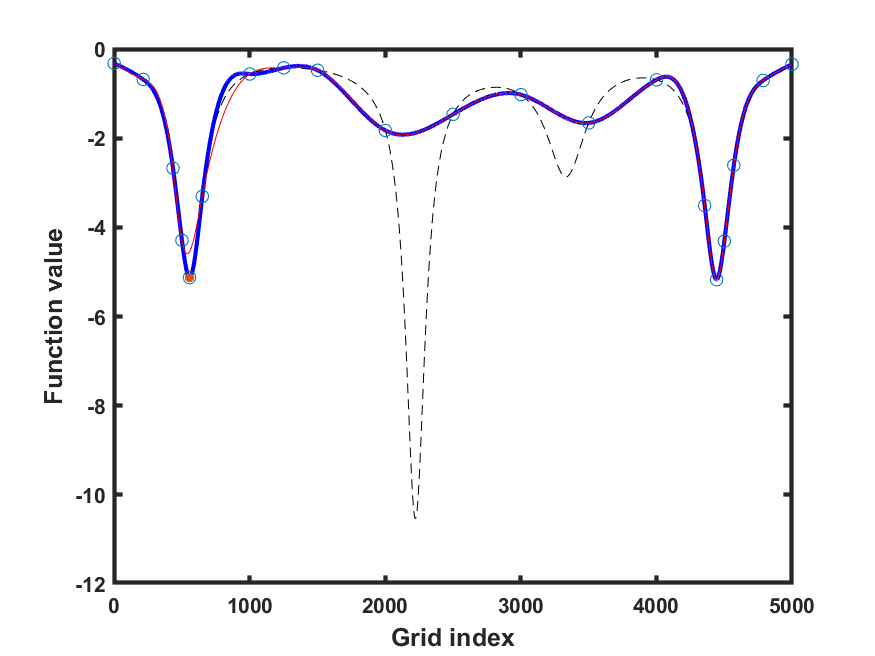}
\caption{LineWalker20}
\end{subfigure}
\newline
\begin{subfigure}[b]{0.270\textwidth}
\includegraphics[width=\textwidth]{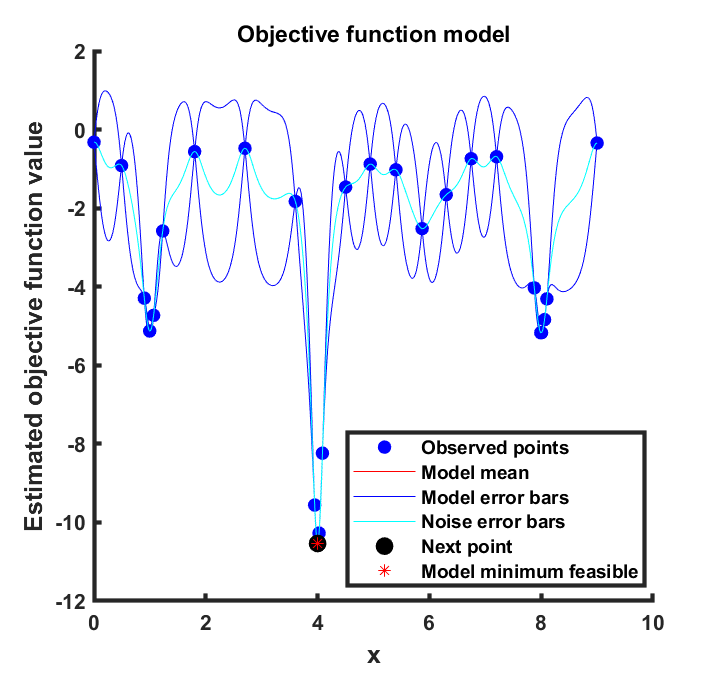}
\caption{bayesopt30}
\end{subfigure}
\begin{subfigure}[b]{0.330\textwidth}
\includegraphics[width=\textwidth]{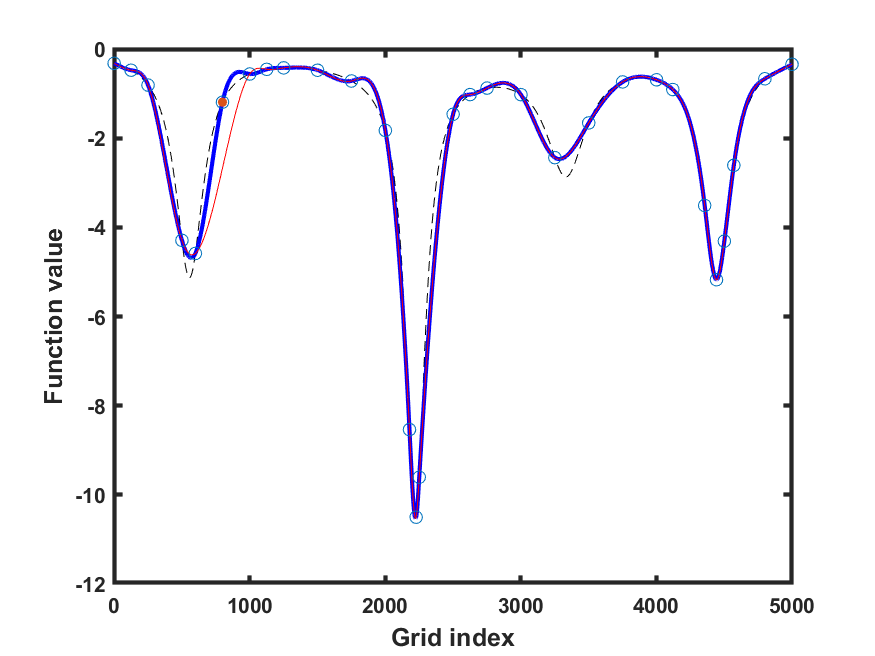}
\caption{LineWalker30}
\end{subfigure}
\newline
\begin{subfigure}[b]{0.270\textwidth}
\includegraphics[width=\textwidth]{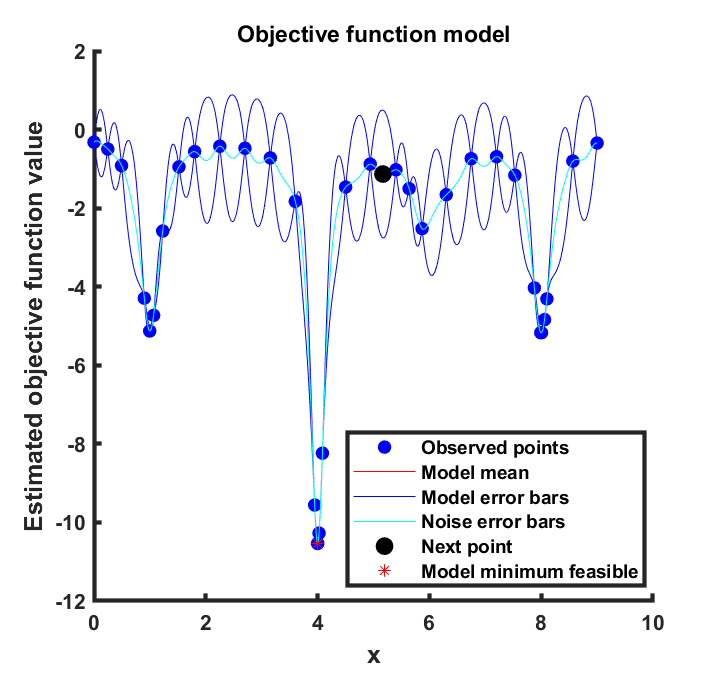}
\caption{bayesopt40}
\end{subfigure}
\begin{subfigure}[b]{0.330\textwidth}
\includegraphics[width=\textwidth]{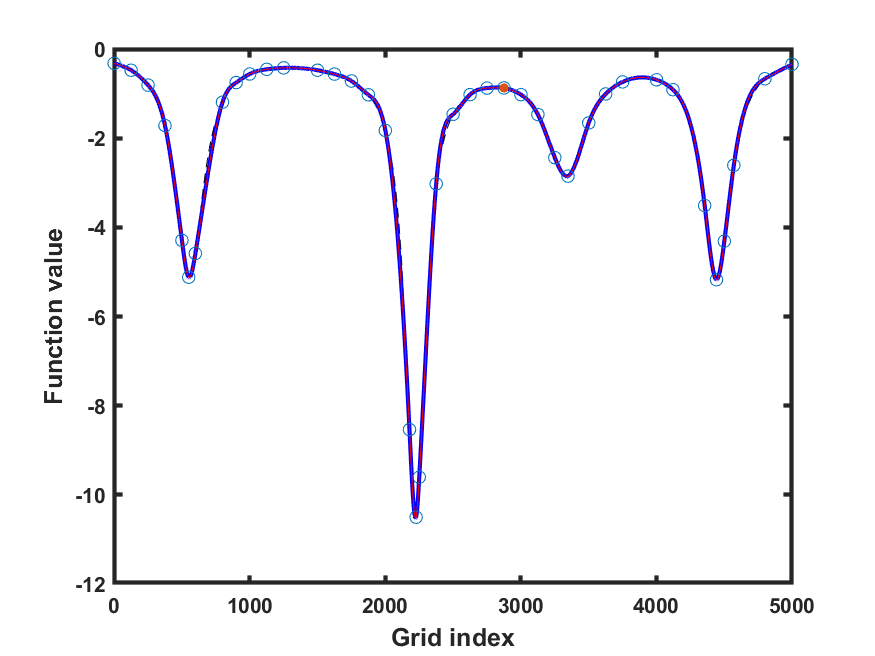}
\caption{LineWalker40}
\end{subfigure}
\newline
\begin{subfigure}[b]{0.270\textwidth}
\includegraphics[width=\textwidth]{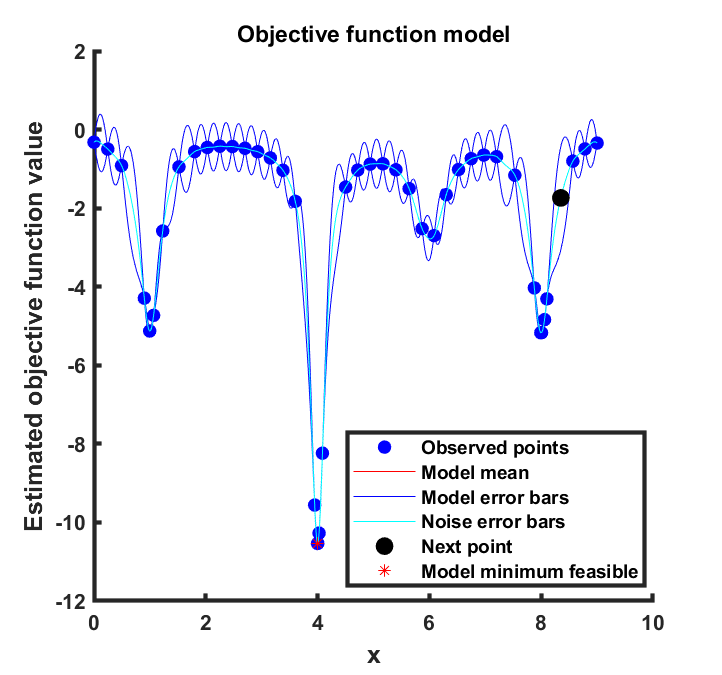}
\caption{bayesopt50}
\end{subfigure}
\begin{subfigure}[b]{0.330\textwidth}
\includegraphics[width=\textwidth]{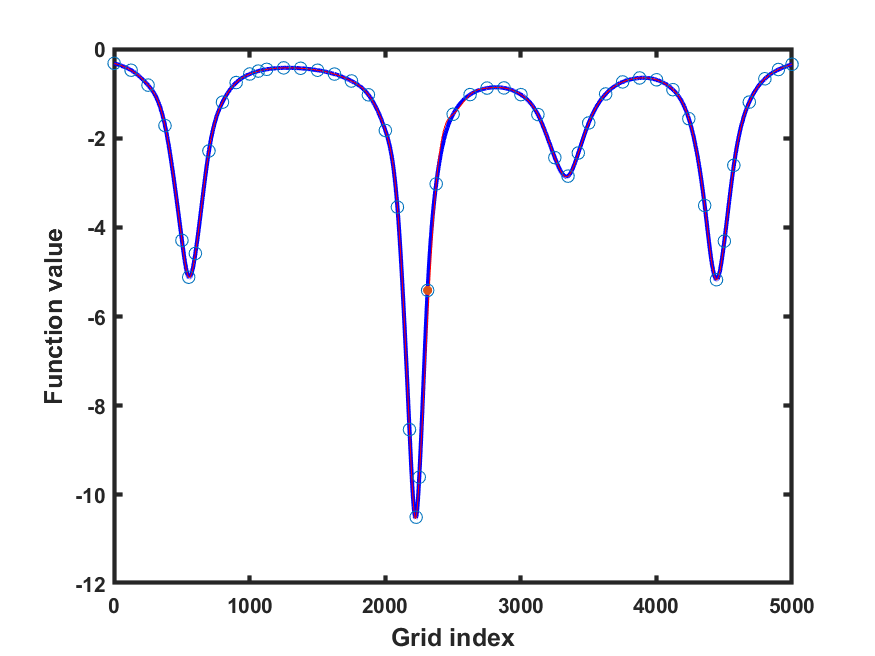}
\caption{LineWalker50}
\end{subfigure}
\newline
\caption{shekel. Left column = \texttt{bayesopt}. Right column = \texttt{LineWalker-full}}
\label{fig:out_shekel_1Dslice}
\end{figure}

\begin{figure}
\centering
\begin{subfigure}[b]{0.270\textwidth}
\includegraphics[width=\textwidth]{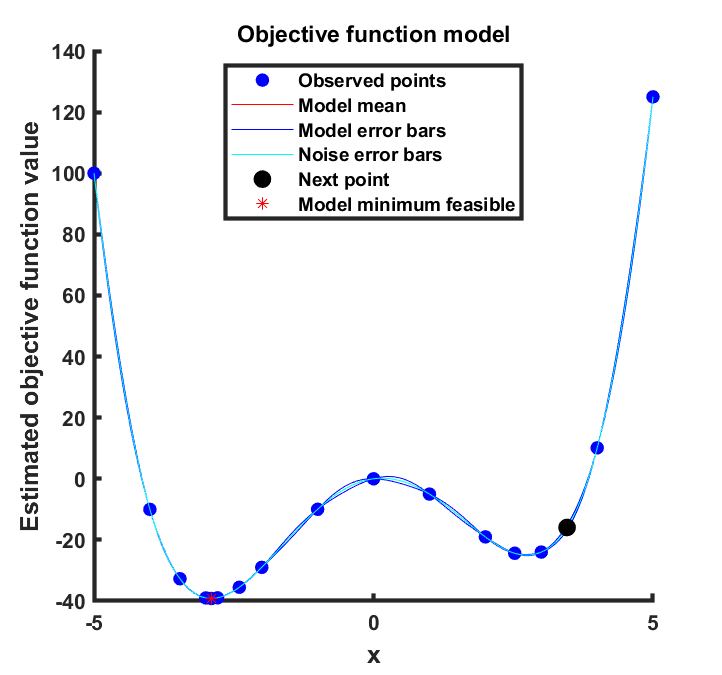}
\caption{bayesopt20}
\end{subfigure}
\begin{subfigure}[b]{0.330\textwidth}
\includegraphics[width=\textwidth]{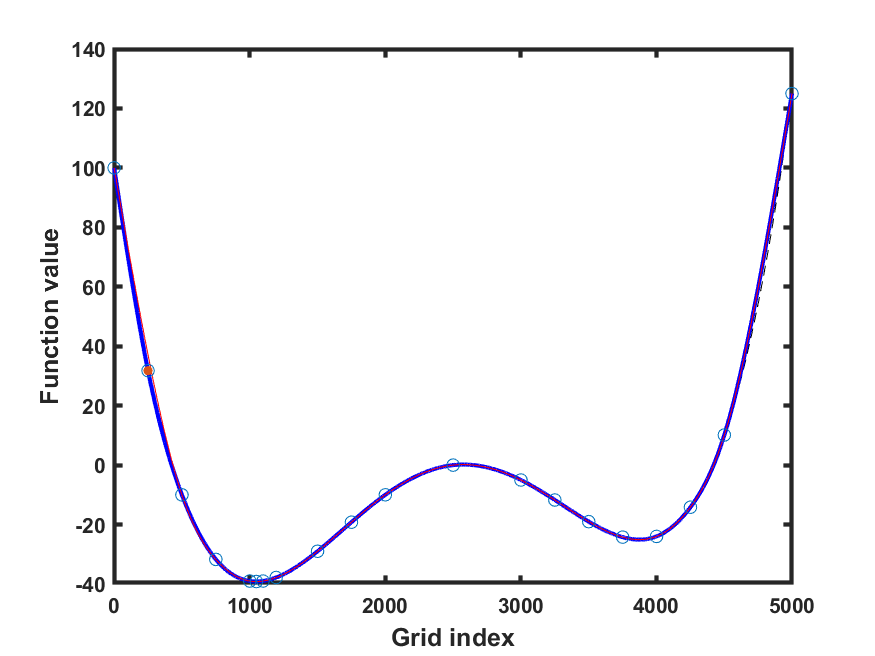}
\caption{LineWalker20}
\end{subfigure}
\newline
\begin{subfigure}[b]{0.270\textwidth}
\includegraphics[width=\textwidth]{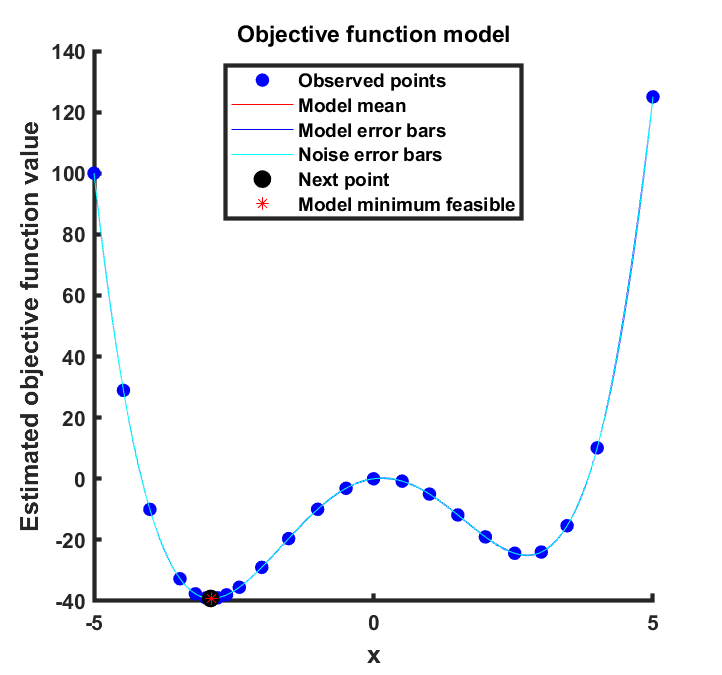}
\caption{bayesopt30}
\end{subfigure}
\begin{subfigure}[b]{0.330\textwidth}
\includegraphics[width=\textwidth]{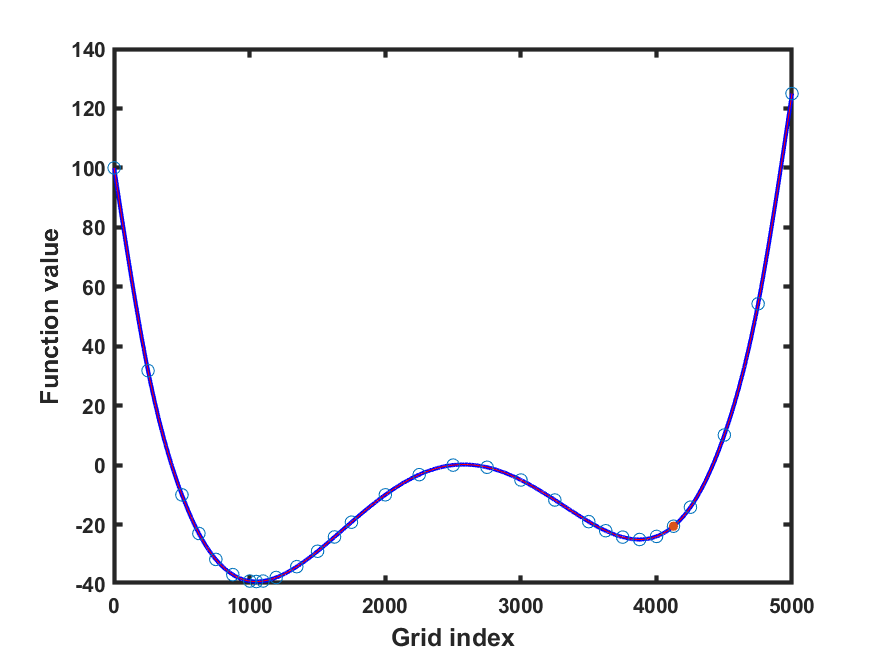}
\caption{LineWalker30}
\end{subfigure}
\newline
\begin{subfigure}[b]{0.270\textwidth}
\includegraphics[width=\textwidth]{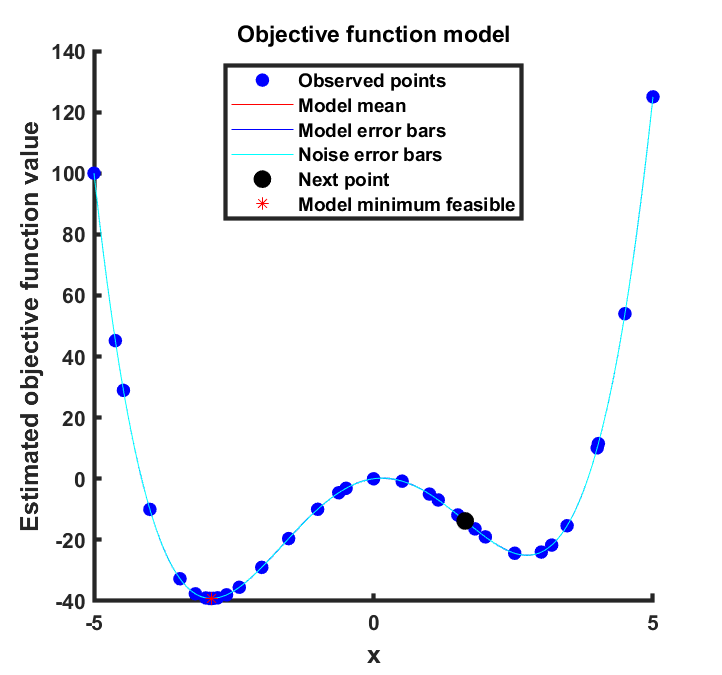}
\caption{bayesopt40}
\end{subfigure}
\begin{subfigure}[b]{0.330\textwidth}
\includegraphics[width=\textwidth]{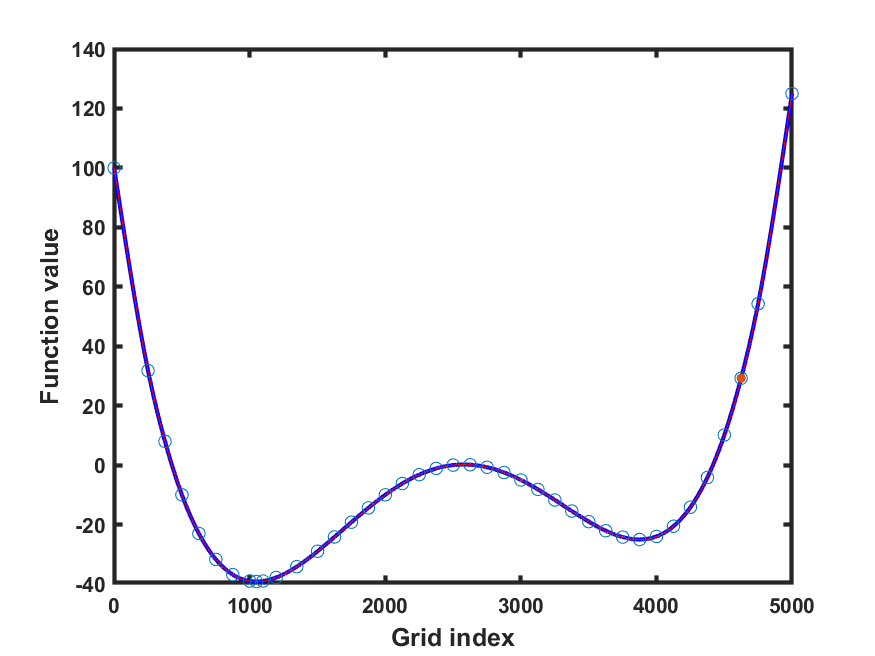}
\caption{LineWalker40}
\end{subfigure}
\newline
\begin{subfigure}[b]{0.270\textwidth}
\includegraphics[width=\textwidth]{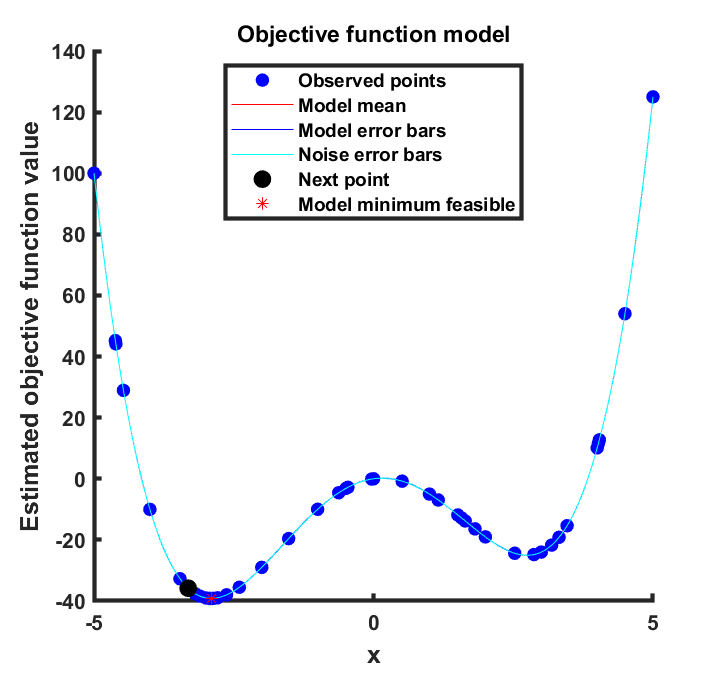}
\caption{bayesopt50}
\end{subfigure}
\begin{subfigure}[b]{0.330\textwidth}
\includegraphics[width=\textwidth]{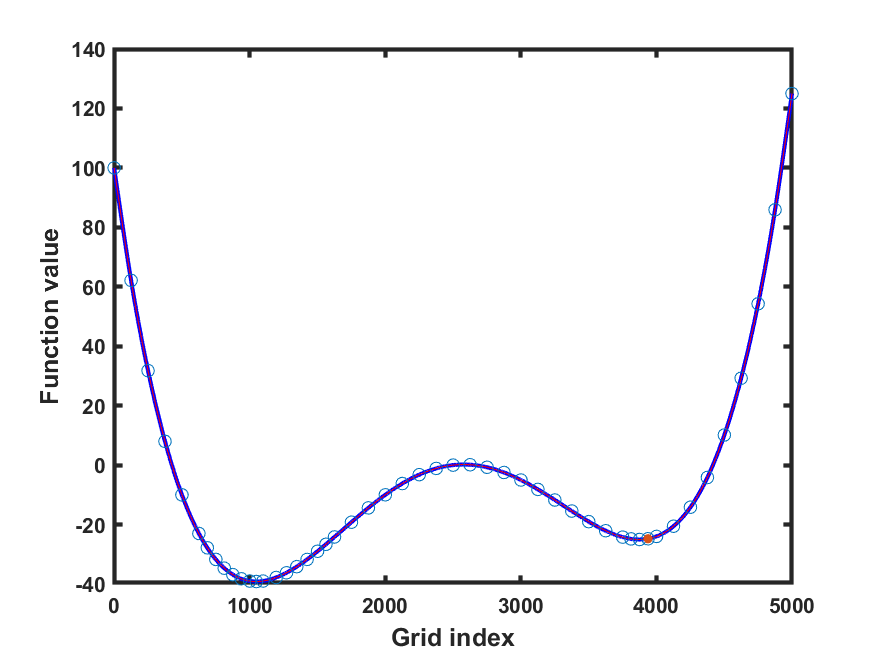}
\caption{LineWalker50}
\end{subfigure}
\newline
\caption{stybtang. Left column = \texttt{bayesopt}. Right column = \texttt{LineWalker-full}}
\label{fig:out_stybtang}
\end{figure}

\begin{figure}
\centering
\begin{subfigure}[b]{0.270\textwidth}
\includegraphics[width=\textwidth]{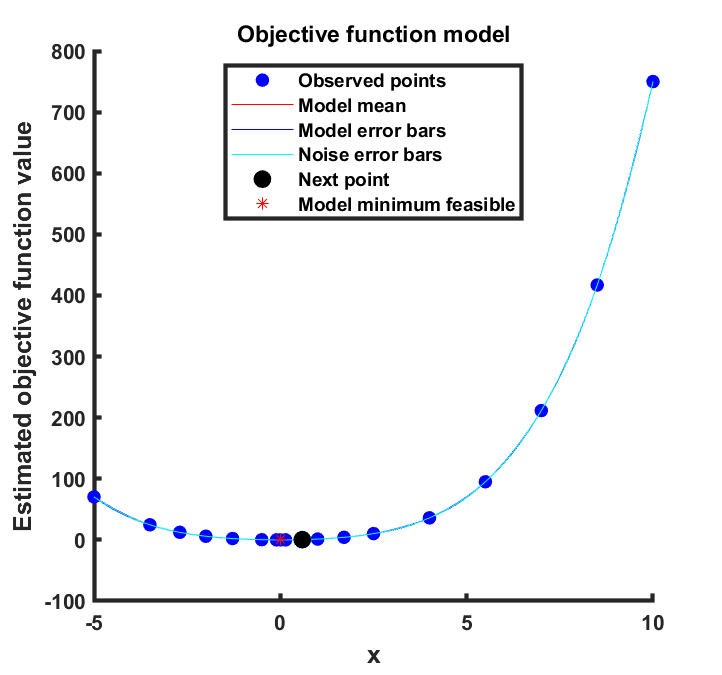}
\caption{bayesopt20}
\end{subfigure}
\begin{subfigure}[b]{0.330\textwidth}
\includegraphics[width=\textwidth]{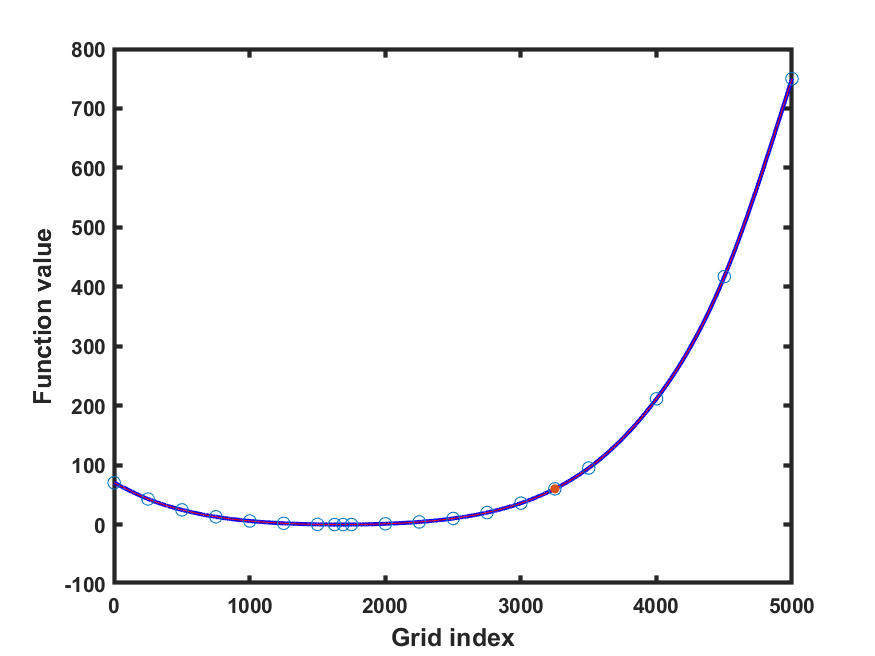}
\caption{LineWalker20}
\end{subfigure}
\newline
\begin{subfigure}[b]{0.270\textwidth}
\includegraphics[width=\textwidth]{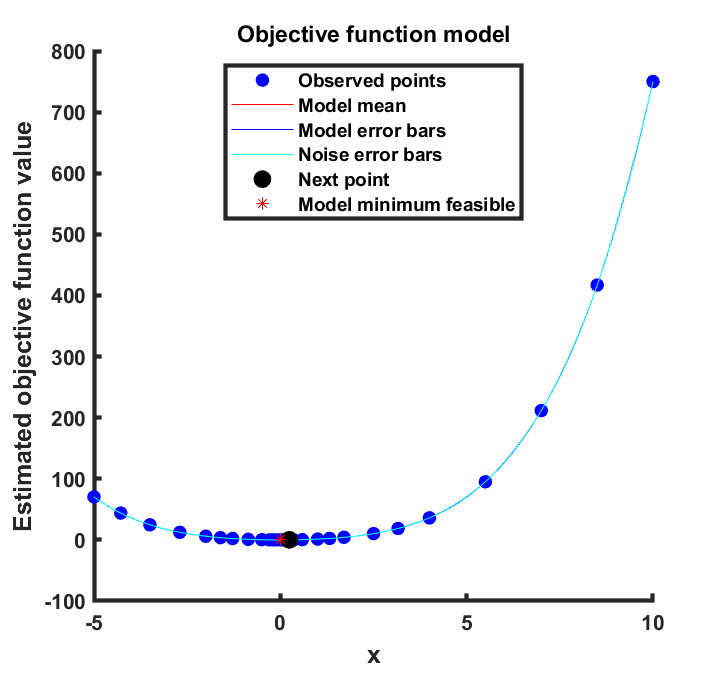}
\caption{bayesopt30}
\end{subfigure}
\begin{subfigure}[b]{0.330\textwidth}
\includegraphics[width=\textwidth]{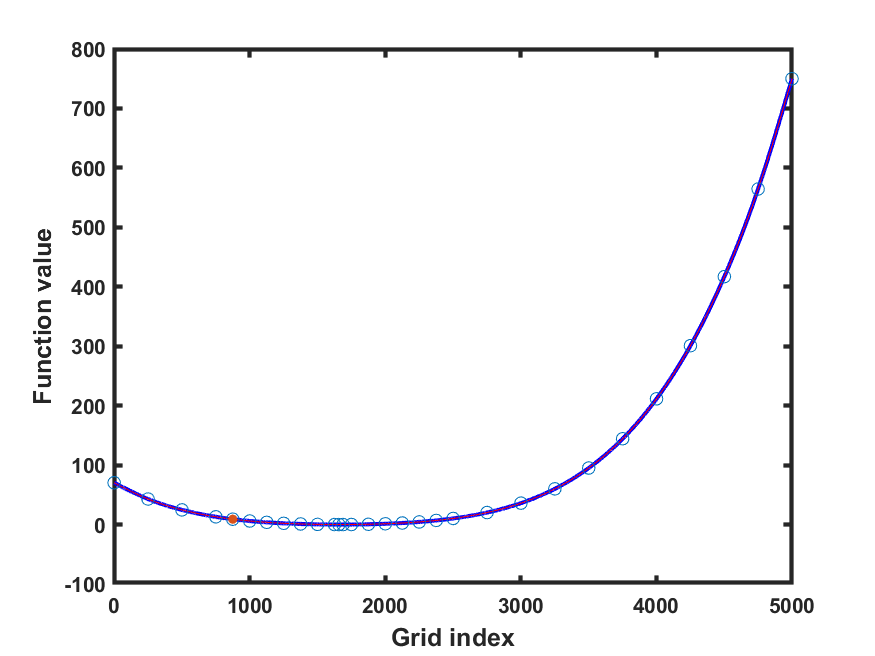}
\caption{LineWalker30}
\end{subfigure}
\newline
\begin{subfigure}[b]{0.270\textwidth}
\includegraphics[width=\textwidth]{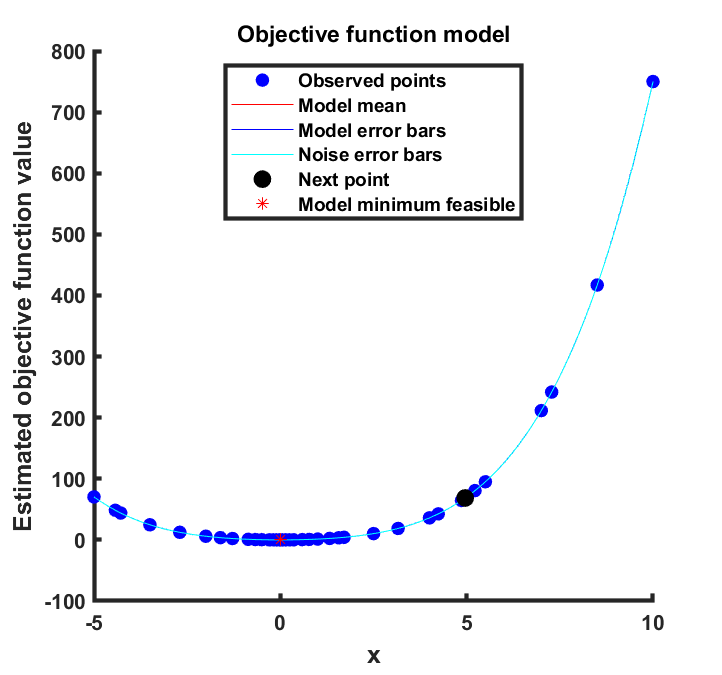}
\caption{bayesopt40}
\end{subfigure}
\begin{subfigure}[b]{0.330\textwidth}
\includegraphics[width=\textwidth]{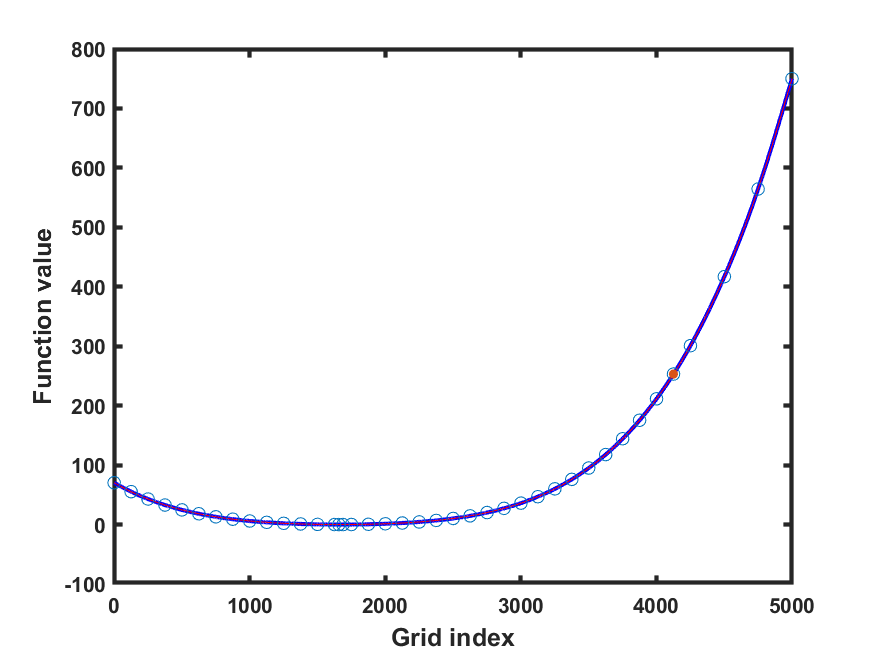}
\caption{LineWalker40}
\end{subfigure}
\newline
\begin{subfigure}[b]{0.270\textwidth}
\includegraphics[width=\textwidth]{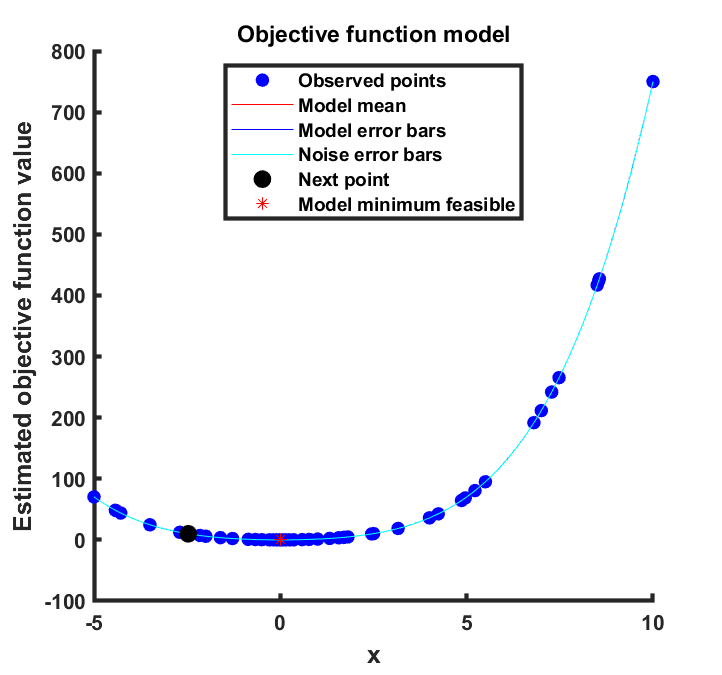}
\caption{bayesopt50}
\end{subfigure}
\begin{subfigure}[b]{0.330\textwidth}
\includegraphics[width=\textwidth]{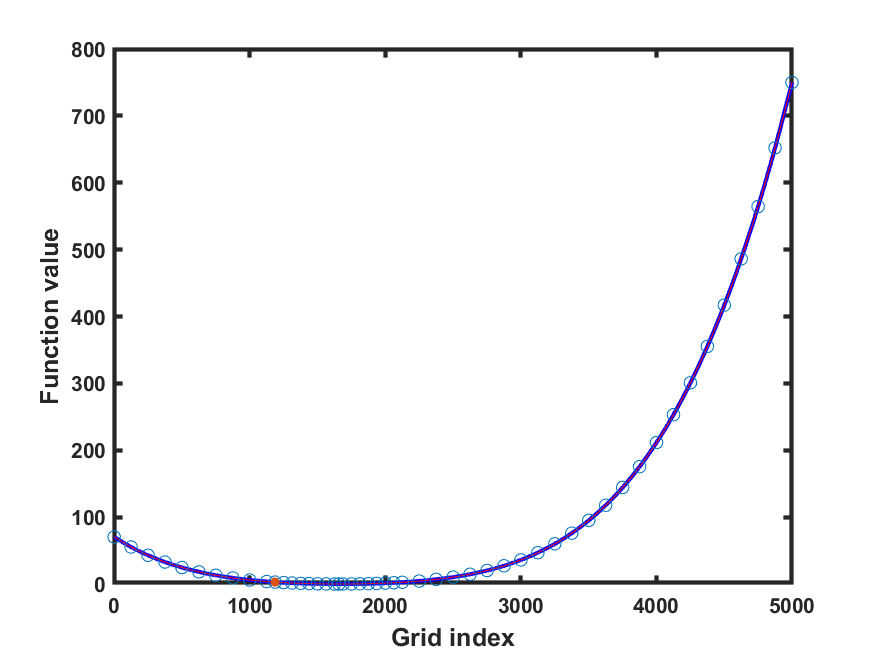}
\caption{LineWalker50}
\end{subfigure}
\newline
\caption{zakharov. Left column = \texttt{bayesopt}. Right column = \texttt{LineWalker-full}}
\label{fig:out_zakharov}
\end{figure}

\end{document}